\journal{Annals of Pure and Applied Logic}
\newcommand{\E}{\mathcal{E}}
\newcommand{\M}{\mathcal{M}}
\newcommand{\N}{\mathcal{N}}
\newcommand{\T}{\mathcal{T}}
\newcommand{\RR}{\mathbb{R}}
\newcommand{\OO}{\mathcal{O}}
\newcommand{\PP}{\mathbb{P}}
\newcommand{\QQ}{\mathbb{Q}}
\newcommand{\CC}{\mathbb{C}}
\newcommand{\MM}{\mathbb{M}}
\newcommand{\ML}{\mathbb{ML}}
\newcommand{\bsi}[1]{\boldsymbol{\Sigma}^{1}_{#1}}
\newcommand{\bpi}[1]{\boldsymbol{\Pi}^{1}_{#1}}
\newcommand{\bdi}[1]{\boldsymbol{\Delta}^{1}_{#1}}
\newcommand{\LLL}{\mathbb{L}}
\newcommand{\Amo}[2]{\mathbb{A}_{#1,\: #2}}
\newcommand{\amos}{\mathbb{A}_{\mathbb{S}}}
\newcommand{\qq}[1]{\mathcal{Q}(#1)}
\newcommand{\Q}{\mathcal{Q}}
\newcommand{\pc}{\mathcal{P}}
\newcommand{\itr}{\mathfrak{T}}
\newcommand{\UU}{\mathbb{V}}
\newcommand{\sa}{\mathbb{S}}
\newcommand{\hull}[2]{\text{Hull}^{#1}(#2)}
\newcommand{\Next}[1]{\text{Next}(#1)}
\newcommand{\li}{\trianglelefteq}
\newcommand{\HHH}[1]{\mathrm{H}({#1})}
\newcommand{\seq}[2]{\langle #1 \mid #2 \rangle}
\newcommand{\w}{\omega}
\newcommand{\HC}[1]{\mathrm{H}_{\omega_{#1}}}
\newcommand{\id}{\operatorname{id}}
\newcommand{\cp}[1]{\operatorname{crit}(#1)}
\newcommand{\mn}[2]{M^\sharp_{#1}(#2)}
\newcommand{\toe}[1]{E_{\rm{top}}^{\,#1}}
\newcommand{\centm}[1]{\begin{center}#1\end{center}}
\newcommand{\forc}[2]{\dststile{#2}{#1}}%  this is for Vdash #1 forces over #2
\newcommand{\cof}{\operatorname{cof}}
\newcommand{\Trcl}{\operatorname{Trcl}}
\newcommand{\Spl}{\operatorname{split}}
\newcommand{\Ter}{\operatorname{Term}}
\newcommand{\stem}{\operatorname{stem}}
\newcommand{\Lev}[2]{\operatorname{Lev}_{#1}{(#2)}}
\newcommand{\succc}[2]{\mathrm{Succ}_{#1}(#2)}
\newcommand{\ult}[1]{\operatorname{Ult}(#1)}
\newcommand{\res}[2]{#1\!\upharpoonright\!{#2}}
\newcommand{\lh}{\operatorname{lh}}
\newcommand{\AD}{{\sf AD}}
\newcommand{\ZFC}{{\sf ZFC}}
\newcommand{\ZF}{{\sf ZF}}
\newcommand{\AC}{{\sf AC}}
\newcommand{\OR}{{\sf OR}}
\newcommand{\GCH}{{\sf GCH}}
\newcommand{\ran}[1]{{{\rm{rng}}(#1)}}
\newcommand{\dom}[1]{{{\rm{dom}}(#1)}}
\newcommand{\heigth}[1]{{{\rm{ht}}(#1)}}
\newcommand{\VV}{{V}}
\newcommand{\LL}{L}
\newtheorem{theorem}{Theorem}[section]
\newtheorem{lemma}[theorem]{Lemma}
\newtheorem{corollary}[theorem]{Corollary}
\newtheorem{proposition}[theorem]{Proposition}
\newtheorem{question}[theorem]{Question}
\newtheorem{claim}[theorem]{Claim}
\newtheorem*{claim*}{Claim}
\newtheorem*{subclaim*}{Subclaim}
\theoremstyle{definition}
\newtheorem{definition}[theorem]{Definition}
\newtheorem{fact}[theorem]{Fact}
\theoremstyle{remark}
\newtheorem{remark}[theorem]{Remark}
\newtheorem{case}{\scshape Case}
\newtheorem*{case*}{\scshape Case}
\definecolor{teal2}{rgb}{0.036, 0.512, 0.512}
\begin{document}
\nocite{*} 

\begin{frontmatter}

\title{Preserving levels of projective determinacy by tree forcings\tnoteref{mytitlenote}}
%\tnotetext[mytitlenote]{Fully documented templates are available in the elsarticle package on \href{http://www.ctan.org/tex-archive/macros/latex/contrib/elsarticle}{CTAN}.}

%% Group authors per affiliation:
%\author{Fabiana Castiblanco\fnref{myfootnote}}
%\address{Institut f\"ur Mathematische Logik und Grundlagenforschung, Universit\"at M\"unster, 
%Einsteinstra{\ss}e 62, 48149 M\"unster, Germany.   E-mail:  {fabianaomega0@gmail.com}}
%\fntext[myfootnote]{Corresponding author}

%\author{Philipp Schlicht}
%\address{School of Mathematics, University of Bristol, University Walk, Bristol, BS8 1TW, UK.  E-mail: philipp.schlicht@bristol.ac.uk}

\author[fabi]{Fabiana Castiblanco\corref{cor1}}
\ead{fabianaomega0@gmail.com}
\author[phil]{Philipp Schlicht}\ead{philipp.schlicht@bristol.ac.uk}

\cortext[cor1]{Corresponding author}
\address[fabi]{Institut f\"ur Mathematische Logik und Grundlagenforschung, Universit\"at M\"unster, 
Einsteinstra{\ss}e 62, 48149 M\"unster, Germany}
\address[phil]{School of Mathematics, University of Bristol, Fry Building, Woodland Road, Bristol, BS8 1UG, UK} 
%School of Mathematics, University of Bristol, University Walk,  Bristol, BS8 1TW, UK}
%% or include affiliations in footnotes:
%\author[mymainaddress]{Fabiana Castiblanco\corref{mycorrespondingauthor}}\cortext[mycorrespondingauthor]{Corresponding author}
%\ead{fabianaomega0@gmail.com}

%\author[mysecondaryaddress]{Philipp Schlicht}

%\address[mymainaddress]{Institut f\"ur Mathematische Logik und Grundlagenforschung, Universit\"at M\"unster, 
%Einsteinstra{\ss}e 62, 48149 M\"unster, Germany.}
%\address[mysecondaryaddress]{School of Mathematics, University of Bristol, University Walk, Bristol, BS8 1TW, UK.}

\begin{abstract}
We prove that various classical tree forcings---for instance Sacks forcing, Mathias forcing, Laver forcing, Miller forcing and Silver forcing---preserve the statement that every real has a sharp and hence analytic determinacy. 
We then lift this result via methods of inner model theory to obtain level-by-level preservation of projective determinacy ({\sf PD}). 
Assuming {\sf PD}, we further prove that projective generic absoluteness holds and no new equivalence classes are added to thin projective transitive relations by these forcings. 
\end{abstract}

\begin{keyword}
Forcing preservation theorems, tree forcing, Projective Determinacy, thin relations, sharps, mouse operator, uniform indiscernibles\end{keyword}

\end{frontmatter}

\linenumbers

\section{Motivation}

A. Levy and R. Solovay \cite{levysol} have shown that if $\kappa$ is a measurable cardinal and $\PP$ is a small forcing notion, i.e. $|\PP|<\kappa$, then $\kappa$ remains measurable in the generic extension $\VV^\PP$.  Since the existence of compact, supercompact and huge cardinals, among others, are characterized by the existence of  certain elementary embeddings related to  ultrapowers, variants of the Levy-Solovay argument were performed in that cases showing that small forcing preserves these large cardinal properties as well. 

Other large cardinal notions are instead characterized by the existence of extender embeddings rather than simple ultrapower embeddings.  In this respect,  Hamkins and Woodin \cite{hamkins} have shown that if $\kappa$ is $\lambda$-strong then it would be also $\lambda$-strong in a generic extension obtained after forcing with a small poset.\footnote{In fact, they prove as well that a $\lambda$-strong cardinal $\kappa$ cannot be created via any forcing of cardinality $<\kappa$ (except possibly in the case of ordinals $\lambda$ with $\cof{\lambda}\leq |\PP|^{\: +}$).} Hence, the \emph{strongness} and \emph{Woodiness} of a cardinal are also preserved by small forcing. 

Many global consequences implied by the existence of large cardinals  also are preserved after forcing with certain posets.  For instance, the existence of $x^\sharp$ for each set of ordinals $x$  satisfying $\sup{x}\subset \kappa$ is a known consequence of the existence of a Ramsey cardinal $\kappa$ \cite[Chapter 2, \S 9]{kanamori}.  Moreover, from a Ramsey cardinal $\kappa$ we obtain closure under sharps for reals in the universe and also, we gain $\bsi{3}$-absoluteness for small generic extensions.   

This is closely related to the following well-known result: 

\begin{fact} 
Suppose that for every set of ordinals $x$,  $x^\sharp$ exists.  Let $\PP$ be a forcing in $V$ and let $G$ be $\PP$-generic over $V$.  Then also $\VV[G]\models \forall x\ (x^\sharp \text{ exists})$. 
\end{fact}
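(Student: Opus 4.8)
The plan is to run a Levy--Solovay-style argument adapted to sharps, working in $\ZFC$. Recall that for a set of ordinals $a$, the statement ``$a^\sharp$ exists'' is equivalent to the existence of a nontrivial elementary embedding $j\colon L[a]\to L[a]$, and that when $a^\sharp$ exists such an embedding can be found with critical point an arbitrarily large Silver indiscernible. First I would reduce to sets of ordinals: it suffices to show that for an arbitrary set of ordinals $y\in V[G]$, say $y\subseteq\delta$, we have $y^\sharp\in V[G]$. Fix a $\PP$-name $\dot y\in V$ with $\dot y^G=y$, and using $\AC$ code the pair $(\PP,\dot y)$ by a single set of ordinals $a\in V$ in such a way that $\PP,\dot y\in L[a]$. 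By hypothesis $a^\sharp$ exists in $V$.

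Next I would produce a self-embedding of the extension. Since $a^\sharp$ exists, fix a nontrivial elementary $j\colon L[a]\to L[a]$ with $\cp{j}=\kappa$, where $\kappa$ is chosen to be a Silver indiscernible for $L[a]$ above $\max\{\card{\PP},\delta\}$; this is possible as there are class-many such indiscernibles. Because $\card{\PP}<\kappa=\cp{j}$ and $\PP\in L[a]$, the forcing is ``small'' relative to $j$, so $\res{j}{\PP}=\id$ and $j(\PP)=\PP$. Now $G$ is $\PP$-generic over $V$ and hence over the inner model $L[a]$, and the pointwise image $j[G]=G$ is trivially contained in $G$, so the generic lifting lemma applies and yields a nontrivial elementary embedding
\[
k\colon L[a][G]\to L[a][G],\qquad k(\dot\tau^G)=\big(j(\dot\tau)\big)^G,
\]
with $\cp{k}=\kappa$ and $k\in V[G]$.

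Finally I would descend to $y$. We have $y=\dot y^G\in L[a][G]$ and $y\subseteq\delta<\kappa=\cp{k}$, so $k(y)=y$. Since $L[y]$ is an inner model of $L[a][G]$ definable from the fixed parameter $y$, the restriction $\res{k}{L[y]}\colon L[y]\to L[y]$ is elementary, and it is nontrivial because it moves $\kappa$. Thus $V[G]$ contains a nontrivial elementary self-embedding of $L[y]$, which gives $y^\sharp\in V[G]$; as $y$ was arbitrary, $V[G]\models\forall x\,(x^\sharp\text{ exists})$. I expect the main obstacle to be the lifting step: one must verify that the critical point can be pushed above $\card{\PP}$ (so that $j(\PP)=\PP$ and $G$ is simultaneously generic for $\PP$ and $j(\PP)$) and that the lifted map really lands back in $L[a][G]$ rather than in some outer model. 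The coding of $(\PP,\dot y)$ as a set of ordinals, which invokes $\AC$, is the only other point requiring care.
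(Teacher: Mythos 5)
Your proposal is correct. The paper states this Fact without proof (it is folklore), so there is no in-paper argument to match verbatim; but your Levy--Solovay-style lifting is exactly the standard proof, and it is also the template the paper itself deploys later in the harder setting of its Theorem 4.2 (preservation of sharps for reals by forcings in $\T$): there the hypothesis is only closure under sharps for \emph{reals}, so one cannot code $\PP$ and the name into the relevant $L[w]$, and the capturing machinery of Theorem \ref{keylemma} is needed to replace your coding step --- the generic is re-read as Mathias-generic over $L[w]$, where $\MM$ is coded by a real and the critical point automatically exceeds $|\wp(\MM)|^{L[w]}$, giving $j[G]=G$ before applying the same Lifting Lemma \ref{lift} and the same restriction-to-$L[y]$ descent you use. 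Your stronger hypothesis (sharps for all sets of ordinals) is what makes the direct coding of $(\PP,\dot y)$ into a single $a$ possible and hence the argument uniform in $\PP$. One fine point you flag but should nail down: $|\PP|<\kappa=\cp{j}$ alone does not give $\res{j}{\PP}=\id$, since conditions of $\PP$ may have rank above $\kappa$; the standard repair is to pass (without loss of generality, by forcing isomorphism) to a copy of $\PP$ whose underlying set is the ordinal $|\PP|$ with $\leq_\PP$ coded below $|\PP|^+$, or to note that the transitive closure of $\{\PP,\dot y\}$ has cardinality some $\lambda$ and hence rank below $\lambda^+$, and then choose the indiscernible $\kappa$ above $\lambda^+$ rather than merely above $\max\{|\PP|,\delta\}$. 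With that adjustment $j$ fixes $\PP$ pointwise, $j[G]=G$, and the rest of your argument (the lift $k$ landing in $L[a][G]$ because name evaluation is absolute, $k(y)=y$ since $y\subseteq\delta<\kappa$, and elementarity of $\res{k}{L[y]}$ via relativization of formulas to the definable inner model $L[y]$) goes through as written.
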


If we consider the property that $x^\sharp$ exists for every real $x$, this preservation result is no longer true.  In fact,  R. David (cf. \cite{david}) has shown that in the minimal model closed under sharps for reals $\LL^\sharp$ there is a $\bsi{3}$-forcing $\PP$ adding a real with no sharp in the generic extension. 

Nevertheless, if we restrict the complexity of the forcing,  we obtain positive results.  Furthermore, Schlicht \cite[Lemma 3.11]{phildis} has proved a more general statement: given $n<\w$, if $M^\#_n(x)$ exists and is $\w_1$-iterable for every $x\in\mathbb{R}$, then every $\Sigma^1_2$ provably c.c.c. forcing  preserves the existence and iterability of $M_n^\sharp(x)$.\:     Thus, it is natural to ask whether we can extend this result to the wider class of $\bsi{2}$ proper forcing notions.

This paper addresses the preservation problem above and its consequences when we consider Sacks ($\sa$), Silver ($\UU$), Mathias($\MM$), Miller ($\ML$) and Laver ($\LLL$) forcing.  These forcing notions are proper and their complexity is at most $\bdi{2}$.  We prove that for each natural number $n$, all the forcing notions in the set $\T=\{\sa, \UU, \MM, \LLL,\ML\}$ preserve $M_n^\sharp(x)$ for every $x\in{}^\w\w$ or equivalently,  every partial order in $\T$ preserves $\bpi{n+1}$-determinacy (cf. Theorem \ref{mainthm}).    As a consequence, from the existence of $\mn{n}{x}$ for every real $x$ we obtain  that $\bsi{n+3}$-$\PP$-absoluteness holds for every $\PP\in\T$ (cf. Theorem \ref{abs}).  This gives a partial answer to \cite[Question 7.3]{ikegami_2}.

With these results in hand,  we can show that given $n\in\w$, every forcing notion in $\T$ does not add any new orbits to $\bdi{n+3}$-thin transitive relations if we assume  the existence of $\mn{n}{x}$ for every real $x$ (cf. Theorem \ref{classes}).    As a motivation to this main result, we show that all the forcing notions in $\T$ do not change the value of the second uniform indiscernible $u_2$, which partially answers \cite[Question 7.4]{ikegami_2}.

\bigskip 
\noindent 
{\bf Acknowledgements.} 
This work is part of the Ph.D. thesis of the first author. She would like to thank Ralf Schindler for his permission to include the joint results in Section \ref{thinexample} and providing background on inner model theory. 
The authors would further like to thank the referee for the careful reading and numerous highly useful suggestions. 

The first author gratefully acknowledges support from the SFB 878 program ``\emph{Groups, Geometry \& Actions}'' financed by the Deutsche Forschungsgemeinschaft (DFG). 
This project has received funding from the European Union's Horizon 2020 research and innovation programme under the Marie Sk\l odowska-Curie grant agreement No 794020 (IMIC) of the second author. 
The second author was partially supported by FWF grant number I4039. 
%The second author gratefully acknowledges support from a from a Marie Sk\l odowska-Curie Individual Fellowship with number 794020. 

\section{Basic notions}
Let $\RR$ denote the set of real numbers.  As usual, we identify $\RR$ with the power set $\wp(\w)$ of the set $\omega$ of natural numbers, with the Baire space ${}^\w\w$, with the Cantor space ${}^\w2$ or with the set  ${}^{\uparrow\w}\w$\index{${}^{\uparrow\w}\w$} of strictly increasing functions from $\w$ to $\w$, depending on the context. 

We assume familiarity of the reader with the basic facts about forcing.  For undefined notions, consult the texts \cite{jech} and \cite{ralfbook}.
For our purposes, a forcing\index{forcing} notion $\PP$ consists of an underlying set $P$ together with a preorder $\leq_\PP$ on $P$ and the induced incompatibility relation $\perp_\PP$.  In this case, we write $\PP=\langle P,\leq_\PP, \perp_\PP\rangle$.  We often identify the  underlying set $P$ with $\PP$ itself.

\subsection{Arboreal forcing}
\begin{definition}Let $n\geq 1$ be a natural number.  Let $M$ be a  transitive model of $\ZFC$  and let $\PP\in M$ be a forcing notion. We say that $M$ is (1-step) $\bsi{n}$-$\PP$-absolute iff for every $\Sigma^1_n$-formula $\varphi$ and for every real $a\in M$, we have \[M\models \varphi(a)\iff M^\PP\models \varphi(a).\]
This is equivalent to the expression $M^\VV\prec_{\Sigma^1_n}M^{\VV^\PP}$.  Similarly, we define (1-step) $\bpi{n}$-$\PP$-absoluteness.
\end{definition}

\begin{definition}Let $n\geq 1$ be a natural number.  Let $M$ be a transitive model of $\ZFC$.  We say that $M$ is $\bsi{n}$- correct\index{correctness! $\bsi{n}$-correctness} (in $\VV$) iff  $M\prec_{\Sigma^1_n} \VV$. In other words, for each $\Sigma^1_n$-formula $\varphi$ and every real  $a\in M$,\[M\models \varphi(a)\iff \VV\models\varphi(a).\]In a similar way, we define $\bpi{n}$-correctness.\end{definition}

\begin{definition}Let $n<\w$.  We say that a forcing notion $\PP=\langle P,\leq_\PP, \perp_\PP\rangle$ is $\bsi{n}$ if $P\subset{}^\w\w$ is $\bsi{n}$ and the order and incompatibility relations $\leq_\PP$ and $\perp_\PP$ are $\bsi{n}$-subsets of ${}^\w\w\times{}^\w\w$.  In a similar way, we define $\bpi{n}$ forcing notions.  In addition, we say that $\PP$ is $\bdi{n}$ if it is both $\bsi{n}$ and $\bpi{n}$.  Finally, we say that $\PP$ is projective if and only if $\PP$ is $\bsi{n}$ for some $n<\w$. \end{definition} 

\begin{definition}We say  that a forcing notion $\PP$  is Suslin if and only if it is $\bsi{1}$-definable.  Also, we say that $\PP$ is co-Suslin if and only if it is $\bpi{1}$-definable.  
\end{definition}

\begin{definition}Let $\PP=\langle P,\leq_\PP,\perp_\PP\rangle$ be a poset definable by a projective formula with parameter $a\in{}^\w\w$.   Let $M$ be a transitive model of $\ZF$ containing the parameter $a$.  Then, $\PP^M$, $P^M$, $\leq_\PP^M$ and $\perp_\PP^M$ denote the forcing notion $\PP$ re-interpreted in $M$.\:  Also, we say that $\PP$ is absolute for $M$ if \[\leq_\PP^M\: =\: \leq_\PP\cap\, M\:\text{ and\:  }\perp_\PP^M\: =\: \perp_\PP\cap\, M.\] 
\end{definition}

\begin{definition}Let $n\geq 1$ be a natural number.  A forcing notion $\PP$ is called \emph{provably} $\Delta^1_{n}$ if there is a $\Sigma^1_n$-formula $\varphi$ and a $\Pi^1_n$-formula $\psi$ such that\[\ZFC\vdash ``\varphi\text{ and $\psi$ define the same triple $\PP=(P,\leq_\PP,\perp_\PP)$'' }.\]\end{definition}
\begin{definition}Let $\PP$ be a forcing notion.  We say that a cardinal $\lambda$ is \emph{sufficiently large} if $\lambda>2^{|\PP|}$ and we write $\lambda\gg\PP$.\: 
%Under this condition, let \[\mathcal{H}=\{M:\:  M\prec\HHH{\lambda}\text{ is a countable elementary substructure}\}\]\noindent It is easy to see that $\mathcal{H}$ is a club in $[\HHH{\lambda}]^\w$.  
\end{definition}

\begin{definition}Let $\PP$ be a forcing notion and let $\lambda\gg\PP$.  Let $M\prec \HHH{\lambda}$ be an elementary substructure.  A condition $q\in\PP$ is $(M,\PP)$-generic iff for every dense set $D\subset \PP$ such that $D\in M$, $D\cap M$ is predense below $q$.  Equivalently, $q\in \PP$ is $(M,\PP)$-generic if and only if \[q\forc{\PP}{\VV}\: \text{``$\dot{G}\cap P\cap M$ is $\PP\cap M$-generic over $M$''}.\]\end{definition}

\begin{definition} A forcing notion $\PP$ is \emph{proper } iff for any  $\lambda\gg\PP$ and for any countable elementary substructure $M\prec \HHH{\lambda}$ with $\PP\in M$, every $p\in \PP\cap M$ has an extension $q\leq_\PP p$ which is $(M,\PP)$-generic. 
\end{definition}

\begin{definition}Let $\PP\subset\RR$ be a forcing notion.\:  We say $\tau$ is a countable $\PP$-name for a real if it is a countable set of pairs $(\check{n}, p)$, where $n\in\w$ and $p\in\PP$.   
\end{definition}

If $\PP\subseteq \RR$ is proper, not every $\PP$-name for a real is in $\HC{1}$. However, modulo equivalence through conditions, we have countable names for reals:

\begin{proposition}\label{cnames}Let $\PP$ be a proper forcing notion whose conditions are real numbers.  Let $p\in\PP$ and let $\tau$ be a $\PP$-name for a real $x\in\VV^\PP$ such that $p\forc{\PP}{\VV} \tau\subset \w$.  Then,  there exists a condition $q\leq p$ and a countable $\PP$-name $\sigma$ such that $q\forc{\PP}{\VV} \sigma =\tau$.
\end{proposition}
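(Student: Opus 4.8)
The plan is to use properness to capture $\tau$ by a name living inside a countable elementary submodel. First I would fix a cardinal $\lambda\gg\PP$ and a countable elementary substructure $M\prec\HHH{\lambda}$ with $\PP,p,\tau\in M$; such an $M$ exists since $\mathcal{H}$ is a club in $[\HHH{\lambda}]^\w$ and so contains a member above any prescribed countable set of parameters. Because $\PP$ is proper and $p\in\PP\cap M$, there is a condition $q\leq_\PP p$ which is $(M,\PP)$-generic. The point to record at this stage is that $\PP\cap M$ is countable and, since the conditions of $\PP$ are reals and hence hereditarily countable, every $r\in\PP\cap M$ lies in $\HHH{\w_1}$.

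I would then define the candidate name directly from the decisions made by conditions inside $M$. For each $n\in\w$ put
\[
A_n=\set{r\in\PP\cap M}{r\forc{\PP}{\VV}\check n\in\tau},
\]
and set $\sigma=\set{(\check n,r)}{n\in\w,\ r\in A_n}$. Since $\w\subseteq M$ and each $A_n\subseteq\PP\cap M$ is countable, $\sigma$ is a countable set of pairs $(\check n,r)$ with $\check n\in\HHH{\w_1}$ and $r\in\RR$; hence $\sigma$ is a countable $\PP$-name for a real and $\sigma\in\HHH{\w_1}$.

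It then remains to verify $q\forc{\PP}{\VV}\sigma=\tau$, which I would do by working with an arbitrary $\PP$-generic $G$ containing $q$. Since $q\leq_\PP p$ we have $\tau^G\subseteq\w$, and by construction $\sigma^G=\set{n\in\w}{A_n\cap G\neq\emptyset}$. The inclusion $\sigma^G\subseteq\tau^G$ is immediate: if $r\in A_n\cap G$ then $r\forc{\PP}{\VV}\check n\in\tau$ and $r\in G$, so $n\in\tau^G$. For the converse, fix $n\in\tau^G$ and consider the dense set $D_n=\set{r\in\PP}{r\ \text{decides}\ \check n\in\tau}$, which, being definable from the parameters $\tau,\PP,n\in M$, belongs to $M$. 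By the $(M,\PP)$-genericity of $q$, the set $D_n\cap M$ is predense below $q$, so, as $q\in G$, there is some $r\in D_n\cap M\cap G$. Since $r\in G$ and $n\in\tau^G$, the condition $r$ cannot force $\check n\notin\tau$, whence $r\forc{\PP}{\VV}\check n\in\tau$; thus $r\in A_n\cap G$ and $n\in\sigma^G$. Therefore $\sigma^G=\tau^G$ for every such $G$, giving $q\forc{\PP}{\VV}\sigma=\tau$.

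The computations above are routine; the step that genuinely uses the hypotheses is the inclusion $\tau^G\subseteq\sigma^G$, where properness---through the $(M,\PP)$-genericity of $q$ ensuring that $D_n\cap M$ is predense below $q$---is exactly what guarantees that the value of $\tau$ is already decided by conditions drawn from the countable model $M$. The only other point demanding care is the verification that $\sigma\in\HHH{\w_1}$, and this is precisely where the hypothesis that the conditions of $\PP$ are reals enters, since it makes each condition hereditarily countable and hence $\sigma$ a genuine element of $\HHH{\w_1}$.
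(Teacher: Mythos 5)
Your proof is correct, but it follows a genuinely different route from the paper's. The paper works in a generic extension: it writes $\tau$ in antichain normal form, fixes a generic $G\ni p$, observes that the ``used part'' $X=\{\langle\langle n,m\rangle\check{\ },p'\rangle\in\tau : p'\in G\cap A_n\}$ is countable in $V[G]$, invokes the covering property of proper forcing to find a countable $Y\in V$ with $X\subseteq Y$, and takes $\sigma=\tau\cap Y$, finally choosing $q\in G$ with $q\forc{\PP}{\VV}\sigma=\tau$. You instead stay entirely in $V$ and use the definition of properness directly: you take an $(M,\PP)$-generic $q\leq p$ for a countable $M\prec\HHH{\lambda}$ containing $\tau$, and build a fresh name $\sigma$ from the conditions in $\PP\cap M$ deciding $\check n\in\tau$, with the density of the deciding sets $D_n\in M$ plus predensity of $D_n\cap M$ below $q$ yielding $\tau^G\subseteq\sigma^G$. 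Your approach buys a few things: it needs no normal-form assumption on $\tau$ and no detour through a covering lemma; it produces one condition $q$ below which $\sigma=\tau$ is forced for \emph{all} generics, rather than extracting $q$ from a single fixed $G$; and it isolates exactly where the hypothesis that conditions are reals is used (membership of $\sigma$ in $\HHH{\w_1}$). The paper's approach, in exchange, yields $\sigma$ as a literal subname $\tau\cap Y$ of $\tau$ and is shorter once the covering property is taken as known. One small point of care in your write-up: the paper's convention $\lambda\gg\PP$ only demands $\lambda>2^{|\PP|}$, which need not put $\tau$ inside $\HHH{\lambda}$; since you require $\tau\in M\prec\HHH{\lambda}$, you should say explicitly that $\lambda$ is chosen large enough that $\tau\in\HHH{\lambda}$ (and that $\HHH{\lambda}$ computes the relevant forcing relation correctly) --- this is harmless because the paper's definition of properness quantifies over all such $\lambda$, but it deserves a word.
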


\begin{proof}Suppose  $\tau=\{\langle \check{n}, p'\rangle:n\in\w, p'\in A_n\}$ is a $\PP$-name for the real $x$, where each $A_n$ is an antichain. 
%This means that for each $p'\in A_n$,  $p'\Vdash_{\mathbb{P}} \dot{x}(n)=m$ for some $m$. 
Let $G$ be $\PP$-generic over $\VV$ containing $p$ and take $X=\{\langle \check{n}, p'\rangle\in \tau: p'\in G\cap A_n\}\subset \tau$. Since each $A_n$ is an antichain, $X$ is a countable subset in $V[G]$. Since $\PP$ is proper, by \cite[Lemma 31.4]{jech} there is countable set $Y\in V$ covering $X$.  Let $\sigma=\tau\cap Y$.  Then $\sigma\in V$ is a countable name and $\sigma^G=\tau^G=x$.  Thus, we can pick $q\in G$, $q\leq p$ such that $q\Vdash_\PP \sigma=\tau$ as required.\end{proof}

All the forcing notions which we shall consider  are strongly proper in the sense of \cite[Definition 5]{bagaria}.  In \cite[Definition 3.2.]{fsk}, this property is called \emph{properness-for-candidates}.  From now on, $\ZFC^*$ stands for some unspecified sufficiently large finite fragment of $\ZFC$.
\begin{definition}[Shelah]Let $\PP$ be a projective forcing defined by a formula with real parameter $a$.  We say that a countable transitive model $M$ of $\ZFC^*$ is a \emph{candidate} if it contains the defining parameter $a$.\end{definition}

\begin{definition}Let $\PP$ be a projective forcing defined by a formula in the real parameter $a$.  We say that $\PP$ is \emph{strongly proper} if for all candidates $M$ containing $a$ and satisfying $P^M\subseteq P$, $\leq_\PP^M\subseteq \leq_\PP$ and $\perp_{\PP^M}\subseteq \perp_\PP$, every condition $p\in P^M$ has an extension $q\leq_\PP p$ which is $(M,\PP)$-generic.  \end{definition}
Clearly, every projective c.c.c. forcing notion is strongly proper and every strongly proper forcing notion is proper.  

\begin{definition}
A strongly proper forcing notion $\PP=\langle \PP,\leq_{\PP}, \perp_{\PP} \rangle$ is called Suslin proper if $P$, $\leq_{\PP}$ and $\perp_{\PP}$ are $\bsi{1}$. 
Moreover, a forcing notion $\PP=\langle \PP,\leq_{\PP}, \perp_{\PP} \rangle$ is called Suslin$^+$ proper if
\begin{enumerate}[(i)]
\item $P$ and $\leq_{\PP}$ are $\bsi{1}$, 
\item there is a $\bsi{2}$, $(\w+1)$-ary relation $\text{epd}(p_0,p_1,\dots,q)$ such that if $\text{epd}(p_0,p_1,\dots,q)$ holds for $p_i,q\in\PP$, then $\{p_i:\i<\w\}$ is predense below $q$ and
\item for every candidate $M$ containing all relevant parameters, and all $p\in\PP^M$ there is $q\leq p$ such that for every $D\in M$ which is $\PP^M$-dense, there exists an enumeration $\{d_i:i<\w\}\subset D$ such that $\text{epd}(d_0,d_1,\dots,q)$ holds.  In this case we say that $q$ is an effective $(M,\PP)$-generic condition, and we call this property \emph{effective}\,-properness for candidates.
\end{enumerate}
\end{definition}

Mathias forcing is Suslin proper, while Sacks, Silver, Laver and Miller forcing are not. 
This was Goldstern's and Shelah's motivation for introducing the weaker notion Suslin$^+$ proper, which covers these forcings.  
Note that Suslin proper implies Suslin$^+$ proper implies proper. 
%Moreover, Mathias forcing is a Suslin proper forcing notion, while Sacks, Silver, Laver and Miller forcing notions are Suslin$^+$ proper in the codes.  

The following lemma will be crucial for our absoluteness arguments.
\begin{lemma}\label{fordef}Let $\PP$ be a Suslin$^+$ proper forcing notion.  Let $p\in\PP$ and let $\tau$ be a countable $\PP$-name for a real.  Then, for all $n\geq2$:
\begin{enumerate}[(1)]\item If $\varphi(x)$ is $\bpi{n}$, then $p\Vdash\varphi(\tau)$ is $\bpi{n}$.\footnote{When we write that $\varphi(x)$ is ${\bf \Pi}^1_n$, we mean that it is a $\Pi^1_n$-formula with the free variable $x$ and real parameters.} 
 \item If $\varphi(x)$ is $\bsi{n}$, then $p\Vdash\varphi(\tau)$ is $\bsi{n+1}$.
\end{enumerate}
\end{lemma}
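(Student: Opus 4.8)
The plan is to prove (1) and (2) by a simultaneous induction on $n$, analysing the forcing relation through its recursion over real quantifiers and exploiting the two structural features of Suslin$^+$ properness: the ordering $\le_\PP$ (and the incompatibility relation) is $\bsi 1$, and the predensity predicate $\mathrm{epd}$ is $\bsi 2$. Throughout, Proposition \ref{cnames} lets me replace every quantifier over $\PP$-names for reals by a quantifier over countable names, which are themselves reals; thus each instance of $p\Vdash\varphi(\tau)$ becomes a relation between reals and the pointclass computation makes sense. I would first record the universal clause, $p\Vdash\forall y\,\psi(y,\tau)$ iff $p\Vdash\psi(\sigma,\tau)$ for every countable name $\sigma$, which prepends a single universal real block.

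For the $\bpi n$ statements I would avoid the naive clause (which loses a level) by unfolding two quantifiers: writing $\varphi=\forall y_1\exists y_2\,\theta$ with $\theta\in\bpi{n-2}$, I express $p\Vdash\varphi(\tau)$ as: for all countable $\sigma_1$ and all $q\le_\PP p$ there exist $r\le_\PP q$ and a countable $\sigma_2$ with $r\Vdash\theta(\sigma_1,\sigma_2,\tau)$. Here the density quantifier $\forall q$ \emph{merges} with the outer $\forall\sigma_1$ into one universal block, $\exists r\,\exists\sigma_2$ forms one existential block, and $r\Vdash\theta$ is $\bpi{n-2}$ by the induction hypothesis; counting blocks (and absorbing the $\bsi 1$ noise coming from $\le_\PP$) yields exactly $\bpi n$. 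This step runs for $n\ge 3$ and feeds off the hypothesis for $n-2$.

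For the $\bsi n$ statements I would \emph{not} use the density characterisation, since its leading $\forall q$ produces $\bpi{n+1}$ rather than $\bsi{n+1}$. Instead, writing $\varphi=\exists y\,\psi$ with $\psi\in\bpi{n-1}$, I characterise $p\Vdash\exists y\,\psi(y,\tau)$ \emph{existentially}: there is a single real coding a candidate $M\ni p,\tau,a$, an effective $(M,\PP)$-generic condition $q\le_\PP p$, and a sequence $\langle(p_i,\sigma_i)\mid i<\w\rangle\in M$ with $\mathrm{epd}(p_0,p_1,\dots,q)$, each $p_i\le_\PP p$, and each $p_i\Vdash\psi(\sigma_i,\tau)$. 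The matrix is a conjunction of a $\bsi 2$ part ($\mathrm{epd}$) with the $\bpi{n-1}$ part supplied by the induction hypothesis; for $n\ge 3$ this conjunction lies in $\bpi n$, and the single leading existential real quantifier delivers $\bsi{n+1}$. The base cases $n\le 2$ (where the innermost matrix is arithmetic, respectively $\bsi 1$/$\bpi 1$) I would treat directly, using $\mathrm{epd}$ to certify the countable predense sets witnessing forced existential number quantifiers and thereby landing forcing of arithmetic statements inside $\bsi 2\cap\bpi 2$, and forcing of $\bsi 2$ (resp. $\bpi 2$) statements inside $\bsi 3$ (resp. $\bpi 2$).

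The main obstacle is the correctness of this $\mathrm{epd}$-certificate characterisation of forced existentials, and especially its nontrivial direction: that whenever $p\Vdash\exists y\,\psi$ there genuinely exists a countable, $\mathrm{epd}$-certified, predense system of conditions each forcing a witness. This is exactly where effective properness for candidates (clause (iii) of Suslin$^+$ properness) enters: inside a candidate $M$ the set of conditions deciding a witness is dense, a maximal antichain in it is countable because $M$ is, and the effective generic $q\le_\PP p$ furnishes an $\mathrm{epd}$-enumeration of that antichain. One must then verify that ``$p_i\Vdash\psi(\sigma_i)$'' is computed identically in $M$ and in $\VV$, which is precisely what the induction hypothesis at the lower complexity $\bpi{n-1}$ provides, and reconcile the passage from predensity below the effective generic $q$ to the statement at $p$. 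The rest is pointclass bookkeeping, making sure the $\bsi 1$ and $\bsi 2$ contributions are absorbed so that the final complexity is exactly $\bpi n$ and $\bsi{n+1}$ rather than one level higher.
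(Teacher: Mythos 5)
The paper itself gives no argument for this lemma (it is quoted from \cite[Lemma 3.7]{fsk}), so your proposal has to be judged on its own terms, and it contains a genuine gap in the $\bsi{n}$ case. Your existential characterisation of $p\Vdash\exists y\,\psi(y,\tau)$ --- a candidate $M$, an effective $(M,\PP)$-generic $q\le_\PP p$, and an $\mathrm{epd}$-certified system $\langle (p_i,\sigma_i)\rangle$ predense \emph{below $q$} with $p_i\Vdash\psi(\sigma_i,\tau)$ --- is unsound in the easy direction, not merely delicate in the hard one. A family of witness-forcing conditions predense below $q$ certifies exactly that $q\Vdash\exists y\,\psi(y,\tau)$, hence only that $p$ does \emph{not} force the negation. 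Concretely: take $p$ not deciding $\varphi$ and $q^*\le_\PP p$ with $q^*\Vdash\varphi(\tau)$; running your own recipe inside a candidate containing $q^*$ (the set of conditions forcing a witness, padded with conditions incompatible with $q^*$, is dense in $\PP^M$, and clause (iii) yields an effective generic $q\le q^*$ with an $\mathrm{epd}$-enumeration) realises your certificate while $p\not\Vdash\varphi(\tau)$. The step you flag as ``reconcile the passage from predensity below the effective generic $q$ to the statement at $p$'' is therefore not bookkeeping but the heart of the matter, and it cannot be reconciled within your scheme: a sound certificate must assert predensity \emph{below $p$ itself}, and effective properness for candidates only manufactures $\mathrm{epd}$-certified predense systems below effective generics. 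If instead you restore the density quantifier ($\forall q'\le_\PP p$ there is a certificate below some $q\le q'$), the count comes out $\bpi{n+1}$ or worse, not $\bsi{n+1}$. So as written, your argument establishes at best that ``$\exists q\le_\PP p\,(q\Vdash\varphi(\tau))$'' is $\bsi{n+1}$, which is a different (and weaker) statement.

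There is also a localised problem in your $\bpi{n}$ induction. Unfolding two real quantifiers requires the induction hypothesis at $\bpi{n-2}$, which at $n=3$ means a complexity bound for forcing $\bpi{1}$ statements that the lemma does not provide; patching with your base case ($q\Vdash\theta$ in $\bpi{2}$ for $\theta\in\bpi{1}\subseteq\bpi{2}$) makes the inner block $\exists r\,\exists\sigma_2\,(\bsi{1}\wedge\bpi{2})$, i.e.\ $\bsi{3}$, and the outer universal block then lands in $\bpi{4}$, overshooting $\bpi{3}$. The standard repair is to unfold only one quantifier and route through the negation: $p\Vdash\forall y\,\psi'(y,\tau)$ iff for all countable names $\sigma$ and all $q\le_\PP p$ it is \emph{not} the case that $q\Vdash\neg\psi'(\sigma,\tau)$, where $\neg\psi'\in\bpi{n-1}$ is handled by the $\bpi{}$-side induction hypothesis alone; this gives $\forall\forall\,\bsi{n-1}=\bpi{n}$ with no level loss, so your stated motive for the two-quantifier unfolding (that the naive clause loses a level) is mistaken. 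Finally, note that your base case $n=2$ is only asserted: the usual proof of ``$p\Vdash\varphi(\tau)$ is $\bpi{2}$ for $\varphi\in\bpi{2}$'' characterises the forcing relation by quantifying over \emph{all} candidates containing $p,\tau$ and the parameter, with soundness via L\"owenheim--Skolem and completeness via properness-for-candidates together with upward $\bsi{2}$ (Shoenfield--Mostowski) absoluteness from $M[G\cap M]$ to $\VV[G]$; your $\mathrm{epd}$-based sketch of the base case inherits the same below-$q$-versus-below-$p$ unsoundness on the existential side.
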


\begin{proof} See \cite[Lemma 3.7.]{fsk}.\end{proof}
\begin{definition}A partial ordering $(T,<)$ is called a tree if for every $s\in T$, the set $T_{<s}=\{t\in T: t<s\}$ is well-ordered.\:
\begin{enumerate}[(a)]
\item If $s\in T$, $\succc{T}{ s}$ is the set of all immediate successors of $s$ in $T$.  We say that $T_s=\{t\in T: $\:either $s=t$, $t< s$ or $s< t \}$ is the subtree determined by $s$.

\item The set of splitting points of $T$ is defined by $\Spl(T)=\{t\in T: |\succc{T}{t}|>1\}$.  The least node which splits in $T$ is denoted by $\stem(T)$.

\item The $n^{\text{th}}$ \emph{splitting level} of $T$ is the set \begin{center}$\Lev{n}{T}=\{t: t\in\Spl(T) \text{ and } |\{s\subsetneq t: s\in\Spl(T)\}|=n\}.$\end{center}

\item Let $\Ter(T)=\{s\in T: \succc{T}{s}=\varnothing\}$.
\end{enumerate}

A tree $T$ is called \emph{perfect} if for every $s\in T$ there exists $t\in\Spl(T)$ such that $s<t$ and \emph{superperfect} if for every node $s\in T$ there exists $t\in\Spl(T)$ such that $s<t$ and $\succc{T}{t}$ is infinite.
\end{definition}

\begin{definition}\label{arb}A partial order $\PP$ is \emph{arboreal} if its conditions are perfect trees on $\w$ or $2$ ordered by reversed inclusion ($T\leq S$ if $T\subset S$).\:  A partial order $\PP$ is \emph{strongly arboreal} if it is arboreal and for all $T\in \PP$, if $s\in T$ then $T_s\in \PP$.\end{definition}

If $\PP$ is strongly arboreal, we can associate a unique sequence $x_G$ to each $\PP$-generic filter $G$ over $V$ in the following way: \[x_G=\bigcup\{\stem(T): T\in G\}=\bigcap\{[T]: T\in G\}.\]  Notice that $x_G$ is an element of ${}^\w2$ or ${}^\w\w$ and $G=\{T\in\PP:  x_G\in[T]\}$ if $\PP$ is assumed to be proper. (This follows from \cite[Proposition 2.18]{ikegami_2}.) 
Therefore, $V[G]=V[x_G]$.  In this setting, we refer to the real $x_G$ as a $\PP$-generic real over $\VV$.

\subsubsection{Sacks forcing $\sa$}
The perfect set forcing $\sa$ was introduced by Sacks in \cite{sacksf}, where he constructed a model of $\ZFC$ with exactly two degrees of constructibility.  His results have been remarkable for further development of forcing as well as for recursion theory.
 
\begin{definition}  Sacks forcing $\sa$ is defined in the following way: 
\[\sa= \{T: T \text{\: is a perfect subtree of ${}^{<\w}2$}\}.\]
For $S, T\in \sa$ we stipulate $S\leq T$ if and only if $S\subset T$. 
\end{definition}

It is clear that $\sa$ is an strongly arboreal forcing notion.\:  On the other hand, $\sa$ does not satisfy the c.c.c.  In fact, there are antichains of size $2^{\aleph_0}$: \:  Let $\{A_\alpha:\alpha<2^{\aleph_0}\}$ be an almost disjoint family of subsets of $\w$ and for each $\alpha<2^{\aleph_0}$ choose a perfect tree $T_\alpha$ whose splitting levels are exactly the elements of $A_\alpha$, v.g., $T_\alpha=\{s\in2^{<\w}:\forall n <|s|(n\notin A_\alpha \to s(n)=0)\}$.\:  If $\alpha<\beta$, then $T_\alpha\cap T_\beta$ includes no perfect tree, so they are incompatible.\:  Nevertheless, $\sa$ does not collapse $\aleph_1$ either since it satisfies Baumgartner's Axiom $\sf A$ (see \cite[Lemma ]{judah})

An important property of Sacks forcing is its minimality. Abstractly, we say that a real $x$ is minimal over a model $M$ if $x\notin M$ and every real $y\in M[x]$ is in $M$ or reconstructs $x$, i.e. $x\in M[y]$.

\begin{theorem}[Sacks]Suppose that $s$ is a Sacks real over $V$.  Then, in $V[s]$ for any set $X\notin V$ and $X\subseteq V$, $V[X]=V[s]$. \end{theorem}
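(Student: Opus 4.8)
The plan is to establish the nontrivial inclusion $V[s] \subseteq V[X]$, since $V[X] \subseteq V[s]$ is immediate from $X \in V[s]$ and $X \subseteq V$. Equivalently, I would show $s \in V[X]$, i.e.\ that the generic real can be decoded from the new set $X$. Working with a name $\dot X$ for $X$, I first fix a condition $p$ in the generic filter $G$ such that $p \Vdash \dot X \subseteq \check V$ and no $r \le p$ forces $\dot X = \check y$ for any $y \in V$. Such a $p$ exists: because $X \subseteq V$, some condition of $G$ forces $\dot X \subseteq \check V$, and the conditions that either force $\dot X = \check y$ for some $y$, or below which no extension ever does, form a dense set; a member of $G$ in this dense set cannot be of the first kind, as that would give $X \in V$.

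The combinatorial core is a separation lemma. First, any $r \le p$ admits some $a \in V$ and extensions $r_0, r_1 \le r$ with $r_0 \Vdash \check a \in \dot X$ and $r_1 \Vdash \check a \notin \dot X$; otherwise $r$ would decide $\check a \in \dot X$ for every $a \in V$, and since $r \Vdash \dot X \subseteq \check V$ this would let $r$ force $\dot X = \check y$ for $y = \{a : r \Vdash \check a \in \dot X\} \in V$, contradicting the choice of $p$. Upgrading this, at any splitting node $t$ of a condition $r \le p$ I can refine the two immediate subtrees $r_{t^\frown 0}$ and $r_{t^\frown 1}$ to decide some $\check a_t \in \dot X$ oppositely: apply the previous step to $r_{t^\frown 0}$ to obtain $a_t$ together with two extensions forcing opposite values, then refine $r_{t^\frown 1}$ to any condition deciding $\check a_t \in \dot X$ and pair it with the extension of $r_{t^\frown 0}$ forcing the opposite value.

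Next I would run a fusion. Building a fusion sequence $p = p_0 \ge_0 p_1 \ge_1 \cdots$ in which, at stage $n$, each splitting node $t \in \Lev{n}{p_n}$ is treated by replacing the two subtrees above $\succc{p_n}{t}$ with the separated refinements just described (recording each witness $a_t \in V$), I obtain a lower bound $q = \bigcap_n p_n \in \sa$ with $q \le p$ such that for every splitting node $t$ of $q$ the subtrees $q_{t^\frown 0}$ and $q_{t^\frown 1}$ decide $\check a_t \in \dot X$ oppositely. As this construction can be carried out below any condition extending $p$, such separating conditions are dense below $p$; since $p \in G$, genericity yields a separating $q \in G$, whence $s \in [q]$.

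Finally I would decode $s$ inside $V[X]$. The tree $q$, the assignment $t \mapsto a_t$, and the record of which successor of each splitting node forces $\check a_t \in \dot X$ all lie in $V$. Starting from $\stem(q)$ and following splitting nodes, one determines at each splitting node $t$ met along the generic branch the direction taken by checking whether $a_t \in X$: the subtree of $q$ through which $s$ passes is exactly the one whose forced value of $\check a_t \in \dot X$ matches the actual membership $a_t \in X$. This reconstructs $s$ from $X$ and parameters in $V$, so $s \in V[X]$ and hence $V[X] = V[s]$. I expect the fusion bookkeeping---treating all splitting nodes of a level simultaneously while preserving the lower splitting structure so that $p_{n+1} \le_n p_n$ and the separation survives to $q$---to be the main technical obstacle, together with verifying that the decoding genuinely returns the true generic branch rather than some other branch of $[q]$.
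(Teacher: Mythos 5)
The paper states this theorem without proof, citing it as Sacks's classical result, so there is no in-paper argument to compare against; your proposal is the standard fusion proof of Sacks minimality, and it is correct. All three components are right: the density argument producing $p\in G$ below which no condition forces $\dot X$ into $V$, the splitting lemma (if some $r\le p$ decided $\check a\in\dot X$ for every $a$, then $r$ would force $\dot X=\check y$ for a ground-model $y$), and the fusion plus decoding, where the key point you correctly rely on is that for strongly arboreal forcings $G=\{T\in\sa : x_G\in[T]\}$ (noted after Definition \ref{arb} in the paper), so that at each splitting node $t$ of $q$ along $s$ the subtree $q_{t^\frown i}$ containing $s$ lies in $G$ and hence its forced verdict on $\check a_t\in\dot X$ agrees with the truth, making the decoded branch equal $s$ rather than some other branch of $[q]$. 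Two small points deserve one line each in a full write-up: first, since $X$ is a set with $X\subseteq V$, a rank argument gives a $z\in V$ with $X\subseteq z$, so you may assume $\dot X$ names a subset of $\check z$; this guarantees that $y=\{a\in z: r\Vdash \check a\in\dot X\}$ is genuinely a set of $V$ (definability of forcing) rather than a class. Second, in the fusion bookkeeping you flag, note that replacing only the parts of $p_n$ above the immediate successors of nodes in $\Lev{n}{p_n}$ leaves all splitting nodes of levels $\le n$ untouched, which is exactly what $p_{n+1}\le_n p_n$ in the paper's Definition \ref{parb} requires, and the paper's Fusion Lemma then gives $q\le_n p_n$ for all $n$, so every splitting node of $q$ was treated at its own stage and the separation survives.
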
 

\subsubsection{Silver forcing $\UU$}\index{forcing notions!Silver $\UU$}
\begin{definition} A uniform tree $T\subseteq {}^{<\w}2$ is a perfect tree such that for all $s, t\in T$ of the same length we have \[s^\frown0\in T  \text{\: iff \: }t^\frown0\in T \text{\: and\: } s^\frown 1\in T \text{\: iff \:} t^\frown 1\in T.\]\end{definition}

Let  $\UU$ be the collection of all uniform trees with infinitely many splitting levels. For $S, T\in \UU$ we stipulate  $S\leq T$ if and only if $S\subset T$.\:  The poset $\UU$ is known as Silver forcing and if $G$ is generic, $x_G$ defined as in \ref{arb} is called a Silver real.  Basically, Silver forcing is the uniform version of Sacks forcing.

Neither a Silver real occurs in a Sacks forcing extension nor a Sacks real occurs in a Silver extension.  However, like Sacks forcing, Silver forcing adds a real of minimal degree of constructibility (see \cite[Theorem 4.1]{gri} or \cite[Theorem 18]{halbeisen2}).

\subsubsection{Mathias forcing $\MM$}

We follow the terminology of \cite[Chapter 10]{halbeisen}. 

%\begin{definition} We define:\[\MM=\{(s,A): s\in[\w]^{<\w}, A\in[\w]^{\w}, \max(s)<\min(A)\}\]In $\MM$ we stipulate the following order:\[(s,A)\leq (t,B)\iff t\subseteq s,\: A\subseteq B \text{\: and\: } s\smallsetminus t\subseteq B\]The poset $(\MM,\leq)$ is called \emph{Mathias forcing}.\:  If $G$ is $\MM$-generic over $V$, we say that  \[m_G=\bigcup\{s: (s,A)\in G\:\text{for some\:}A\}\] is a \emph{Mathias real}.  \:   \end{definition}
\begin{definition} \index{filter!free} Let $\mathcal{F}$ be a filter over $\w$.  We say that $\mathcal{F}$ is a \emph{free filter} if it contains the Frechet filter.
In this case, let \[\mathcal{F}^+=\{x\subseteq\w: \forall z\in\mathcal{F}(|x\cap z|=\w)\}.\]
\end{definition}

\begin{definition}A family $\mathcal{E}$ of subsets of $\w$ is called a  \emph{free family}\index{family!free} is there is a free filter $\mathcal{F}\subseteq [\w]^\w$ such that $\mathcal{E}=\mathcal{F}^+$. \end{definition}
\noindent Notice that a free family does not contain finite sets and is closed under supersets.\: 

\begin{definition} Let $\mathcal{E}$ be a free family.  We define a game $G_\mathcal{E}$\index{game!{\sc Maiden} and {\sc Death} $G_\mathcal{E}$} between two players, {\sf Adam} and {\sf Eve}, as follows: 
\[\begin{diagram}\node{\text{\sf Adam}}  \node[2]{A_0}\arrow{se,t}{}\node[2]{A_1}\arrow{se,t}{}\node[2]{A_2}\arrow{se,t}{}\node[2]{\cdots}\\
\node{\text{\sf Eve}}
\node[3]{a_0}\arrow{ne,t}{}\node[2]{a_1}\arrow{ne,t}{}\node[2]{a_2}\arrow{ne,t}{} \end{diagram}\]
\noindent where $A_i\in\mathcal{E}$,  $a_i\in A_i$ and further $A_{i+1}\subseteq A_i$ and $a_i< a_{i+1}$, for each $i\in\w$.  We say that {\sf Eve} wins the game $G_\mathcal{E}$ if and only if $\{a_i:i<\w\}\in\mathcal{E}$.\end{definition}

\begin{definition} A family $\mathcal{E}$ of subsets of $\w$ is \emph{Ramsey}\index{family!Ramsey} if the {\sf Adam} has no winning strategy in the game $G_\mathcal{E}$, i.e., {\sf Eve} can always defeat any given strategy of the {\sf Adam}.\:  Note that this does not imply that {\sf Eve} has a winning strategy.
\end{definition}

The notion of Ramsey families generalises Ramsey ultrafilters (see \cite[Chapter 10]{halbeisen}). 

\begin{definition}Let $\E$ be a Ramsey family.  We define:\[\MM_\E=\{(s,A): s\in[\w]^{<\w}, A\in\E, \max(s)<\min(A)\}.\]In $\MM_\E$,\index{forcing notions!Mathias $\MM_\E$, $\MM$} we stipulate the following order:
\[(s,A)\leq (t,B)\iff t\subseteq s,\: A\subseteq B \text{\: and\: } s\smallsetminus t\subseteq B.\]
The poset $(\MM_\E,\leq)$ is called \emph{Mathias forcing} with respect to $\E$.\:  If $G$ is $\MM_\E$-generic over $V$, we say that  \[m_G=\bigcup\{s: (s,A)\in G\:\text{for some\:}A\}\] is a \emph{Mathias real}.  \:   When $\E=[\w]^\w$ we write $\MM:=\MM_\E$ and we refer to $\MM$ as Mathias forcing.   
\end{definition}

The main property of $\MM_\E$ is the next so called \emph{Mathias property}\index{forcing!notion!Mathias property}. 

\begin{lemma}[Mathias]\label{mathiasproperty} 
%A.R.D. 
Every infinite subset of an $\MM_{\E}$-generic real over $V$ is also $\MM_{\E}$-generic over $V$.\end{lemma}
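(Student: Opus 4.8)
The plan is to prove the Mathias property by a fusion-style argument that exploits the two-component structure of Mathias conditions. Recall that a condition $(s,A)\in\MM_\E$ consists of a finite ``stem'' $s$ together with an infinite ``reservoir'' $A\in\E$, and the generic real is $m_G=\bigcup\{s:(s,A)\in G\}$. So suppose $m=m_G$ is $\MM_\E$-generic over $V$ and let $b\subseteq m$ be an arbitrary infinite subset; I want to show $b$ is itself $\MM_\E$-generic over $V$. The natural strategy is to produce an $\MM_\E$-generic filter $H$ over $V$ (in $V[G]$) with $m_H=b$, and then conclude $V[H]\subseteq V[G]$ is a genuine generic extension realising $b$ as its Mathias real.

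First I would fix the key combinatorial reduction: since $b\subseteq m$, every finite initial segment of $b$ is a finite subset of $m$, and the ``tail'' $b\smallsetminus t$ of $b$ past any finite stem $t$ is an infinite subset of the corresponding reservoir. Concretely, I would define
\[
H=\{(t,B)\in\MM_\E : t\sqsubseteq b \text{ (i.e. $t$ is an initial segment of $b$) and } b\smallsetminus t\subseteq B\}.
\]
The first task is to check $H$ is a filter on $\MM_\E$: it is clearly upward closed under $\leq$ by unwinding the order $(s,A)\leq(t,B)\iff t\subseteq s$, $A\subseteq B$, $s\smallsetminus t\subseteq B$, and any two members of $H$ are compatible because their stems are comparable initial segments of $b$ and their reservoirs both contain a common tail of $b$. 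Verifying that $m_H=b$ is then immediate from the definition, since the stems appearing in $H$ are exactly the finite initial segments of $b$.

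The heart of the argument, and the step I expect to be the main obstacle, is showing that $H$ meets every dense set $D\in V$; equivalently, that $H$ is generic over $V$. Here is where Ramseyness of $\E$ must enter. Given a dense open $D\in V$, I would use the interaction between $D$ and the generic $m$ to find a condition in $H\cap D$, the point being that the infinite subsets of $m$ that extend to conditions in $D$ form a combinatorially large family. The cleanest route is to associate to $D$ a colouring (or equivalently a strategy in the game $G_\E$) and invoke the fact that $\E$ is Ramsey to guarantee that \textsc{Death} can steer the play into the tail of $b$; because $m$ itself is generic, the relevant homogeneous/diagonalised set can be taken inside $m$, and its intersection with $b$ yields the desired reservoir $B$ with $(t,B)\in D\cap H$ for a suitable initial segment $t$ of $b$. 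Making this precise requires carefully passing from ``$m$ is generic'' to a statement about which reservoirs survive in $D$, and then intersecting with the arbitrary subset $b$ — this is exactly where the Ramsey property is indispensable, since without it the tail $b\smallsetminus t$ need not remain in $\E^+$ in a way compatible with $D$. Once density is established, $H$ is $\MM_\E$-generic over $V$ with $m_H=b$, completing the proof.
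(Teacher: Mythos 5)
Your reduction is the right one, and it is how the cited proofs conclude: form the derived filter $H$ from $b$ and show it meets every dense set in $V$. (Note that the paper itself does not reprove this classical lemma; it cites Mathias \cite[Corollary 8.3]{mathias} and the game-theoretic proof in \cite[Proposition 11]{halbeisen2}.) But your proposal stops exactly where the lemma begins. The paragraph you flag as ``the heart of the argument'' is a plan, not a proof: what is needed there is Mathias's \emph{capturing lemma} --- for every dense open $D$, the set of conditions $(s,B)$ such that \emph{every} infinite $x\subseteq B$ has a finite initial segment $u$ with $(s\cup u,\, B\smallsetminus(\max u+1))\in D$ is dense --- and its proof is the entire combinatorial content of the Mathias property. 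One shows that a putative failure of capturing below a condition yields a strategy for the \textsc{Maiden} in the game $G_\E$, and a play by \textsc{Death} defeating that strategy (which exists since $\E$ is Ramsey) produces the capturing reservoir. Your sketch names Ramseyness and the game but constructs neither the strategy nor the diagonalized set, so the proposal as written does not prove the statement.

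Even granting capturing, two points your outline glosses over would still derail it. First, the stem mismatch: if $(s,B)\in G$ captures $D$, the conditions it produces have stems of the form $s\cup u$, but $b$ need not contain $s$ --- only $b\cap(\max s+1)\subseteq s$ --- so $(s\cup u,\cdot)\notin H$ in general. One must first strengthen capturing, by finitely many iterated applications (once for each of the $2^{|s|}$ subsets $t\subseteq s$), to get reservoirs $B$ capturing $D$ \emph{with every $t\subseteq s$ as stem}; then $t=b\cap s$ and $x=b\smallsetminus s\subseteq B$ yield $(t\cup u,\, B\smallsetminus(\max u+1))\in D\cap H$. Second, absoluteness: capturing is proved in $V$ as a statement about all infinite $x\subseteq B$, but it must be applied to $x=b\smallsetminus s$, which lives in $V[G]$; this transfer is by $\bpi{1}$-absoluteness, which your argument never invokes. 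Finally, your claim that compatibility of members of $H$ is clear is only clear for $\E=[\w]^\w$: for a general Ramsey family $\E=\mathcal{F}^+$ the coideal is not closed under intersections, and to see that $B_1\cap B_2$ (which contains a tail of $b$) is still in $\E$ one needs genericity once more, namely that $m\subseteq^* z$ for every $z\in\mathcal{F}\cap V$, since the conditions whose reservoir lies inside a fixed filter set are dense.
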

\begin{proof}The original source of this result is \cite[Corollary 2.5]{mathias}.  For a proof in terms of the game $G_\E$ see \cite[Proposition 11]{halbeisen2}. \end{proof}

%\begin{remark}As a consequence of \ref{mathiasproperty}, we have that minimality in constructibility degrees fails badly for $\MM_\E$.\end{remark}

Mathias forcing $\MM$ can be represented also as an arboreal forcing notion.\:   This characterization will be useful through section \ref{sharpsreals}.  Recall that ${}^{\uparrow<\w}\w$ stands for the set of all strictly increasing finite sequences of natural numbers. 

\begin{proposition}\label{mathiastree2}\index{forcing notions!Mathias $\MM_\E$, $\MM$!max@$\MM_{\text{tree}}$} For each $(s,A)\in\MM$, let \[T_{(s,\, A)}:=\{t\in{}^{\uparrow<\w}\w:\:\:  s\subset\ran{t}\subset s\cup A\}.\]Let $\MM_{\text{tree}}=\{T_{(s,A)}: (s,A)\in\MM\}$.   Then $\pi: \mathbb{M}\mapsto \MM_{\text{tree}}$ defined as $\pi(s,A)=T_{(s,A)}$ is an isomorphism, where $\MM_{\text{tree}}$ is ordered by $T\leq_{\MM_\textrm{tree}}S$ iff $T\subset S$.
\end{proposition}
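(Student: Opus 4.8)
The plan is to show that $\pi(s,A)=T_{(s,A)}$ is a well-defined bijection preserving the order in both directions, so that it is an order-isomorphism onto its image $\MM_{\text{tree}}$. I would organize this as three checks: well-definedness together with the claim that each $T_{(s,A)}$ is a legitimate condition of the arboreal kind, injectivity (which for a monotone map is most cleanly obtained by exhibiting an explicit inverse recovering $(s,A)$ from $T_{(s,A)}$), and the order equivalence $T_{(s,A)}\subseteq T_{(t,B)}\iff (s,A)\leq(t,B)$.

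First I would verify that $T_{(s,A)}$ is a tree in ${}^{\uparrow<\w}\w$ closed under initial segments: if $\res{t}{k}$ is an initial segment of $t\in T_{(s,A)}$, then $\ran{\res{t}{k}}\subseteq \ran{t}\subseteq s\cup A$, and the condition $s\subseteq\ran{\res{t}{k}}$ holds because the enumeration is increasing and the elements of $s$ (all below $\min(A)$) must be listed first. This last point is where I would be most careful: the defining clause $s\subseteq\ran{t}$ is not literally inherited by initial segments, so I would instead describe $T_{(s,A)}$ as the tree of increasing sequences whose range lies in $s\cup A$ \emph{and begins by enumerating $s$}, and check that the two descriptions agree and that the branches $[T_{(s,A)}]$ are exactly the increasing enumerations of sets of the form $s\cup B$ with $B\in[A]^\w$, matching the Mathias real below $(s,A)$.

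Next I would recover $(s,A)$ from the tree. Since $\max(s)<\min(A)$, the stem of $T_{(s,A)}$ is the increasing enumeration of $s$, so $s=\ran{\stem(T_{(s,A)})}$; and $A=\bigcup\{\ran{t}:t\in T_{(s,A)}\}\smallsetminus s$, the set of all values appearing past the stem. This gives an explicit left inverse, hence injectivity, and shows the image is precisely $\MM_{\text{tree}}$, so $\pi$ is a bijection onto $\MM_{\text{tree}}$.

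Finally I would prove the order equivalence. For the forward direction, suppose $T_{(s,A)}\subseteq T_{(t,B)}$. Comparing stems gives $\ran{\stem(T_{(t,B)})}=t\subseteq s=\ran{\stem(T_{(s,A)})}$, and comparing the values appearing past the stems gives both $A\subseteq B$ and $s\smallsetminus t\subseteq B$; these are exactly the three clauses defining $(s,A)\leq(t,B)$. For the converse, assuming $t\subseteq s$, $A\subseteq B$, and $s\smallsetminus t\subseteq B$, any $u\in T_{(s,A)}$ satisfies $\ran{u}\subseteq s\cup A\subseteq t\cup(s\smallsetminus t)\cup A\subseteq t\cup B$ and $t\subseteq s\subseteq\ran{u}$, so $u\in T_{(t,B)}$. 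Thus $\pi$ and its inverse are both order-preserving, completing the proof that $\pi$ is an isomorphism. The only genuinely delicate point is the bookkeeping in the first step ensuring the tree is downward closed and that its branches encode exactly the Mathias reals extending $(s,A)$; everything else is a direct unravelling of the definitions.
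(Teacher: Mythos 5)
Your proof is correct, and in the harder direction it takes a genuinely different route from the paper. The paper proves $(s,A)\leq(t,B)\Rightarrow T_{(s,A)}\subseteq T_{(t,B)}$ exactly as you do, but for the converse it argues by contradiction, splitting into three cases ($t\not\subseteq s$; $t\subseteq s$ but $s\smallsetminus t\not\subseteq B$; $t\subseteq s$, $s\smallsetminus t\subseteq B$ but $A\not\subseteq B$) and in each case exhibiting an explicit node $p\in T_{(s,A)}\smallsetminus T_{(t,B)}$ (an increasing enumeration of $s$, or of $s\cup\{m\}$ for $m=\min(A\smallsetminus B)$). Your direct extraction is cleaner: from $\mathrm{enum}(s)\in T_{(s,A)}\subseteq T_{(t,B)}$ you read off $t\subseteq s\subseteq t\cup B$, hence $s\smallsetminus t\subseteq B$, and from the post-stem values (using $\max(t)\leq\max(s)<\min(A)$, so $A\cap t=\varnothing$) you get $A\subseteq B$ in one pass, with no case analysis. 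Moreover, your inverse map $T\mapsto\bigl(\ran{\stem(T)},\,\bigcup\{\ran{u}:u\in T\}\smallsetminus\ran{\stem(T)}\bigr)$ buys explicit injectivity, which the paper leaves implicit behind ``$\pi$ is clearly onto'' (there it would have to be recovered from two-way order preservation plus antisymmetry). One caveat on your first step: with ``begins by enumerating $s$'' read as ``extends the increasing enumeration of $s$,'' the two descriptions do agree (by monotonicity and $\max(s)<\min(A)$), but the resulting set is still not closed under initial segments---the proper initial segments of $\mathrm{enum}(s)$ are missing---so $T_{(s,A)}$ is really a tree above its stem rather than a downward-closed tree; if instead you pass to the downward closure, the set strictly grows below $\mathrm{enum}(s)$ and the two descriptions differ there. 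Fortunately neither reading affects the proposition, which only concerns the inclusion order on the sets as literally defined, and your branch computation ($[T_{(s,A)}]$ consisting of the increasing enumerations of $s\cup B$ for infinite $B\subseteq A$) is correct and is exactly what the paper needs later when it uses $[q]_\MM$.
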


\begin{proof}It is enough to see that $\pi$ is order-preserving, because $\pi$ is clearly onto.  Suppose that $(s,A)\leq_\MM (t,B)$.  Let $p\in T_{(s,\; A)}$; then, $s\subseteq \ran{p}\subseteq s\cup A$.  As $t\subseteq s$, $s\smallsetminus t\subseteq B$ and $A\subseteq B$ we have that   \[t\subseteq \ran{p}\subseteq s\cup A= t\cup (s\smallsetminus t) \cup A\subseteq t\cup B.\]Thus, $p\in T_{(t,\, B)}$.  Therefore, $T_{(s,\;A)}\leq_{\MM_{\text{tree}}} T_{(t,\; B)}$.

On the other hand, suppose that $T_{(s,\;A)}\leq_{\MM_{\text{tree}}} T_{(t,\; B)}$ but $(s,A)\not\leq_{\MM} (t,B)$.  We shall consider three cases:

\begin{case} Suppose that $t\not\subseteq s$.   Let $p:|s|\to s$ be a strictly increasing enumeration of $s$.  As $s= \ran{p}\subseteq s\cup A$, $p\in T_{(s,\, A)}$.   Since $t\nsubseteq s$, then $t\nsubseteq \ran{p}$ and therefore $p\notin T_{(t,\; B)}$.\end{case}

\begin{case} Suppose $t\subseteq s$ but $s\smallsetminus t\nsubseteq B$.  Again, take $p: |s|\to s$ be a strictly increasing enumeration.  Let $m=\min ((s\smallsetminus t)\smallsetminus B)$.  Clearly $m\in\ran{p}$ but $m\notin t\cup B$.  So, $t\subset\ran{p}$ but $\ran{p}\nsubseteq t\cup B$.  Thus, $p\notin T_{(t,\; B)}$.\end{case}

\begin{case}Suppose that $t\subseteq s$, $s\smallsetminus t\subset B$ but $A\nsubseteq B$.  Let $m=\min(A\smallsetminus B)$.   Let $p:|s|+1\to s\cup \{m\}$ be a strictly increasing enumeration.  Note that, since $\max(s)<\min(A)$, $m\notin s$ and therefore, $m\notin t$.  Since $s\subset\ran{p}\subset s\cup A$, $t\in T_{(s,\, A)}$.  So, $t\subset\ran{p}$ but $m\in\ran{p}$ and $m\notin t\cup B$.  Thus, $p\notin T_{(t,\, B)}$.\end{case}
From cases 1), 2) and 3) it follows that $T(s,\,A)\nsubseteq T(t,\,B)$ which contradicts our assumption.  Therefore, we must have $(s,A)\leq_{\MM} (t,B)$ as desired. \end{proof}

We can also see $\MM$ as an arboreal forcing whose conditions are subtrees of ${}^{<\w}2$.  Given a subset $a\subseteq n$, let $\chi_a^n$  denote the characteristic function of $a\cap \{0,\dots,n-1\}=a\cap n$ with domain $n$. 
%let $\chi_a^n$ denote the characteristic function of the set $a$ defined over  the set $\{0,1,\dots,n-1\}$. \:

\setcounter{case}{0}
  \begin{proposition}\label{mathiastree} \index{forcing notions!Mathias $\MM_\E$, $\MM$!mbx@$\MM'_{\text{tree}}$}
 For each $(s,A)\in\MM$, let \[U_{(s,\, A)}:=\{\res{\chi_a^n}{k}:s\subset a\subset s\cup A,\: a\in[\w]^{<\w}, k\leq n\in\w\}.\]Let\: $\MM'_{\text{tree}}=\{U_{(s,\,A)}: (s,A)\in\MM\}$.  Then, $\pi^*: \mathbb{M}\mapsto \MM'_{\text{tree}}$ defined as $\pi^*(s,A)=U_{(s,\;A)}$ is an isomorphism, where $\MM'_{\text{tree}}$ is ordered by reversed inclusion.
\end{proposition}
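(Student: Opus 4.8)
The plan is to follow the template of Proposition~\ref{mathiastree2}, replacing the range characterisation of conditions by the characteristic-function one. Surjectivity of $\pi^*$ is immediate from the definition of $\MM'_{\text{tree}}$, and since $\leq_\MM$ is antisymmetric, order-preservation in both directions will already yield injectivity; so the whole content lies in showing that $\pi^*$ and its inverse are order-preserving. The first preliminary step I would carry out is to record exactly which strings sit in $U_{(t,B)}$: a string $\sigma\in{}^{k}2$ belongs to $U_{(t,B)}$ if and only if $\sigma(i)=1$ for every $i\in t$ with $i<k$ and $\sigma(i)=0$ for every $i<k$ with $i\notin t\cup B$, the coordinates in $B\smallsetminus t$ being free. (This is checked by reading off the forced and forbidden coordinates of $\res{\chi_{a'}^n}{k}$ as $a'$ ranges over sets with $t\subseteq a'\subseteq t\cup B$.) This reformulation turns the tree-inclusion $U_{(s,A)}\subseteq U_{(t,B)}$ into a purely combinatorial statement about where $1$'s are forced and forbidden, and it is what makes the case analyses transparent.

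For the forward implication, assume $(s,A)\leq_\MM(t,B)$, i.e. $t\subseteq s$, $A\subseteq B$ and $s\smallsetminus t\subseteq B$. Given a node $\res{\chi_a^n}{k}\in U_{(s,A)}$ with $s\subseteq a\subseteq s\cup A$, I would verify that the same witness $a$ works for $(t,B)$: from $t\subseteq s\subseteq a$ and $a\subseteq s\cup A = t\cup(s\smallsetminus t)\cup A\subseteq t\cup B$ we get $t\subseteq a\subseteq t\cup B$, hence $\res{\chi_a^n}{k}\in U_{(t,B)}$. Therefore $U_{(s,A)}\subseteq U_{(t,B)}$, that is $\pi^*(s,A)\leq_{\MM'_{\text{tree}}}\pi^*(t,B)$.

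For the converse I would argue by contraposition, assuming $(s,A)\not\leq_\MM(t,B)$ and producing a node of $U_{(s,A)}$ lying outside $U_{(t,B)}$, split into the same three cases as in Proposition~\ref{mathiastree2}. If $t\nsubseteq s$, pick $m\in t\smallsetminus s$ and consider $\res{\chi_s^{m+1}}{m+1}$: it lies in $U_{(s,A)}$ but has a $0$ at the coordinate $m\in t$, which the membership criterion forbids in $U_{(t,B)}$. If $t\subseteq s$ but $s\smallsetminus t\nsubseteq B$, pick $m\in(s\smallsetminus t)\smallsetminus B$ and again take $\res{\chi_s^{m+1}}{m+1}$: it has a $1$ at $m\notin t\cup B$, forbidden in $U_{(t,B)}$. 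If $t\subseteq s$ and $s\smallsetminus t\subseteq B$ but $A\nsubseteq B$, pick $m\in A\smallsetminus B$; then the characteristic function $\res{\chi_{a}^{m+1}}{m+1}$ of $a=s\cup\{m\}$ lies in $U_{(s,A)}$ yet has a $1$ at $m$. Each case contradicts $U_{(s,A)}\subseteq U_{(t,B)}$, which forces $(s,A)\leq_\MM(t,B)$.

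The main thing to get right—more a point of care than a genuine obstacle—is the preliminary membership criterion together with the bookkeeping in the third case, where one must observe that $m\in A\smallsetminus B$ automatically satisfies $m\geq\min(A)>\max(s)$ and hence $m\notin t$, so that $m\notin t\cup B$ and the node $\res{\chi_a^{m+1}}{m+1}$ is indeed excluded from $U_{(t,B)}$ while remaining a legitimate node of $U_{(s,A)}$. Everything else is a direct transcription of the argument already given for $\MM_{\text{tree}}$.
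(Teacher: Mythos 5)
Your proof is correct and follows the same template as the paper's own proof (both are transcriptions of Proposition \ref{mathiastree2} into the characteristic-function setting): surjectivity by definition of $\MM'_{\text{tree}}$, injectivity from two-sided order-preservation plus antisymmetry of $\leq_\MM$, the forward direction by reusing the same witness $a$, and the converse by contraposition via explicit separating nodes. There are, however, two points where your write-up genuinely improves on the paper. First, your preliminary membership criterion (coordinates in $t$ forced to $1$, coordinates outside $t\cup B$ forced to $0$, coordinates in $B\smallsetminus t$ free) lets you dispatch the case $t\nsubseteq s$ with a single node determined by some $m\in t\smallsetminus s$, whereas the paper splits that case into three subcases according to the position of $\min(t\setminus s)$ relative to $\max(s)$ and whether $s\subseteq t$. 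Second, and more substantially, the paper's proof of \ref{mathiastree} treats only the two cases $t\nsubseteq s$ and $s\smallsetminus t\nsubseteq B$ and then concludes $(s,A)\leq_\MM(t,B)$; but those two cases never verify $A\subseteq B$, so the published argument has a gap exactly where your third case sits. Your node $\chi_{s\cup\{m\}}^{m+1}$ for $m\in A\smallsetminus B$, together with the bookkeeping observation that $m\geq\min(A)>\max(s)$ forces $m\notin t$ and hence $m\notin t\cup B$, closes that gap, in parallel with the third case of the range version \ref{mathiastree2}. One cosmetic nitpick: in your first two cases the separating node should be written $\res{\chi_s^{n}}{m+1}$ for some $n>\max(s)$ rather than $\res{\chi_s^{m+1}}{m+1}$, since by the paper's convention $\chi_s^{m+1}$ is well-formed only when $s\subseteq m+1$ (the paper's own Case 2 is loose in exactly the same way); the intended node, the characteristic function of $s\cap(m+1)$ on domain $m+1$, is unambiguous and your membership criterion handles it correctly.
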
 
\begin{proof}   

We show that $\pi^*$ is order preserving.  Note that for all $u\in U_{(s,A)}$ and all $k\leq |s|,|u|$,\:  $\res{u}{k}= \res{s}{k}$.  

 First, suppose that $(s,A)\leq (t,B)$.  Let $a\in[\omega]^{<\omega}$, $n\in\omega$ such that $\chi_a^n\in U_{(s,\; A)}$.\:  As $s\subseteq a\subseteq s\cup A$ and $(s,A)\leq(t, B)$ we have $t\subseteq s\subseteq a$  and  \[a\subseteq s\cup A\subseteq s\cup B=(s\cap t)\cup (s\smallsetminus t)\cup B\subseteq (t\cup B).\]  Therefore $\chi_a^n\in U_{(t,\, B)}$.  This proves $U_{(s,\;A)}\subseteq U_{(t,\,B)}$, i.e.  $U_{(s,\;A)}\leq U_{(t,\;B)}$. %\medskip

On the other hand, suppose that $U_{(s,\,A)}\leq U_{(t,\,B)}$ but $(s,A)\not\leq (t,B)$.  We shall consider two cases:

\begin{case} Suppose that $t\not\subseteq s$.   Let $k=\min (t\setminus s)$. We have $\chi_s^{|s|}\in U_{(s,\, A)}$. We show that $\chi_s^{|s|}\notin U_{(t,\,B)}$.
First, suppose that $k<\max(s)$.\:  Then $\chi_s^{|s|}(k)\neq \chi_t^{|t|}(k)$, so $\chi_s^{|s|}\notin U_{(t,\,B)}$. 
Now, suppose that $\max(s)\leq k$. Aditionally, assume that $s\not\subseteq t$. 
Let $l=\min(s\setminus (t\cap k))$.  Then $\res{\chi_s^{|s|}\!\!}{l}\neq \res{\chi_t^{|t|}}{l}$, so $\res{\chi_s^{|s|}}{(l+1)}\neq \res{\chi_t^{|t|}}{(l+1)}$. 
Since all elements of $U_{(t,\,B)}$ are comparable with $\chi_t^{|t|}$, $\res{\chi_s^{|s|}} {(l+1)}\notin U_{(t,\,B)}$. 
To conclude, assume that $s\subseteq t$. Then $s= t\cap k$ an so $\chi_s^{k+1}\in U_{(s,\,A)}$.\: However, $\chi_s^{k+1}\neq \chi_t^{k+1}$ and hence $\chi_s^{k+1}\notin U_{(t,\,B)}$. In both subcases, we conclude then that $U_{(s,\,A)}\nleq U_{(t,\,B)}$ which contradicts our assumption.
\end{case}

\begin{case} Suppose that $t\subseteq s$ and $s\setminus t\not \subseteq B$.  Then, if $n=\min((s\setminus t)\setminus B)$ we have that $\chi_s^{n+1}\in U_{(s,\,A)}$ but $\chi_s^{n+1}\notin U_{(t,\,B)}$ which contradicts  $U_{(s,\,A)}\leq U_{(t,\,B)}$.
\end{case}From cases 1) and 2) we conclude $(s,A)\leq_\MM(t,B)$, as required.
 \end{proof} 

\subsubsection{Laver forcing $\mathbb{L}$}The forcing notion $\LLL$ was introduced by Richard Laver in \cite{laver} with the purpose of producing a model of the Borel conjecture.  This forcing notion adds a Laver real $f:\w\to\w$ which dominates all ground model functions, that is, for any $g\in{}^\w\w\cap\VV$,  $g(n)<f(n)$ for all but for finitely many $n\in\w$.  Instead of considering perfect subtrees of ${}^{<\w}2$, Laver considered infinitely splitting subtrees of ${}^{<\w}\w$. 
 
\begin{definition}\index{tree!Laver}A tree $T\subseteq ^{<\w}[\w]$ is called a \emph{Laver tree} if it has a stem $s$ and above this stem it splits into infinitely many successors at every node i.e. $ \forall t\in T(t\subseteq s$ or $|\succc{T}{t}|=\w)$.\:  \emph{Laver forcing}, denoted by $\LLL$\index{forcing notions!Laver $\LLL$}, is the set of all Laver trees ordered by inclusion.\end{definition}
\noindent If $G$ is a $\LLL$-generic filter, $x_G=\bigcup\{\stem(T):T\in G\}$ is called a \emph{Laver real}.\:  Laver reals are also of minimal degree of constructibility (cf. \cite{gray}).  %% general case in Combinatorics on ideals and forcing with trees

\subsubsection{Miller forcing $\mathbb{ML}$}
The rational perfect set forcing $\mathbb{ML}$ was introduced by Miller in \cite{miller}.  This forcing notion is half way between Sacks and Laver forcing.   

%\begin{definition}Let $Q$ be the set of all $x\in{}^{<\w}2$ which are eventually zero.  A tree $T\subset{}^{<\w}2$ is called a \emph{rational perfect} tree\index{tree!rational perfect} if for all $s\in T$ there is some $x\in Q\cap [T]$ such that $s\subset x$.  This is equivalent to say that $Q\cap [T]$ is  dense in $[T]$.
%The rational perfect forcing is the collection of all rational perfect trees ordered by containment.
%\end{definition}

\begin{definition}A tree $T\subset {}^{<\w}\w$ is called superperfect if for all $s\in T$ there exists $t\supset s$ such that for infinitely many $n<\w$, $t^\frown \langle n\rangle\in T$.  Miller forcing $\ML$\index{forcing notions!Miller $\ML$} is the collection of all superperfect trees.\:  As in the other tree forcing notions, $T\leq S$ if and only if $T\subset S$.
\end{definition}

As in the case of Sacks and Laver forcing, forcing with $\ML$ produces a real of minimal constructibility degree (cf. \cite{miller}).

\begin{definition}We set \[\T=\{\sa,\UU,\MM,\LLL,\ML\}\]for the collection of the tree forcing notions presented so far.\index{tree forcing notions $\T$}
\end{definition}
 
\begin{definition}\label{parb}Let $\PP\in\T$ and let $n\in \w$.  Given two trees $T, S\in\PP$, we say $T\leq_n S$ if and only if $T\leq_\PP S$ and $\Lev{n}{S}= \Lev{n}{T
}$, i.e. the first $n$ levels of $\Spl(S)$ are still in $\Spl(T)$.     \:  We also say that $\seq{T_n}{n\in\w}\subset\PP$ is a \emph{fusion sequence} if for all $n\in\w$, $T_{n+1}\leq_n T_n$.  \end{definition}

The next statement is a concrete version of the fusion condition of Axiom A \cite[Definition 31.10 (i)]{jech}. 

\begin{lemma}[Fusion]Let $\PP\in\T$. \index{tree forcing notions $\T$!fusion}  If \: $\seq{T_n}{n<\w}$ is a fusion sequence for $\PP$ then its fusion  $T =\bigcap_{n<\w} T_n $ is an element of\: $\PP$.\:\:  Furthermore,  $T\leq_nT_n$ for each $n<\w$.
\end{lemma}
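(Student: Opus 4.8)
The plan is to prove both assertions at once by tracking how the splitting levels of the $T_n$ behave under intersection. That $T=\bigcap_{n<\w}T_n$ is a downward-closed subtree of the ambient tree (namely ${}^{<\w}2$ for $\sa$ and $\UU$, ${}^{<\w}\w$ for $\LLL$ and $\ML$, and the arboreal representation of Proposition~\ref{mathiastree} for $\MM$) is immediate, since an intersection of downward-closed sets is downward-closed and the stems stabilise so $T\neq\emptyset$; the only genuine content is that $T$ is again a condition of $\PP$. First I would record the monotonicity of the relations from Definition~\ref{parb}: $T\leq_{n+1}S$ implies $T\leq_n S$, and $\leq_n$ is transitive. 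Combining these with the hypothesis $T_{n+1}\leq_n T_n$ gives $T_m\leq_n T_n$ for every $m\geq n$. Hence the first $n$ splitting levels of $T_n$, together with the immediate successors of the nodes occurring at those levels, are common to all $T_m$ with $m\geq n$, and therefore lie in $T$. Writing $F_n$ for this frozen initial part of $T_n$ (the nodes with at most $n$ splitting predecessors, with their recorded successors), the $F_n$ form an increasing sequence with $\bigcup_n F_n\subseteq T$ and $F_n\subseteq T_m$ for all $m\geq n$.

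Next I would pin down the splitting structure of $T$ exactly. Since $T\subseteq T_n$ we have $\Spl(T)\subseteq\Spl(T_n)$, so no new splitting nodes appear; conversely each node of $F_n$ that splits in $T_n$ retains all its frozen immediate successors in $T$, hence still splits in $T$. An induction on $k\leq n$ then yields $\Lev{k}{T}=\Lev{k}{T_n}$ with matching immediate successors: the nodes lying strictly between two consecutive frozen splitting levels have a unique successor in the pruned tree $T_n$, hence a unique successor in $T\subseteq T_n$, so they do not split, while the frozen nodes do. This is exactly $T\leq_n T_n$ for every $n$, proving the ``furthermore'' clause. It also shows $\bigcup_n F_n=T$: any $t\in T$ has a non-increasing (hence eventually constant) number of splitting predecessors in the $T_m$, so $t$ falls into some $F_n$ once that count has stabilised.

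Finally I would verify $T\in\PP$. By the previous step every $s\in T$ lies below a node of some $\Lev{k}{T}=\Lev{k}{T_k}$, a splitting node of $T$ carrying exactly the branching inherited from $T_k$. For $\sa$ this density of splitting nodes is perfection; for $\ML$ and $\LLL$ the frozen splitting nodes keep their infinitely many immediate successors, giving superperfection and the Laver property respectively; for $\UU$ the frozen successor pattern agrees on all nodes of a common length, so uniformity is preserved; and for $\MM$ one argues identically through the arboreal representation of Proposition~\ref{mathiastree}.

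The main obstacle is precisely this last point: one must rule out that the intersection degenerates into a thin tree — a single branch, or one failing the required splitting condition. This is exactly what the fusion hypothesis $T_{n+1}\leq_n T_n$ forbids, by freezing each splitting level together with its branching as soon as it is reached. The heart of the argument is therefore the stabilisation of the second paragraph, which shows that these frozen levels both survive the intersection and remain cofinal in $T$; once that is in hand, the forcing-specific verifications in the last step are routine.
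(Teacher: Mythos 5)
The paper gives no proof of this lemma at all---it is stated as a known fact, with the Axiom A verifications delegated to \cite{judah}---so your argument can only be measured against the standard one from the literature, which you reproduce correctly for $\sa$ and $\UU$: there splitting is binary, so membership of a node in $\Lev{k}{T_m}$ automatically keeps both immediate successors, the monotonicity ``$T\leq_{n+1}S$ implies $T\leq_n S$'' holds (it deserves a short argument rather than a bare assertion, but it is true for these posets), and your stabilisation bookkeeping for the frozen parts $F_n$ and the exhaustion $\bigcup_n F_n=T$ are sound.

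The genuine gap is the step asserting that ``the first $n$ splitting levels of $T_n$, together with the immediate successors of the nodes occurring at those levels, are common to all $T_m$'', later used as ``for $\ML$ and $\LLL$ the frozen splitting nodes keep their infinitely many immediate successors''. Definition \ref{parb} only requires $\Lev{n}{T_{n+1}}=\Lev{n}{T_n}$ as sets of nodes; this guarantees that a frozen node still \emph{splits} (has at least two successors) in every later tree, not that its successor set survives. For $\LLL$ your conclusion can be rescued by a special argument: in a Laver tree every node above the stem splits, so $\Lev{m}{T}$ is the full set of nodes at distance $m$ above the stem, and its preservation freezes, by downward closure, the entire tree up to that depth; the successor set of a depth-$k$ node then equals its infinite successor set in $T_{k+1}$, so infinite splitting survives. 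For $\ML$, however, the step genuinely fails under the literal reading: one can build superperfect trees with $T_{m+1}\leq_m T_m$ in the sense of Definition \ref{parb} where, at stage $m$, every level-$m$ splitting node has its infinitely many successors cut down to two---this deletes only splitting nodes of level greater than $m$, so $\Lev{m}{\cdot}$ is untouched, and superperfection of $T_{m+1}$ is kept by leaving all levels above $m$ infinitely splitting---and the fusion of such a sequence is a binary perfect tree, hence not a Miller condition. So for Miller forcing your proof (and indeed the lemma, read with the paper's bare $\leq_n$) breaks down; the correct route, and the one implicitly intended via \cite[Lemmas 7.3.27 and 7.3.44]{judah}, is to strengthen $\leq_n$ for $\ML$ (and for $\MM$, where you currently say only ``one argues identically'' through Proposition \ref{mathiastree}) so that it explicitly preserves finitely many designated successors of the relevant splitting nodes at each stage; with that definition your frozen-skeleton argument goes through verbatim.
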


\begin{theorem} Each forcing notion $\PP\in\T$ with the relations $\leq_n$ above satisfies Axiom A. 
\end{theorem}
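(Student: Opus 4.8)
Recall that $\PP$ satisfies \emph{Axiom A} if there is a sequence of partial orderings $\seq{\leq_n}{n<\w}$ on $\PP$ with $\leq_0\,=\,\leq_\PP$, with $\leq_{n+1}\,\subseteq\,\leq_n$ for every $n$, such that every sequence $\seq{T_k}{k<\w}$ with $T_{k+1}\leq_k T_k$ has a lower bound, and such that for every $T\in\PP$, every $n<\w$ and every maximal antichain $A\subseteq\PP$ there is $S\leq_n T$ with $\set{a\in A}{a\not\perp_\PP S}$ countable. The plan is to take the orderings $\leq_n$ of Definition \ref{parb}, except that I set $\leq_0\,:=\,\leq_\PP$ (the relation of Definition \ref{parb} for $n=0$ additionally fixes the stem, a harmless strengthening). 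Three of the four clauses are then already in place: monotonicity $\leq_{n+1}\subseteq\leq_n$ holds because $T\subseteq S$ gives $\Spl(T)\subseteq\Spl(S)$, so that agreement $\Lev{n+1}{T}=\Lev{n+1}{S}$ forces, by counting splitting predecessors, agreement of every lower splitting level; and the lower-bound clause is exactly the preceding Fusion Lemma, which moreover delivers $S\leq_n T_k$. Everything therefore reduces to the last clause.

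For that clause I would argue uniformly by fusion, first replacing $A$ by the dense open set $D_A=\set{q\in\PP}{\exists a\in A\ q\leq_\PP a}$: it suffices to find $S\leq_n T$ together with a \emph{countable} $D_0\subseteq D_A$ that is predense below $S$, since the countably many members of $A$ lying above the elements of $D_0$ then exhaust $\set{a\in A}{a\not\perp_\PP S}$. To produce $S$ I build a fusion sequence $\seq{T_k}{k\geq n}$ with $T_n=T$ and $T_{k+1}\leq_k T_k$. At stage $k\geq n$ I run through the immediate successors $t'$ of the nodes of $\Lev{k}{T_k}$ that are retained in the tree; for each such $t'$ I use density of $D_A$ to choose $R_{t'}\leq_\PP (T_k)_{t'}$ with $R_{t'}\in D_A$, set $u_{t'}=\stem(R_{t'})$, and, using strong arboreality (Definition \ref{arb}) to reattach subtrees, replace the part of $T_k$ above $t'$ by $R_{t'}$. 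The resulting $T_{k+1}\leq_k T_k$ leaves the first $k$ splitting levels untouched while planting, above each retained $t'$, a node $u_{t'}$ with $(T_{k+1})_{u_{t'}}\in D_A$ whose successors in $T_{k+1}$ are refined only at later stages. Since $D_A$ is open, these witnesses persist under all later refinements, so for $S=\bigcap_k T_k$ we get $(S)_{u_{t'}}\in D_A$ for every chosen node, and the Fusion Lemma gives $S\in\PP$ with $S\leq_n T$.

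It then remains to check that $D_0=\set{(S)_{u_{t'}}}{t'\text{ retained}}$ works. It is countable because the $u_{t'}$ form a subset of the countable tree $S$. It is predense below $S$ because any nonempty $S'\leq_\PP S$ is again a perfect (resp.\ superperfect) tree, so one of its branches crosses $\Lev{n}{S}$ at a splitting node of $S$ and then enters one of its retained successors $t'$; as the segment of $S$ from $t'$ to the next splitting node is non-splitting and ends at $u_{t'}$, that branch passes through $u_{t'}$, whence $u_{t'}\in S'$ and $S'$ is compatible with $(S)_{u_{t'}}$. This gives the fourth clause and hence Axiom A.

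The description above is written as if $\PP$ were Sacks forcing, where each splitting node has exactly two successors and only finitely many nodes are handled per stage. The main obstacle is the bookkeeping for the infinitely splitting forcings. For $\LLL$ and $\ML$ a splitting node carries infinitely many successors, so at each stage one must graft witnesses above countably many retained successors while keeping every retained node infinitely (resp.\ superperfectly) splitting, so that the fusion $S$ is genuinely a Laver (resp.\ Miller) tree; and for $\UU$ every refinement must be carried out uniformly across nodes of the same length so that $S$ remains a uniform tree. In each case the retained-successor structure still captures every $S'\leq_\PP S$, so the predensity argument goes through verbatim; verifying that the fusion never leaves the respective forcing is where the per-forcing work lies.
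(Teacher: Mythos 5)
Your core argument for Sacks forcing is correct, and it is essentially the standard fusion proof: the reduction of the antichain clause to finding a countable subset of the open dense set $D_A$ that is predense below a fusion $S\leq_n T$, the grafting of witnesses above the retained successors of the $k$-th splitting level, and the persistence of membership in $D_A$ under later refinements are all exactly as in the sources the paper itself cites (the paper's proof of this theorem is in fact nothing but a pointer to \cite[Lemma 7.3.2, \S 7.4.A, Lemmas 7.3.27 and 7.3.44]{judah} and \cite[Lemma 25]{gesc}, one citation per forcing). Your adjustment $\leq_0\,=\,\leq_\PP$ and your verification of $\leq_{n+1}\subseteq\leq_n$ by matching splitting predecessors are fine. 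The problem is that the theorem asserts Axiom A for all five members of $\T$, and as written you have proved it for $\sa$ only: Mathias forcing $\MM$ is a member of $\T$ and is never mentioned in your proof, and for $\UU$ you name the uniformity obstruction without overcoming it.

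The omission of $\MM$ is a genuine gap, not deferred bookkeeping, because your grafting step uses a closure property strictly stronger than strong arboreality (Definition \ref{arb}): namely, that one may replace the subtree above a \emph{single} node by an arbitrary stronger condition while leaving the tree unchanged elsewhere. This holds in $\sa$, $\LLL$ and $\ML$, but fails in $\MM_{\text{tree}}$ and $\UU$. A Mathias tree is globally of the form $T_{(s,A)}$ (Proposition \ref{mathiastree2}), so the subtree above any node already determines the whole condition up to a finite part, and a tree carrying two independent grafts above two different nodes is not a condition at all; moreover, read literally, Definition \ref{parb} degenerates on $\MM_{\text{tree}}$, since for $n\geq 1$ the level $\Lev{n}{T_{(s,A)}}$ consists of the increasing enumerations of the sets $s\cup F$ with $F\in[A]^n$ and hence determines $(s,A)$, so $\leq_n$ collapses to equality there. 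The actual Axiom A structure for $\MM$ is $(s,A)\leq_n(t,B)$ iff $(s,A)\leq(t,B)$, $s=t$ and the first $n$ elements of $B$ belong to $A$, with the antichain clause proved by a pure-decision, Ramsey-type fusion rather than by grafting --- this is the content of \cite[\S 7.4.A]{judah}, which the paper cites for precisely this case. For $\UU$ the repair is likewise a different amalgamation (view conditions as partial functions $p:\w\to 2$ with coinfinite domain and run through the $2^n$ patterns on the first $n$ free coordinates one at a time, copying each shrink uniformly; openness of $D_A$ keeps earlier witnesses alive), and for $\ML$ one must additionally designate, stage by stage, finitely many successors of each splitting node to be preserved, since equality of splitting levels alone allows successor sets to be thinned cofinally in $k$, in which case the naive intersection $\bigcap_k T_k$ need not be superperfect. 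So the proposal establishes the theorem for Sacks, sketches a repairable route for Laver, and leaves Silver, Miller and above all Mathias unproved, where genuinely different arguments are required.
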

\begin{proof}For Sacks forcing see \cite[Lemma 7.3.2]{judah} or \cite[Lemma 25]{gesc}.  The argument for Silver forcing is completely analogous.  For Mathias forcing, see \cite[\S 7.4.A]{judah}.  For Laver and Miller forcing, see \cite[Lemmas 7.3.27 and 7.3.44]{judah}.\end{proof}%\section{Descriptive set theory?}

\subsection{Inner model theory}\label{imt}
For undefined notions in inner model theory appearing in this paper, we refer the reader to \cite{ralfbook},\cite{steel} and \cite{fsit}.

\begin{definition}An \emph{inner model}\index{inner models} is a transitive $\in$-model of $\ZF$ (or $\ZFC$) that contains all ordinals. \end{definition}
G\"{o}del's constructible universe $L$ is the simplest example of an inner model; also the relativization approaches of $L$ to sets of ordinals proposed by  Andr\'{a}s Hajnal and Azriel L\`{e}vy\footnote{See \cite[Chapter 1, \S 3]{kanamori}.} are instances of inner models.
\begin{definition}\label{L}Let $A$ be a set. \begin{enumerate}[(i)]\item (Hajnal)\:\: The \emph{constructive closure}\index{inner models!constructive closure $L(A)$} of $A$, $\LL(A)$ is the smallest inner model $\langle M;\in\rangle$ such that $A\in M$.  
\item (L\`{e}vy)\:\: The \emph{G\"{o}del constructive universe relative to $A$}, $\LL[A]$ is the smallest inner model $\langle M; \in, A\rangle$ which is amenable with respect to the unary predicate ``$A$'', i.e. for every $x\in M$, $x\cap A\in M$.  \end{enumerate}

\end{definition}

\begin{remark}In general, for a given set $A$ the models $L(A)$ and $L[A]$ might  have different properties.  For example, $\LL(A)$ is a model of $\ZF$ and, it might be the case that $\LL(A)\models``\AC\text{ fails ''}$\footnote{For instance, if $A=\RR$ and we assume that there are infinitely many Woodin cardinals,  we have that $L(\RR)\models \AD$ (cf. \cite[Theorem 8.3]{woodin}) and, therefore, in $L(\RR)$ does not hold $\AC$.}.  On the other hand, $\LL[A]$ is always a model of $\ZFC$ and, moreover, if $A\subset \w_1$ then $\LL[A]\models\GCH$.  Also, in general it might happen that $A\notin L[A]$\footnote{For instance, if $A=\RR$, then $L[\RR]=L$ and $\RR\notin L$ in general.}.  For more facts about these two constructions see  \cite[Proposition 3.2]{kanamori} \end{remark}

\subsubsection{Large cardinals and elementary embeddings} 

\begin{definition}\index{elementary embedding}Let $M$ and $N$ be inner models.  We say that $j:M\to N$ is an \emph{elementary embedding } if for every formula $\varphi(x_1,\dots, x_k)$ and every $a_1,\dots, a_k\in M$, we have\[M\models\varphi(a_1,\dots,a_k)\: \iff\: N\models\varphi(j(a_1),\dots,j(a_k)).\]  \end{definition}

\begin{definition}[Derived extenders]\index{extender!derived}\label{ext}Let $j:M\to N$ be an elementary embedding with $\cp{j}=\kappa$, where $M$ is an inner model of $\ZFC^-$ and $N$ is transitive and rudimentary closed.  Suppose that $N\models \lambda\leq j(\kappa)$.  For each $a\subseteq [\lambda]^{<\w}$ and $X\subseteq [\kappa]^{|a|}$, $X\in M$, we set\[X\in E_a\iff a\in j(X).\]Then, we say that $E:=E^\lambda_j=\{(a,X): X\in E_a\}$ is the $(\kappa,\lambda)$-\emph{extender}\index{extender!kappx@$(\kappa,\lambda)$-extender} derived from $j$ and we  $\kappa=\cp{E}$ for the critical point of $E$ and $\lambda=\lh E$\index{extender!length of} for the length of $E$.\end{definition}
\begin{remark}The notion of extender can be defined without considering an elementary embedding as in \cite[Definition 4.7]{stn}.  Nevertheless, it is possible to show that each extender in this new sense can be derived from an elementary embedding. In this case, $j_E$ denotes the elementary embedding associated to the extender $E$. \end{remark}

Each $E_a$ as in definition \ref{ext} is an $M$-$\kappa$-complete ultrafilter on $\wp([\kappa]^{|a|})^M$.  Therefore, we can form $\ult{M,E_a}$  in the usual way (cf. \cite[\S 2.1]{steel}).  Also, if we set $k_a([f])= j(f)(a)$ for every function $f$ with domain $[\kappa]^{|a|}$, the following diagram commutes: 

\[\begin{diagram}
\node{M}
\arrow[2]{e,t}{j}
\arrow{ese,b}{i}
\node[2]{N}
\\
\node[3]{\ult{M,E_a}}
\arrow{n,r}{k_a}
\end{diagram}
\]\medskip

Notice that $k_a([id])=a$ and $\ran{k_a}= \hull{N}{\ran{j}\cup a}$.  

\noindent Now, let $a\subseteq b\subseteq[\lambda]^{<\w}$.  Then, set $i_{ab}(x):= k^{-1}_b(k_a(x))$.  Since $[\lambda]^{<\w}$ is a directed set under inclusion and for every $a\subseteq b\subseteq c$ we have $i_{ac}=i_{bc}\circ i_{ab}$, we can define\[\ult{M,E}=  \text{dir lim}\{ \ult{M,E_a}: {a\in[\lambda]^{<\w}}\}\]where we take the direct limit under the embeddings $i_{ab}$.

\noindent Also, we can piece together the embeddings $k_a$ into $k:\ult{M, E}\to N$ given by $k(i_{a,\infty}(x))=k_a(x)$, where $i_{a,\infty}:\ult{M,E_a}\to \ult{M,E}$ is the direct limit embedding. 

As before, $\ran{k}=\hull{N}{\ran{j}\cup\lambda}$.  Therefore, $\res{k}{\lambda}=$id.  If $\ult{M, E}$ is well-founded, then $E$ is called a $(\kappa,\lambda)$-\emph{extender}.

\begin{figure}[h]\[\begin{diagram}
\node{M}
\arrow[2]{e,t}{j}
\arrow{ese,b}{i^M_{E}}
\arrow{sse,b}{i^M_{E_a}}
\node[2]{N}
\\
\node[3]{\ult{M,E}}\arrow{n,r}{k}
\\
\node[2]{\ult{M, E_a}}\arrow{ne,r}{i_{a,\infty}}
\end{diagram}
\]
\end{figure}

\begin{remark}The requirement $\lambda\leq j(\kappa)$ is not really needed.  The extenders satisfying such a property are called \emph{short extenders}\index{extender!short}.  \end{remark}

A \emph{$V$-extender} is an extender that is derived from an elementary embedding $j\colon V\rightarrow N$ with $N$ transitive. 

\begin{definition}\label{stre}\index{extender!strength of a}Let $E$ be a $V$-extender; then the \emph{strength} of $E$ is defined as \[\nu(E):= \sup\{\alpha: V_\alpha\subseteq \ult{V,E}\}.\]  \end{definition}
\noindent If $E$ is an $V$-extender, $E\notin \ult{V,E}$ and hence $\nu(E)\leq\lh(E)$ . 

\begin{definition}Let $\lambda$ be an ordinal.  We say that an uncountable $\kappa$ is $\lambda$-\emph{strong}\index{cardinal!$\lambda$-strong} if there is an elementary embedding $j:V\to M$ where $M$ is a transitive class, $\cp{j}=\kappa$ and $V_\lambda\subseteq M$. Equivalently,  $\kappa$ is $\lambda$-strong if there is a $(\kappa,\lambda)$-extender $E$ such that $V_\lambda\subseteq\ult{V,E}$ and $j_E(\kappa)\geq \lambda$. Under this setting, we say that $\kappa$ is \emph{strong}\index{cardinal!strong} if it is $\alpha$-strong for every ordinal $\alpha$. 
 \end{definition}
\begin{definition}Let $\alpha<\delta$ be ordinals and $A\subset V_\delta$. We say that an uncountable cardinal $\kappa$ is $\alpha$-$A$-\emph{reflecting}\index{cardinal!$\alpha$-$A$-reflecting} if there is an elementary embedding $j:V\to M$ witnessing that $\kappa$ is $\alpha$-strong and further  $A\cap V_\alpha=j(A)\cap V_\alpha$.  Moreover, we say that $\kappa$ is $A$-\emph{reflecting}\index{cardinal!$A$-reflecting} in $\delta$ if it is $\alpha$-$A$-reflecting for every $\alpha<\delta$. \end{definition}
\begin{definition}An uncountable regular cardinal $\delta$ is \emph{Woodin}\index{cardinal!Woodin} if and only if for every $A\subset V_\delta$, there is some $\kappa<\delta$ such that $\kappa$ is $A$-reflecting in $\delta$.  Equivalently, $\delta$ is Woodin if and only if for every function $f:\delta\to \delta$ there is $\kappa<\delta$ and an extender $E$ with $\cp{E}=\kappa$ such that ${j_E(f)(\kappa)}\leq \nu(E)$.\end{definition}

\subsubsection{Mice and iteration strategies}
\begin{definition}Let $X$ be a set of ordinals.  A $X$-\emph{potential premouse}\index{premouse!potential}\:  is an amenable structure of the form \[\M=\langle J^{\vec{E}}_\alpha(X);\in,X,\res{\vec{E}}{\alpha}, E_\alpha\rangle,\]where $\vec{E}$ is a fine extender sequence\footnote{See \cite[Definition 2.4]{steel}.} over $X$.  In this case, $\toe{\M}=E_\alpha$ denotes the top extender of $\M$.

 \noindent A $X$-potential premouse $\M=\langle J^{\vec{E}}_\alpha(X);\in,X,\res{\vec{E}}{\alpha}, E_\alpha\rangle$ is called a $X$-\emph{premouse}\index{premouse}\: if every proper initial segment of $\M$ is $\w$-sound\footnote{See \cite[Definitions 2.16-2.17]{steel}}.
 
\noindent For each $\beta\leq\alpha$, let \[\M\parallel_\beta:= \langle J^{\vec{E}}_\beta(X);\in,X,\res{\vec{E}}{\beta}, E_\beta\rangle,\]\[\M|_\beta:= \langle J^{\vec{E}}_\beta(X);\in,X,\res{\vec{E}}{\beta}, \varnothing\rangle.\]
\noindent A $X$-premouse $\pc$ is an \emph{initial segment}\index{premouse!initial segment of a} of $\M$ if there is some $\beta\leq\alpha$ such that $\pc=\M\parallel_\beta$.  In this case we write $\pc\li\M$.   Furthermore, we say that the $X$-premice $\M$ and $\pc$ are \emph{compatible}\index{premouse!compatible} if $\M\li\pc$ or $\pc\li\M$.  \end{definition}

\begin{definition}Let $\alpha<\w$.  A tree   on $\alpha$\index{iteration!tree $\itr$!order $<_\itr$} is a partial ordering $<_\itr$ with least element $0$ such that for every $\gamma<\alpha$ we have 
\begin{enumerate}[(i)]\item if $\beta<_\itr\gamma$ then $\beta<\gamma$,
\item $\{\beta:\beta<_\itr \gamma\}$ is well-ordered by $<_\itr$,\item $\gamma$ is a $<_\itr$-successor if and only if it is a successor ordinal and
\item if $\gamma$ is a limit ordinal, then $\{\beta:\beta<_\itr \gamma\}$ is $\in$-cofinal in $\gamma$.
\end{enumerate}We say that $b\subseteq \alpha$ is a \emph{branch} in $T$ if $b$ is  $<_\itr$-downward closed and $<_\itr$-well-ordered.   \end{definition}
Let $\M$ be an $\w$-sound $X$-premouse, and let $\theta$ be an ordinal.  The iteration game $\mathcal{G}_\theta(\M)$\index{iteration!game $\mathcal{G}_\theta(\M)$} is a two player game of length $\theta$ \footnote{See \cite[\S 3.1]{steel} for further details.} whose players produce:
\begin{enumerate}[(i)]\item a tree order $<_\itr$ on $\theta$,
\item a sequence of $X$-premice $\langle\M^\itr_\alpha:\alpha<\theta\rangle$ beginning with $\M^\itr_0=\M$,
\item a sequence of extenders $\langle E^\itr_\alpha:\alpha<\theta\rangle$ such that $E_\alpha$ is appearing on the $\M^\itr_\alpha$-extender sequence and 
\item a sequence $D^\itr$ of commuting embeddings  $e_{\alpha,\zeta}:\M^\itr_\alpha\to \M^\itr_\zeta$ defined for each $\alpha<_\itr\zeta$ with the next properties:\begin{enumerate}[(a)]\item If $\alpha=\text{pred}_\itr(\zeta+1)$, then $e_{\alpha, \zeta+1}= j_{E_\zeta}:\M_\alpha\to \ult{\M_\alpha, E_\zeta}$. 
\item Whenever $\gamma<\theta$ is a limit ordinal, $\M_\gamma=\text{dir}\lim \{\M_\alpha:\alpha<_\itr \gamma\}$ under the embeddings $e_{\alpha,\zeta}$ and $e_{\alpha,\gamma}$ is the direct limit embedding.
\end{enumerate}
\end{enumerate}
Player I plays all successor stages while player II does it at limit stages:
\begin{enumerate}[(1)]
\item Suppose we are at stage $\alpha+1<\theta$ during the game; the players have constructed $\res{<_\itr}{\alpha+1}$,  the sequence of premice $\langle \M^\itr_\beta:\beta\leq \alpha\rangle$, the sequence of extenders $\langle E^\itr_\beta:\beta< \alpha\rangle$ and $D^\itr\cap{\alpha+1}$.  At this move, player I should pick an extender $E_\alpha^\itr$ from the $\M^\itr_\alpha$-sequence such that for every $\xi<\alpha$,  $\lh(E^\itr_\xi)<\lh(E^\itr_\alpha)$ .  If this is not possible, then the game is over and player I loses.  Otherwise, let $\beta\leq\alpha$ be least with $\cp{E^\itr_\alpha}\in[\cp{E_\beta},\lh(E_\beta))$.   Then, it makes sense to apply  $E^\itr_\alpha$ to $\M_\beta$.  Player I then sets $\beta=\text{pred}_\itr(\alpha+1)$ and $\M_{\alpha+1}=\ult{\M_\alpha, E_\alpha}$.
\item Now, for the limit stages $\gamma$, player II picks a cofinal branch $b$ in $\res{<_\itr}{\gamma}$ and sets $\M_\gamma:=\text{dir}\lim \{\M_\alpha:\alpha\in b\}$, where the direct limit is with respect to the embeddings $e_{\alpha,\zeta}$. If player II fails to do this, then player I wins. \end{enumerate}
Finally,  player II wins $\mathcal{G}_\theta(\M)$ if  $\M_\alpha$ is well founded for all $\alpha<\theta$.% or no player has lost after $\theta$ many stages.

\begin{definition}A run of the iteration game $\mathcal{G}_\theta(\M)$ in which no player has lost after $\theta$ many steps is called an \emph{iteration tree }\index{iteration!tree $\itr$} on $\M$ of length $\theta$ and it has the form $\itr:=\{<_\itr, \M_\alpha^\itr, E_\alpha^\itr:\alpha<\theta\}$.  In this case, we write $\lh(\itr)=\theta$.  \end{definition}
\noindent If $\lh(\itr)=\gamma+1$ for some $\gamma$, then $e^\itr:=e_{0,\gamma}\to \M_\gamma^\itr$ is called the \emph{iteration embedding}\index{iteration!embedding}.  On the other hand, if $\lh(\itr)$ is a limit ordinal, $b$ is a cofinal branch through $<_\itr$ and $\alpha\in b$, then $e^\itr_{\alpha,b}:\M_\alpha^\itr\to \M_b^\itr$ is the direct limit embedding, where \[\M_b^\itr= \text{dir}\lim \{\M_\beta:\beta\in b\}.\]
\begin{definition}Let $\itr$ be an iteration tree on an $X$-premouse $\M$, with $\lh(\itr)$ a limit ordinal.  Then
\begin{enumerate}[(i)]\item \index{iteration!tree $\itr$!delx@$\delta(\itr)$}$\delta(\itr):=\sup\{\lh(E_\alpha^\itr):\alpha<\lh(\itr)\}$;\item the common part model $\M(\itr)$ of the iteration tree $\itr$\index{iteration!tree $\itr$!common part model $\M(\itr)$} is the $X$-premouse built from the extender sequence $\vec{E}=\bigcup\{\res{\vec{E}^{\M_\alpha^\itr}}{\lh(E_\alpha^\itr)}: \alpha<\lh(\itr)\}$. %$\M(\itr):=\triangledown\{\M_\alpha^\itr|\lh(E^\itr_\alpha):\alpha<\lh(\itr)\}$.
\end{enumerate}\end{definition}

\begin{definition}An $X$-premouse $\M$ is called $\theta$-iterable\index{premouse!thx@$\theta$-iterable} if player II has a winning strategy $\Sigma$ in $\mathcal{G}_\theta(\M)$. $\Sigma$ is called a $\theta$-iteration strategy for $\M$.  \end{definition}

\begin{definition}We say that a $X$-premouse $\M$ is a $X$-\emph{mouse}\index{mice} if and only if it is $\w_1+1$-iterable.\end{definition}
\begin{remark}There is an important difference between tree iterations and linear iterations of extenders in a countable $X$-premouse $\M$ in which case full iterability is equivalent to $\w_1$-iterability\footnote{See \cite[\S 6]{stn}}. 
For tree iterability in a premouse, $\w_1+1$-iterability does not imply full iterability and $\w_1$-iterability does not imply $\w_1+1$-iterability.
%For tree iterability in premouse, $\w_1+1$-iterability does not implies fully iterability neither $\w_1$-iterability implies $\w_1+1$-iterability.  
However, if $\M$ is countable and we are assuming $\AD$, $\w_1$ and $\w_1+1$-iterability are equivalent for countable premice.  Also, when considering countable premice we have such an equivalence under the assumption that  uniqueness of cofinal well-founded branches (i.e. the direct limit model is well-founded) through an iteration tree holds. \end{remark}

\begin{remark}If $\M$ is a countable $X$-mouse, then a $\w_1$-iteration strategy for $\M$ can be coded by a set of reals.  This is not the case for $\w_1+1$-iteration strategies on $\M$.\end{remark}

%For a proof of the next comparison theorem see \cite[Theorem 3.11]{steel}.
%\begin{theorem}[Mitchell-Steel]\index{mice!comparison theorem}Suppose $\M$ and $\N$ are two countable mice with iteration strategies $\Sigma_\M$ and $\Sigma_N$.  Then, there are a $\Sigma_\M$-iterate $\pc$ of $\M$ and a $\Sigma_\N$-iterate $\Q$ of $\N$ such that either $\pc\li\Q$ or $\Q\li \pc$.\end{theorem}

\begin{definition}Let $n\geq 1$ and let $\M$ be a $X$-premouse.  We say that $\M$ is $n$-\emph{small}\index{premouse!nx@$n$-small} if and only if whenever $E$ is an extender on the $\M$-sequence with $\cp{E}=\kappa$ we have \[\M|_\kappa\not\models ``\text{ there are  $n$ Woodin cardinals''}.\]Furthermore, we say that $\M$ is $0$-small if and only if $\M$ is an initial segment of $L[X]$.\end{definition}
\begin{definition}\label{qstru}Let $\M$ be an $X$-premouse and suppose that $\itr$ is an iteration tree of limit length on $\M$.  A $\Q$-\emph{structure}\index{qx@$\Q$-structure} for $\itr$ is a $X$-premouse $\langle J^{\vec{F}}_\beta(X);\in,X,\res{\vec{F}}{\beta}, F_\beta\rangle$ such that \begin{enumerate}[(i)]\item $\M(\itr)\li\Q$ and $\delta(\itr)$ is a cutpoint of $\Q$,
\item $\Q$ is $\w_1$-iterable above $\delta(\itr)$ and 
\item $\Q$ kills definably the Woodin property of $\delta(\itr)$\footnote{For further details see \cite[Definition 2.2.1]{ut}.}.  (When $\Q$ is a proper initial segment of a $X$-premouse, it is equivalent to say that $\beta$ is minimal with \[J_{\beta+1}^{\vec{F}}(X)\models ``\delta(\itr)\text{ is not Woodin''}.)\]\end{enumerate}If a $\Q$-structure for $\itr$ exists and is unique, we denote it by $\Q(\itr)$. 
We further define 
$\Q(b,\itr)$ for a branch $b$ as the least initial segment of the direct limit model along $b$ such that the Woodin property of $\delta(\itr)$ is destroyed definably over it.
 \end{definition}

\begin{definition}\label{qitr} Let $\M$ be a $X$-premouse.  We say that a partial iteration strategy $\Sigma$ for $\M$ is guided by $\Q$-structures if and only if
for any tree $\itr$ on $\M$ of limit length  we have: \begin{enumerate}[(i)]\item If $\Q(\itr)$ exists and $b$ is the unique branch through $\itr$ such that $\Q(b,\itr)=\Q(\itr)$,\footnote{In other words, $\Q(\itr)$ is an initial segment of the direct limit model along $b$.} then $\Sigma(\itr)=b$.  \item If no such unique branch $b$ exists, then $\Sigma(\itr)$ is undefined.\end{enumerate}   \end{definition}

\subsubsection*{The mouse operator $\sharp$}

\begin{definition}\label{defsharps}For any $n\in\w$ and $x\in{}^\w\w$, $\mn{n}{x}$ \index{mice!mx@$\mn{n}{x}$}denotes any countable, sound $\w_1$-iterable $x$-premouse which is not $n$-small but all of whose proper initial segments are $n$-small. 

\noindent Similarly, provided that an $\mn{n}{x}$ exists, $M_n(x)$\index{inner models!mx@$M_n(x)$} denotes an inner model obtained from $\mn{n}{x}$ by iterating $\toe{\mn{n}{x}}$ out of the universe.  %$M_n(x)$ is the minimal model $\M$ containing $x$ such that $\M\models$ ``there are $n$ Woodin cardinals''.
\end{definition}

\begin{remark} 
\label{remark on uniqueness of Mn} 
In our context, $M_n^\#(x)$ is unique. This is because we always assume that $M_n^\#(x)$ exists for all reals $x$. By combined results of Neeman and Woodin (see \cite[Theorem 1.6]{nd}), this implies that fine-structurally iterable models (i.e. iterable with truncations) of this form exists. A folklore argument then shows that $M_n^\#(x)$ is unique (see \cite[Lemma 2.41]{phildis}).
\end{remark} 

\begin{remark}\label{sharpsl}When $x=0$ in the definition above, we write $M^\sharp_n$ for $\mn{n}{0}$.  In this case, $M_n$ is the minimal canonical inner model which contains $n$ Woodin cardinals; in particular, $M_0=\LL$.  Also,  when $n=0$, we write $x^\sharp$ for $\mn{0}{x}$ and, in this case $M_0(x)=\LL[x]$ is G\"{o}del's constructible universe relative to $x$ as in Definition \ref{L}.\end{remark}

\begin{remark}\label{agree}Let $x\in{}^\w\w$ and suppose that $\mn{n}{x}$ exists.  Say, $\mn{n}{x}=\langle J^{\vec{E}}_\alpha(x);\in, \res{\vec{E}}{\alpha}, E_\alpha\rangle$.  Then, $\langle J^{\vec{E}}_\alpha(x);\in, \res{\vec{E}}{\alpha},\varnothing\rangle$ is the initial segment of $M_n(x)$ up to $\alpha$.   In particular, $\mn{n}{x}$ and $M_n(x)$ have the same reals.   %Also, if $\mn{n}{x}\cap\OR= \lambda$, then $\lambda=\cp{E_\alpha}^{+{M_n(x)}}$.
\end{remark}

%\begin{remark}
Originally, the notion of $0^\sharp$ was  isolated and studied by J. Silver in his dissertation in which he derived some of the consequences of the existence of $0^\sharp$ from the existence of a Ramsey cardinal\footnote{A complete exposition of these facts can be found in \cite[\S9]{kanamori}. }.  Briefly, we mention some of the equivalent definitions for $x^\sharp$, for $x\in{}^\w\w$ which will be relevant for the following chapters. 
%\end{remark}
\begin{lemma}\label{sheq}Let $x\in{}^\w\w$. The following are equivalent:
\begin{enumerate}[(1)]
\item $x^\sharp$ exists;\index{mice!xx@$x^\sharp$}
\item there is a club proper class $I$ of indiscernibles for $L[x]$;
\item there is some limit ordinal $\lambda$ such that $L_\lambda[x]$ has an uncountable set of indiscernibles;
\item there is a elementary embedding $j\colon L_\alpha[x]\to L_\beta[x]$ where $\alpha$ and $\beta$ are limit ordinals and $\cp{j}<|\alpha|$; 
\item there exists a nontrivial elementary embedding $j\colon L[x]\to L[x]$.
\end{enumerate}\end{lemma}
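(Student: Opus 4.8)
The plan is to run the cycle $(1)\Rightarrow(2)\Rightarrow(3)\Rightarrow(4)\Rightarrow(5)\Rightarrow(1)$, using the premouse definition of $x^\sharp=\mn{0}{x}$ from Definition \ref{defsharps} together with the identifications $M_0(x)=L[x]$ and the agreement of reals from Remarks \ref{sharpsl} and \ref{agree}. For $(1)\Rightarrow(2)$: since $\mn{0}{x}$ is not $0$-small but all of its proper initial segments are initial segments of $L[x]$ (and $L[x]$ carries no total extenders), the only active predicate is the top one, so $\toe{\mn{0}{x}}$ is a normal measure on some $\kappa=\cp{\toe{\mn{0}{x}}}$. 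Using $\w_1$-iterability, which suffices for linear iterations by the remark after Definition \ref{defsharps}, I would iterate $\mn{0}{x}$ linearly by this measure and its images through all of $\On$. By Remark \ref{agree} the direct limit is $M_0(x)=L[x]$, and the critical points $\kappa_\xi=e_{0,\xi}(\kappa)$ form a closed unbounded proper class $I$; a routine argument shows $I$ is a class of indiscernibles for $(L[x],\in,x)$.

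For $(2)\Rightarrow(3)$: I would choose a limit point $\lambda$ of $I$ of cofinality $\w_1$, so that $I\cap\lambda$ is uncountable. The Skolem hull $H=\hull{L[x]}{(I\cap\lambda)\cup\{x\}}$ is an elementary submodel of $L[x]$; its transitive collapse is some $L_\gamma[x]$ by condensation, and since $I\cap\lambda$ are indiscernible for $H\prec L[x]$, their collapse images form an uncountable set of indiscernibles for $L_\gamma[x]$. For $(3)\Rightarrow(4)$: given an uncountable set $J$ of indiscernibles for $L_\lambda[x]$ ($\lambda$ a limit ordinal), set $H=\hull{L_\lambda[x]}{J\cup\{x\}}$. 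By indiscernibility any nontrivial order-preserving $\pi\colon J\to J$ induces a nontrivial elementary $\tilde\pi\colon H\to H$; conjugating by the transitive collapse $c\colon H\cong L_\gamma[x]$ yields a nontrivial elementary $j\colon L_\gamma[x]\to L_\gamma[x]$. Its critical point is $c(\min J)$, which is countable because the ordinals of $H$ below $\min J$ are values of Skolem terms in $x$ alone, whereas $|\gamma|\geq|J|\geq\w_1$; so $\cp{j}<|\gamma|$ and $(4)$ holds with $\alpha=\beta=\gamma$.

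For $(4)\Rightarrow(5)$: given $j\colon L_\alpha[x]\to L_\beta[x]$ with $\kappa=\cp{j}<|\alpha|$, I would set $U=\{Y\in\wp(\kappa)\cap L_\alpha[x]:\kappa\in j(Y)\}$. Since $\kappa<|\alpha|$ gives $\alpha\geq(\kappa^+)^{L[x]}$, we have $\wp(\kappa)^{L[x]}\subseteq L_\alpha[x]$, so $U$ is a nonprincipal normal $\kappa$-complete $L[x]$-ultrafilter measuring every subset of $\kappa$ in $L[x]$. The ultrapower $\ult{L[x],U}$ is well-founded: any descending membership sequence may be taken from functions $f_n\colon\kappa\to\kappa$ in $L[x]$, which lie in $L_\alpha[x]$, and the factor map $[f]\mapsto j(f)(\kappa)$ sends the relevant ordinals order-preservingly into the well-founded $L_\beta[x]$. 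Hence $\ult{L[x],U}$ is a transitive inner model of $V=L[x]$, so it equals $L[x]$; since $\kappa>\w$ we have $j_U(x)=x$, and the ultrapower embedding $j_U\colon L[x]\to L[x]$ is nontrivial with critical point $\kappa$, giving $(5)$. For $(5)\Rightarrow(1)$: from a nontrivial $j\colon L[x]\to L[x]$ with $\kappa=\cp{j}$, form the derived $U$ as above. Because $j$ is a total class embedding, $U$ is weakly amenable to $L[x]$ and, by the classical analysis of Kunen, countably complete, so by Gaifman's theorem all its linearly iterated ultrapowers are well-founded. Thus the active $x$-premice $(L_\gamma[x],U\cap L_\gamma[x])$ are $\w_1$-iterable, and the minimal not-$0$-small sound one among them is exactly $\mn{0}{x}=x^\sharp$ by Definition \ref{defsharps}.

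The main obstacle is the last step $(5)\Rightarrow(1)$: passing from a single elementary embedding to a genuinely iterable premouse requires the full strength of countable completeness of the derived ultrafilter and Gaifman-style well-foundedness of all transfinite iterates, which is precisely what separates the first-order data of $(4)$--$(5)$ from the iterable mouse demanded by Definition \ref{defsharps}. By contrast, $(2)\Rightarrow(3)$ and $(3)\Rightarrow(4)$ are routine Skolem-hull and condensation arguments, and $(4)\Rightarrow(5)$ is a clean factor-map computation in which the hypothesis $\cp{j}<|\alpha|$ is used exactly to place the witnessing functions inside the domain $L_\alpha[x]$.
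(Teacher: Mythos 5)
The paper does not prove this lemma at all: its ``proof'' is a pointer to the classical theorems of Silver and Kunen in Kanamori (Ch.~2 \S 9 and Ch.~4 \S 21) and Jech (Ch.~18). Your cycle $(1)\Rightarrow(2)\Rightarrow(3)\Rightarrow(4)\Rightarrow(5)\Rightarrow(1)$ is the right skeleton, and $(1)\Rightarrow(2)$ and $(2)\Rightarrow(3)$ are essentially fine (modulo the standard caveat that one should take hulls inside some $L_\theta[x]$ with $\theta\in I$ rather than in the proper class $L[x]$, since satisfaction for $L[x]$ is not definable). But two of your steps have genuine gaps, and they sit exactly where Silver's and Kunen's theorems carry their weight. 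In $(3)\Rightarrow(4)$ you compute the critical point from the claim that ``the ordinals of $H$ below $\min J$ are values of Skolem terms in $x$ alone.'' That is not a consequence of order-indiscernibility: an ordinal of $H$ below $\min J$ has the form $t(x,\gamma_1,\dots,\gamma_n)$ with $\gamma_i\in J$, and nothing in indiscernibility forces its value to be independent of the $\gamma_i$ (a term with $t(\vec{\gamma})<\gamma_1$ that is injective in the tuple is perfectly consistent with indiscernibility). Independence of the parameters is precisely \emph{remarkability}, which Silver obtains only after the nontrivial re-selection arguments on an uncountable set of indiscernibles (the blueprint lemmas of Kanamori \S 9). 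Without it you only get $\cp{j}\leq c(\min J)$, and $c(\min J)=\otp{H\cap \min J}$ can be as large as $|J|=|\gamma|$, so the required inequality $\cp{j}<|\gamma|$ does not follow from what you wrote.

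The second gap is in $(5)\Rightarrow(1)$, where you assert that the derived ultrafilter $U$ is countably complete ``by the classical analysis of Kunen,'' so that Gaifman's theorem applies. This is false in general: if $\kappa=\cp{j}$ has $V$-cofinality $\omega$, say $\kappa=\sup_n\kappa_n$, then each final segment $\kappa\smallsetminus\kappa_n$ lies in $L[x]$ and $\kappa\in j(\kappa\smallsetminus\kappa_n)$ (as $j(\kappa_n)=\kappa_n$), so all these sets are in $U$ while their intersection is empty; and nothing in hypothesis $(5)$ lets you choose $\kappa$ of uncountable cofinality. Kunen's actual proof secures well-foundedness of all iterates not through countable completeness but by realizing each iterate back into $L[x]$ using $j$ itself (equivalently, via the theory of weakly amenable iterable $L[x]$-ultrafilters), and that realization induction is the hard core your sketch waves away. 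A related soft spot occurs already in $(4)\Rightarrow(5)$: ordinals of the full ultrapower $\ult{L[x],U}$ are represented by arbitrary $f:\kappa\to\On$ with $f\in L[x]$, which need neither map into $\kappa$ nor belong to $L_\alpha[x]$, so the factor map $[f]\mapsto j(f)(\kappa)$ into $L_\beta[x]$ is simply undefined on them and your well-foundedness argument fails as stated; this is exactly why the classical route iterates the set-sized structure $(L_{(\kappa^+)^{L[x]}}[x],U)$ rather than taking one full ultrapower of $L[x]$ from set-sized data. The repair is to invoke (or reproduce) the cited arguments of Silver and Kunen at these two points rather than the shortcuts above.
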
 
\begin{proof} 
See \cite[Chapter 2, \S 9 and Chapter 4, \S 21]{kanamori} and \cite[Chapter 18]{jech}. 
\end{proof} 

%\begin{lemma}Let $x\in{}^\w\w$ and suppose that $x^\sharp$ exists.  Let $\kappa$ be any uncountable cardinal.  Then:\begin{enumerate-(1)}\item $L[x]\models\kappa\text{ is inaccessible}$
%\item $V\models|\wp^{L[x]}(\kappa)|=\kappa$
%\end{enumerate-(1)} \end{lemma}

\begin{lemma}Let $n\geq 1$ and suppose that for every $x\in{}^\w\w$, $\mn{n}{x}$ exists.   Let $\M$ be a countable $n$-small premouse which is $\w_1$-iterable and let $\itr$ be an iteration tree on $\M$.  Then, the $\Q$-structure iteration strategy $\Sigma(\itr)$ is the unique $\w_1$-iteration strategy on $\M$.  \end{lemma}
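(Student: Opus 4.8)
The plan is to show that any $\w_1$-iteration strategy $\Lambda$ for $\M$ must coincide with the $\Q$-structure strategy $\Sigma$ at every limit stage, by establishing that at each limit-length tree the $\Q$-structure exists, is $\w_1$-iterable, and uniquely pins down the branch. First I would verify that $n$-smallness is preserved along iterations: since every extender used in an iteration tree $\itr$ on $\M$ comes from the current model's sequence and the critical-point conditions are inherited, each model $\M_\alpha^\itr$ and each common part model $\M(\itr)$ remains $n$-small. As $\M$ is countable and the trees relevant to $\w_1$-iterability have length $<\ww{1}$, each $\M(\itr)$ is a countable $n$-small premouse, hence coded by a real.

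Next I would produce the $\Q$-structure. For a limit-length $\itr$, the hypothesis that $\mn{n}{x}$ exists for every real $x$ yields $\mn{n}{\M(\itr)}$, which is $\w_1$-iterable. Because $\M(\itr)$ is $n$-small, the Woodinness of $\delta(\itr)$ cannot persist to an unbounded level of $\mn{n}{\M(\itr)}$; so there is a least $\beta$ at which it is definably killed, and the corresponding initial segment is the $\Q$-structure $\Q(\itr)$ in the sense of Definition \ref{qstru}. It is a proper initial segment of $\mn{n}{\M(\itr)}$ with $\delta(\itr)$ as a cutpoint, and therefore $\w_1$-iterable above $\delta(\itr)$. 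In particular $\Q(\itr)$ always exists, so the ``undefined'' clause of Definition \ref{qitr} never triggers and $\Sigma$ is genuinely a (total) $\w_1$-iteration strategy.

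Now let $\Lambda$ be any $\w_1$-iteration strategy for $\M$ and let $\itr$ be a limit-length tree played by $\Lambda$. The branch $b=\Lambda(\itr)$ gives a well-founded model $\M_b^\itr$ that end-extends $\M(\itr)$ (so $\M(\itr)\li\M_b^\itr$), which is again $n$-small; hence $\delta(\itr)$ is not Woodin in $\M_b^\itr$ and the branch $\Q$-structure $\Q(b,\itr)$ exists as an initial segment of $\M_b^\itr$, so is $\w_1$-iterable via the tail of $\Lambda$. Applying the Mitchell-Steel comparison theorem to the two $\w_1$-iterable $\Q$-structures $\Q(b,\itr)$ and $\Q(\itr)$---both end-extending $\M(\itr)$ with $\delta(\itr)$ a cutpoint and both definably killing its Woodinness---forces $\Q(b,\itr)=\Q(\itr)$.

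The hard part, and the final step, is branch uniqueness: I would invoke the zipper argument of Martin-Steel to conclude that an iteration tree admits at most one cofinal branch $c$ with $\Q(c,\itr)=\Q(\itr)$. Indeed, if two distinct such branches existed, interleaving the extenders along them, whose critical points are cofinal in $\delta(\itr)$, would witness that $\delta(\itr)$ is Woodin in the common $\Q$-structure, contradicting the defining property of a $\Q$-structure. Thus $b=\Lambda(\itr)$ is the unique branch with $\Q(b,\itr)=\Q(\itr)$, which is exactly $\Sigma(\itr)$ by Definition \ref{qitr}. As this holds at every limit stage, $\Lambda=\Sigma$, so the $\Q$-structure strategy is the unique $\w_1$-iteration strategy on $\M$.
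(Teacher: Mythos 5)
First, note what you are competing against: the paper does not prove this lemma at all --- its ``proof'' is the citation ``See \cite[Corollary 6, \S6]{jen9} or \cite[Corollary 6.14]{steel}.'' Your sketch reconstructs the standard argument behind those citations, and its back half is essentially right: granting that $\Q(b,\itr)$ and $\Q(\itr)$ both exist and are $\w_1$-iterable above $\delta(\itr)$, comparison identifies them (though you should say why comparison gives literal equality rather than mere compatibility after iteration: both $\Q$-structures are sound above $\delta(\itr)$ and project to $\delta(\itr)$, so neither side moves in the coiteration, and minimality of the Woodin-killing level then forces $\Q(b,\itr)=\Q(\itr)$), and the uniqueness of the branch with $\Q(b,\itr)=\Q(\itr)$ is exactly the Martin--Steel zipper argument, which the paper records separately as Lemma \ref{buniq}.

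The genuine gap is in the existence step, and it occurs twice: you claim that ``because $\M(\itr)$ is $n$-small, the Woodinness of $\delta(\itr)$ cannot persist to an unbounded level of $\mn{n}{\M(\itr)}$,'' and later that $\M_b^\itr$ ``is again $n$-small; hence $\delta(\itr)$ is not Woodin in $\M_b^\itr$.'' For $n\geq 1$ this inference is simply false: an $n$-small premouse can perfectly well satisfy ``there is a Woodin cardinal.'' Indeed, by Definition \ref{defsharps} every proper initial segment of $\mn{n}{x}$ is $n$-small, yet any such segment of height a cardinal of $M_n(x)$ above its least Woodin $\delta_0$ satisfies ``$\delta_0$ is Woodin,'' since all subsets of $\delta_0$ in $M_n(x)$ appear below $\delta_0^{+M_n(x)}$. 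So $n$-smallness of the branch model does not by itself produce a level killing the Woodinness of $\delta(\itr)$, and likewise the existence of a Woodin-killing level of $\mn{n}{\M(\itr)}$ requires a real argument --- this is precisely the content the paper outsources to \cite[Corollary 6.14]{steel}. The delicate point is that at a limit stage $\delta(\itr)$ could a priori remain Woodin in $\M_b^\itr$ (so that $\Q(b,\itr)$ does not exist and your comparison step never gets started) or in $\mn{n}{\M(\itr)}$ (so that $\Sigma$ is not total). Ruling this out needs, e.g., a coiteration of $\M_b^\itr$ with $\mn{n}{\M(\itr)}$ above $\delta(\itr)$, using that no subsets of $\delta(\itr)$ are added above $\delta(\itr)$, that $\mn{n}{\M(\itr)}$ has its $n$ Woodins above $\delta(\itr)$ and is minimal non-$n$-small, and soundness/projectum facts --- none of which is replaced by the bare appeal to $n$-smallness. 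As written, both the totality of the $\Q$-structure strategy and the existence of $\Q(b,\itr)$ for the rival strategy's branch are unjustified exactly at the stages where they matter.
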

\begin{proof}See \cite[Corollary 6, \S6]{jen9} or  \cite[Corollary 6.14]{steel}.\end{proof}

%\begin{theorem}[Welch] In $\LL[0^\sharp]$ there are reals $a, b$ and $c$ such that\begin{enumerate-(1)}\item $c$ is Cohen generic over both $\LL[a]$ and $\LL[b]$, and\item $c^\sharp\in\LL[a,b]$\end{enumerate-(1)}\end{theorem}

%\begin{proof}Work in $\LL[0^\sharp]$.  Let $a$ be a real coding a bijection between $\w$ and $\w_1^{\LL}$ so that $\w_1^{\LL[a]}$ is countable.  For example, pick $a\in\LL[h]$ where $h$ is $\Col(\w,\w_1)$-generic over $\LL$.  In $\LL[a]$, let $\seq{D_n}{n<\w}$ be an enumeration of all open dense sets of $\CC$ in $\LL$ and let $T=\{p_s: s\in{}^\w2\}$ be a tree of Cohen conditions such that $p_{s^\frown\seqs{0}}$ and $p_{s^\frown\seqs{1}}$ are incompatible for each $s\in{}^\w2$ and such that each $p_s\in D_{|s|}$.  Pick a Cohen generic $b$ over $\LL$ such that $p_s\in b$ if and only if $s$ is an initial segment of the characteristic function of $0^\sharp$.  Note that $0^\sharp\LL[b, T]\subset\LL[a,b]$.  Since $b$ is Cohen generic over $\LL$, $\w_1^{\LL[b]}$ is again countable in $\LL[0^\sharp]$, so we may fix a real $c$ there which is Cohen generic over $\LL[a]$ and $\LL[b]$.  Again, since $c$ is in a set generic extension of $\LL$, $c^\sharp\in\LL[0^\sharp]$ and therefore $c^\sharp \in\LL[a,b]$. \end{proof}

One important feature of having Woodin cardinals is the increasing amount of correctness between $\mn{n}{x}$ and the universe $\VV$.  In the next theorem, we use the term $n$-\emph{iterability} in the sense of \cite[Definition 1.1]{nd}. 
It is defined as $\omega\cdot n$-iterability (without truncations, as above) with a uniqueness condition for branches.  

\begin{lemma}[Woodin]\index{correctness!Woodin's Theorem} \label{corr} Let $n<\w$ and $x\in{}^\w\w$.  Suppose that $\N$ is a proper class $x$-premouse which is $n$-iterable and has $n$ Woodin cardinals which are countable in $V$.\:  Let $\varphi$ be a $\Sigma^1_{n+2}$-formula and let $a\in \N\cap{}^\w\w$.\:  Then: 
\begin{enumerate}[(1)]
\item If $n$ is even,\:  $\varphi(a)\iff \N\models \varphi(a)$.
\item If $n$ is odd and $\delta_0$ is the least Woodin cardinal in $\N$, we have\[\varphi(a)\iff \forc{\N}{\text{Col}(\w, \delta_0)} \varphi(a)\]Furthermore, in this case we have that $\N$ is $\bsi{n+1}$-correct in $\VV$.\end{enumerate} 
\end{lemma}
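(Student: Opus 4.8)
The plan is to argue by induction on $n$, letting the parity of $n$ select the clause. The argument is essentially Neeman's, so a careful write-up would cite \cite{nd} for the iterability bookkeeping; the two moving parts are (i) the homogeneously Suslin tree representations that the Martin--Steel construction builds inside $\N$ from its $n$ Woodin cardinals (and the measurables generated above them), and (ii) Woodin's genericity iterations, which realize a prescribed real of $\VV$ as generic over an iterate of $\N$ at the image of the least Woodin $\delta_0$. Throughout I use that iteration embeddings are elementary with critical points above $\w$, hence fix $a$ and all reals of $\N$.

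For the base case $n=0$, clause (1) asks for $\bsi{2}$-correctness. By Remark \ref{agree}, $\N$ and its iterate-out $M_0(x)=L[x]$ have the same reals, and $L[x]$ is $\bsi{2}$-correct by Shoenfield's absoluteness theorem (both $L[x]$ and $\VV$ are transitive, well-founded, and contain $a$, and the Shoenfield tree of a $\Sigma^1_2$ or $\Pi^1_2$ formula is computed identically in both). Since $\Sigma^1_2$-truth depends only on the reals, $\N$ computes $\varphi(a)$ correctly, giving clause (1).

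For the inductive step, write $\delta_0<\dots<\delta_{n-1}$ for the Woodin cardinals of $\N$. The structural fact driving the parity is that if $g$ is $\Col(\w,\delta_0)$-generic over $\N$, then above $\delta_0$ the model $\N[g]$ carries, over a real coding $g$ and $a$, an $(n-1)$-Woodin $(n-1)$-iterable premouse; since $n-1$ has the opposite parity, clause (2) for $n$ reduces to clause (1) for $n-1$. For the substantial direction, a $\Sigma^1_{n+2}$ statement $\exists y\,\psi(a,y)$ with $\psi\in\bpi{n+1}$ true in $\VV$ has a witness $y$; I would run a genericity iteration of $\N$, guided by its $n$-iteration strategy and the extender algebra at $\delta_0$, to an iterate $\N^{\ast}$ in which $y$ becomes generic at the image $\delta_0^{\ast}$. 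The $\bpi{n+1}$ tree representation computed in $\N^{\ast}$ is correct and absolute enough to certify $\psi(a,y)$ in $\N^{\ast}[y]$, so $\N^{\ast}$ forces $\exists y\,\psi(a,y)$ over $\Col(\w,\delta_0^{\ast})$, and pulling back along the (elementary, $a$-fixing) embedding gives the same over $\N$. For even $n$ one then removes the collapse: $\bsi{n+2}$ is a scaled pointclass, so the forced statement is already captured by a tree inside $\N$ and holds of $\N$ outright; for odd $n$ one keeps the collapse, obtaining clause (2), while the displayed $\bsi{n+1}$-correctness falls out of clause (1) applied to the $(n-1)$-Woodin tail. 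The converse direction is easier: elementarity of the iteration maps together with the upward absoluteness of the well-founded $\bpi{n+1}$ tree sends truth in $\N$ (or its collapse) up to $\VV$.

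I expect the genuine obstacle to be the genericity-iteration bookkeeping: one must verify that the $\Q$-structure guided strategy produces a well-founded iterate, that $\delta_0^{\ast}$ remains Woodin so that the extender algebra genuinely forces $y$ generic, and that $a$ is preserved throughout. This is exactly the point at which Neeman's weaker notion of $n$-iterability, rather than full $\w_1+1$-iterability, is calibrated to make the construction close, and it is also where the even-case removal of the collapse---the capturing of the $\bsi{n+2}$ tree inside $\N$ via the periodicity of the scale property---must be argued with care.
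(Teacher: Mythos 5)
Your outline is correct and follows the same route the paper itself relies on: the paper gives no argument for Lemma \ref{corr} beyond the citation \cite[Lemma 1.3.4]{ut}, and the proof there is exactly the induction you describe---Shoenfield absoluteness at the base, Woodin's genericity iterations via the extender algebra at the least Woodin to absorb a $V$-witness, the parity flip through $\Col(\w,\delta_0)$ reducing clause (2) for $n$ to clause (1) for $n-1$, and the Martin--Steel/Neeman tree representations handling the even-case removal of the collapse. The one soft spot is your appeal to Remark \ref{agree} in the base case, which strictly concerns the minimal mouse $x^\sharp$ rather than an arbitrary iterable $\N$ with no Woodin cardinals; for the lemma as stated one should instead note that the suitability built into Neeman's notion of $n$-iterability (active premouse, top extender with critical point above $\w_1^{L[x]}$) guarantees $\RR\cap L[x]\subseteq \N$, after which the Levy--Shoenfield witness argument goes through as you say.
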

\noindent For a proof of this result see \cite[Lemma 1.3.4]{ut}.    
\begin{remark}Suppose that $\N$ is a $x$-premouse as in Lemma \ref{corr}.  Then, from (1) we conclude that for even $n$, $\N\prec_{\bsi{n+2}}\hspace{-2mm}V$.  If we additionally assume that in $\N$, there is a measurable cardinal above the $n$ Woodin cardinals (and $\N$ is still a proper class), then $\N\prec_{\bsi{n+2}}\hspace{-2mm}V$ also holds for odd $n$. \end{remark}

\begin{remark} 
It follows that for any transitive model $M$ of $\ZF$ such that  for all reals $x\in M$, both $M_n^\#(x)^M$ and $M_n^\#(x)^V$ exist and are equal, we have $M\prec_{\Sigma^1_{2n+2}}V$. 
\end{remark}

\rm{There is a seminal result between inner model theory and descriptive set theory, linking projective determinacy level-by-level with the existence of inner models with Woodin cardinals. } 

\begin{theorem}[Martin, Harrington, Neeman, Woodin]\label{micedet}Let $n\in\w$.  Then, the following are equivalent:
\begin{enumerate}[(1)]\item For every $x\in{}^\w\w$, $\mn{n}{x}$ exists and is $\w_1$-iterable.
\item Every $\boldsymbol{\Pi}^1_{n+1}$ set is determined.\end{enumerate} \end{theorem}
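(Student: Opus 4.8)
The plan is to prove both implications by induction on $n$, treating the two directions together since the inductive hypothesis at level $n-1$ feeds into each half of the argument at level $n$. The organizing principle throughout is the correspondence between Woodin cardinals and homogeneously Suslin (hence determined) pointclasses: mice with $n$ Woodin cardinals furnish the tree representations that yield $\bpi{n+1}$-determinacy, while conversely determinacy at that level forces the relevant mice to exist and to be iterable. For the base case $n=0$ one must show that $x^\sharp$ exists and is $\w_1$-iterable for every real $x$ iff every $\bpi{1}$ set is determined. The forward direction is Martin's argument: the club of indiscernibles for $\LL[x]$ provided by $x^\sharp$ (Lemma \ref{sheq}) can be assembled into a homogeneous tree projecting to a prescribed $\bpi{1}(x)$ set, and homogeneously Suslin sets are determined. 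The converse is Harrington's theorem: applying $\bpi{1}$-determinacy to a suitably coded game yields a nontrivial elementary embedding $\LL[x]\to\LL[x]$, which by Lemma \ref{sheq} is equivalent to the existence of $x^\sharp$.

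For the inductive step in the direction (1)$\Rightarrow$(2), I would assume $\mn{n}{x}$ exists and is $\w_1$-iterable for all $x$ and derive $\bpi{n+1}$-determinacy following Neeman. The inner model $M_n(x)$ obtained by iterating the top extender of $\mn{n}{x}$ out of the universe (Definition \ref{defsharps}) carries $n$ Woodin cardinals, and Woodin's correctness result (Lemma \ref{corr}) ensures it computes $\bsi{n+1}$-truth correctly. The core of the argument is an auxiliary game in which, alongside the real being produced, the players iterate $\mn{n}{x}$ and build a genericity iteration that makes the emerging play generic over a mouse with Woodin cardinals. The Woodin cardinals supply, via Martin--Steel, the homogeneously Suslin representation needed to define a winning strategy, and $\w_1$-iterability---equivalently, the availability of a $\Q$-structure-guided strategy (Definitions \ref{qstru}--\ref{qitr})---is exactly what certifies the correct cofinal wellfounded branch at each limit stage of the iteration.

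For the converse inductive step (2)$\Rightarrow$(1), from $\bpi{n+1}$-determinacy I would recover the existence and iterability of $\mn{n}{x}$ by a core model induction in the style of Woodin. Determinacy at this level forces the failure of $n$-smallness to be witnessed inside a genuinely iterable mouse: one argues that the premouse produced by the background $K^{c}$-construction cannot be $n$-small, and that the $\Q$-structure-guided iteration strategy for it is total, the latter flowing from the determinacy of the games that select branches through iteration trees. Here the inductive hypothesis at level $n-1$ provides the mice needed to run the comparison (via the Mitchell--Steel comparison theorem) and to make sense of the construction in the first place.

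The main obstacle is the successor-level direction (1)$\Rightarrow$(2): manufacturing a winning strategy for a $\bpi{n+1}$ game out of iterable mice carrying $n$ Woodin cardinals. This is Neeman's theorem, and its delicacy lies in the interleaving of the game play with the genericity iteration of $\mn{n}{x}$, where the choice of the right cofinal branch at each limit stage depends on its $\Q$-structure and ultimately on $\w_1$-iterability. Because this machinery, together with Woodin's reverse direction, is developed in full in the cited literature, in the final write-up I would state the theorem with attributions to Martin, Harrington, Neeman and Woodin and point to those sources rather than reproduce the constructions.
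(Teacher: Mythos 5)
The paper itself gives no proof of Theorem \ref{micedet}: it only records the attributions (Martin and Harrington for the case $n=0$, Neeman for the forward direction for $n$ even, Woodin for $n$ odd and for the backward direction) and refers to the cited literature, in particular \cite{ut}, for a published proof. Your outline agrees with exactly those attributions and with the structure of the cited arguments, and your concluding plan---to state the theorem with attributions and point to the sources rather than reproduce the genericity-iteration and core-model machinery---is precisely what the paper does.
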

The case $n=0$ of theorem \ref{micedet} is due to D.A. Martin and L. Harrington (cf. \cite{md} and \cite{had}) and the forward direction is due to Itay Neeman \cite{nd} for $n$ even and to Hugh Woodin for $n$ odd.  The backward direction is a result announced by H. Woodin in the 80's, but until recently without a published proof. For a proof of this result see \cite{ut}.  

\section{Tree forcings and capturing}\label{sharpsreals}
 
\begin{definition}[{\cite[Definitions 4.8 and 4.13]{steelm}}]A \emph{mouse operator}\index{mouse operator} $\M$ is a function $x\mapsto \M(x)$ with $\dom{\M}=\HC{1}$ such that there is a sentence $\varphi$ in the language of relativized premice so that for every $x\in\HC{1}$:\begin{enumerate}[(i)]\item $\M(x)$ is a $\w$-sound, $\w_1+1$-iterable $x$-premouse,
\item $\M(x)\models \varphi$ and
\item for all $\xi\in\OR\cap\M(x)$, $\M(x)|_\xi\not\models\:  \varphi$.\end{enumerate}\end{definition}
Recall that if $\PP$ is a strongly arboreal forcing notion and $G$ is $\PP$-generic over $V$, then $G$ and $x_G=\bigcap\{[T]: T\in G\}$ are inter-definable, i.e. $V[G]=V[x_G]$ (see \ref{arb}).
\begin{definition}\label{pqprop}Let $\PP$ and $\QQ$ be strongly arboreal forcing notions in $\VV$.  Let $\M:x\mapsto \M(x)$ be a mouse operator defined on $\VV\cap\HC{1}$.  We say that $\QQ$ \emph{captures} $\PP$\index{capturing} over $\M$  if whenever $G$ is a $\PP$-generic filter over $V$, for $\leq_L$-cofinally many $a\in \RR\cap V[G]$ there is $x\in\RR\cap V$ such that the following holds:
\begin{enumerate}[(i)]
\item 
%$V[G]\models ``x_G$ is $\QQ$-generic over $\M(x)$", 
$\M(x)[x_G]$ is a $\QQ$-generic extension of $\M(x)$, 
\item $G\cap\M(x)[x_G]\in\M(x)[x_G]$ and 
\item $a\in\M(x)[x_G]$. 
%there is a $\PP$-name $\tau$ for $a$ such that $\tau\in\M(x)$, i.e. $\tau^H=a$ for some $\PP$-generic filter $H$ over $\M(x)$. 
\end{enumerate}
\end{definition}

We will see a number of examples to the definition above assuming closure under the sharp operators $\M^\sharp_n: x\mapsto \mn{n}{x}$ in the following sections. 

The next remark was suggested by the anonymous referee. 

\begin{remark} 
Any $\Sigma^1_2$ provably c.c.c. forcing $\PP$ captures itself over $\M^\sharp_n$, assuming $\M^\sharp_n$ is total on $\VV\cap\HC{1}$. 
To see this, note that $M^\sharp_n(x)$ is preserved by $\PP$ for all reals $x$  by \cite[Lemma 3.11]{phildis}. 
If $G$ is $\PP$-generic over $V$, $a\in \RR\cap V[G]$ and $\tau$ a nice name with $\tau^G=a$, then it is easy to see that $G\cap M^\sharp_n(\tau)$ is $\PP$-generic over $M^\sharp_n(\tau)$ and $a\in M^\sharp_n(\tau)[G\cap M^\sharp_n(\tau)]$. 
%Fix n ? ?. Assume (?x ? R)Mn#(x) exists. Then if P is ?12 and provably c.c.c. and if G is P-generic over V and if r ? RV[G], then there exists an x ? R such that if we let G? = G ? Mn#, then G? is PMn#(x)-generic over Mn#(x) and r ? Mn#(x)[G?]. Is this right?
\end{remark}

\subsection{$\CC$-capturing of Sacks and Silver forcing}\label{sacksc}

Let $\CC={}^{<\omega}2$ denote Cohen forcing.  \: We aim to show that Cohen forcing captures Sacks and Silver forcing over the operator $\M_n^\sharp$.  First, we define auxiliary forcings giving us perfect trees whose branches are Cohen reals. 

\subsubsection{Auxiliary construction for Sacks forcing}
\begin{definition}\label{amos}Suppose that $S\subset {}^{<\w}2$ is a perfect tree.  We define:\index{forcing notions!aax@$\Amo{\sa}{S}$}
\[\Amo{\sa}{S}=\{t\subset S: t\text{\: is a finite subtree of  $S$ such that $\Spl(t)$ is isomorphic to some $^n2$ }\}\]
ordered by end-extension, i.e. $t\leq s$ if and only if $t\supset s$ and $\Spl(t)\cap s\subseteq \Spl(s)\cup \Ter(s)$. 
%$\res{t}{|s|}=s$.\:  

%Given $s\in T$,  let $s^T(n)= s(m_n)$ where $m_n$ is the $n$-th splitting point of $T$ above $s$. 

\noindent If $S\in\sa$,\: let $\Theta_S$ the natural isomorphism between $\Spl(S)$ and ${}^{<\w}2$.

\end{definition}

\begin{proposition}\label{denseperfect}Let $S\in\sa$ and let $D$ be a dense subset in $\CC$.\:  Then, the set \[D_{S}=\{t\in\Amo{\sa}{S}: \forall s\in\Ter(t)(\Theta_S(s)\in D)\}\] is dense in $\Amo{\sa}{S}$.\end{proposition}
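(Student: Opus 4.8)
The plan is to transport the whole problem to Cohen forcing $\CC={}^{<\w}2$ along the isomorphism $\Theta_S$ and there exploit density of $D$ directly. Recall that $\Theta_S$ identifies the splitting nodes of $S$ with ${}^{<\w}2$, so a condition $t\in\Amo{\sa}{S}$---a finite subtree of $S$ isomorphic to some ${}^n2$---is carried to a finite subset $\Theta_S[t]\subseteq{}^{<\w}2$ that is order-isomorphic to ${}^{\leq n}2$, and each terminal node $s\in\Ter(t)$ is carried to one of the $2^n$ leaves $\Theta_S(s)$. Membership in $D_S$ is exactly the statement that all of these leaves lie in $D$, so it suffices to produce, below an arbitrary $t$, a stronger condition whose leaves all map into $D$.

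Given $t\in\Amo{\sa}{S}$ with terminal nodes $s_1,\dots,s_{2^n}$ and leaves $\sigma_i:=\Theta_S(s_i)$, I would grow $t$ upward by a single splitting level. For each $i$ and each $b\in\{0,1\}$, density of $D$ in $\CC$ yields some $\tau_i^b\in D$ with $\tau_i^b\supseteq\sigma_i^\frown b$; the two strings $\tau_i^0,\tau_i^1$ are automatically incomparable, since they diverge at coordinate $|\sigma_i|$. Setting
\[ t':= t\cup\{\Theta_S^{-1}(\tau_i^b) : i\leq 2^n,\ b\in\{0,1\}\}, \]
each leaf $s_i$ of $t$ acquires in $t'$ exactly the two immediate successors $\Theta_S^{-1}(\tau_i^0),\Theta_S^{-1}(\tau_i^1)$, so $t'$ is a subtree of $S$ isomorphic to ${}^{n+1}2$; it end-extends $t$ (its bottom $n$ levels are exactly $t$, so $\res{t'}{|t|}=t$ and $t'\leq t$); and by construction every terminal node of $t'$ maps under $\Theta_S$ into $D$. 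Hence $t'\in D_S$ with $t'\leq t$, which is the required density.

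The one point that needs care---and the only place the argument could go wrong---is the \emph{balance} constraint built into $\Amo{\sa}{S}$: a condition must be a \emph{complete} binary tree, so I am not free to push the various leaves of $t$ to different, independently chosen depths inside $D$. Extending by a single uniform level circumvents this, because I apply density separately above the two incomparable one-step extensions $\sigma_i^\frown 0$ and $\sigma_i^\frown 1$ of each leaf, keeping all new terminal nodes at the same level of the abstract tree. It then remains only to verify the routine facts that $\Theta_S$ and $\Theta_S^{-1}$ preserve the extension order---so that incomparabilities and the ``immediate successor'' relations transfer correctly between $\Spl(S)$ and ${}^{<\w}2$---and that every $\tau\in{}^{<\w}2$ has a splitting-node preimage in $S$, which is immediate from $\Theta_S$ being the natural isomorphism associated to the perfect tree $S$.
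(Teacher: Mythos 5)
Your proof is correct and takes essentially the same route as the paper: a direct one-step density argument that uses $\Theta_S$ to push the image of each terminal node of a given condition into $D$ and then closes off below in $S$. The only difference is minor: the paper extends each leaf of $t$ to a single element of $D$ and takes $\mathrm{Downcl}_S$ (leaving the branching structure of height $n$ unchanged), whereas you add one full splitting level with both successors chosen in $D$ --- a slightly more careful variant that also respects the literal ``isomorphic to some ${}^{n}2$'' reading of the conditions in $\Amo{\sa}{S}$.
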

\begin{proof}Let $t\in\Amo{\sa}{S}$ and let $s\in\Ter(t)$.\:  As $D$ is dense in $\CC$, there is $s^D\in D$ such that $\Theta_S(s)\subseteq s^D$.\:  We can define the tree $t'=\text{Downcl}_{S}(t\cup\{\Theta_S^{-1}(s^D):s\in\Ter(t)\})$.\:  It is clear that $t'\supset t$ and $\Ter(t')=\{\Theta_S^{-1}(s^D):s\in \Ter(t)\}$, therefore $t'\in D_{S}$ and $t'\leq t$ as required. \end{proof}

\begin{lemma}\label{perfect}Let $S\in\sa$ and suppose that $g$ is $\Amo{\sa}{S}$-generic over a transitive model $M$.\:  Then:
\begin{enumerate}[(i)]
\item $T_g=\bigcup g$ is a perfect subtree of $S$.
\item For every $x\in[T_g]$, $\Theta_S(x):=\bigcup_{n<\w}\Theta_{S}(\res{x}{n})$  is Cohen-generic over $M$.\end{enumerate}
\end{lemma}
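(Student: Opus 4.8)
The plan is to establish the two clauses separately. Clause (i) is a routine genericity argument, while clause (ii) rests entirely on Proposition \ref{denseperfect}, the only new work being the bookkeeping that relates a branch of $T_g$ to the terminal nodes of the finite approximations occurring in $g$. Throughout I record the basic fact that every terminal node of a condition $t\in\Amo{\sa}{S}$ is a splitting node of $S$ lying in $\Lev{|t|}{S}$, so that $\Theta_S$ is defined on it and carries $\Ter(t)$ onto ${}^{|t|}2$.

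For clause (i), first I would note that since $\Amo{\sa}{S}$ is ordered by end-extension, any two conditions in the filter $g$ cohere: if $t,t'\in g$ with $|t|\le|t'|$ then $\res{t'}{|t|}=t$. Hence $T_g=\bigcup g$ is a $\subseteq$-increasing union of finite subtrees of $S$ and is therefore itself a subtree of $S$. Perfectness then follows from genericity: for each $n$ the set $E_n=\{t\in\Amo{\sa}{S}:|t|\ge n\}$ is dense, because any condition of shape ${}^m2$ extends to one of shape ${}^{m+1}2$ by prolonging each of its terminal nodes---which are splitting nodes of $S$ since $S$ is perfect---to the two splitting nodes of $S$ immediately above it. Moreover $E_n\in M$, being definable from $S\in M$. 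Since $g$ meets every $E_n$, the tree $T_g$ has full splitting levels of every height, and as each condition is a full finite binary tree, every node of $T_g$ lies below a splitting node of $T_g$; this is exactly perfectness.

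For clause (ii), fix $x\in[T_g]$ and a dense $D\subseteq\CC$ with $D\in M$. By Proposition \ref{denseperfect} the set $D_S$ is dense in $\Amo{\sa}{S}$, and $D_S\in M$ since it is definable from the parameters $S$, $\Theta_S$ and $D$, all of which lie in $M$. Genericity of $g$ over $M$ therefore yields a condition $t\in g\cap D_S$. Now $t$ constitutes the first $|t|$ splitting levels of $T_g$, so the branch $x$ passes through exactly one terminal node $s_x\in\Ter(t)$, and $t\in D_S$ gives $\Theta_S(s_x)\in D$. The crucial identification is $\Theta_S(s_x)=\res{\Theta_S(x)}{|t|}$: indeed $s_x$ is precisely the $|t|$-th splitting node of $S$ met along $x$, and $\Theta_S$ is the order isomorphism carrying $\Lev{n}{S}$ onto ${}^n2$ for each $n$. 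Thus $\Theta_S(x)$ has an initial segment in $D$, so $\Theta_S(x)$ meets $D$; as $D\in M$ was arbitrary, $\Theta_S(x)$ is Cohen-generic over $M$.

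The main obstacle is bookkeeping rather than a conceptual difficulty: I must verify that $x\mapsto\Theta_S(x)$ is well defined, i.e. that the splitting nodes of $T_g$ met by $x$ are cofinal along $x$ so that $\bigcup_{n<\w}\Theta_S(\res{x}{n})$ is a genuine element of ${}^\w2$ and not a finite string, and that these nodes are carried by $\Theta_S$ to a $\subseteq$-increasing chain of binary strings. Cofinality is immediate from the perfectness proved in clause (i), since $x$ passes through a splitting node of $T_g$ at every height; the coherence of the images along the chain is exactly the statement that $\Theta_S$ is an order isomorphism of $(\Spl(S),\subseteq)$ onto $({}^{<\w}2,\subseteq)$. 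Once these are in place, the identification $\Theta_S(s_x)=\res{\Theta_S(x)}{|t|}$ and hence the meeting of $D$ follow at once.
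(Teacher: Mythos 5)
Your proof is correct and is essentially the paper's own argument: clause (i) via the dense sets of conditions of height at least $n$ together with coherence under end-extension, and clause (ii) by hitting the dense set of Proposition \ref{denseperfect} and locating the terminal node $s_x$ of $t\in g\cap D_S$ through which $x$ passes---the paper merely shortens the bookkeeping by reducing at the outset to $\Amo{\sa}{{}^{<\w}2}$ via the isomorphism $\Theta_S$, whereas you carry it out for general $S$. One local inaccuracy, which happily does not damage the argument: your recorded fact $\Ter(t)\subseteq\Lev{|t|}{S}$, and hence the ``crucial identification'' $\Theta_S(s_x)=\res{\Theta_S(x)}{|t|}$, can fail, since conditions in $\Amo{\sa}{S}$ are not required to occupy consecutive splitting levels of $S$ (indeed, the extensions built in the proof of Proposition \ref{denseperfect} have terminal nodes $\Theta_S^{-1}(s^D)$ at ragged splitting levels of $S$, so $s_x$ may be the $m$-th splitting node of $S$ along $x$ for some $m\geq |t|$); all your argument actually uses is that $\Theta_S(s_x)$ is an initial segment of $\Theta_S(x)$ lying in $D$, and that already follows from the order-isomorphism observation in your final paragraph, since the splitting nodes of $S$ along $x$ form a chain carried by $\Theta_S$ to a $\subseteq$-increasing chain of binary strings.
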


\begin{proof}Since $\Theta_S:\Spl(S)\to {}^{<\w}2$ is an isomorphism it is enough to prove the lemma for  $\amos= \Amo{\sa}{{}^{<\w}2}$.\:   For a tree $t\in \amos$, let $\Ter(t)=\{s\in t: \succc{t}{s}=\varnothing\}$.\:  \: % denote the set of terminal nodes of $t$.

  \begin{enumerate}[(i)]
\item  For each $n<\w$, the set $D^n=\{t\in \amos: \heigth{t}>n\}$  is open dense in $\amos$.\:   If $p\in T_g$, consider  the finite subtree $t=\res{T_g}{p}\in \amos$.  If $n=\heigth{t}$, take $s\in D^{n+2}\cap G$ extending $t$.   Then, $p\in s\smallsetminus \Ter(s)$ and, as $s$ is perfect, $p^\frown 0$, $p^\frown 1\in s\subseteq T_g$.     Therefore $T_g$ is an perfect tree.

\item Let $x\in[T_g]$ and let $D\in M$ be dense in Cohen forcing.\:  By \ref{denseperfect}, $D_{{}^{<\w}2}$ is dense in $\amos$, so there is $t\in D_{{}^{<\w}2}\cap G$.\:  Since $\res{T_g}{\heigth{t}}=t$ we have that $\res{x}{\heigth{t}}\in \Ter(t)\subseteq D$.\:  Therefore, $x$ is $\CC$-generic over $M$.\qedhere\end{enumerate}\end{proof}

\subsubsection{Auxiliary construction for Silver forcing}

\begin{definition}\label{amou}\index{forcing notions!abx@$\Amo{\UU}{T}$}Suppose that $T\subseteq {}^\w2$ is an uniform tree. \:  Let\[\Amo{\UU}{T}=\{t\subseteq T:\text{ $t$ is a finite uniform subtree of $T$}\}.\]
If $t\in\Amo{\UU}{T}$ and $n<\w$, let $\res{t}{n}=\{p\in t: \lh(p)\leq n\}$.  Since $t\in\Amo{\UU}{T}$ is finite and uniform, we can define $\heigth{t}$ as being the height of any branch through $t$; let also $\Ter(t)$ be the set of terminal nodes of $t$.  Finally, in $\Amo{\UU}{T}$ we stipulate $s\leq t$ if and only if $\heigth{s}\geq \heigth{t}$ and $\res{s}{\heigth{t}}=t$.\end{definition}

Since the poset $(\Amo{\UU}{T},\leq)$ is countable  and atomless, this forcing notion is equivalent to Cohen forcing.\:  

\begin{proposition}\label{densesilver}Let $T\in \UU$ and $D\subseteq \CC$ open dense.  Then, the set 
%\[D_T=\{t\in\Amo{\UU}{T}: \Ter(\Theta_T(t))\subseteq D\}\] 
\[D_T=\{t\in \Amo{\UU}{T}\mid \forall s\in \Ter(T) (\Theta_T(s)\in D) \} \] 
is dense in $\Amo{\UU}{T}$, where $\Theta_T: T\to {}^{<\w}2$ is the natural isomorphism between the splitting points of $T$ and the full binary tree. \end{proposition}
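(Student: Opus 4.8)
The statement to prove is Proposition~\ref{densesilver}: for $D \subseteq \CC$ open dense, the set $D_T = \{t \in \Amo{\UU}{T} : \Ter(\Theta_T(t)) \subseteq D\}$ is dense in $\Amo{\UU}{T}$. This is the Silver-forcing analogue of Proposition~\ref{denseperfect}, so I would mirror that proof, adapting it to the uniform setting where splitting is synchronized across nodes of the same length.

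The plan is as follows. First I would fix an arbitrary $t \in \Amo{\UU}{T}$ and aim to produce $t' \leq t$ with $t' \in D_T$. Since $t$ is a finite uniform subtree of $T$, its terminal nodes $\Ter(t)$ all lie at the same height $\heigth{t}$, and under the isomorphism $\Theta_T$ they correspond to the terminal nodes $\Ter(\Theta_T(t))$ of a finite uniform subtree of ${}^{<\w}2$. For each such terminal node $s \in \Ter(\Theta_T(t))$, since $D$ is dense in $\CC$ I may choose $s^D \in D$ with $s \subseteq s^D$. The essential new subtlety compared to the Sacks case is uniformity: I cannot extend different terminal branches by incomparable amounts independently, because the resulting tree must remain uniform (all nodes of a given length must split or not split together). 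So I would first extend all the chosen $s^D$ to a common length $N = \max\{\lh(s^D) : s \in \Ter(\Theta_T(t))\}$, using density of $D$ once more (or the fact that $D$ is open) to ensure the extensions still lie in $D$; here openness of $D$ is exactly what lets me pass to longer extensions while staying inside $D$.

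Next I would assemble the extended terminal data into a uniform subtree. Working on the ${}^{<\w}2$ side, I would take the uniform finite subtree $t^*$ of ${}^{<\w}2$ of height $N$ that refines $\Theta_T(t)$ and whose branches, restricted to the relevant splitting pattern, pass through the chosen members of $D$; then pull back via $\Theta_T^{-1}$ to obtain $t' \subseteq T$. Because $\Theta_T$ is an isomorphism onto ${}^{<\w}2$, $t'$ is a finite uniform subtree of $T$, so $t' \in \Amo{\UU}{T}$, and by construction $\heigth{t'} = N \geq \heigth{t}$ with $\res{t'}{\heigth{t}} = t$, giving $t' \leq t$. Finally, every terminal node of $\Theta_T(t')$ is one of the chosen extensions lying in $D$, so $\Ter(\Theta_T(t')) \subseteq D$ and hence $t' \in D_T$.

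The main obstacle is the uniformity constraint, which is absent in the Sacks argument where terminal nodes could be extended independently. The care needed is to arrange all terminal extensions to share a common height and common splitting pattern so that the downward closure remains a \emph{uniform} subtree of $T$; this is where openness of $D$ (rather than mere density) is used, to extend the $s^D$ to the common length $N$ while keeping them inside $D$. Once the uniform extension is correctly built, verifying $t' \leq t$ and $t' \in D_T$ is routine and parallels the Sacks case in Proposition~\ref{denseperfect}.
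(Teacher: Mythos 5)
Your plan correctly identifies uniformity as the new obstacle, but the construction you describe does not actually secure it, and this is a genuine gap rather than a presentational one. You choose, for each terminal node $s\in\Ter(\Theta_T(t))$, an extension $s^D\in D$ \emph{independently}, and then pad all the $s^D$ to a common length $N$ using openness of $D$. Equal length, however, is not what uniformity demands: for the downward closure to be a uniform subtree of $T$ (with no uncontrolled new splitting), every terminal node of $t$ must be extended by one and the same tail $\sigma$, i.e. the new terminal nodes must have the form $t_i{}^\frown\sigma$ for a single common $\sigma$ (or by a common pattern splitting at the same positions above every node). If your independently chosen $s^D$ disagree at some coordinate beyond $\heigth{t}$ --- say the tail above $t_0$ has a $0$ where the tail above $t_1$ has a $1$ --- then no further end-extension repairs this: openness lets you lengthen a string in $D$, it does not let you alter bits already committed. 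The smallest uniform tree containing your chosen nodes would then be forced to split at every coordinate of disagreement, and its terminal nodes would include ``mixed'' strings such as $t_0$ followed by $t_1$'s tail, which you have no reason to believe lie in $D$. So the assembly step (``take the uniform finite subtree $t^*$ \dots whose branches pass through the chosen members of $D$'') is exactly where the argument breaks down.

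The repair is to make the choices sequentially rather than independently, which is what the paper's proof does via the operation $a\oplus b$ (overwrite the initial segment of $b$ by $a$, keep $b$'s tail). Enumerate $\Ter(\Theta_T(t))=\{t_i: i<m\}$ and maintain a single growing tail: pick $s_0\in D$ extending $\Theta_T(t_0)$; at step $i$, form $\Theta_T(t_i)\oplus s_{i-1}$ --- the $i$-th terminal node wearing the tail accumulated so far --- and use \emph{density} of $D$ to extend this into some $s_i\in D$. At the end set $t_i^*:=\Theta_T(t_i)\oplus s_{m-1}$; since the tail of $s_{m-1}$ end-extends the tail of each $s_i$, every $t_i^*$ end-extends $s_i\in D$, so a single application of \emph{openness} puts all $t_i^*$ in $D$, and since all $t_i^*$ share one tail, the downward closure in $T$ is a uniform extension of $t$. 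Note the division of labour: density is invoked $m$ times, once per terminal node, each time re-entering $D$ after the overwrite destroys membership; openness is invoked once at the very end. Your proposal inverts this, using openness where density-after-overwriting is what is actually needed.
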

\begin{proof} For $a,b\in\CC$, if $m=|b|>|a|$ we define the sequence $a\oplus b\in{}^{m}2$ as follows:
\[a\oplus b (n)=\begin{cases} a(n) & \mbox{if  } n<|a|,\\
b(n)& \mbox{if  }|a|\leq n<m.\end{cases}\]    Let $t\in\Amo{\UU}{T}$ and suppose $\Ter(t)= \{t_i:i< m\}$.\:  We will construct inductively the terminal nodes of an extension for $t$ in $\Amo{\UU}{T}$:
\renewcommand{\labelitemi}{\checkmark}
\begin{enumerate}[(i)]\item   As $D$ is dense in $\CC$, pick $s_0\in D$ extending $\Theta_T(t_0)$.\:  
\item For $i<m$, pick $s_i\in D$  such that $ \Theta_T(t_i)\oplus s_{i-1}\subseteq s_i$.\:  
\item Finally, for each $i<m$ let $t_i^*:=\Theta_T(t_i)\oplus s_{m-1}$.
\end{enumerate}

Note that for each $i<m$, $\lh(t_i^*)=\lh(t_j^*)$.  Let $E=\{\Theta_T^{-1}(t_i^*): i<m\}$ and define $t'=\mathrm{dowcl}_{T}(E)$.\:  By construction, we have that $t'\in\Amo{\UU}{T}$ and $t'\leq t$.  Also,  since $E=\Ter(t')$ we have  $\Ter(\Theta_T(t'))=\Theta_T[\Ter(t')]=\Theta_T[E]=\{t_i^*: i<m\}\subseteq D$.  
\end{proof}

\begin{lemma}\label{uniform} Let $T\in \UU\cap M$, where $M$ is a transitive model of $\ZFC$, and suppose that $g$ is $\Amo{\UU}{T}$-generic over $M$.\:  Then, $T_g=\bigcup g$ is a Silver subtree of $T$ whose branches  are $\CC$-generic over $M$ modulo $\Theta_T$. \end{lemma}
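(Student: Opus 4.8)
The plan is to follow the template of Lemma \ref{perfect}, establishing two things: first that $T_g$ is a uniform (Silver) subtree of $T$, and second that every branch, transported along $\Theta_T$, is a Cohen real over $M$. Throughout I will use that the conditions in the generic filter $g$ are pairwise compatible and, being a filter, directed, so that for any finitely many $t_0,\dots,t_k\in g$ there is a single $u\in g$ with $u\leq t_i$ for all $i$; by the definition of the order on $\Amo{\UU}{T}$ this means $\res{u}{\heigth{t_i}}=t_i$, so the conditions in $g$ cohere and $T_g=\bigcup g$ satisfies $\res{T_g}{\heigth{t}}=t$ for every $t\in g$.

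For the first claim, $T_g\subseteq T$ is immediate since every condition is a subtree of $T$. To see that $T_g$ is infinite I note that the sets $D^n=\{t\in\Amo{\UU}{T}:\heigth{t}>n\}$ are dense (any finite uniform subtree of $T$ can be prolonged along $T$), so $T_g$ has unbounded height. For the uniformity condition, suppose $s,t\in T_g$ have the same length $\ell$ and $s^\frown i\in T_g$; choosing by directedness a single $u\in g$ containing $s$, $t$ and $s^\frown i$, we have $\heigth{u}>\ell$, so $s$ and $t$ are non-terminal nodes of the finite uniform tree $u$, and uniformity of $u$ forces $t^\frown i\in u\subseteq T_g$. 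Finally, to guarantee that $T_g$ is \emph{perfect} (and not merely infinite and uniform) I argue that the sets $E^n=\{t\in\Amo{\UU}{T}: t \text{ has at least } n \text{ splitting levels}\}$ are dense: given $t$, since $T$ is itself uniform and perfect there is a least splitting level of $T$ above $\heigth{t}$, common to all terminal nodes of $t$ by uniformity of $T$, and extending the terminal nodes of $t$ along $T$ up through that level yields a condition $t'\leq t$ with one more splitting level. Genericity then gives $T_g$ infinitely many splitting levels, so $T_g$ is a uniform tree.

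For the second claim, fix $x\in[T_g]$ and set $\Theta_T(x)=\bigcup_{n<\w}\Theta_T(\res{x}{n})$, recalling that $\Theta_T$ records the sequence of choices made at the splitting levels of $T$, so that $\Theta_T(x)\in{}^\w 2$. Let $D\in M$ be open dense in $\CC$. By Proposition \ref{densesilver} the set $D_T=\{t\in\Amo{\UU}{T}:\Ter(\Theta_T(t))\subseteq D\}$ is dense in $\Amo{\UU}{T}$, and since $T,\Theta_T,D\in M$ we have $D_T\in M$; hence there is $t\in g\cap D_T$. Because $\res{T_g}{\heigth{t}}=t$, the node $\res{x}{\heigth{t}}$ is a terminal node of $t$, so its image $\Theta_T(\res{x}{\heigth{t}})$ lies in $\Ter(\Theta_T(t))\subseteq D$. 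As this image is an initial segment of $\Theta_T(x)$, the real $\Theta_T(x)$ meets $D$; since $D$ was arbitrary, $\Theta_T(x)$ is Cohen-generic over $M$.

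The main obstacle, and the one point where the argument genuinely departs from the Sacks case, is the bookkeeping around $\Theta_T$ in the last paragraph. In Lemma \ref{perfect} one reduces to $S={}^{<\w}2$, where $\Theta_S$ is the identity and the terminal nodes of a condition are literally Cohen conditions; here $T$ is a proper uniform subtree, a condition $t$ may fail to split at some splitting level of $T$, and one must check that $\Theta_T$ measures length by the number of $T$-splitting levels crossed, so that $\Theta_T(\res{x}{\heigth{t}})$ is exactly the initial segment $\res{\Theta_T(x)}{j}$, where $j$ is the common length of the strings in $\Ter(\Theta_T(t))$ (namely the number of splitting levels of $T$ below $\heigth{t}$). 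Verifying that $\Theta_T$ is injective on the nodes of $T$ of a fixed length (so that $\Ter(\Theta_T(t))=\{\Theta_T(s):s\in\Ter(t)\}$ and these strings all have the same length $j$) is the only place requiring care; everything else is a routine transcription of the perfect-tree argument.
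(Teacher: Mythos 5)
Your proof is correct and follows essentially the same route as the paper: the genericity of branches is obtained exactly as in the paper's proof, by taking $t\in g\cap D_T$ with $D_T$ dense via Proposition \ref{densesilver} and noting that $\res{T_g}{\heigth{t}}=t$ forces $\Theta_T(\res{x}{\heigth{t}})\in\Ter(\Theta_T(t))\subseteq D$. The paper's proof omits the verification that $T_g$ is a uniform perfect subtree of $T$ (treating it as analogous to Lemma \ref{perfect}), so your density arguments for the sets $D^n$ and $E^n$, and your observation that $\Theta_T$ is injective on nodes of $T$ of fixed length because same-length nodes of a uniform tree can differ only at splitting levels, simply make explicit what the paper leaves implicit.
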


\begin{proof}We show that the branches of $T_g$ are $\CC$-generic over $M$ modulo the isomorphism $\Theta_T: \Spl(T)\to {}^\w2$.\:   Let $x\in[T_g]$ and let $D\in M$ be a dense subset of $\CC$.\:  As $D_T$ is dense in $\Amo{\UU}{T}$, pick $t\in D_T\cap g$.\:  If $\heigth{t}=n$ then $\Ter(\Theta_T(\res{x}{n}))\subseteq D$.\:  Therefore,\: $x$ is $\CC$-generic over $M$ modulo the isomorphism $\Theta_T$.   %$p\in T_G$ and consider the finite subtree $t_p\subseteq T_G$ determined by p;\: let $m=\heigth{t_p}$.\:   % For each $n<\w$, the set $D^n=\{t\in \Amo{\UU}_T: \heigth{t}>n\}$ is open dense in $\Amo{\UU}_T$.  Hence, there exists $t\in D^{m+2}$ such that $t\geq t_p$.
\end{proof}

Now we have all the necessary tools to prove our result.

\begin{lemma}\label{dense}\index{capturing!between!cax@$\CC$ and $\sa$}\index{capturing!between!cbx@$\CC$ and $\UU$}Let $n\in\w$ and suppose that $\VV\cap\HC{1}$ is closed under the sharp operator $x\mapsto \mn{n}{x}$.\: Then, $\CC$ captures Sacks and Silver forcing over $\M_n^\sharp$. % Let $\dot{y}$ a name for the $\sa$-generic real.  For every $S\in\sa$, there is some $T\leq S$ such that
%\[T\Vdash_\sa \dot{y} \text{\emph{\:is} $\CC$-\emph{generic over} }L[\sigma, S]\]
 \end{lemma}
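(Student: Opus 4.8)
The plan is to prove that $\CC$ captures Sacks and Silver forcing over $\M_n^\sharp$ by verifying the three conditions of Definition \ref{pqprop} directly, exploiting the auxiliary forcings $\amos$ and $\Amo{\UU}{T}$ constructed above together with the sharp operator. The central idea is that the auxiliary forcings produce, from a single Cohen-type generic, a perfect (respectively uniform) subtree all of whose branches are Cohen-generic over the relevant mouse, which lets us realize the Sacks/Silver generic real $x_G$ as a Cohen real over $\M_n^\sharp(x)$ for a suitable code $x$.

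First I would fix a $\PP$-generic filter $G$ over $V$, where $\PP$ is $\sa$ or $\UU$, with generic real $x_G$, and fix an arbitrary real $a\in\RR\cap V[G]$. By Proposition \ref{cnames} I may take a countable $\PP$-name $\tau\in\HHH{\w_1}$ with $\tau^G=a$; then I choose a real $x\in\RR\cap V$ coding both $\tau$ and enough of $G$ (for instance, coding the condition $S=\bigcup\{\,\stem(T):T\in G\,\}$-approximating tree together with $\tau$), and I invoke closure under $\M_n^\sharp$ to form $\M(x)=\mn{n}{x}$, which by hypothesis exists and is $\w_1$-iterable and contains $x$ and hence $\tau$. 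Since these codes can be chosen $\leq_L$-cofinally in $\RR\cap V[G]$, this meets the cofinality requirement of the definition. The key structural step is then to argue that $x_G$, viewed through the isomorphism $\Theta_S$, is Cohen-generic over $\M(x)$: working inside $V[G]$, the tree $T_g$ obtained from the auxiliary forcing (Lemmas \ref{perfect} and \ref{uniform}) has the property that every branch is $\CC$-generic over any ground transitive model, and because $\M(x)$ is a \emph{countable} transitive model with $x\in\M(x)$, every dense subset of $\CC$ lying in $\M(x)$ is met; this establishes clause (i), namely $V[G]\models$ ``$x_G$ is $\QQ=\CC$-generic over $\M(x)$''.

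For clause (ii), I would show $G\cap\M(x)[x_G]\in\M(x)[x_G]$ using that, since $\PP$ is strongly arboreal, $G=\{T\in\PP:x_G\in[T]\}$ is recoverable from $x_G$ together with the reinterpreted forcing $\PP^{\M(x)[x_G]}$; because $\PP$ is at most $\bdi{2}$ and $\M(x)$ is sufficiently correct (being a non-$n$-small mouse, it computes the tree relations absolutely enough by Lemma \ref{corr} and the agreement of reals in Remark \ref{agree}), the set $G\cap\M(x)[x_G]$ is definable inside $\M(x)[x_G]$ from $x_G$. For clause (iii), the $\PP$-name $\tau$ was arranged to lie in $\M(x)$ by construction, so $\tau^H=a$ for the appropriate $\PP$-generic $H$ over $\M(x)$ induced by $x_G$; here I use that a countable $\PP$-name coded into $x$ is genuinely an element of the mouse.

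The hard part will be clause (i): matching the genericity of $x_G$ over $V$ for $\PP$ with genericity over the \emph{mouse} $\M(x)$ for Cohen forcing. The subtlety is that $x_G$ is $\PP$-generic over the full universe $V$, not manifestly Cohen-generic over the small model $\M(x)$; bridging this requires the auxiliary-forcing machinery to transfer a $\PP$-generic branch into a branch through a perfect/uniform tree whose every branch is Cohen-generic, and then verifying that the density arguments of Propositions \ref{denseperfect} and \ref{densesilver} apply to dense sets coded in $\M(x)$. I expect the delicate point is ensuring the tree $T_g$ (or an equivalent object) is actually built over $\M(x)$ rather than over $V$, which forces a careful choice of the code $x$ so that the auxiliary forcing $\amos$ or $\Amo{\UU}{T}$ and its relevant dense sets are elements of $\M(x)$; once this is arranged, Lemmas \ref{perfect} and \ref{uniform} close the argument.
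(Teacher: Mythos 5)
There is a genuine gap, and it sits exactly where you flagged ``the hard part''. First, your choice of the code is incoherent: you propose to pick $x\in\RR\cap V$ coding $\tau$ together with ``enough of $G$'', e.g.\ coding $\bigcup\{\stem(T):T\in G\}$ --- but that union is the generic real $x_G$ itself, and neither it nor $G$ lies in $V$, so no ground-model real can code it. Second, and more substantively, your sketch never supplies the reason why $x_G$ is Cohen-generic (modulo the isomorphism) over the mouse, and your proposed fix --- arranging that the auxiliary forcing and its dense sets are ``elements of $\M(x)$'' so that $T_g$ is ``built over $\M(x)$'' --- inverts the actual mechanism. The paper's proof is a density argument carried out in $V$, in which the mouse parameter is found \emph{a posteriori} inside $G$: fix, via Proposition \ref{cnames}, a condition $P\in G$ and a countable name $\sigma\in\HC{1}$ with $P\forc{\PP}{\VV}\tau=\sigma$; then for any $S\leq P$ the mouse $\mn{n}{\sigma,S}$ is countable in $V$, so one can choose \emph{in $V$} a filter on $\Amo{\PP}{S}$ that is generic over $\mn{n}{\sigma,S}$, and Lemmas \ref{perfect} and \ref{uniform} turn it into a tree $T\leq S$ every branch of which is $\CC$-generic over $\mn{n}{\sigma,S}$ modulo $\Theta_S$. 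Hence the set $D_P$ of such $T$ (with existential witness $S$) is dense below $P$; since $G$ is generic over $V$ it meets $D_P$, and since $x_G\in[T]$ for every $T\in G$, clause (i) of Definition \ref{pqprop} holds with the mouse $\mn{n}{\sigma,S}$. The generic tree is built in $V$ over the countable mouse --- nothing is built inside the mouse --- and the code $(\sigma,S)$ is not fixed in advance but determined by which member of the dense set the filter $G$ happens to contain. Your plan of fixing $x$ up front cannot work, precisely because which mouse does the capturing depends on $G$.

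Once this is repaired, your clauses (ii) and (iii) go through, and more simply than you suggest: $G\cap\mn{n}{\sigma,S}[x_G]=\{T\in\PP\cap\mn{n}{\sigma,S}:x_G\in[T]\}$ belongs to $\mn{n}{\sigma,S}[x_G]$ by absoluteness of the statement ``$x_G\in[T]$'' between transitive models (no appeal to Lemma \ref{corr} or to correctness of the mouse is needed), and $\sigma\in\mn{n}{\sigma,S}$ holds trivially since $\sigma$ is a parameter of the mouse, so $r=\sigma^G\in\mn{n}{\sigma,S}[x_G]$.
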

 
 \begin{proof}%Since $(\sigma,S)^\sharp$ exists, we have that $\w_1^{L[\sigma,\, S]}<\w_1^\VV$.  Therefore, we can pick a $\Amo{\sa}{S}$-generic $T\in\VV$ over $L[\sigma, S]$.\:   By  \ref{perfect}, in $\VV$ holds that every branch in $T$ is $\CC$-generic over $L[\sigma, S]$ and $T\leq S$.\: Therefore,  if $G$ is $\sa$-generic over $\VV$ we also have 
%\[V[G]\models \text{every branch in $T$ is $\CC$-generic over }L[\sigma, S]\]
%In particular, as $\dot{y}$ is a Sacks real, if $T\in G$ we have
Let $G$ be $\PP$-generic over $V$ where $\PP$ is Sacks or Silver forcing and let $r\in{}^\w\w\cap V[G]$.   We want to see that there is some $x\in {}^\w\w\cap V$ such that $r\in\mn{n}{x}[x_G]$. 

Let $\tau$ be a $\PP$-name for $r$.   By \ref{cnames}, there is a condition $P\in G$ and a $\PP$-name $\sigma\in \HC{1}$ with $P\forc{\PP}{V}\tau=\sigma$.  
\begin{claim}\label{key1}The set\[D_P=\{T\in\PP: \exists S\: ( P\geq S\geq T \wedge \forall z\in[T] (\text{$z$ is $\CC$-generic over $\mn{n}{\sigma, S}$ mod $\Theta_S$})) \}\footnote{While one can let $S=P$ here,  the present formulation is closer to the proofs of Lemmas \ref{densemath1}, \ref{densesm} and \ref{la-mi} below. $S$ is needed there because a $\PP$-generic real is not necessarily $\PP$-generic over inner models.} \] 
is $\PP$-dense below $P$.\end{claim}  
\begin{proof} Let $S\leq P$.\:   Since $\mn{n}{\sigma, S}$ is countable, we can pick a $\Amo{\PP}{S}$-generic $T\in\VV$ over $\mn{n}{\sigma, S}$.\:   By  \ref{perfect} in the case of $\PP=\sa$ or \ref{uniform} if $\PP=\UU$, we have that $T\leq S$ and in $\VV$ holds that every branch through $T$ is $\CC$-generic over $\mn{n}{\sigma, S}$ modulo $\Theta_S$.  Therefore, $D_P$ is dense below $P$.\end{proof}

Let $x_G$ the $\PP$-generic real associated to $G\ni P$.  By Claim \ref{key1}, there is some $T\in D_P\cap G$.   Hence, there is some $S\geq T$ such that \[V[G]\models ``x_G \text{\: is\:  $\CC$-generic over\: }\mn{n}{\sigma,S}\text{ modulo $\Theta_S$''}.\]
Finally, since \[G\cap\mn{n}{\sigma, S}[x_G] =\{T\in \PP\cap \mn{n}{\sigma, S}: x_G\in[T]\}\in \mn{n}{\sigma,S}[x_G],\] 
it follows that $r=\sigma^G\in \mn{n}{\sigma,S}[x_G]$.
\end{proof}

\subsection{$\MM$-capturing of forcings in $\T$}

\rm {Since Mathias forcing does not add any Cohen reals\footnote{For a proof of this fact, see  \cite[Sect. 7.4.A]{judah}.} we cannot expect to construct an auxiliary forcing for $\MM$ like in \ref{amos} and \ref{amou}.    

\noindent Nevertheless we found an analogous result to Lemma \ref{dense} without the use of Cohen forcing.   For this, we will make use of the Mathias property established in \ref{mathiasproperty}.} 

\begin{lemma}\label{densemath1} Let $n\in\w$ and suppose that $\VV\cap\HC{1}$ is closed under the operator $\M_n^\sharp:x\mapsto \mn{n}{x}$.  Then, Mathias forcing captures itself over  $\M_n^\sharp$.\index{capturing!between!max@$\MM$ and itself}\end{lemma}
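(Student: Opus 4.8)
The plan is to mimic the structure of the proof of Lemma \ref{dense} ($\CC$-capturing of Sacks and Silver forcing), replacing the auxiliary Cohen-type construction---which is unavailable for $\MM$ since Mathias forcing adds no Cohen reals---by a direct appeal to the Mathias property of Lemma \ref{mathiasproperty}. Let $G$ be $\MM$-generic over $V$ and let $r \in {}^\w\w \cap V[G]$ be an arbitrary real in the extension. By Proposition \ref{cnames} we may fix a condition $p \in G$ and a countable name $\sigma \in \HC{1}$ with $p \forc{\MM}{V} \tau = \sigma$ for some $\MM$-name $\tau$ for $r$. Writing $p = (s,A)$ and setting $x = (\sigma, s, A)$ (coded as a single real), the goal is to verify the three clauses of Definition \ref{pqprop} with $\QQ = \PP = \MM$ and the mouse operator $\M_n^\sharp$, exhibiting the $\MM$-generic real $m_G$ as $\MM$-generic over $\mn{n}{x}$ and recovering $r$ inside $\mn{n}{x}[m_G]$.

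The key move is to establish, by analogy with Claim \ref{key1}, that the set
\[
D_p = \{(t,B) \in \MM : (t,B) \leq p \text{ and } B \text{ is } \MM\text{-generic over } \mn{n}{\sigma,t,B'} \text{ for the relevant side condition}\}
\]
is dense below $p$. The crucial point is that since $\mn{n}{\sigma,t,C}$ is countable, one can find in $V$ an infinite set $B \subseteq C$ that diagonalizes against all dense subsets of $\MM$ lying in this countable mouse---equivalently, $B$ is (the range of) an $\MM$-generic real over $\mn{n}{\sigma,t,C}$, where the countability of the mouse makes such a generic available in $V$ by the usual diagonalization. Then by the Mathias property (Lemma \ref{mathiasproperty}) every infinite subset of an $\MM$-generic real is again $\MM$-generic; applied relative to the countable mouse, this guarantees that the pair $(t,B)$ generates a generic filter over $\mn{n}{\sigma,t,C}$ and that $m_G$ restricted appropriately remains generic over the mouse. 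Density below $p$ follows since we can carry out this construction below any condition extending $p$.

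Once density is in hand, I pick $(t,B) \in D_p \cap G$ and let $m_G$ be the $\MM$-generic real attached to $G$. Since $m_G \setminus t \subseteq B$ and $m_G$ is an infinite subset of the generic, the Mathias property yields that $m_G$ is $\MM$-generic over $\mn{n}{x}$ (clause (i)), so $V[G] \models ``m_G$ is $\MM$-generic over $\mn{n}{x}$''. Clause (ii) follows exactly as in Lemma \ref{dense}: the filter
\[
G \cap \mn{n}{x}[m_G] = \{(u,C) \in \MM \cap \mn{n}{x} : u \subseteq m_G \subseteq u \cup C\}
\]
is definable from $m_G$ inside $\mn{n}{x}[m_G]$, hence an element of it. Finally, clause (iii) holds because $\sigma \in \mn{n}{x}$ by construction and $r = \sigma^G$, so $r \in \mn{n}{x}[m_G]$ via the restricted generic. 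Doing this for $\leq_L$-cofinally many reals $r$ completes the verification.

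The main obstacle I anticipate is the precise formulation and justification of the density claim: one must arrange that the diagonalizing infinite set $B$ genuinely induces $\MM$-genericity over the countable mouse, and confirm that the Mathias property transfers to the relativized setting over $\mn{n}{x}$ rather than over $V$. In particular, care is needed to ensure that $\MM^{\mn{n}{x}}$ agrees with $\MM$ on the relevant conditions (so that dense sets of $\MM$ coded in the mouse are correctly diagonalized) and that the side-condition bookkeeping---tracking how $t$, $B$, and the parameter real interact---is consistent throughout the fusion-free argument, since unlike the Sacks/Silver case there is no perfect-tree auxiliary forcing to absorb these choices.
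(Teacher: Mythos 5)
Your proposal is correct and follows essentially the same route as the paper's proof: fix a countable name $\sigma$ and $p\in G$ by properness, then prove density below $p$ of the set of conditions $q=(s, x_H\setminus s)$ where $x_H$ is the Mathias real of a filter $H\in V$ generic over the countable mouse $\mn{n}{\sigma,p'}$ (available by countability), so that every $z\in[q]_\MM$ is an infinite subset of $x_H$ and hence generic over the mouse by Lemma \ref{mathiasproperty}, with clauses (ii) and (iii) of Definition \ref{pqprop} verified exactly as you describe. The bookkeeping you worry about is resolved precisely as you anticipate: the witness condition $p'=(s,A)$ is existentially quantified in $D_p$, and the Mathias property is applied relativized to the countable mouse, just as in the paper.
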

 
 \begin{proof}Let $G$ be $\MM$-generic over $V$ and let $r\in{}^\w\w\cap \VV[G]$.  Given a $\MM$-name $\tau$ for $r$, by properness, there exists a condition $p\in G $ and a countable $\MM$-name $\sigma$ such that $p\forc{\MM}{\VV}\tau=\sigma$. 
 
For $q=(s,A)\in \MM$, write $[(s,A)]_\MM:=\{X \in[\omega]^\omega \mid s\subseteq X \subseteq s\cup A\}$.

\begin{claim}The set \[D_p=\{q:\exists p' (p\geq p'\geq q \wedge \forall {z} \in [q]_\MM\: (z \text{ is $\MM$-generic over $\mn{n}{\sigma, p'}$}))\}\footnote{Recall that for a Mathias condition $q=(t,B)$, $[q]_\MM=\{z\in [\w]^\w: t\subset z\subset t\cup B\}$.}\] is $\MM$-dense below $p$.\end{claim} 
\begin{proof}  Let $p'\leq p$.  Since  $\mn{n}{\sigma, p'}$ exists in $\VV$ and it is countable, we can pick a $\MM$-generic filter $H\in\VV$ over $\mn{n}{\sigma,p'}$, containing the condition $p':=(s, A)$.\:   In this case, the Mathias real $x_H$ satisfies $s\subset x_H\subset s\cup A$. \:  Let $q=(s, x_H\smallsetminus s)\in\MM$.\:  Since $x_H\smallsetminus s\subset A$ we have $q\leq p'$. \:  Now, if $z\in [q]_\MM$ we have that $s\subset z\subset s\cup x_H\smallsetminus s = x_H$.  Thus, by Lemma \ref{mathiasproperty} we have that $z$ is also $\MM$-generic over $\mn{n}{\sigma,p}$.  Thus, $q\in D_p$.
\end{proof}
Since $G\ni p$, by the claim above there is some $q\in D_p\cap G$. If $p'\leq p$ is a condition witnessing that $q\in D_p$, as $x_G\in[q]_\MM$ we have\[V[G]\models \text{``$x_G$ is $\MM$-generic over $\mn{n}{\sigma,p'}$'' }.\]
\noindent Finally, since $G=\{q\in\MM: x_G\in [q]_\MM\}$ 
%\exists n<\w (\res{x_G}{n}\subset s)\}$ 
is computable inside $\mn{n}{\sigma,p'}[x_G]$ we have that $G\cap\mn{n}{\sigma,p'}[x_G]\in \mn{n}{\sigma,p'}[x_G]$.  Therefore, since $r=\sigma^G$, we have that $r\in\mn{n}{\sigma,p'}[x_G]$.
 \end{proof}

\subsubsection{Sacks and Silver reals viewed as local Mathias reals}

 \begin{lemma}\label{densesm} Let $n<\w$ and suppose that for every $x\in{}^\w\w$, $\mn{n}{x}$ exists.  Then, $\MM$ captures Sacks and Silver forcing over the mouse operator $\M^\sharp_n$.\index{capturing!between!sax@$\MM$ and $\sa$}\index{capturing!between!silx@$\MM$ and $\UU$}\end{lemma}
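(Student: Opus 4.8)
The plan is to follow the blueprint of Lemma \ref{dense}, replacing the auxiliary Cohen-tree forcings $\Amo{\sa}{S}$ and $\Amo{\UU}{T}$ by a direct construction that feeds a single Mathias-generic real over the relevant mouse into the splitting structure of the tree, exploiting the Mathias property (Lemma \ref{mathiasproperty}). Fix $\PP\in\{\sa,\UU\}$, let $G$ be $\PP$-generic over $V$ and let $r\in{}^\w\w\cap V[G]$; as in Lemma \ref{dense} I would prove the capturing clauses for every such $r$, which is stronger than for $\leq_L$-cofinally many. Choosing a $\PP$-name $\tau$ for $r$ and applying Proposition \ref{cnames}, I would first pass to a condition $P\in G$ and a countable name $\sigma\in\HC{1}$ with $P\forc{\PP}{V}\tau=\sigma$, so that for any $S\leq P$ the pair $\langle\sigma,S\rangle$ codes a real and $\mn{n}{\sigma,S}$ is available and countable by hypothesis.

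The heart of the argument is the density, below $P$, of the set
\[D_P=\{T\in\PP:\exists S\,(P\geq S\geq T\ \wedge\ \forall z\in[T]\ (\text{$\Theta_S(z)$ is $\MM$-generic over $\mn{n}{\sigma,S}$}))\}.\]
Given $S\leq P$, set $N=\mn{n}{\sigma,S}$. Since $N$ is countable, I can pick in $V$ a real $y\subseteq\w$ that is $\MM$-generic over $N$, and split $y=y_0\sqcup y_1$ into two infinite pieces. Working in the coordinate system provided by $\Theta_S\colon[S]\to{}^\w2$, I would then take the subtree $T\leq_\PP S$ whose branches are exactly those $z\in[S]$ with $y_0\subseteq\Theta_S(z)\subseteq y$; concretely, in the $\Theta_S$-coordinate this is the tree with the coordinates in $y_0$ fixed to $1$, those outside $y$ fixed to $0$, and those in $y_1$ left free. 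As $y_1$ is infinite this is a perfect --- and, since only whole splitting levels are fixed or freed, in the Silver case also uniform --- subtree of $S$, hence a genuine condition of $\PP$ below $S$. Every branch $z\in[T]$ then has $\Theta_S(z)$ an \emph{infinite} subset of $y$, so by the Mathias property (Lemma \ref{mathiasproperty}) each $\Theta_S(z)$ is $\MM$-generic over $N$; thus $T\in D_P$, and density follows.

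With density in hand, the conclusion runs exactly as in Lemma \ref{dense}. Picking $T\in D_P\cap G$ with witness $S$, the generic real satisfies $V[G]\models$ ``$\Theta_S(x_G)$ is $\MM$-generic over $\mn{n}{\sigma,S}$'', giving clause (i) of Definition \ref{pqprop} (modulo the isomorphism $\Theta_S$, which lies in $N$ since $S\in N$, so that $N[x_G]=N[\Theta_S(x_G)]$). Clause (ii) holds because $G\cap N[x_G]=\{T'\in\PP\cap N:x_G\in[T']\}\in N[x_G]$, using strong arboreality and the absoluteness of $\PP$ for $N$; clause (iii) holds because $\sigma\in N$ is a $\PP$-name with $r=\sigma^{G\cap N}\in N[x_G]$. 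I expect the one genuinely new point --- and the main thing to get right --- to be the subtree construction: one must guarantee that \emph{every} branch, not merely the generic one, encodes an infinite subset of $y$ (which is why both $y_0$ and $y_1$ are taken infinite, rather than merely requiring $\Theta_S(z)\subseteq y$), since the Mathias property applies only to infinite subsets; verifying that the same recipe yields a uniform tree in the Silver case is then routine.
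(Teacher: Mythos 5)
Your proof is correct and runs on the same engine as the paper's own argument---pick a Mathias-generic real over the countable mouse $\mn{n}{\sigma,S}$, pull a tree of its subsets back into $S$ via $\Theta_S$, and invoke the Mathias property (Lemma \ref{mathiasproperty})---but your implementation of the key density claim differs in one genuine respect. The paper takes the full Mathias tree $U_{(\varnothing,u)}$ of Proposition \ref{mathiastree}, whose branches code \emph{all} subsets of $u$, including finite ones; since the branches coding finite sets are not Mathias-generic, the paper cannot assert genericity branchwise and instead formulates $D_P$ with a forcing clause ($T\forc{\sa}{V}\dot{x}$ is $\MM$-generic over $\mn{n}{\sigma,S}$ mod $\Theta_S$), arguing that the \emph{generic} branch codes an infinite set precisely because $x_G\notin V$. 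You instead split $y=y_0\sqcup y_1$ into two infinite pieces and require $y_0\subseteq\Theta_S(z)\subseteq y$, which forces every branch of your tree to code an infinite subset of $y$; this buys you the pointwise formulation $\forall z\in[T]$ exactly parallel to Lemma \ref{dense}, eliminating any appeal to forcing statements inside the density set, at the cost of a slightly more bespoke tree (a uniform-style tree even in the Sacks case---which is harmless, and which is also what makes your Silver case go through verbatim). Your diagnosis of where care is needed---the Mathias property applies only to \emph{infinite} subsets---is exactly the point the paper handles via the $x_G\notin V$ argument, and both resolutions are sound; your verifications of clauses (i)--(iii) of Definition \ref{pqprop} (including computing $G\cap\mn{n}{\sigma,S}[x_G]$ from $x_G$ and evaluating $\sigma$ there) match the paper's.
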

 
 \begin{proof}We give the argument for Sacks forcing. \: Throughout this proof, we identify $\MM$ with its tree version $\MM'_{\text{tree}}$ (see \ref{mathiastree}).  Assume that $G$ is $\sa$-generic over $V$ and  let $r\in{}^\w\w\cap V[G]$.  Then, given  $\tau$ a $\sa$-name for $r$, there is a condition $P\in\sa$ and some $\sigma\in\HC{1}$  such that $P\forc{\sa}{\VV} \sigma=\tau$.  

\begin{claim}If $\dot{x}$ is a $\sa$-name for a Sacks generic real, let \[D_P=  \{T: \exists S (P\geq_\sa S\geq_\sa T\wedge (T\forc{\sa}{V} \dot{x} \text{ is $\MM_{\text{tree}}$-generic over $\mn{n}{\sigma, S}$ modulo $\Theta_S$}))\}.\]  Then, $D_P$ is $\sa$-dense below $P$.\end{claim}

\begin{proof} Let  $S\leq_\sa P$ and let $\Theta_S: \Spl(S)\to {}^{<\w}2$ be the canonical isomorphism.\:  Since $\mn{n}{\tau,S}$ exists and it is countable,  we can pick an $\MM$-generic real $u\in\VV$ over $\mn{n}{\tau,S}$.\: Let $\bar{T}=U_{(\varnothing, u)}\in\MM'_{\text{tree}}$ be the subtree of ${}^{<\w}2$ determined by the characteristic function of $u$ as in \ref{mathiastree}.   Finally let $T=\text{downcl}_S(\Theta_S^{-1}[\bar{T}])$.\:  Observe that $T\in\sa$ and $T\leq_\sa S\leq_\sa P$.  Suppose that $x$ is Sacks generic over $V$ below the condition $T$.  Then, take \[\bar{x}=\bigcup\{\Theta_S(\res{x}{n}):n\in\w,\: \res{x}{n}\in\Spl(S)\}.\]
By construction, $\bar{x}\in [\bar{T}]$, hence $v=\{n\in\w: \bar{x}(n)=1\}\subset u$.  Note that $v$ is infinite.  Otherwise, $\bar{x}\in \VV$ and therefore $x\in \VV$ contradicting the assumption that $x$ is $\sa$-generic over $\VV$.   
Thus, by fact \ref{mathiasproperty}, $v$ is $\MM$-generic over $\mn{n}{\tau, S}$, therefore $\bar{x}$ is $\MM'_{\text{tree}}$-generic over $\mn{n}{\tau, S}$. 
Since $x$ and $\bar{x}$ are interdefinable via $\Theta_S$, we conclude that \[T\forc{\sa}{\VV} \dot{x}\text{\:  is $\MM'_{\text{tree}}$-generic over }\mn{n}{\tau,S}\text{ modulo $\Theta_S$}.\qedhere\]
\end{proof}
Let $T\in D_P\cap G$ and let $S\in\sa$ witnessing $T\in D_P$.  Thus \[V[G]\models x_G\text{ is $\MM'_{\text{tree}}$-generic over $\mn{n}{\sigma,S}$ modulo $\Theta_S$}.\]In \ref{dense}, we saw already that $G\cap\mn{n}{\sigma,S}[x_G]\in\mn{n}{\sigma,S}[x_G]$.  In particular, since $r=\sigma^G$ it follows that $r\in\mn{n}{\sigma,S}[x_G]$.
 \end{proof}
  
\subsubsection{Laver and Miller reals viewed as local Mathias reals}

We will use the following fact: 

\begin{fact}\label{codeslaver}  Suppose that  $u\subseteq\omega$ is an infinite set.\:   Then,  $T_{(\varnothing,u)}=\{t\in{}^{\uparrow<\w}\w: \ran{t}\subset u\}\in\MM_{\text{tree}}$ is a Laver tree with stem $\varnothing$.\end{fact} 

%\begin{proof}Suppose $\{u_n:n\in\w\}$ is a strictly increasing enumeration of $u$ and let $t\in T_{(\varnothing,\, u)}$.  Since $\ran{t}\subset u$, there is some $m_0<\w$ such that $\max(\ran{t})=u_{m_0}$.  Note that, for every $i>m_0$, $t^\frown \langle u_{m_i}\rangle\in T_{(\varnothing,\, u)}$, so $T_{(\varnothing,\, u)}$ is a Laver tree.

%\noindent    Now, suppose that $s=\stem(T_{(\varnothing,\, u)})$.  Then, for every $t\in T_{(\varnothing,\, u)}$, $s\sqsubseteq t$ or $t\sqsubseteq s$.  Since for every $i<\w$, $\langle u_i\rangle\in T_{(\varnothing,\, u)}$, we have that $s\subset \langle u_i\rangle\cap \langle u_j\rangle$ for $i\neq j$.   Thus, $s=\varnothing$.\qedhere

%\end{proof} 

 \begin{lemma} \label{la-mi}

 Let $n<\w$ and suppose that for every $x\in{}^\w\w$, $\mn{n}{x}$ exists.  Then $\MM$ captures Laver forcing over the mouse operator $\M_n^\sharp$.\index{capturing!between!lavx@$\MM$ and $\LLL$}
\end{lemma}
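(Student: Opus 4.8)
The plan is to follow the same template established in Lemma \ref{densemath1} and Lemma \ref{densesm}, adapting the density argument to Laver forcing. Let $G$ be $\LLL$-generic over $V$ and let $r\in{}^\w\w\cap V[G]$. By Proposition \ref{cnames}, fix a condition $P\in\LLL$ and a name $\sigma\in\HC{1}$ with $P\forc{\LLL}{\VV}\sigma=\tau$, where $\tau$ is a $\LLL$-name for $r$. As before, the heart of the matter is to prove that the set
\[
D_P=\{T:\exists S\,(P\geq_\LLL S\geq_\LLL T\wedge (T\forc{\LLL}{\VV}\dot{x}\text{ is }\MM\text{-generic over }\mn{n}{\sigma,S}))\}
\]
is $\LLL$-dense below $P$, where $\dot{x}$ names the Laver real. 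Once this is shown, the conclusion follows exactly as in Lemma \ref{densesm}: pick $T\in D_P\cap G$ with witness $S$, conclude that $x_G$ is $\MM$-generic over $\mn{n}{\sigma,S}$, observe that $G\cap\mn{n}{\sigma,S}[x_G]$ is computable from $x_G$ inside $\mn{n}{\sigma,S}[x_G]$, and deduce $r=\sigma^G\in\mn{n}{\sigma,S}[x_G]$.

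The key structural input is Proposition \ref{codeslaver}, which identifies, for any infinite $u\subseteq\w$, the Mathias tree $T_{(\varnothing,u)}=\{t\in{}^{\uparrow<\w}\w:\ran{t}\subset u\}$ as a Laver tree with stem $\varnothing$. So to prove density below $P$, I would take an arbitrary $S\leq_\LLL P$ and, since $\mn{n}{\sigma,S}$ is countable in $\VV$, pick in $\VV$ a Mathias-generic real $u$ over $\mn{n}{\sigma,S}$. The plan is then to transfer $u$ into a Laver tree refining $S$: above the stem of $S$, I would thin out the splitting so that at each splitting node the permitted successors are indexed according to the increasing enumeration of $u$, producing a Laver tree $T\leq_\LLL S$ whose branches code infinite subsets of $u$. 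Since an infinite subset of a Mathias-generic real is again Mathias-generic by the Mathias property (Lemma \ref{mathiasproperty}), every branch of $T$ will read off a Mathias-generic real over $\mn{n}{\sigma,S}$, which forces that the Laver generic $\dot{x}$ is $\MM$-generic over that mouse.

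The main obstacle, and the point where the Laver case genuinely differs from Sacks and Silver, is setting up the correct correspondence between a Laver real $x$ (an element of ${}^\w\w$, or equivalently the set $\ran{x}\in[\w]^\w$) and a Mathias real. In the Sacks and Silver arguments one uses the isomorphism $\Theta_S$ between splitting nodes and ${}^{<\w}2$ to convert branches into Cohen or Mathias reals; here the natural move is instead to use the range of the generic branch directly, exploiting that a Laver tree is (after the coding of Proposition \ref{codeslaver}) already a Mathias tree on ${}^{\uparrow<\w}\w$. I would therefore arrange the refinement so that $T=T_{(s,u')}$ for an appropriate finite stem $s$ and an infinite $u'\subseteq u$ determined by $S$, and verify that for a Laver-generic $x$ through $T$ the set $\ran{x}$ is an infinite subset of $u$, hence Mathias-generic over $\mn{n}{\sigma,S}$ by \ref{mathiasproperty}. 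The bookkeeping to guarantee that this refinement lies below $S$ (not merely below $P$) while remaining a legitimate Laver tree is the delicate part; once it is in place, the forcing statement $T\forc{\LLL}{\VV}\dot{x}$ is $\MM$-generic over $\mn{n}{\sigma,S}$ and thus $T\in D_P$, completing the density argument and hence the lemma.
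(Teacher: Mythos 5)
Your overall template matches the paper's proof: reduce to the density of $D_P$ below $P$, pick a Mathias-generic real $u\in\VV$ over the countable mouse $\mn{n}{\sigma,S}$, invoke Proposition \ref{codeslaver} and the Mathias property \ref{mathiasproperty}, and close the argument as in Lemma \ref{densesm}. But there is a genuine gap at exactly the step you flag as ``the delicate part,'' and it is not mere bookkeeping: your plan to realize the refinement literally as a Mathias tree $T=T_{(s,u')}\leq_\LLL S$, so that $\ran{x}$ is an infinite subset of $u$ for the Laver-generic $x$, fails for general $S$. A Laver condition $S$ need not contain \emph{any} tree all of whose branches have range inside a fixed infinite set, let alone one of the special form $T_{(s,u')}$. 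Concretely, fix a recursive partition $\{A_k:k<\w\}$ of $\w$ into infinite pieces and an injective coding $c:{}^{\uparrow<\w}\w\to\w$ of nodes, and let $S$ be the Laver tree with stem $\varnothing$ in which the successors of a node $t$ are exactly the elements of $A_{c(t)}$ above $\max(\ran{t})$. A Laver tree $T\leq_\LLL S$ whose branches all have range contained in $u$ would force $u\cap A_{c(t)}$ to be infinite for infinitely many distinct codes $c(t)$. However, the set of Mathias conditions $(s,A)$ with either $A\subseteq A_k$ for some $k$, or $|A\cap A_k|\leq 1$ for all $k$, is dense (and lies in $\mn{n}{\sigma,S}$, since $S$ does), so every Mathias-generic $u$ over the mouse is either almost contained in a single block or meets every block finitely; in either case no such $T$ exists, no matter which generic filter you choose over the countable mouse.

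The repair is precisely what the paper does, and what your \emph{first} construction sketch (``permitted successors indexed according to the increasing enumeration of $u$'') already brushes against before you discard it: do not use the values of the branch, but relabel. Since every node of $S$ above $\stem(S)$ has infinitely many successors, there is a canonical isomorphism $\Theta_S$ between $S$ above its stem and ${}^{\uparrow<\w}\w$, and one sets $T=\mathrm{downcl}(\Theta_S^{-1}(T_{(\varnothing,u)}))$, which is always a Laver subtree of $S$. For $x$ Laver-generic below $T$, the relabelled branch $\bar{x}=\bigcup_{i\geq|\stem(S)|}\Theta_S(\res{x}{i})$ lies in $[T_{(\varnothing,u)}]$, so $\ran{\bar{x}}$ is an infinite subset of $u$ (infinite because $x\notin\VV$), hence Mathias-generic by Lemma \ref{mathiasproperty}. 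Accordingly, your displayed set $D_P$ must read ``$\dot{x}$ is $\MM$-generic over $\mn{n}{\sigma,S}$ \emph{modulo} $\Theta_S$''---dropping this qualifier is not cosmetic but is the precise point where the direct-range approach breaks. Since $\Theta_S$ is definable from $S\in\mn{n}{\sigma,S}$, the reals $x$ and $\bar{x}$ are interdefinable over the mouse, so $\mn{n}{\sigma,S}[x_G]=\mn{n}{\sigma,S}[\bar{x}_G]$ and the concluding computation $G\cap\mn{n}{\sigma,S}[x_G]\in\mn{n}{\sigma,S}[x_G]$ and $r=\sigma^G\in\mn{n}{\sigma,S}[x_G]$ goes through exactly as you describe.
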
 

\begin{proof}Throughout this proof, we shall identify $\MM$ with its tree version $\MM_{\text{tree}}$.  Let $G$ be $\LLL$-generic and  let $r\in{}^\w\w\cap V[G]$.  Given a $\LLL$-name $\tau$ for $r$, let $P\in G$ such that  such that $P\forc{\LLL}{\VV} \tau=\sigma$ for some $\sigma\in\HC{1}$.
\begin{claim}If $\dot{x}$ is an $\LLL$-name for a Laver real, let \[D_P=  \{T: \exists S (P\geq_\LLL S\geq_\LLL T\wedge (T\forc{\sa}{V} \dot{x} \text{ is $\MM_{\text{tree}}$-generic over $\mn{n}{\sigma, S}$ modulo $\Theta_S$}))\}.\]  Then $D_P$ is $\LLL$-dense below $P$.\end{claim}
\begin{proof}Let $S\leq P$.   Since $\mn{n}{\sigma,S}$ exists and it is countable,  we can pick $u\in\VV$ a $\MM_{\text{tree}}$-generic real over $\mn{n}{\sigma,S}$.\:  By identifying $u\in{}^{\uparrow\w}{\w}$ with its range we have, by \ref{codeslaver}, that $T_{(\varnothing,u)}\in\VV$ is a Laver tree with $\stem(T_{(\varnothing,u)})=\varnothing$.\:  
Therefore, if $\Theta_S: \Spl(S)\to {}^{\uparrow<\w}\w$ denotes the canonical isomorphism between $S$ and ${}^{\uparrow<\w}\w$ we have that $T=\text{downcl}(\Theta_S^{-1}(T_{(\varnothing,u)}))$ is a Laver subtree of $S$.% obtained by attaching $\stem(S)$ to $\Theta^{-1}(T_{(\varnothing,u)})$.\:   %Let $m=\stem(S)$ and consider $S_m=\{t\in S\mid m\subseteq t\}$. Let $g\colon S_m\rightarrow {}^{<\omega}\omega$ an order isomorphism. 

\noindent Suppose $x$ is a $\mathbb{L}$-generic real over $\VV$ below $T$.  Let \[\bar{x}=\bigcup_{i\geq |\stem(S)|} \Theta_S(\res{x}{i}).\] 
Observe that $\bar{x}$ is a branch through the tree $T_{(\varnothing,u)}$.  Therefore, $\ran{\bar{x}}\subset u$.  Since $x$ is infinite so is $\ran{\bar{x}}$.  By Lemma 
 \ref{mathiasproperty}, we have that $\bar{x}$ is an $\MM_{\text{tree}}$-generic real over $\mn{n}{\sigma, S}$.\:  Since $\bar{x}$ and $x$ are interdefinable via $\Theta_S$ we conclude \[T\forc{\LLL}{\VV} \dot{x} \text{ is }\MM_{\text{tree}}\text{-generic over $\mn{n}{\sigma, S}$ mod $\Theta_S$}.\qedhere\]
\end{proof}As before, take $T\in D_P\cap G$ and let $S\in\LLL$ be  a witness for that.  Then we have \[V[G]\models x_G\text{ is $\MM_{\text{tree}}$-generic over $\mn{n}{\sigma,S}$ modulo $\Theta_S$}.\]As in the former cases, since $G= \{T\in\LLL: x_G\in[T]\}$ is computable in $\mn{n}{\sigma,S}[x_G]$ we have that $G\cap\mn{n}{\sigma,S}[x_G]\in\mn{n}{\sigma,S}[x_G]$.  Thus,  $r= \sigma^G\in\mn{n}{\sigma,S}[x_G]$.
\end{proof}

\begin{corollary}Let $n<\w$ and suppose that for every $x\in{}^\w\w$, $\mn{n}{x}$ exists.  Then, $\MM$ captures $\ML$ over the mouse operator $\M_n^\sharp$\index{capturing!between!milx@$\MM$ and $\ML$}.\end{corollary}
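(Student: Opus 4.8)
The plan is to run the proof of Lemma \ref{la-mi} essentially verbatim, replacing Laver forcing $\LLL$ by Miller forcing $\ML$ and $\geq_\LLL$ by $\geq_\ML$ throughout, and identifying $\MM$ with its tree version $\MM_{\text{tree}}$. The only place where the Laver argument uses something special about $\LLL$ is Proposition \ref{codeslaver}, which produces, from an infinite $u\subseteq\w$, the Laver tree $T_{(\varnothing,u)}\in\MM_{\text{tree}}$ with stem $\varnothing$. The key observation that makes the transfer cheap is that \emph{every Laver tree is superperfect}: above its stem it splits into infinitely many immediate successors at every node, so in particular it splits infinitely often cofinally. Hence each $T_{(\varnothing,u)}$ is simultaneously a Miller tree, and Proposition \ref{codeslaver} already supplies the coding lemma I need for $\ML$; no new construction is required.

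The one genuine adaptation concerns the canonical isomorphism $\Theta_S$. For a Laver condition every node above the stem is an (infinite) splitting node, so in Lemma \ref{la-mi} the map $\Theta_S$ is defined on all such nodes. For a superperfect $S\in\ML$ I would instead fix $\Theta_S$ as a canonical isomorphism between the set of \emph{infinitely} splitting nodes $\{t\in S:|\succc{S}{t}|=\w\}$ and ${}^{\uparrow<\w}\w$: ordered by the induced tree order (pass from one infinitely splitting node to the next along each branch), these nodes form an $\w$-branching tree of height $\w$, hence are order-isomorphic to ${}^{\uparrow<\w}\w$. The density argument is then identical. Given the $\ML$-name $\tau$ for $r$, choose $P\in G$ and $\sigma\in\HC{1}$ with $P\forc{\ML}{\VV}\tau=\sigma$ and set
\[D_P=\{T:\exists S\,(P\geq_\ML S\geq_\ML T\,\wedge\,T\forc{\ML}{\VV}\dot x\text{ is }\MM_{\text{tree}}\text{-generic over }\mn{n}{\sigma,S}\text{ mod }\Theta_S)\}.\]
To see $D_P$ is $\ML$-dense below $P$, fix $S\leq_\ML P$, use countability of $\mn{n}{\sigma,S}$ to pick in $\VV$ a $\MM_{\text{tree}}$-generic real $u$ over $\mn{n}{\sigma,S}$, and put $T=\text{downcl}_S(\Theta_S^{-1}[T_{(\varnothing,u)}])$. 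Since $T_{(\varnothing,u)}$ is infinitely branching at each of its nodes, its $\Theta_S^{-1}$-image picks out infinitely branching infinitely splitting nodes of $S$, so $T$ is superperfect and $T\leq_\ML S\leq_\ML P$. If $x$ is $\ML$-generic below $T$, then $\bar x=\bigcup\{\Theta_S(t):t\in S,\ |\succc{S}{t}|=\w,\ t\subset x\}$ is a branch through $T_{(\varnothing,u)}$, so $\ran{\bar x}\subset u$; by Lemma \ref{mathiasproperty} this makes $\bar x$ a $\MM_{\text{tree}}$-generic real over $\mn{n}{\sigma,S}$, and since $x,\bar x$ are interdefinable via $\Theta_S$ one gets $T\in D_P$. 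Taking $T\in D_P\cap G$ with witness $S$, I conclude as before that $V[G]\models x_G$ is $\MM_{\text{tree}}$-generic over $\mn{n}{\sigma,S}$ mod $\Theta_S$, that $G\cap\mn{n}{\sigma,S}[x_G]\in\mn{n}{\sigma,S}[x_G]$, and hence $r=\sigma^G\in\mn{n}{\sigma,S}[x_G]$.

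The one step that will need care is the claim that $\ran{\bar x}$ is infinite, equivalently that the Miller-generic branch $x$ passes through infinitely many infinitely splitting nodes of $T$. In the Laver case this was automatic because every node above the stem splits infinitely; for Miller it must be argued from genericity, using that for each $k$ the set of conditions whose stem reaches past the $k$-th infinite splitting level is dense. This is exactly what prevents $\bar x$ from lying in $\mn{n}{\sigma,S}$ and lets Lemma \ref{mathiasproperty} apply to yield a genuine Mathias real. The remaining bookkeeping (definability of $G$ inside $\mn{n}{\sigma,S}[x_G]$) is word-for-word that of Lemma \ref{la-mi}, so the corollary follows.
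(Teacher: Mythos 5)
Your overall route is exactly the paper's: the paper proves this corollary by remarking that $T_{(\varnothing,u)}$, being a Laver tree, is in particular a Miller tree with stem $\varnothing$, and then rerunning Lemma \ref{la-mi} with $\LLL$ replaced by $\ML$. Your extra care about the generic branch meeting infinitely many infinitely splitting levels is correct and well taken: the density argument you sketch works, since an infinitely splitting node of a subtree is still infinitely splitting in any supertree. But there is a genuine gap at the \emph{other} point where Miller differs from Laver, and it is precisely the step you wave through: the inference that $T=\mathrm{downcl}_S(\Theta_S^{-1}[T_{(\varnothing,u)}])$ is superperfect because its skeleton ``picks out infinitely branching infinitely splitting nodes of $S$''. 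Under the paper's definition a superperfect tree may split \emph{finitely} between consecutive infinitely splitting nodes. Hence if $t$ is a selected node, its infinitely many selected successors in the induced tree order need not pass through infinitely many distinct immediate successors of $t$ in $S$: many (even all but finitely many) of the minimal infinitely splitting nodes above $t$ can extend the same immediate successor $t^\frown n$, diverging only at intermediate finitely splitting nodes. In the downward closure $T$ those intermediate nodes still split finitely, and $t$ itself may split only finitely (or not at all) in $T$; nesting this phenomenon, one can build a superperfect $S$ and choose a Mathias-generic $u$ over $\mn{n}{\sigma,S}$ for which no node of $T$ splits infinitely, so that $T\notin\ML$ and your density claim fails as written. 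In the Laver case this cannot happen, because there the induced successor of a node is its literal immediate successor --- so the transfer is not quite verbatim at exactly the point your plan declares it cheap.

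The repair is standard and cheap, so the corollary survives. Before running the construction, shrink $S$ to $S^*\leq_\ML S$ in which every splitting node splits infinitely and the path between consecutive infinitely splitting nodes is unique; such trees are dense in $\ML$ (above each immediate successor of each chosen infinitely splitting node, keep only the leftmost minimal infinitely splitting node together with the connecting path, and recurse). For such $S^*$ the minimal infinitely splitting nodes above $t$ correspond bijectively to the immediate successors of $t$, so $\Theta_{S^*}$ behaves exactly as $\Theta_S$ does for a Laver tree, the selected skeleton really is the set of infinitely splitting nodes of $T$, and $T\in\ML$. Your argument --- including the density argument guaranteeing that the generic branch passes through infinitely many skeleton nodes, the Mathias-property step via Lemma \ref{mathiasproperty}, and the final computation of $G$ inside $\mn{n}{\sigma,S^*}[x_G]$ --- then goes through word for word, with $S^*$ rather than $S$ serving as the witness required in the definition of $D_P$, which that definition permits. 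With this one insertion your proof is correct and coincides with the paper's intended argument.
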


\begin{proof}By \ref{codeslaver}, if $u\subset\omega$ is  infinite, then $T_{(\varnothing,u)}$ is a Laver tree.   In particular, $T_{(\varnothing,u)}$ is a Miller tree with stem $\varnothing$.  With this remark, the proof follows same as before, just replacing $\LLL$ by $\ML$.   \end{proof}

Summarizing the lemmas given through this section, we can state the following key  result:

\begin{theorem}\label{keylemma}Let $n<\w$ and let $\PP\in\T$.   Assume that for every $ x\in{}^\w\w$, $\mn{n}{x}$ exists and is $\w_1$-iterable.   Then $\MM$ captures $\PP$ over the mouse operator $\M_n^\sharp$.  In particular, given a $\PP$-generic filter $G$ over $V$, for every real $r\in V[G]$ there is a real $y\in V$ such that \[V[G]\models ``x_G\text{ is an $\MM$-generic real over $\mn{n}{y}$ mod $\Theta$ and $r\in\mn{n}{y}[x_G]$''},\]where $\Theta$ is a definable function in $y$.  Furthermore, in this situation also \[V[G]\models ``x_G\text{ is an $\MM$-generic real over $M_n(y)$ mod $\Theta$ and $r\in M_n(y)[x_G]$''},\]where $M_n(y)$ is as in Definition \ref{defsharps}. \end{theorem}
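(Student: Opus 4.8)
The plan is to read the theorem off the capturing lemmas already proved for the individual members of $\T$ and then to upgrade the conclusion from the countable mouse $\mn{n}{y}$ to the iterated class model $M_n(y)$. For the first assertion I would simply invoke the cases settled one by one: Lemma \ref{densemath1} handles $\PP=\MM$, Lemma \ref{densesm} handles $\PP=\sa$ and $\PP=\UU$, Lemma \ref{la-mi} handles $\PP=\LLL$, and the subsequent Corollary handles $\PP=\ML$. Together these give that $\MM$ captures every $\PP\in\T$ over the operator $\M_n^\sharp$, which is exactly the first claim.

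Next I would extract the ``in particular'' clause directly from those proofs. In each case the argument does not merely produce $\leq_L$-cofinally many reals as in Definition \ref{pqprop}, but starts from an \emph{arbitrary} real $r\in V[G]$, passes to a countable name $\sigma$ and a condition $S$ (or $p'$) with $P\forc{\PP}{\VV}\sigma=\tau$, and then shows, writing $y$ for a real coding the pair built from $\sigma$ and $S$, that in $V[G]$ the generic real $x_G$ is $\MM$-generic over $\mn{n}{y}$ modulo the canonical isomorphism $\Theta=\Theta_S$ (which is definable from $y$) and that $r=\sigma^G\in\mn{n}{y}[x_G]$. Collecting these witnesses yields the displayed $\mn{n}{y}$-version for every real $r$.

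The substantive step is the ``furthermore'' clause, transferring both the genericity and the membership $r\in\mn{n}{y}[x_G]$ to the proper class model $M_n(y)$. Here I would set $\kappa=\cp{\toe{\mn{n}{y}}}$. Since $M_n(y)$ arises by iterating the top extender $\toe{\mn{n}{y}}$ out of the universe, every extender applied in that iteration has critical point at least $\kappa$, so $\mn{n}{y}$ and $M_n(y)$ have exactly the same sets of rank below $\kappa$. As $\kappa$ is inaccessible in $\mn{n}{y}$ and lies far above $\RR$, both $\RR$ and $\pow(\RR)$ sit inside this common initial segment; hence the two models compute the same forcing $\MM$, the same $\pow(\MM)$, and in particular the same family of dense subsets of $\MM$. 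Therefore $x_G$ meets every dense subset of $\MM$ in $M_n(y)$ precisely when it meets those in $\mn{n}{y}$, and the $\mn{n}{y}$-genericity already obtained upgrades to $\MM$-genericity over $M_n(y)$. The same agreement, together with Remark \ref{agree} that $\mn{n}{y}$ and $M_n(y)$ share their reals, shows that the countable name $\sigma$ and its interpretation are computed identically in both models, so that $r=\sigma^G\in M_n(y)[x_G]$.

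The main obstacle is exactly this last transfer: one must be certain that genericity over the small countable premouse lifts to genericity over the tall class model. This rests entirely on the inner-model-theoretic observation that the iteration producing $M_n(y)$ fixes everything below $\kappa$, where $\MM$, all of its dense sets, and the name $\sigma$ reside. Once that is in place, the remainder is bookkeeping that is uniform across the five forcings in $\T$.
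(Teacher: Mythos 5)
Your proposal is correct and takes essentially the same approach as the paper: the first two clauses are, as in the paper, just a summary of Lemmas \ref{densemath1}, \ref{densesm}, \ref{la-mi} and the Miller corollary, and the ``furthermore'' clause rests on the agreement between $\mn{n}{y}$ and $M_n(y)$ recorded in Remark \ref{agree}. The only immaterial difference is order of operations: the paper picks the Mathias generic directly over $M_n(\sigma,p)$ and reruns the density arguments, whereas you transfer genericity after the fact by noting that the two models share $V_\kappa$ (for $\kappa$ the critical point of the top extender) and hence the very same dense subsets of $\MM$ --- your version in fact spells out the $V_\kappa$-agreement that the paper's one-line proof leaves implicit.
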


\begin{proof}For the second part, note that for  every $\PP$-name $\sigma\in\HC{1}$ for a real $r\in V[G]$ and for $p\in\PP$, we have $\RR\cap \mn{n}{\sigma,p}= \RR\cap M_n(\sigma,p)$ by Remark \ref{agree}. Therefore, we can pick some real $u\in\VV$ which is $\MM$-generic over $M_n(\sigma,p)$.  By proceeding  as in the proof of \ref{densemath1}, \ref{densesm} or \ref{la-mi}, we conclude the desired result.\end{proof}

\subsection{Addendum: Capturing products of forcings in $\T$}

%\begin{lemma}\label{prod1}Let $n\in\w$ and suppose that for all $x\in{}^\w\w$, $\mn{n}{x}$ exists and is $\w_1$-iterable.   Then, $\MM^2=\MM\times\MM$ has the $\MM^2$-extension property with respect to the operator $\M_n^\sharp: x\mapsto\mn{n}{x}$.  \end{lemma}
\begin{lemma}
\label{keylemma for product forcings} 
Let $n\in\w$ and suppose that for all $x\in{}^\w\w$, $\mn{n}{x}$ exists and is $\w_1$-iterable.  For each $\PP\in\T$,  $\MM_2=\MM\times\MM$\index{capturing!between!wx@$\MM\times\MM$ and forcings in $\T$} captures $\PP_2=\PP\times\PP$ over $\M_n^\sharp$.\end{lemma}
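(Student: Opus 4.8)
The plan is to run the single-coordinate capturing arguments of Lemmas \ref{densemath1}, \ref{densesm} and \ref{la-mi} in both coordinates at once, the only genuinely new ingredient being a product version of the Mathias property. Fix $\PP\in\T$, identify $\MM$ with its tree version as in those lemmas, let $G=G_0\times G_1$ be $\PP_2$-generic over $V$ with associated generic reals $x_{G_0},x_{G_1}$, and let $r\in{}^\w\w\cap V[G]$ with $\PP_2$-name $\tau$. Since a finite product of proper forcings is proper, Proposition \ref{cnames} applies to $\PP_2$ (whose conditions code as reals), so I may fix $(P_0,P_1)\in G$ and a countable $\PP_2$-name $\sigma\in\HC{1}$ with $(P_0,P_1)\forc{\PP_2}{\VV}\tau=\sigma$. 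For $(S_0,S_1)\leq(P_0,P_1)$ write $N_{S_0,S_1}:=\mn{n}{\sigma,S_0,S_1}$, which is countable by the closure assumption.

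First I would show that
\[
D_{(P_0,P_1)}=\{(T_0,T_1):\exists(S_0,S_1)\,[(P_0,P_1)\geq(S_0,S_1)\geq(T_0,T_1)\wedge(T_0,T_1)\forc{\PP_2}{\VV}(x_{G_0},x_{G_1})\text{ is }\MM_2\text{-generic over }N_{S_0,S_1}\text{ mod }\Theta]\}
\]
is dense below $(P_0,P_1)$, where $\Theta=(\Theta_{S_0},\Theta_{S_1})$ is the pair of canonical isomorphisms acting coordinatewise. Given $(S_0,S_1)$, since $N_{S_0,S_1}$ is countable I can pick in $V$ a pair $(u_0,u_1)$ that is $\MM_2$-generic over $N_{S_0,S_1}$, and build $T_i\leq S_i$ coordinatewise exactly as in the relevant lemma: from $U_{(\varnothing,u_i)}$ via $\Theta_{S_i}$ for $\PP\in\{\sa,\UU\}$ (as in \ref{densesm}), from the Laver/Miller tree $T_{(\varnothing,u_i)}$ of Proposition \ref{codeslaver} for $\PP\in\{\LLL,\ML\}$ (as in \ref{la-mi}), and the case $\PP=\MM$ being \ref{densemath1} in each coordinate. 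Any $\PP_2$-generic $(x_0,x_1)$ below $(T_0,T_1)$ then produces reals $\bar x_0\subseteq u_0$ and $\bar x_1\subseteq u_1$, each infinite (otherwise the corresponding factor $x_i$ would lie in $V$, contradicting its genericity).

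The heart of the matter, and the step I expect to be the main obstacle, is a \emph{product Mathias property}: if $(u_0,u_1)$ is $\MM_2$-generic over a model $M$ of $\ZFC^*$ and $a_i\subseteq u_i$ are infinite, then $(a_0,a_1)$ is $\MM_2$-generic over $M$. I would prove this by a double application of Lemma \ref{mathiasproperty} organized by the product forcing lemma. Writing the genericity of $(u_0,u_1)$ in symmetric form, $u_1$ is $\MM$-generic over $M$ and $u_0$ is $\MM$-generic over $M[u_1]$; since $a_0\subseteq u_0$ is infinite, Lemma \ref{mathiasproperty} relativized to $M[u_1]$ shows $a_0$ is $\MM$-generic over $M[u_1]$, so by the product lemma $(a_0,u_1)$ is $\MM_2$-generic over $M$ and in particular $u_1$ is $\MM$-generic over $M[a_0]$. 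As $a_1\subseteq u_1$ is infinite, Lemma \ref{mathiasproperty} relativized to $M[a_0]$ then makes $a_1$ $\MM$-generic over $M[a_0]$; combined with $a_0$ being $\MM$-generic over $M$, the product lemma delivers $(a_0,a_1)$ $\MM_2$-generic over $M$. The point to watch is that the Mathias property must be invoked over $M[u_1]$, where the \emph{full} generic $u_0$ lives, rather than over $M[a_0]$; this avoids the false demand that $a_0$ belong to $M[u_0]$, which is exactly where a naive argument breaks down. Applying this with $M=N_{S_0,S_1}$ and $a_i=\bar x_i$ shows $(\bar x_0,\bar x_1)$, hence $(x_0,x_1)$ modulo $\Theta$, is $\MM_2$-generic over $N_{S_0,S_1}$, completing the density proof.

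To conclude I would pick $(T_0,T_1)\in D_{(P_0,P_1)}\cap G$ with witness $(S_0,S_1)$, so that in $V[G]$ the pair $(x_{G_0},x_{G_1})$ is $\MM_2$-generic over $N_{S_0,S_1}$ mod $\Theta$, giving clause (i) of capturing. Since $G=G_0\times G_1$ with each $G_i=\{T\in\PP:x_{G_i}\in[T]\}$ computable from $x_{G_i}$ inside $N_{S_0,S_1}[(x_{G_0},x_{G_1})]$, clause (ii) holds; and as $\sigma\in N_{S_0,S_1}$ is a countable name with $r=\sigma^G$, clause (iii) holds and $r\in N_{S_0,S_1}[(x_{G_0},x_{G_1})]$. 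This works for an arbitrary real $r$ of $V[G]$, hence a fortiori for $\leq_L$-cofinally many, so $\MM_2$ captures $\PP_2$ over $\M_n^\sharp$; the passage to $M_n(\sigma,S_0,S_1)$ is then exactly as in Theorem \ref{keylemma} using Remark \ref{agree}.
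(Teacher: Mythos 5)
Your proposal is correct and takes essentially the same route as the paper: fix a countable $\PP_2$-name via properness, choose in $V$ a pair that is $\MM_2$-generic over the countable mouse $\mn{n}{\sigma,S_0,S_1}$, shrink conditions coordinatewise as in the single-coordinate lemmas, and invoke the Mathias property in both coordinates to get density, then verify clauses (i)--(iii) exactly as the paper does. Your explicit ``product Mathias property,'' proved by the swap through the product lemma, is precisely the careful version of the step the paper compresses into the one-line assertion that $x_0$ being $\MM$-generic over $M$ and $y_0$ being $\MM$-generic over $M[x_H]$ already yield $\MM_2$-genericity of the pair $(x_0,y_0)$, so you have in fact supplied the detail the paper elides rather than a different argument.
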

\begin{proof}We prove the result for Mathias forcing.  The other arguments are analogous, by using the tree versions of Mathias forcing $\MM'_{\text{tree}}\subset{}^{<\w}2$ in the case of Sacks and Silver forcing and $\MM_{\text{tree}}\subset{}^{\uparrow<\w}\w$ for Laver and Miller forcing.   

\noindent Let $G$ be $\MM_2$-generic over $V$ and suppose $r\in{}^\w\w\cap\VV[G]$.    Since $\MM_2$ is proper, if $\tau$ is a $\MM_2$-name for $r$, there is $(p,q)\in G$ and a $\MM_2$-name $\sigma\in\HC{1}$ such that $(p,q)\Vdash \sigma=\tau$. \:   Let $(p_0,q_0)\leq (p,q)$.   Since $\mn{n}{p_0,q_0,\sigma}$ exists in $\VV$ and it is countable, we can pick $H\in\VV$ containing $(p_0,q_0)$ which is $\MM_2$-generic over $\mn{n}{p_0,q_0,\sigma}$.   

 Let $x_H=\bigcup \{\stem(p^*):p^*\in H_0\}$ and $y_H=\bigcup \{\stem(q^*):q^*\in H_1\}$, where $H_0$ and $H_1$ are the first and second projections of $H\subset \MM\times\MM$.  Then, $x_H$ is $\MM$-generic over $\mn{n}{p_0,q_0,\sigma}$ and $y_H$ is $\MM$-generic over $\mn{n}{p_0,q_0,\sigma}[x_H]$.  Suppose that $p_0=(s,A)$, $q_0=(t,B)\in \MM$.  Then, $s\subset x_H\subset s\cup A$ and $t\subset y_H\subset t\cup B$; also, as $H\in\VV$ we have that $x_H$ and $y_H$ are in $\VV$.  Let $p_1=(s, x_H\setminus s)$ and $q_1=(t, y_H\setminus t)$.  Notice that $(p_1,q_1)\leq_\MM(p_0,q_0)$.   

Suppose that $x_0\in[p_1]_\MM$ and $y_0\in[q_1]_\MM$.  Then, $s\subset x_0\subset (x_H\setminus s)\subset x_H$ and  $t\subset y_0\subset (y_H\setminus s)\subset y_H$.  Since $x_0$ and $y_0$ are infinite, we have that $x_0$ and $y_0$ are also $\MM$-generic over $\mn{n}{p_0,q_0,\sigma}$ and $\mn{n}{p_0,q_0,\sigma}[x_H]$ respectively.      Thus, we have proved that the set \[D=\{(p_1,q_1):\exists (p_0,q_0)\geq (p_1,q_1)(\forall x_0\in[p_1], y_0\in[q_1] ((x_0,y_0)\text{ is $\MM_2$-generic over $\mn{n}{p_0,q_0,\sigma}))\}$}\]is $\MM_2$-dense below $(p,q)$. 

\noindent For $G$, let $G_0$ and $G_1$ its projections as before.  If $x_G$ is the Mathias real over $\VV$ associated to $G_0$ and $y_G$ is the Mathias real over $\VV[x_G]$ associated to $G_1$ and $(p_1,q_1)\in D\cap G$ with$(p_0,q_0)$ as a witness, we have  \[ \VV[G]\models (x_G,y_G)\text{ is $\MM^2$-generic over $\mn{n}{p_0,q_0,\sigma}$}.\]Since the generic filter $G$ is computable in $\mn{n}{p_0,q_0,\sigma}[(x_G,y_G)]$, it follows that condition (ii) in Definition \ref{pqprop} is also satisfied.\end{proof}

\subsubsection{Capturing $\sa_2$ and $\UU_2$ by Cohen forcing }
%and the $\CC\times\CC$-extension property}
\begin{definition}\label{sacksamo} If $(T,S)\in\sa_2=\sa\times\sa$, we define
\[\mathbb{A}_{\sa_2, (T,S)}=\mathbb{A}_{\sa, T}\times\mathbb{A}_{\sa, S}\]
ordered componentwise. \end{definition}
Recall that for a condition $T\in\sa$, $\Theta_T:\Spl(T)\to {}^{<\w}2$ denotes the canonical isomorphism.

\begin{proposition}\label{denseproduct}  Let $T, S\in \sa$ and let $D\subset \mathbb{C}_2=\CC\times\CC$ be an open dense set.\:  Then, the set
\[D_{(T,S)}=\{(t, s)\in\mathbb{A}_{\sa_2, (T,S)}:\forall (p,q)\in\Ter(t)\times\Ter(s) (\Theta(p,q)\in D) \}\]is a dense subset of $\mathbb{A}_{\sa_2, (T,S)}$, where $\Theta:=\Theta_T\times\Theta_S$.\end{proposition}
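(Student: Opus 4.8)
The plan is to mimic the one-coordinate argument of Proposition~\ref{denseperfect}, transported through the isomorphism $\Theta=\Theta_T\times\Theta_S$, the only genuinely new feature being that a single terminal node now occurs in many pairs of the grid $\Ter(t)\times\Ter(s)$, so the extensions witnessing membership in $D$ must be chosen \emph{coherently} rather than independently. Concretely, for a condition $(t,s)\in\mathbb{A}_{\sa_2,(T,S)}$ I would not try to extend each pair of terminal nodes in one step; instead I would maintain two functions $f\colon\Ter(t)\to\CC$ and $g\colon\Ter(s)\to\CC$ with $f(p)\supseteq\Theta_T(p)$ and $g(q)\supseteq\Theta_S(q)$, and refine their values one pair at a time. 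Note that the underlying finite sets $\Ter(t)$ and $\Ter(s)$, and hence the grid of pairs, stay fixed throughout the construction: each step only lengthens some values of $f$ and $g$, it never creates new terminal nodes.

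First I would initialise $f(p)=\Theta_T(p)$ and $g(q)=\Theta_S(q)$ and fix an enumeration $\{(p_i,q_i):i<N\}$ of $\Ter(t)\times\Ter(s)$, where $N=|\Ter(t)|\cdot|\Ter(s)|$. At stage $i$, using that $D$ is dense in $\CC\times\CC$, I would choose a condition $(a,b)\in D$ with $a\supseteq f(p_i)$ and $b\supseteq g(q_i)$, and then update $f(p_i):=a$ and $g(q_i):=b$, leaving all other values unchanged. Since only the values $f(p_i)$ and $g(q_i)$ ever change, and they only get longer, the functions $f,g$ are monotone in the extension order along the whole run.

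The point of the construction---and the step that makes the coordination work---is the \emph{openness} of $D$. After stage $i$ the pair $(f(p_i),g(q_i))$ lies in $D$; at any later stage $j>i$ the values $f(p_i)$ and $g(q_i)$ are left untouched unless $p_j=p_i$ or $q_j=q_i$, and in those cases they are replaced by end-extensions of themselves, so the final pair $(f(p_i),g(q_i))$ is $\leq$ the one that was placed in $D$ at stage $i$; by openness it is therefore still in $D$. Running through all $N$ stages thus yields $f,g$ with $(f(p),g(q))\in D$ for \emph{every} $(p,q)\in\Ter(t)\times\Ter(s)$ simultaneously. Finally, exactly as in Proposition~\ref{denseperfect}, I would set
\[t'=\mathrm{Downcl}_T\bigl(t\cup\{\Theta_T^{-1}(f(p)):p\in\Ter(t)\}\bigr),\qquad s'=\mathrm{Downcl}_S\bigl(s\cup\{\Theta_S^{-1}(g(q)):q\in\Ter(s)\}\bigr),\]
noting that $\Theta_T,\Theta_S$ are onto, so $\Theta_T^{-1}(f(p))$ and $\Theta_S^{-1}(g(q))$ are well defined splitting nodes. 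Then $t'\in\Amo{\sa}{T}$, $s'\in\Amo{\sa}{S}$ with $t'\leq t$, $s'\leq s$, and $\Theta_T[\Ter(t')]=\{f(p):p\in\Ter(t)\}$, $\Theta_S[\Ter(s')]=\{g(q):q\in\Ter(s)\}$, so every pair in $\Ter(t')\times\Ter(s')$ maps under $\Theta$ to some $(f(p),g(q))\in D$; hence $(t',s')\in D_{(T,S)}$ and $(t',s')\leq(t,s)$, giving density.

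The main obstacle is precisely the coordination just described: because each terminal node $p$ appears in $|\Ter(s)|$ many pairs, naively extending each pair independently would force conflicting extensions of $p$. Openness of $D$ is exactly what dissolves this, since it guarantees that further lengthening a coordinate never ejects an already-placed pair from $D$; this is also why the hypothesis on $D$ is openness and not mere density, in contrast to Proposition~\ref{denseperfect}, where the single coordinate makes the issue vacuous. A secondary, purely bookkeeping point is to check that the reconstruction of $t',s'$ lands in $\Amo{\sa}{T}$ and $\Amo{\sa}{S}$; this is identical to the verification already carried out in Proposition~\ref{denseperfect}, since distinct terminal nodes have incomparable $\Theta$-images and the chosen extensions preserve incomparability.
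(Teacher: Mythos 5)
Your proposal is correct and takes essentially the same approach as the paper: the paper's proof likewise enumerates the grid $\Ter(t)\times\Ter(s)$ (in lexicographic order), at each stage extends the \emph{current} extensions of the pair into $D$ by density, and invokes openness of $D$ so that all previously placed pairs survive the later end-extensions, before forming $t'$ and $s'$ by downward closure exactly as you do. Your bookkeeping via the functions $f,g$ is just a cleaner phrasing of the paper's iteratively updated starred extensions, and your explicit identification of openness as the coordination mechanism matches the paper's remark that any further extension of a pair already in $D$ remains in $D$.
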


\begin{proof}Given $(t,s)\in\mathbb{A}_{\sa_2, (T,S)}$ suppose that $\Ter(t)=\{p_i^t: i<n\}$ and $\Ter(s)=\{p_i^s: i<m\}$.\:  Inductively, consider the pair $(p_0^t, p_0^s)$.\:  As $D$ is dense in $\mathbb{C}^2$, take ${(p_0^t, p_0^s)}^*$ extending $(p_0^t, p_0^s)$ such that $\Theta({(p_0^t, p_0^s)}^*)\in D$.\:  Since $D$ is open dense and $\Theta$ is an order isomorphism, any further extension of this pair is also in $D$.   Now, consider $((p_0^t)^*, p_1^s)$ and find ${((p_0^t)^*,p_1^s)}^{*}$ such that  $\Theta({((p_0^t)^*,p_1^s)}^{*})\in D$.  At stage $(0, m-1)$ we have found extensions in $D$ (modulo $\Theta$) for the first $\leq_{\text{lex}}$-$m$ pairs $(p_0^t, p_i^t)$, $i<m$.

At stage $(i,j)$, we find an extension in $D$ modulo $\Theta$ of  the current extension of $(p_i^t,p_j^s)$.\: At the end of the construction we have found for each $(i,j)$, $(q_i^t, q_j^s)\geq (p_i^t, q_j^s)$ such that $\Theta((q_i^t, q_j^s))\in D$.  

Let $t'= \text{Downcl}_T(t\cup\{q_i^t:i<n\})$, $s'= \text{Downcl}_S(s\cup\{q_i^s:i<m\})$.  Note that $(t,s)\leq(t',s')$ and $(t', s')\in D_{(T,S)}$ as required. \end{proof}

\begin{proposition}Suppose $(T,S)\in\sa_2$ and let $G$ be $\mathbb{A}_{\sa_2,(T,S)}$-generic over $V$.\:  Then: 
\begin{enumerate}[(1)]\item $(T_G, S_G)=\bigcup G$ is such that $T_G\leq_\sa T$, $S_G\leq_\sa S$.
\item For every $(x,y)\in [T_G]\times [S_G]$, $(x,y)$ is $\mathbb{C}^2$-generic over $V$ modulo the isomorphism $\Theta=\Theta_T\times \Theta_S$.
\end{enumerate}\end{proposition}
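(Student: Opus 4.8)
The plan is to prove this as the two-coordinate version of Lemma~\ref{perfect}, handling the two clauses separately: clause~(1) reduces to the behaviour of the projections of $G$, and clause~(2) reduces to the product density Proposition~\ref{denseproduct}. Throughout, write $G_0=\{t:\exists s\;(t,s)\in G\}$ and $G_1=\{s:\exists t\;(t,s)\in G\}$ for the projections of $G$, so that, interpreting the union coordinatewise, $(T_G,S_G)=\bigcup G$ means $T_G=\bigcup G_0$ and $S_G=\bigcup G_1$.

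For clause~(1) I would first invoke the standard product-forcing fact that, since $\mathbb{A}_{\sa_2,(T,S)}=\Amo{\sa}{T}\times\Amo{\sa}{S}$ (Definition~\ref{sacksamo}), the projection $G_0$ is $\Amo{\sa}{T}$-generic over $V$ and $G_1$ is $\Amo{\sa}{S}$-generic over $V$. Applying Lemma~\ref{perfect}(i) in each coordinate then yields that $T_G=\bigcup G_0$ is a perfect subtree of $T$ and $S_G=\bigcup G_1$ is a perfect subtree of $S$, which is exactly $T_G\leq_\sa T$ and $S_G\leq_\sa S$.

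For clause~(2), fix an open dense set $D\subseteq\mathbb{C}^2$ with $D\in V$; it suffices to show that the pair $(\Theta_T(x),\Theta_S(y))$, where $\Theta_T(x)=\bigcup_n\Theta_T(\res{x}{n})$ and $\Theta_S(y)=\bigcup_n\Theta_S(\res{y}{n})$, extends a condition of $D$. By Proposition~\ref{denseproduct} the set $D_{(T,S)}$ is dense in $\mathbb{A}_{\sa_2,(T,S)}$, so by genericity there is $(t,s)\in D_{(T,S)}\cap G$. Since $(t,s)\in G$ we have $\res{T_G}{\heigth{t}}=t$ and $\res{S_G}{\heigth{s}}=s$, just as in the proof of Lemma~\ref{perfect}(ii), and therefore $\res{x}{\heigth{t}}\in\Ter(t)$ and $\res{y}{\heigth{s}}\in\Ter(s)$. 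The defining property of $D_{(T,S)}$ then gives $\Theta\big(\res{x}{\heigth{t}},\res{y}{\heigth{s}}\big)=\big(\Theta_T(\res{x}{\heigth{t}}),\Theta_S(\res{y}{\heigth{s}})\big)\in D$, and since these two strings are initial segments of $\Theta_T(x)$ and $\Theta_S(y)$ respectively, the pair $(\Theta_T(x),\Theta_S(y))$ meets $D$. As $D$ was arbitrary, $(x,y)$ is $\mathbb{C}^2$-generic over $V$ modulo $\Theta=\Theta_T\times\Theta_S$.

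The genuinely new content over the single-coordinate Lemma~\ref{perfect} is confined to the appeal to Proposition~\ref{denseproduct}; the rest is coordinatewise bookkeeping. The one point I would check carefully is that the two components of a condition $(t,s)$ may have different heights, so that $\res{x}{\heigth{t}}$ and $\res{y}{\heigth{s}}$ are restrictions to a priori different lengths. This is harmless, since a condition in $\mathbb{C}^2$ is a pair of finite binary strings whose lengths may be chosen independently, so the two coordinates are located and recombined without interaction; I expect this height mismatch, rather than any deeper difficulty, to be the main thing requiring attention.
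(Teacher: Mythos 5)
Your proposal is correct and takes essentially the same approach as the paper: for clause (2) the paper likewise combines Proposition \ref{denseproduct} with the argument of Lemma \ref{perfect}(ii), and the height mismatch you flag is indeed harmless for exactly the reason you give---the set $D_{(T,S)}$ quantifies over all pairs in $\Ter(t)\times\Ter(s)$ with no coordination of lengths, matching the fact that a $\CC\times\CC$-condition is a pair of strings of independent lengths. For clause (1) the paper simply notes that the sets $D_n=\{(t,s):\lh(t),\lh(s)\geq n\}$ are open dense in $\mathbb{A}_{\sa_2,(T,S)}$ and implicitly reruns the argument of Lemma \ref{perfect}(i), whereas you route through the standard fact that the projections $G_0$, $G_1$ are generic for the factors and then apply Lemma \ref{perfect}(i) as a black box; this is the same idea in slightly different packaging, with no substantive difference.
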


\begin{proof}For (1), note that the sets $D_n=\{(t,s): \lh(t), \lh(s)\geq n\}$ are open dense in $\mathbb{A}_{\sa_2,(T,S)}$.\:  
 
Now we prove (2).   Let $(x,y)\in [T_G]\times[S_G]$ and consider $D\subset \mathbb{C}^2$ open dense.  By using Proposition \ref{denseproduct} and following the proof of Lemma \ref{perfect}, we have the result. \end{proof}

\begin{lemma}\label{denseproduct2}Let $n<\w$ and suppose that for every $x\in{}^\w\w$, $\mn{n}{x}$ exists and is $\w_1$-iterable.  Then, $\CC_2$ captures $\sa_2$ and $\UU_2$ over the mouse operator $\M_n^\sharp$.  \end{lemma}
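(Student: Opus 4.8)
The plan is to run the proof of Lemma \ref{dense} in the product setting, replacing the one-coordinate auxiliary forcing $\Amo{\sa}{S}$ by the product auxiliary forcing $\Amo{\sa_2}{(T',S')}$ of Definition \ref{sacksamo} and invoking Proposition \ref{denseproduct} together with the genericity proposition following it in place of Proposition \ref{denseperfect} and Lemma \ref{perfect}. I give the argument for $\sa_2$; the $\UU_2$ case is identical once the analogous uniform product auxiliary forcing is in place. First I would fix a $\sa_2$-generic filter $G$ over $V$, a real $r\in{}^\w\w\cap V[G]$, and a $\sa_2$-name $\tau$ for $r$. Since $\sa_2$ is proper and its conditions code as reals, Proposition \ref{cnames} supplies a condition $(P,Q)\in G$ and a countable name $\sigma\in\HC{1}$ with $(P,Q)\forc{\sa_2}{\VV}\sigma=\tau$; this guarantees that $\mn{n}{\sigma,(T',S')}$ is a countable object for every condition $(T',S')$.

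The crux is the density claim that
\[
D_{(P,Q)}=\{(T,S)\in\sa_2:\exists(T',S')\,[(P,Q)\geq(T',S')\geq(T,S)\ \wedge\ \forall(z,w)\in[T]\times[S]\,((z,w)\text{ is }\CC_2\text{-generic over }\mn{n}{\sigma,(T',S')}\text{ mod }\Theta_{T'}\times\Theta_{S'})]\}
\]
is $\sa_2$-dense below $(P,Q)$. Given $(T',S')\leq(P,Q)$, I would use that $\mn{n}{\sigma,(T',S')}$ exists in $V$ and is countable to pick, by diagonalization, a filter $g\in V$ that is $\Amo{\sa_2}{(T',S')}$-generic over $\mn{n}{\sigma,(T',S')}$. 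Putting $(T,S)=\bigcup g$, the product genericity proposition---applied relativized to the transitive model $\mn{n}{\sigma,(T',S')}$, its proof resting on Proposition \ref{denseproduct} exactly as Lemma \ref{perfect} rests on Proposition \ref{denseperfect}---yields $T\leq_\sa T'$, $S\leq_\sa S'$, and that every $(z,w)\in[T]\times[S]$ is $\CC_2$-generic over $\mn{n}{\sigma,(T',S')}$ modulo $\Theta_{T'}\times\Theta_{S'}$. Hence $(T,S)\leq(T',S')\leq(P,Q)$ witnesses $(T,S)\in D_{(P,Q)}$.

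To conclude, I would extract $(T,S)\in D_{(P,Q)}\cap G$ with witness $(T',S')$ and write $(x_G,y_G)$ for the pair of generic reals attached to $G$. As $(T,S)\in G$ forces $(x_G,y_G)\in[T]\times[S]$, the density clause gives that in $V[G]$ the pair $(x_G,y_G)$ is $\CC_2$-generic over $\mn{n}{\sigma,(T',S')}$ modulo $\Theta_{T'}\times\Theta_{S'}$, which is clause (i) of Definition \ref{pqprop}. Since $G=\{(T'',S'')\in\sa_2:(x_G,y_G)\in[T'']\times[S'']\}$ is definable from $(x_G,y_G)$, the intersection $G\cap\mn{n}{\sigma,(T',S')}[(x_G,y_G)]$ lies in $\mn{n}{\sigma,(T',S')}[(x_G,y_G)]$, giving clause (ii); and $\sigma\in\mn{n}{\sigma,(T',S')}$ together with $r=\sigma^G$ gives clause (iii). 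As $r$ was arbitrary, this holds $\leq_L$-cofinally often, so $\CC_2$ captures $\sa_2$ and, by the identical uniform argument, $\UU_2$ over $\M_n^\sharp$.

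The hard part has in fact already been discharged: the only substantive content, namely that a pair of branches through the auxiliary-generic product tree is \emph{mutually} $\CC_2$-generic over the countable model, is precisely Proposition \ref{denseproduct} and the genericity proposition following it. What remains is bookkeeping---checking that those two propositions relativize to the countable transitive model $\mn{n}{\sigma,(T',S')}$ (they do, since their proofs consume only the countably many dense sets belonging to that model) and matching the three clauses of the capturing definition, just as in Lemmas \ref{dense} and \ref{densesm}. The single point needing a line of care is the coding of $\sa_2$-conditions as reals, so that Proposition \ref{cnames} and the sharp operator apply to the product unchanged.
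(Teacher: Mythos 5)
Your proposal is correct and follows essentially the same route as the paper: fixing a countable name via Proposition \ref{cnames}, proving density below the given condition of the set of pairs all of whose branch-pairs are $\CC_2$-generic over $\mn{n}{\sigma,(T',S')}$ by taking an $\Amo{\sa_2}{(T',S')}$-generic over the countable mouse and invoking Proposition \ref{denseproduct} together with the genericity proposition that follows it, and then reading off the capturing clauses from $(T,S)\in D\cap G$, with the Silver case handled by the analogous product auxiliary forcing $\Amo{\UU}{T}\times\Amo{\UU}{S}$. Your write-up is in fact slightly more explicit than the paper's, which leaves the verification of clauses (ii) and (iii) of Definition \ref{pqprop} implicit by reference to Lemma \ref{dense}.
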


\begin{proof} We will do the proof for Sacks forcing.  For Silver forcing, the procedure is completely analogous, by defining first the auxiliary forcing $\Amo{\UU}{(T,S)}:=\Amo{\UU}{T}\times\Amo{\UU}{S}$ as in \ref{sacksamo}.     Let $G$ be $\sa_2$-generic over $V$ and suppose $r\in{}^\w\w\cap {\VV[G]}$.  If $\tau$ is a $\sa_2$-name for $r$, let $(T,S)\in G$ and $\sigma\in\HC{1}$ such that $(T,S)\forc{\sa_2}{\VV}\tau=\sigma$. Let $(T',S')\leq (T,S)$.   Since $\mn{n}{T',S', \sigma}$ exists and is countable, there is $(T^*,S^*)\in V$ which is $\mathbb{A}_{\sa_2, (T',S')}$-generic over $M$.   By \ref{denseproduct}, $(T^*,S^*)\leq_{\sa_2}(T',S')$ and every pair $(p,q)\in[T^*]\times[S^*]$ is $\mathbb{C}_2$-generic over $\mn{n}{T',S', \sigma}$ modulo $\Theta$.  %Remember that $\pi\in M$, because this is sufficiently definable. 
So, the set of conditions \centm{$D=\{(S^*, T^*):\exists (S',T')\geq(S^*,T^*) (\text{ for all } (x,y)\in[S^*]\times[T^*]$\\\hfill$((x,y)\text{ is $\CC_2$-generic over $\mn{n}{\sigma, S',T'}$)}\}$}is dense below $(S,T)$.  Therefore, there is some $(S^*,T^*)\in D\cap G$ and $(S',T')$ witnessing it.  Thus,  the pair $(x,y)^G$ is such that $(x,y)\in[T^*]\times[S^*]$ \footnote{Assume that $G$ is $\sa_2$ generic over $V$.  Let $(x,y)^G=\cap\{[T]\times[S]: (T,S)\in G\}$.  Note that from a pair of reals added by $\sa$, we can reconstruct the generic filter associated to it as well by taking $G=\{(T,S)\in\sa_2: (x,y)\in[T]\times[S]\}$.} and there is $(T',S')\geq (T^*,S^*)$ such that\[V[G]\models (x_G, y_G)\text{ is $\mathbb{C}_2$-generic over $\mn{n}{T',S',\sigma}$ modulo $\Theta$}.\qedhere\]\end{proof}

\section{Preservation of ${M_n^\sharp}$ by forcing notions in ${\T}$}

By the results of R. David (see \cite{david}) even the simple class of $\bsi{3}$ definable c.c.c. forcings does not satisfy preservation of sharps for reals or equivalently, $\bpi{1}$-determinacy.    However, Schlicht \cite[Lemma 3.11]{phildis} has shown that every $\bsi{2}$ forcing notion which is c.c.c. in every inner model of $\ZF$,  preserves $\bpi{n+1}$-determinacy.  
Note that all forcing notions in $\T$ are $\Sigma^1_1$. 
We will show that all the forcing notions in $\T$ which are not c.c.c., but Suslin${^+}$ proper, preserve also sharps for reals.

\begin{lemma}[Lifting Lemma]\index{lifting lemma}\label{lift} Let $j:\VV\to M$ be an elementary embedding and let $\PP\in \VV$ be a forcing notion.  Suppose $G$ is $\PP$-generic over $\VV$ and $H$ is $j(\PP)$-generic over $M$.  If $j[G]\subseteq H$, then $j^*:\VV[G]\to M[H]$ defined by\centm{$j^*(\tau^G)= (\,j(\tau))^H$,}where $\tau\in\VV^\PP$, is a well-defined elementary embedding with $j^*{\upharpoonright}V=j$ and $j^*(G)=H$.  Furthermore, if $j$ is an extender ultrapower embedding, then so is $j^*$. \end{lemma}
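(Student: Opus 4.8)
The plan is to verify that $j^*$ is well-defined, elementary, and satisfies $j^*(G)=H$, and then to handle the extender-embedding clause. The only content that requires care is well-definedness, since the rest follows by a now-standard computation once the map is seen to respect the naming relation. Throughout I would fix a condition $p\in G$ forcing whatever I need, and translate statements about $\tau^G$ into statements about $p\Vdash$ and then push them through $j$.

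First I would check that $j^*$ is well-defined, i.e. that its value does not depend on the choice of name. Suppose $\tau,\tau'\in\VV^\PP$ with $\tau^G=(\tau')^G$. Then there is some $p\in G$ with $p\Vdash_\PP^\VV \tau=\tau'$. Applying elementarity of $j$, we get $j(p)\Vdash_{j(\PP)}^M j(\tau)=j(\tau')$. Since $p\in G$ and $j[G]\subseteq H$, we have $j(p)\in H$, and therefore $(j(\tau))^H=(j(\tau'))^H$ in $M[H]$, which is exactly $j^*(\tau^G)=j^*((\tau')^G)$. The same reduction--translate through a forcing condition $p\in G$, apply $j$, use $j(p)\in H$--yields elementarity: for any formula $\varphi$ and names $\tau_1,\dots,\tau_k$, if $\VV[G]\models\varphi(\tau_1^G,\dots,\tau_k^G)$ then some $p\in G$ forces $\varphi(\tau_1,\dots,\tau_k)$, so $j(p)\in H$ forces $\varphi(j(\tau_1),\dots,j(\tau_k))$ over $M$, giving $M[H]\models\varphi(j^*(\tau_1^G),\dots,j^*(\tau_k^G))$; the converse direction uses that $G$ is generic, so the negation cannot be forced by any condition in $G$. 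To see $j^*(G)=H$, note $G=\dot G^G$ for the canonical name $\dot G$, and $j(\dot G)$ is the canonical $j(\PP)$-name for the generic filter in $M$, so $j^*(G)=(j(\dot G))^H=H$.

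For the final clause, suppose $j=j_E$ is the ultrapower embedding derived from an extender $E$, so every element of $M$ has the form $j(f)(a)$ for some $f\in\VV$ and some $a\in[\lambda]^{<\w}$ (a seed). I would show $j^*$ is the ultrapower of $\VV[G]$ by the extender $E^*$ derived from $j^*$ and check that $M[H]=\{\,(j^*(F))(a) : F\in\VV[G],\ a\in[\lambda]^{<\w}\,\}$, which exhibits $j^*$ as the corresponding extender ultrapower map. Given a name $\tau\in\VV^\PP$ for an element of $M[H]$, one uses that $\tau^G=(j(\sigma))^H$ for an appropriate name; more directly, any element of $M[H]$ is $(j(\rho))^H$ for some $\rho\in M^{j(\PP)}$, and writing $\rho=j(\vec\rho)(a)$ via the seed representation of $M$ one reads off the value as $j^*(F)(a)$, where $F$ is the function $x\mapsto (\vec\rho(x))^G$ read off in $\VV[G]$. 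Thus $M[H]$ is generated over $\range(j^*)$ by the seeds $a\in[\lambda]^{<\w}$, so $j^*$ is the extender ultrapower embedding, as claimed.

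The main obstacle I expect is the extender clause rather than well-definedness: one must be careful that the seed representation of $M$ lifts correctly to $M[H]$, i.e. that adjoining $H$ does not create elements of $M[H]$ outside the Skolem hull of $\range(j^*)\cup\lambda$. This is handled by observing that $H$ itself is named by $j(\dot G)$ and hence lies in the hull, so every element of $M[H]$, being definable from $H$, parameters in $\range(j^*)$, and a seed, stays in the hull; the decomposition $j^*(\tau^G)=(j(\tau))^H$ then shows the lifted map agrees with the extender ultrapower of $\VV[G]$ by $E^*$. Once this is in place the commutation of the relevant factor maps is routine and I would not grind through it.
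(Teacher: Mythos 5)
Your proposal is correct and takes essentially the same approach as the paper: the paper does not prove the lemma in-line but cites \cite[Proposition 9.4]{cummings}, whose proof is exactly your argument---well-definedness and elementarity by pushing a condition $p\in G$ through $j$ to get $j(p)\in H$, the identity $j^*(G)=H$ via the canonical name, and the extender clause via the seed representation showing $M[H]=\{\,j^*(F)(a): F\in\VV[G],\ a\in[\lambda]^{<\w}\,\}$. The only blemish is notational (you write ``$(j(\rho))^H$ for some $\rho\in M^{j(\PP)}$'' where you mean $\rho^H$ with $\rho=j(\vec\rho)(a)$), which your next clause already corrects.
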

\begin{proof}See \cite[Proposition 9.4]{cummings} for a proof in a more general setting.\end{proof}

\begin{theorem}Suppose that $\forall x\in\RR(x^\sharp$ exists$)$ and let $G$ be a $\PP$-generic filter over $V$, $\PP\in\T$. Then 
\[V[G]\models \forall x\in\RR (x^\sharp\text{\emph{\:exists}}).\]
\end{theorem}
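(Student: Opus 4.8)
The plan is to reduce to the case $n=0$ of the capturing theorem (Theorem~\ref{keylemma}) and then lift a sharp embedding through a \emph{small} instance of Mathias forcing. First I would observe that the hypothesis $\forall x\in\RR\,(x^\sharp\text{ exists})$ is precisely the instance $n=0$ of the assumption ``$\mn{n}{x}$ exists and is $\w_1$-iterable for every real $x$'', since $\mn{0}{x}=x^\sharp$ is $\w_1$-iterable by its very definition (Definition~\ref{defsharps}) and $M_0(y)=\LL[y]$. Fix $\PP\in\T$ and a $\PP$-generic filter $G$ over $V$, and let $r\in\RR\cap V[G]$ be arbitrary; by the equivalence in Lemma~\ref{sheq} between the existence of $r^\sharp$ and the existence of a nontrivial elementary embedding $\LL[r]\to\LL[r]$, it suffices to produce such an embedding inside $V[G]$.

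By Theorem~\ref{keylemma} applied with $n=0$, there is a real $y\in V$ such that $V[G]\models$ ``$x_G$ is $\MM$-generic over $M_0(y)=\LL[y]$ modulo $\Theta$ and $r\in\LL[y][x_G]$''. Let $g\in V[G]$ be the $\MM^{\LL[y]}$-generic filter over $\LL[y]$ determined by $x_G$ through the definable isomorphism $\Theta$, so that $\LL[y][g]=\LL[y][x_G]\ni r$. Thus $r$ lives in a Mathias-generic extension of the proper class model $\LL[y]$, and the task reduces to equipping that extension with a sharp embedding.

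The key point is that Mathias forcing over $\LL[y]$ is small relative to any sharp embedding of $\LL[y]$. Since $y^\sharp$ exists, Lemma~\ref{sheq} provides a nontrivial elementary embedding $j\colon\LL[y]\to\LL[y]$, whose critical point $\kappa=\cp{j}$ is a Silver indiscernible and, in particular, inaccessible in $\LL[y]$; hence $\ww{1}^{\LL[y]}<\kappa$. As $\LL[y]\models\GCH$, we have $|\MM^{\LL[y]}|^{\LL[y]}=\ww{1}^{\LL[y]}<\kappa$, so $\MM^{\LL[y]}\in V_\kappa^{\LL[y]}$ and $j$ fixes it together with all of its conditions. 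Consequently $j(\MM^{\LL[y]})=\MM^{\LL[y]}$ and $j[g]=g$, so taking $H:=g$ in the Lifting Lemma (Lemma~\ref{lift}) yields a nontrivial elementary embedding $j^*\colon\LL[y][g]\to\LL[y][g]$ with $\cp{j^*}=\kappa$.

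Finally I would restrict $j^*$ to $\LL[r]$. Since $\kappa>\w$ and $r\subseteq\w$, elementarity forces $j^*(r)=r$, so $j^*$ restricts to an elementary embedding $\res{j^*}{\LL[r]}\colon\LL[r]\to\LL[j^*(r)]=\LL[r]$; this restriction is nontrivial because its critical point $\kappa$ is an ordinal of $\LL[r]$. By the equivalence of Lemma~\ref{sheq}, this witnesses $r^\sharp$ in $V[G]$, and since $r$ was arbitrary we obtain $V[G]\models\forall x\in\RR\,(x^\sharp\text{ exists})$. The only genuinely forcing-theoretic input --- and hence the main obstacle --- is the smallness estimate $|\MM^{\LL[y]}|^{\LL[y]}<\cp{j}$, which is what licenses the Levy--Solovay-style lift with $H=g$; the accompanying routine verification is that $\LL[y][g]$ computes $\LL[r]$ correctly, so that $\res{j^*}{\LL[r]}$ is genuinely an embedding of $\LL[r]$ into itself.
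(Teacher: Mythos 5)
Your proposal is correct and takes essentially the same route as the paper's own proof: capture the new real via Theorem \ref{keylemma} with $n=0$, lift a sharp embedding $j:\LL[y]\to\LL[y]$ through the Mathias generic using the Levy--Solovay-style smallness estimate $|\MM^{\LL[y]}|^{\LL[y]}<\cp{j}$ together with Lemma \ref{lift}, and restrict the lifted embedding to $\LL[r]$ to witness $r^\sharp$ via Lemma \ref{sheq}. If anything, your verification that $j$ fixes every Mathias condition and hence $j[g]=g$ is spelled out slightly more carefully than in the paper, which asserts $j[G]=G$ from $\kappa>\w_2^{\LL[w]}$ without further comment.
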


\begin{proof}Let $y\in{}^{\w}\w$ be a real in $\VV[G]$. By Theorem \ref{keylemma} there is some $w\in{}^\w\w\cap V$ such that \[\VV[G]\models ``x_G\text{ is a }\MM\text{-generic real over }\LL[w]\footnote{Recall that the $\OR$-iterated of $w^\sharp$ by its top measure, corresponds to $\LL[w]$ (cf. Remark \ref{sharpsl}). }\text{ and $y\in L[w][x_G]$''}.\]  
  Since $w^\sharp$ exists, in $\VV$ there is a non trivial elementary embedding $j:\LL[w]\to\LL[w]$. Suppose that $\cp{j}=\kappa$.  Since $\kappa$ is greater or equal to the first $w$-indiscernible in $\LL[w]$ (cf. Lemma \ref{sheq}) we have that $\kappa>\w_1^{\LL[w]}=|\MM|^{\LL[w]}$ and, moreover $\kappa>\w_2^{\LL[w]}=|\wp(\MM)^{\LL[w]}|$.  This implies that  $j[G]=G$.  In view of Lemma \ref{lift}, the embedding  $j^*:\LL[w][x_G]\to \LL[w][x_G]$ defined by  \centm{$j^*(\sigma^{G})= (j(\sigma))^{G}$} is elementary and $\res{j^*}{L[w]}=j$, so $j^*$ is also non-trivial.   Since $y\in L[w][x_G]$ we can take $\tilde{j}:=\res{j^*}{\LL[y]}:\LL[y]\to \LL[y]$.  Therefore, $\tilde{j}$ witness that $y^\sharp$ exists in $\VV[G]$.\end{proof}

Recall the definition of $\mn{n}{x}$ in \ref{defsharps}. 
Now we will extend the results obtained in the last theorem, i.e. we will show that if $n\geq 1$, the closure of $\VV\cap\HC{1}$ under the mouse operator $x\to \mn{n}{x}$ is preserved by any forcing $\PP\in\T$.

\begin{lemma}\label{l1}Let $n\geq 1$,\: $x\in {}^\w\w$ and suppose that $\mn{n}{x}$ exists.  Then, for all $y\in\mn{n}{x}$, $\mn{n}{y}$ exists.\end{lemma}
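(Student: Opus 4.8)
The plan is to exploit the large-cardinal structure sitting inside $\mn{n}{x}$ together with its $\w_1$-iterability in $V$. Write $\M=\mn{n}{x}$ and let $\kappa=\cp{\toe{\M}}$ be the critical point of its top extender. Since $\M$ is not $n$-small but only minimally so, below $\kappa$ it has exactly $n$ Woodin cardinals $\delta_0<\dots<\delta_{n-1}<\kappa$ with $\M|_\kappa\models$``there are $n$ Woodin cardinals'', and $\kappa$ is measurable in $\M$ via the normal measure derived from $\toe{\M}$. I would first reduce to the principal case (the one needed later) that $y$ is a real: the reals of $\M$ all lie in $\M|_{\delta_0}$, i.e. strictly below the least Woodin, and a general $y\in\M$ of countable transitive closure is coded by a real $\bar y\in\M$ below $\delta_0$, with $\mn{n}{y}$ and $\mn{n}{\bar y}$ mutually recoverable since $y$ and $\bar y$ are mutually definable.

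Next I would show that $\M$ internally believes $\mn{n}{y}$ exists. Indeed $\M\models$``there are $n$ Woodin cardinals $\delta_0<\dots<\delta_{n-1}$ with a measurable cardinal $\kappa$ above them, and $y$ lies below $\delta_0$''. By a standard construction -- the same one underlying the backward direction of Theorem~\ref{micedet}, now carried out over $y$ below the Woodins and using the measure on $\kappa$ to supply the top extender -- this hypothesis yields $\M\models$``$\mn{n}{y}$ exists and is $\w_1+1$-iterable''. Let $\N=(\mn{n}{y})^{\M}$ be the resulting $y$-premouse. The structural requirements on $\N$, namely that it is a sound $y$-premouse which is not $n$-small while every proper initial segment is $n$-small, concern only the extender sequence and are therefore absolute between $\M$ and $V$; moreover $\N$ is countable in $V$, since $\M$ is.

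The main obstacle, and the real content of the lemma, is to transfer iterability from $\M$ to $\N$ in $V$: the strategy $\M$ provides for $\N$ is only an iteration strategy in the sense of $\M$, whereas I need $\N$ to be genuinely $\w_1$-iterable in $V$. Here I would use that $\N$ arises as a level of the fully backgrounded extender construction performed inside $\M$ below $\kappa$, so that the resurrection (realization) maps of that construction copy any putative countable iteration tree on $\N$ formed in $V$ to a corresponding tree on $\M$; since $\M$ is $\w_1$-iterable in $V$ the copied tree is well-founded, and pulling back along the copying maps shows the original tree has a well-founded last model. Hence $\N$ is $\w_1$-iterable in $V$, and by the uniqueness clause of Definition~\ref{defsharps} we conclude $\N=\mn{n}{y}$. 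I expect this copying/resurrection step to be the delicate part, since it requires the standard iterability machinery for backgrounded constructions rather than the simpler $\Q$-structure-guided uniqueness that is available only for $n$-small mice.
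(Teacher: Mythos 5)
Your operative mechanism coincides with the paper's: a fully backgrounded extender construction over $y$ relative to the extenders of $\mn{n}{x}$, with $\w_1$-iterability of the constructed levels inherited from the background via realization/resurrection maps, and with the top measure supplying a top extender when the construction stays small. But the middle step of your argument --- establishing that $\M$ \emph{internally} believes ``$\mn{n}{y}$ exists and is $\w_1+1$-iterable'' and setting $\N=(\mn{n}{y})^{\M}$ --- is a genuine gap. The premouse $\M=\mn{n}{x}$ is the minimal sound countable non-$n$-small mouse; by minimality and soundness it projects to $\w$, so it is nowhere near a model of $\ZFC$: there is a surjection of $\w$ onto $\M$ definable over $\M$, it has no usable internal $\w_1$, and it can neither run the Martin--Steel/Neeman machinery nor certify the iterability of its own backgrounded construction. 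Moreover, the measure you invoke ``to supply the top extender'' is the amenable top predicate $\toe{\M}$, which is not an element of $\M$, so that step is necessarily performed in $V$ rather than internally. Finally, even if some internal iterability statement were available it would buy nothing, since --- as you yourself observe --- what is needed is iterability in $V$, which must be proved directly; so the internal-existence detour is both unworkable and unnecessary.

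The paper proceeds exactly as your final paragraph suggests, with two adjustments you should adopt. First, rather than working inside the set-sized $\M$ (which may lack the ordinals needed for the construction to reach a non-$n$-small level), it first iterates $U=\toe{\mn{n}{x}}$ out of the universe to obtain the class model $M_n(x)$ and runs the $L[\vec{E}](y)$-construction there, relaxing the smallness hypothesis to $(n+1)$-smallness; the $n$ Woodin cardinals survive, and the levels are $\w_1$-iterable in $V$ via strategies induced by the strategy for $\mn{n}{x}$ --- this is precisely your resurrection step, and it is indeed the delicate citation. Second, there is an explicit case split that your write-up blurs: either some level of the construction is already non-$n$-small, in which case $\mn{n}{y}\li L[\vec{E}](y)^{M_n(x)}$ and you are done; or $L[\vec{E}](y)^{M_n(x)}$ is $n$-small, in which case one forms, in $V$, the active structure $\mathcal{H}=\langle L[\vec{E}](y)^{M_n(x)}|_\tau;\in, U\cap L[\vec{E}](y)\rangle$ with $\tau=\kappa^{+L[\vec{E}](y)}$ computed in $M_n(x)$, verifies the initial segment condition so that $\mathcal{H}$ inherits iterability and is not $n$-small, and finally extracts $\mn{n}{y}=\Trcl(\hull{\mathcal{H}}{\{y\}})$ --- note the hull-and-collapse, which your outline omits and which is needed to obtain the sound, minimal mouse required by Definition \ref{defsharps} rather than merely some non-$n$-small iterable $y$-premouse. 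Your preliminary reduction of general $y\in\M$ to reals is harmless but not needed: the construction works directly over any $y$ appearing below the least Woodin cardinal of $\mn{n}{x}$.
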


\begin{proof}
Let $\delta_0$ be the least Woodin cardinal in $\mn{n}{x}$.  Note that $y\in\mn{n}{x}|_{\delta_0}$.  Let $M_n(x)$ be the resulting model of iterating $\mn{n}{x}$ out the universe via $U=\toe{\mn{n}{x}}$.  

\noindent We can perform inside the model $M_n(x)$ a full extender background construction over $y$ in the sense of \cite[\S 11]{fsit}, relaxing the smallness hypothesis to $n+1$-smallness.  This construction produces an eventual model $\LL[\vec{E}](y)^{M_n(x)}$ that still has $n$ Woodin cardinals. Also, by a generalization of \cite[\S 12]{fsit}, the resulting model $\LL[\vec{E}](y)^{M_n(x)}$ is $\w_1$-iterable via  iteration strategies induced by the $\w_1$-iteration strategies available for $\mn{n}{x}$.  

\noindent We have two possibilities for finding $\mn{n}{y}$ with the help of the background construction. 
\setcounter{case}{0}
\begin{case} 
If $\mn{n}{y}\li \LL[\vec{E}](y)^{M_n(x)}$, we have that $\mn{n}{y}$ is also $\w_1$-iterable and we are done.
\end{case} 
\begin{case} 
Suppose that $\LL[\vec{E}](y)^{M_n(x)}$ is $n$-small, $\kappa=\cp{U}$ and  $\tau= {\kappa^{+{\LL[\vec{E}](y)}}}^{M_n(x)}$.   Consider the structure \centm{$\mathcal{H}=\langle\LL[\vec{E}](y)^{M_n(x)}|_\tau; \in, U\cap\LL[\vec{E}](y)\rangle$.}
Observe that the potential premouse $\mathcal{H}$ satisfies the initial segment condition.  Thus, by \cite[Section 2]{nfs} we have that the $y$-premouse $\mathcal{H}$ inherits the iterability from $\mn{n}{x}$ and it is not $n$-small.  Since $\LL[\vec{E}](y)^{M_n(x)}$ is $n$-small we have that  $\mn{n}{y}=\Trcl({\hull{\mathcal{H}}{\{y\}}})$.\qedhere\end{case}\end{proof}

We rephrase an instance of \cite[Corollary 6.14]{steel} as stated in \cite[Lemma 3.5]{busche}.  This is a consequence of the Branch Uniqueness Theorem due to Steel (cf. \cite[Theorem 6.10]{steel}). 

\begin{lemma}\label{buniq} Suppose $\M$ is a tame, $k$-sound $A$-premouse which projects to $\xi=\sup(A\cap\OR)$.  Let $\itr$ be a $k$- maximal iteration tree of limit length above $\xi$ on $\M$ which is built according to the $\Q$-structure iteration strategy.  Then there is at most one cofinal, wellfounded branch $b$ through $\itr$ such that $\Q(b,\itr)=\Q(\itr)$. \end{lemma}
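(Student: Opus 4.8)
The plan is to reduce the statement directly to Steel's Branch Uniqueness Theorem \cite[Theorem 6.10]{steel}, which says that if $b\neq c$ are two distinct cofinal wellfounded branches of a $k$-maximal iteration tree $\itr$ of limit length, then $\delta(\itr)$ is Woodin in $\M^\itr_b\cap\M^\itr_c$, as witnessed by extenders appearing on the common part model $\M(\itr)$ (whose lengths are cofinal in $\delta(\itr)$). Granting this, I would argue by contradiction: suppose $b$ and $c$ are distinct cofinal wellfounded branches through $\itr$ with $\Q(b,\itr)=\Q(c,\itr)=\Q(\itr)=:\Q$, and write $\delta:=\delta(\itr)$.

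First I would record the standard structural facts. Since $\itr$ is above $\xi=\sup(A\cap\OR)$ and $\M$ projects to $\xi$, each branch model $\M^\itr_b$ is an $A$-premouse end-extending the common part model $\M(\itr)$ past $\delta$, and tameness of $\M$ guarantees that $\delta$ is a cutpoint of every branch model, hence of $\Q$. Because $\Q=\Q(b,\itr)\li\M^\itr_b$ and $\Q=\Q(c,\itr)\li\M^\itr_c$, the premouse $\Q$ is a common initial segment of the two branch models; thus $\Q\li\N$, where $\N:=\M^\itr_b\cap\M^\itr_c$, and moreover the extenders of $\Q$ indexed below $\delta$ are exactly those of $\M(\itr)$, which are also those of $\N$ below $\delta$.

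Next I would extract the defining feature of the $\Q$-structure. By Definition \ref{qstru}, $\Q$ kills definably the Woodinness of $\delta$, so there is a set $A^*\in\Q$, $A^*\subseteq V_\delta$, such that $\Q\models$ ``no $\kappa<\delta$ is $A^*$-reflecting in $\delta$''. Since $\Q\li\N$ we have $A^*\in\N$. On the other hand, Branch Uniqueness applied to $b\neq c$ yields that $\delta$ is Woodin in $\N$; applying this to the parameter $A^*$ produces some $\kappa<\delta$ that is $A^*$-reflecting in $\delta$, with the witnessing extenders lying on the $\M(\itr)$-sequence, i.e.\ indexed below $\delta$. The crux is then to transfer this witness back into $\Q$: for each $\alpha<\delta$ the relevant extender $E$ sits in $\M(\itr)\li\Q$ below the cutpoint $\delta$, and the reflection clause $j_E(A^*)\cap V_\alpha=A^*\cap V_\alpha$ depends only on $E$, on $A^*\cap V_\alpha$, and on $V_\alpha$, all of which lie in $\Q$ and agree with $\N$ below $\delta$. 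Hence the same $\kappa$ is $A^*$-reflecting in $\delta$ as computed in $\Q$, contradicting the previous sentence. Therefore $b=c$.

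The step I expect to require the most care is this final transfer: one must verify that the reflection witnesses produced by the zipper argument genuinely lie below the cutpoint $\delta$ and that the reflection predicate is absolute between $\Q$ and $\N$. This is exactly where tameness and the cutpoint property of $\delta$ in $\Q$ enter, which is why the hypotheses single out tame premice and trees above the projectum $\xi$; everything else is routine bookkeeping about the agreement of the branch models with $\M(\itr)$ below $\delta(\itr)$.
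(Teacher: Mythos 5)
Your derivation is correct and follows essentially the same route as the paper, which gives no proof of Lemma~\ref{buniq} beyond citing it as an instance of \cite[Corollary 6.14]{steel} (as stated in \cite[Lemma 3.5]{busche}) and noting that it is a consequence of Steel's Branch Uniqueness Theorem \cite[Theorem 6.10]{steel}---precisely the reduction you carry out, feeding the set $A^*\in\Q$ witnessing the definable failure of Woodinness of $\delta(\itr)$ into the zipper argument and pulling the resulting reflection witnesses on $\M(\itr)$ back into $\Q$. One cosmetic quibble: tameness does not make $\delta(\itr)$ a cutpoint of the \emph{full} branch models (extenders overlapping $\delta(\itr)$ are permitted above the level where its Woodinness has already been killed); it yields the cutpoint property only up to the $\Q$-structure, which is all your argument actually uses and which is in any case built into Definition~\ref{qstru}.
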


Now, we have the necessary elements to proof our main result.

\begin{theorem}\label{mainthm} Let $n<\w$.  Suppose that for every $x\in{}^\w\w$, $\mn{n}{x}$ exists.\footnote{The $\mn{n}{x}$ are not automatically unique by Definition \ref{defsharps}. 
However, the assumptions imply that in $V$ all $\mn{n}{x}$ are unique, and $(2)_n$ implies that in $V[G]$ all $\mn{n}{x}$ are unique, as discussed in Remark \ref{remark on uniqueness of Mn}.} 
Let $G$ be $\PP$- or $\PP^2$-generic over $\VV$ where $\PP\in\T$.  Then the following holds: 

\begin{description}

\item[$(1)_n$]  Let $x\in ({}^\w\w)^\VV$ and suppose $\VV\models \mathcal{P}=\mn{n}{x}$.  Then also $\VV[G]\models \mathcal{P}=\mn{n}{x}$.

\item[$(2)_n$] If $x\in ({}^\w\w)^{V[G]}$, then $\VV[G]\models \mn{n}{x}$ exists.

\item[$(3)_n$] Suppose that in $\VV$, $\theta$ is a sufficiently large regular cardinal and $N\prec V_\theta$ is a countable elementary substructure. Let $\bar{N}$ be the transitive collapse of $N$ with uncollapsing map $\pi:\bar{N}\to N$.  Let  $g\in\VV$ be $\PP^{\bar{N}}$-generic over $\bar{N}$.  Then, for each $x\in ({}^\w\w)^{\bar{N}[g]}$, $\mn{n}{x}^{\bar{N}[g]}$ exists and is $\w_1$-iterable in both $\bar{N}[g]$ and in $\VV$. 

\end{description}
\end{theorem}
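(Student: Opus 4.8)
The plan is to prove $(1)_n$, $(2)_n$ and $(3)_n$ simultaneously by induction on $n$, the case $n=0$ being the preceding theorem (closure under $x\mapsto x^\sharp$) together with the classical absoluteness of $x^\sharp$. So fix $n\geq 1$ and assume all three statements with $n-1$ in place of $n$. The first point is that everything reduces to the preservation of $\w_1$-iterability: being a countable $x$-premouse, being $\w$-sound, and failing to be $n$-small while all proper initial segments are $n$-small are first-order properties of a countable structure together with wellfoundedness, hence absolute between $\VV$, $\VV[G]$ and any transitive model containing the structure as a real. Thus in $(1)_n$ the structure $\pc=\mn{n}{x}$ remains, in $\VV[G]$, a sound non-$n$-small $x$-premouse all of whose proper segments are $n$-small, and the only thing to verify is that $\pc$ is still $\w_1$-iterable in $\VV[G]$; granting this, uniqueness of $\mn{n}{x}$ forces $\pc=\mn{n}{x}^{\VV[G]}$. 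Likewise $(2)_n$ and $(3)_n$ amount to producing an $\w_1$-iterable candidate for $\mn{n}{x}$.

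The heart of the argument is therefore an iterability transfer: if $\pc$ is a candidate for $\mn{n}{x}$ that is $\w_1$-iterable in $\VV$, and $\itr\in\VV[G]$ is a putative normal ($k$-maximal) iteration tree on $\pc$ of countable limit length, then $\itr$ has a cofinal wellfounded branch selected by the $\Q$-structure strategy of Definition \ref{qitr}. The $\Q$-structures $\Q(\itr)$ guiding this strategy sit above the common part model $\M(\itr)$ and, at the $\mn{n}{}$ level, are computed from $\M(\itr)$ by the operator $\mn{n-1}{\cdot}$; since $\M(\itr)$ is coded by a real of $\VV[G]$, the inductive hypothesis $(2)_{n-1}$ guarantees that $\mn{n-1}{\M(\itr)}$, and hence the relevant $\Q(\itr)$, exists and is $\w_1$-iterable in $\VV[G]$. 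By Lemma \ref{buniq} there is \emph{at most} one cofinal wellfounded branch $b$ with $\Q(b,\itr)=\Q(\itr)$, so it remains to produce such a branch. This existence step is the main obstacle, precisely because $\itr$ need not lie in $\VV$; here I would invoke capturing. By Theorem \ref{keylemma} the real coding $\itr$ lies in $M_n(y)[x_G]$ for some $y\in\RR\cap\VV$, where $x_G$ is $\MM$-generic over $M_n(y)$ and $M_n(y)$ carries $n$ Woodin cardinals. Inside $M_n(y)[x_G]$ the $\Q$-structure strategy for $\pc$ can be run, the needed $\Q$-structures being computed there by the internal $\mn{n-1}{\cdot}$ operator (available since $n>n-1$), and this yields a cofinal branch $b\in M_n(y)[x_G]$. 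As $b$ and $\itr$ are reals, wellfoundedness of $\M_b^\itr$ is absolute between $M_n(y)[x_G]$ and $\VV[G]$, and the computation of $\Q(b,\itr)$ via $\mn{n-1}{\cdot}$ agrees in the two models by $(2)_{n-1}$; hence $b$ is the unique correct branch in $\VV[G]$ as well, and $\pc$ is $\w_1$-iterable in $\VV[G]$.

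With the transfer in hand the three parts follow. For $(1)_n$ we already noted that preserved iterability yields $\pc=\mn{n}{x}^{\VV[G]}$. For $(2)_n$, given $x\in\RR\cap\VV[G]$, capture it as $x\in M_n(y)[x_G]$ with $y\in\VV$; a full background construction over $x$ carried out inside $M_n(y)[x_G]$, exactly as in the proof of Lemma \ref{l1} (relaxing smallness to $(n+1)$-smallness and using that $\MM^{M_n(y)}$ is small below the Woodins of $M_n(y)$, so they survive), produces a candidate $\pc$ for $\mn{n}{x}$ whose iterability is inherited from the $\w_1$-strategy of $\mn{n}{y}$; the transfer argument then upgrades this to $\w_1$-iterability in $\VV[G]$. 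Finally $(3)_n$ is the same statement carried out inside $\VV$ for the countable model $\bar N[g]$: the clause ``and in $\VV$'' is immediate, since $\bar N\in\VV$ and $g\in\VV$ give $\bar N[g]\subseteq\VV$, so for $x\in\RR\cap\bar N[g]$ we have $x\in\VV$ and $\mn{n}{x}$ is $\w_1$-iterable in $\VV$ by hypothesis; for internal iterability in $\bar N[g]$ one runs the $\Q$-structure strategy inside $\bar N[g]$, where elementarity of $\pi$ makes the instances of the inductive hypothesis and of capturing available, and branch-wellfoundedness is again certified absolutely against $\VV$.

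The single delicate point, which I expect to demand the most care, is this branch-existence step together with the verification that the branch found inside the captured model $M_n(y)[x_G]$ is genuinely the one selected by the $\Q$-structure strategy in $\VV[G]$; it rests on the absoluteness of wellfoundedness for countable structures combined with the inductive totality $(2)_{n-1}$ of the lower sharp operator, which is precisely what makes the $\Q$-structures, and hence the strategy, total in the extension.
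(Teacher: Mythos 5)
Your overall skeleton matches the paper's: induction on $n$, reduction of $(1)_n$ to preservation of $\w_1$-iterability via the $\Q$-structure strategy, $\Q$-structures supplied by $\mn{n-1}{\M(\itr)}$ through the inductive hypothesis, at-most-one branch via Lemma \ref{buniq}, and then $(2)_n$ by capturing plus a background construction as in Lemma \ref{l1}, with $(3)_n$ the localized version (your observation that the ``and in $\VV$'' clause of $(3)_n$ is immediate from $\bar N[g]\subseteq \VV$ is correct). But the heart of your argument---branch existence---has a genuine gap. You assert that, after capturing a code for $\itr$ into $M_n(y)[x_G]$, ``the $\Q$-structure strategy for $\pc$ can be run'' inside $M_n(y)[x_G]$ and ``this yields a cofinal branch $b\in M_n(y)[x_G]$.'' Nothing supports this, and it is circular: running the strategy \emph{means} finding a cofinal wellfounded branch with the right $\Q$-structure at each limit stage, which is exactly what is in question. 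Capturing only guarantees that $\itr$ is coded in $M_n(y)[x_G]$; it does not make $\pc=\mn{n}{x}$ iterable there. Indeed $\pc$ is not $n$-small while $M_n(y)[x_G]$ is a model at the $n$-Woodin level, so there is no reason it can verify iterability of $\pc$; and the statement ``there is a branch'' is $\Sigma^1_1$ in the codes, hence upward \emph{and} downward absolute, so exhibiting it in the captured model is no easier than exhibiting it in $\VV[G]$---some model in which $\pc$ is provably iterable must be found first. Your secondary appeal to $(2)_{n-1}$ to match $\Q$-structure computations also misfires: $(2)_{n-1}$ says $\VV[G]$ is closed under $\mn{n-1}{\cdot}$, not that the \emph{internal} operator of $M_n(y)[x_G]$ agrees with the true one, which is a separate correctness claim.

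The paper's mechanism for exactly this step is different and is what you are missing: one reflects the forcing statement ``$p\Vdash \dot\itr$ witnesses non-iterability of $\check\pc$ via $\Sigma$'' into a countable hull $N\prec V_\theta$, collapses to $\bar N$, and chooses the generic filter $g$ for $\bar\PP$ over $\bar N$ \emph{inside} $\VV$. Then the alleged bad tree $\bar\itr^{g}$ is an element of $\VV$, where $\pc$ \emph{is} $\w_1$-iterable by hypothesis, so $b=\Sigma^{\VV}(\bar\itr^{g})$ exists outright; since $(3)_{n-1}$ makes $\bar N[g]$ correctly closed under $\mn{n-1}{\cdot}$, the assertion that some cofinal branch $c$ carries a $\Q$-structure $\Q\li\mn{n-1}{\M(\bar\itr^{g})}$ killing the Woodinness of $\delta(\bar\itr^{g})$ is $\Sigma^1_1$ in real codes for $\bar\itr^{g}$ and $\mn{n-1}{\M(\bar\itr^{g})}$, hence transfers from $\VV$ to $\bar N[g]$ by $\Sigma^1_1$-absoluteness, and Lemma \ref{buniq} identifies it with $b$, contradicting the reflected statement. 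Capturing then enters only afterwards, in $(2)_n$ and $(3)_n$, where the paper lifts the extender sequence of $\mn{n}{y}$ to $\mn{n}{y}[x_G]$ (the forcing is small below all critical points) and performs the background construction in the iterate of that lifted structure, whose iterability in $\VV[G]$ comes from the already-established $(1)_n$---not from any internal strategy of the captured model. If you replace your capturing-based ``transfer'' by this reflection argument, the rest of your outline goes through essentially as in the paper.
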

\begin{proof} 
We will assume that $G$ is $\PP$-generic over $V$. 
The proof for $\PP^2$ is analogous, with the uses of Theorem \ref{keylemma} replaced by Lemma \ref{keylemma for product forcings}. 

The proof is an induction on $n$.  Clearly, the case $n=0$ was already proven in Section \ref{sharpsreals}.  Let $n>1$ and assume that $(1)_{n-1}$-$(3)_{n-1}$ hold.  

(1)$_n$\: :  Let $\VV\models \mathcal{P}=\mn{n}{x}$, where $x\in{}^\w\w$.   We only have to verify that $\mathcal{P}$ is $\w_1$-iterable in $\VV[G]$.  In fact, we will show that $\pc$ is $\w_1$-iterable in $\VV[G]$ via the $\Q$-structure iteration strategy $\Sigma$ of $\mn{n}{x}$ in $\VV$. 
Towards to get a contradiction, suppose that $\Sigma$ does not define an $\w_1$-iteration strategy in $\VV[G]$ for $\mathcal{P}$.  Thus, there is some iteration tree $\itr$ on $\mathcal{P}$ according to $\Sigma$ such that does not have a unique cofinal branch $b$ with $\qq{b,\itr}=\qq{\itr}$.  We can assume that there is no such cofinal branch in $\VV[G]$ by Lemma \ref{buniq}.  Then \centm{$\VV[G]\models \itr$ is a tree witnessing that $\pc$ is not $\w_1$-iterable via $\Sigma$.} Consequently, there is a $p\in G$ and a name $\dot{\itr}$ such that\centm{$p\forc{\PP}{\VV}\dot{\itr}$ is a tree witnessing that $\check{\pc}$ is not $\w_1$-iterable via $\Sigma$.} We work in the ground model $\VV$.  Let $\theta$ be large enough such that $N\prec\VV_\theta$ is a countable substructure with $\PP,\, p,\, \dot{\itr},\,\check{\pc}\in N$.  Let $\bar{N}=\Trcl(N)$ with uncollapsing map $\pi:\bar{N}\to N$.  Since $p,\,\check{\pc}\in \HHH{\w_1}$ then $p,\,\check{\pc}$ are in $\bar{N}$.  Also,  $\res{\pi}{\check{\pc}}=\id_{\check{\pc}}$.  
Now, let $\bar{\PP}$, $\bar{\itr}$ be the preimage under $\pi$ of $\PP$ and $\dot{\itr}$ respectively. 

Let $g\in\VV$ be $\bar{\PP}$-generic over $\bar{N}$ with $p\in g$.  Then \begin{equation}\label{eq20}\bar{N}[g]\models \bar{\itr}^g\text{ is a tree witnessing that $\pc$ is not $\w_1$-iterable via $\Sigma$.}\end{equation}%Notice that $\bar{\itr}^g$ is an iteration tree according to $\Sigma$ in $\bar{N}[g]$.  
By $(3)_{n-1}$, we have that $\bar{N}[g]\cap\HC{1}$ is closed under the operator $x\mapsto \mn{n-1}{x}$.  Thus, if $\alpha<\lh(\bar{\itr}^g)$ we have that $\mn{n-1}{\M(\res{\bar{\itr}^g}{\alpha})}$ exists in $\bar{N}[g]$ and is $\w_1$-iterable via $\Sigma$ in $\bar{N}[g]$ and in $\VV$.   Recall that $\M(\res{\bar{\itr}^g}{\alpha})$ stands for the common part model of the tree $\bar{\itr}^g$ up to $\alpha$.

If $\Q(\res{\bar{\itr}^g}{\alpha})$ denotes the $\Q$-structure for $\res{\bar{\itr}^g}{\alpha}$ in $\bar{N}[g]$, we can compare it against $\mn{n-1}{\M(\res{\bar{\itr}^g}{\alpha})}$ by \cite[Lemma 2.36]{phildis}.  \:   Since $\Q(\res{\bar{\itr}^g}{\alpha})$ is a $\Q$-structure, ultimately we have \: $\Q(\res{\bar{\itr}^g}{\alpha})\li \mn{n-1}{\M(\res{\bar{\itr}^g}{\alpha})}$.  Thus, $\bar{\itr}$ is truly according to $\Sigma$ in $\bar{N}[g]$ and $\VV$. If $\lh(\bar{\itr}^g)$ is a  successor, we have that in $\bar{N}[g]$ the last model associated to this iteration tree on $\pc$ is well-founded, which contradicts the statement \eqref{eq20}.  Hence,  $\lh(\bar{\itr}^g)$ is a limit ordinal.\:   

Let $b=:\Sigma^\VV(\bar{\itr}^g)$ be the unique cofinal branch given by $\Sigma$ through $\bar{\itr}^g$ in $\VV$. \:   Set\begin{align*}\varphi(c)\equiv  \: \:\exists c [&c\text{ is a cofinal branch through $\bar{\itr}$ and there exists $\Q\li\M_c^{\dot{\itr}}$}\\
 &\: \: \text{such that $\Q\li\mn{n-1}{\M(\bar{\itr}^g)}$ and $\Q\models \delta(\bar{\itr}^g)$ is not Woodin]. }
 \end{align*} 
 Since $\bar{\itr}\in\HC{1}^{\bar{N}[g]}$, the formula $\varphi(\cdot)$  is $\Sigma^1_1(\vec{a}\,)$, where $\vec{a}$ is a vector formed by real codes for $\bar{\itr}^g$ and $\mn{n-1}{\M(\bar{\itr})^g}$. 
 
We have seen that $\varphi(b)$ is true in $\VV$ for $b=\Sigma^\VV(\bar{\itr}^g)$. 
We now argue that $b\in \bar{N}[g]$. 
To this end, let $g_0, g_1\in V$ be mutually generic over $\bar{N}[g]$ for a forcing that makes $\bar{\itr}^g$ countable. 
Since $\bar{N}[g][g_i]\prec_{\Sigma^1_1}\VV$ for $i\in\{0,1\}$,  we have that $\bar{N}[g]\models \varphi(b_i)$ for some $b_i\in\bar{N}[g][g_i]$.  Now, as  $b_i$ is also in $\VV$, by \ref{buniq} we have $\Sigma^\VV(\bar{\itr}^g)=b_0=b_1$. 
Now let $b=b_0=b_1$. 
Since $g_0$ and $g_1$ are mutually generic, $b\in \bar{N}[g]$. 
This contradicts our assumption in \eqref{eq20}.

(2)$_n$:\:\:  Let $x\in ({}^\w\w)^{\VV[G]}$.  By \ref{keylemma}, there is some $y\in {}^\w\w\cap V$ such that \centm{$V[G]\models x_G \text{ is $\MM$-generic over $\mn{n}{y}$ modulo $\Theta$ and $x\in\mn{n}{y}[x_G]$}$,}where $x_G$ is the $\PP$-generic real which codes $G$ and $\Theta$ is a recursive function on $y$.   Since $y\in\VV$,  by $(1)_{n}$ we conclude that $\mn{n}{y}$ exists and it is iterable in $\VV[G]$ via the $\Q$-structure iteration strategy $\Sigma$.   Therefore, to prove the existence (and thus $\w_1$-iterability) of $\mn{n}{x}$ in $\VV[G]$,  by the proof of Lemma \ref{l1} it suffices to see that $\mn{n}{y}[x_G]$ is $\omega_1$-iterable in $\VV[G]$.

Suppose that $\mn{n}{y}=\langle J_{\lambda}(y); \in, \res{\vec{E}}{\lambda}, E_\lambda\rangle$ and let $\delta_y$ be the least Woodin cardinal in $\mn{n}{y}$, $\lambda>\delta_y$ and $\cp{E_\lambda}>\delta_y$. 

Inside $\mn{n}{y}$, since $|\MM|=\w_1$, we have that all the Woodin cardinals in $\mn{n}{y}$ remain to be Woodin in the generic extension $\mn{n}{y}[x_G]$.   In fact, $\MM$ does not add any new Woodin cardinals to $\mn{n}{y}$ (cf.  \cite{hamkins}).  Now, as $|\mathbb{M}|<\cp{E}$ for every $E\in\res{\vec{E}}{\lambda}^\frown E_\lambda$,  we can lift the extender sequence $\res{\vec{E}}{\lambda}$ and the top extender $E_\lambda$ to some $\vec{E}^*$ and $E^*_{\lambda^*}$ respectively in $\mn{n}{y}[x_G]$ (cf. \cite[Proposition 9.4]{cummings}). \:  By $(1)_n$, we have that $\mn{n}{y}$ is $\w_1$-iterable in $V[G]$ via the $\Q$-structure strategy $\Sigma$.  With this in hand, it is straightforward to see that the resulting structure \centm{$\mathcal{N}=\langle J_{\lambda}(y)[x_G]; \in, \res{\vec{E}^*}{\lambda^*}, E_{\lambda^*}\rangle $} is also $\w_1$-iterable in $\VV[G]$ via the $\Q$-structure strategy $\Sigma$.   % Let $\mathfrak{T}\in \VV[G]$ be an iteration tree for $\mathcal{N}$.   This tree, has a counterpart in the ground model 
Now, let $\mathcal{N}^*$ be the resulting model of iterating $\mathcal{N}$ out the universe (in this case, our universe is $\VV[G]$) via $U=\toe{\mathcal{N}}$.  Inside $\mathcal{N}^*$ we can perform a full extender background construction over $x$ as in \cite[\S 11]{fsit} to construct $\mn{n}{x}$.  By the proof of Lemma \ref{l1}, arguing in $\VV[G]$, $\mn{n}{x}$ inherits the Woodin cardinals and also the $\w_1$-iterability of $\mathcal{N}$.  Thus, $\mn{n}{x}$ exists in $\VV[G]$.

(3)$_n$:\:\:  We work in $\VV$.   Since $N$ is fully elementary, the results given in Theorem \ref{keylemma} holds inside the countable model $\bar{N}$.  Thus, given $x\in ({}^\w\w)^{\bar{N}[g]}$, there is some $y\in ({}^\w\w)^{\bar{N}}$ such that \centm{$\bar{N}[g]\models x_g$  is a $\MM$-generic real over $\mn{n}{y}$ and $x\in\mn{n}{y}[x_g]$.}
%Let $\pi(\bar{y})=y$. 
By assumption, $\mn{n}{y}$ exists. 
Since $j$ is elementary and fixes $\mn{n}{y}$, the latter is $\w_1$-iterable in both $\bar{N}$ and $V$. 
%By assumption,  $\mn{n}{y}^{\bar{N}}$ and $\mn{n}{y}$ exist and are  $\w_1$-iterable in $\bar{N}$ and  $V$ respectively. 
Working inside $\bar{N}$, by $(1)_{n}$ we have\centm{$\bar{N}[g]\models \mn{n}{y}$ is $\w_1$-iterable via the $\Q$-structure strategy $\Sigma$.}Now, by the proof of $(2)_n$ inside $\bar{N}[g]$ we have that \centm{$\bar{N}[g]\models \mn{n}{y}[x_g]$ is $\w_1$-iterable via the $\Q$-structure iteration strategy $\Sigma$.}
%To conclude, since $\pi:\bar{N}\to V$ is elementary, note that $\pi$ maps $\mn{n}{\bar{y}}$ into $\mn{n}{\bar{y}}$.  Hence, 
It follows that $\mn{n}{y}[x_g]$ is also $\w_1$-iterable via $\Q$-structures in $V$. 
We can thus construct $\mn{n}{x}$ inside $\mn{n}{y}[x_g]$ as  in the proof of $(2)_n$, and this will be $\w_1$-iterable via  $\Q$-structures in both $\bar{N}[g]$ and $V$. 
%Thus, since \centm{$\bar{N}[g]\models x\in \mn{n}{\bar{y}}[x_g]$\:\:\: and \:\:\: $V\models x\in\mn{n}{y}[x_g]$,} we can construct $\mn{n}{x}^{\bar{N}[g]}$ and $\mn{n}{x}^{V}$ as in the proof of $(2)_n$, and these will be $\w_1$-iterable via  $\Q$-structures. 
\end{proof}

According to the results of Martin, Harrington, Neeman and Woodin 
%D.A. Martin, L. Harrington, I. Neeman and H. Woodin 
mentioned in Theorem \ref{micedet}, we obtain:

\begin{corollary}Let $n\in\w$ and assume that\, $\boldsymbol{\Pi}^1_{n+1}$\! determinacy holds in $\VV$.  Let $\PP\in\T$ and let $G$ be $\PP$-generic over $\VV$.  Then \centm{$\VV[G]\models \:\text{Every }\, \boldsymbol{\Pi}^1_{n+1}$set is determined.}
\end{corollary}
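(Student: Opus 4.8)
The plan is to sandwich the preservation theorem \ref{mainthm} between two applications of Theorem \ref{micedet}, used once in $\VV$ and once in $\VV[G]$.

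First I would read Theorem \ref{micedet} in $\VV$: since $\VV \models \bpi{n+1}$ determinacy, the equivalence yields that for every $x \in {}^\w\w \cap \VV$ the mouse $\mn{n}{x}$ exists and is $\w_1$-iterable. In particular the standing hypothesis of Theorem \ref{mainthm}, namely that $\mn{n}{x}$ exists for every real $x$, is met.

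Next I would apply clause $(2)_n$ of Theorem \ref{mainthm} to the generic filter $G$. This produces, for each real $x \in {}^\w\w \cap \VV[G]$, the mouse $\mn{n}{x}$ in $\VV[G]$; moreover the proof of $(2)_n$ furnishes the $\Q$-structure iteration strategy $\Sigma$, so $\mn{n}{x}$ is in fact $\w_1$-iterable in $\VV[G]$. Hence $\VV[G]$ satisfies: for every real $x$, $\mn{n}{x}$ exists and is $\w_1$-iterable. Finally, since Theorem \ref{micedet} is provable in $\ZFC$ it also holds in $\VV[G]$; reading the equivalence there in the opposite direction converts this mouse-existence statement back into $\bpi{n+1}$ determinacy, giving
\centm{$\VV[G] \models$ every $\bpi{n+1}$ set is determined.}

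The one point to watch — and it is exactly what Theorem \ref{mainthm} is engineered to supply, rather than a genuine obstacle at this stage — is that Theorem \ref{micedet} requires the mice $\mn{n}{x}$ to be $\w_1$-iterable, not merely to exist. Preserving bare existence would be insufficient to reactivate the equivalence in $\VV[G]$; the argument closes precisely because clause $(2)_n$ transfers $\w_1$-iterability (through the $\Q$-structure strategy $\Sigma$) into $\VV[G]$ alongside existence. Once that is granted, the corollary is a purely formal two-step translation with no further forcing-theoretic or inner-model-theoretic content.
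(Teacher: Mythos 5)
Your proposal is correct and is exactly the argument the paper intends: the corollary is stated as an immediate consequence of Theorem \ref{micedet} applied in $\VV$, Theorem \ref{mainthm} (in particular clause $(2)_n$, whose proof supplies $\w_1$-iterability via the $\Q$-structure strategy, as you note), and Theorem \ref{micedet} applied again in $\VV[G]$. Your remark that bare existence without iterability would not suffice is also the right caveat, and it is already built into the paper's Definition \ref{defsharps}, under which $\mn{n}{x}$ is by definition the $\w_1$-iterable such premouse.
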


\section{Absoluteness for tree forcing  notions under determinacy assumptions} 

It is a very well-known result that $\bsi{1}$ and $\bsi{2}$-$\PP$-absoluteness hold in every  transitive model of $\ZF$ for every set forcing $\PP$.  In general, we do not have $\bsi{3}$-absoluteness between inner models of $\ZF$.  However, Martin and Solovay \cite{martinsol} gave a scenario in which we have this degree of absoluteness.

\begin{theorem}[Martin-Solovay]  Let $M$ be an inner model of $\ZFC$.  Then, $\bsi{3}$-$\PP$-absoluteness holds for every set forcing $\PP\in M$ if and only if $X^\sharp$ exists for every set $X$ in $M$.\end{theorem}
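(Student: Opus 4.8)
The statement is an equivalence, and the two implications demand genuinely different machinery, so I would treat them separately.

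For the implication that closure under sharps yields $\bsi{3}$-$\PP$-absoluteness, I would decompose $M\prec_{\Sigma^1_3}M^\PP$ into its upward and downward halves. The upward half is essentially free: if $M\models\exists x\,\psi(x,a)$, where $\psi$ is a $\bpi{2}$-matrix and $a$ is a real of $M$, I fix a witness $x\in M$ and note that $\psi(x,a)$ is a $\bpi{2}$-statement, which transfers to $M^\PP$ by Shoenfield absoluteness (Shoenfield gives both $\bsi{2}$- and, by negation, $\bpi{2}$-absoluteness for arbitrary set forcing); hence $M^\PP\models\exists x\,\psi(x,a)$. Only the downward half, where the existential witness need not lie in $M$, carries content.

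For the downward half I would use the Martin--Solovay tree. Working in $M$, closure under sharps equips the $\bpi{2}$-relation $\psi(x,a)$ with a homogeneous system of measures, built from the indiscernibles supplied by the sharps, from which one constructs a tree $T\in M$ on $\omega\times\omega\times\kappa$, for a suitable ordinal $\kappa$, with $p[T]=\{(x,a):\psi(x,a)\}$; projecting out the $x$-coordinate yields a tree $S\in M$ of ordinals with $a\in p[S]$ iff $M\models\exists x\,\psi(x,a)$, that is, iff the section $S_a$ is ill-founded. Two points then finish the argument. First, since sharps for all sets are preserved into $M^\PP$ (this is exactly the preservation fact recorded above), $M^\PP$ builds its own Martin--Solovay tree from the same, preserved, measures; one checks that it is literally $T$ and that $T$ still satisfies $p[T]=\{(x,a):\psi(x,a)\}$ in $M^\PP$, so that $a\in p[S]$ in $M^\PP$ iff $M^\PP\models\exists x\,\psi(x,a)$. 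Second, ill-foundedness of the tree of ordinals $S_a$ is absolute between the transitive class models $M$ and $M^\PP$. Combining, $M^\PP\models\exists x\,\psi(x,a)$ forces $S_a$ ill-founded in $M^\PP$, hence ill-founded in $M$, hence $M\models\exists x\,\psi(x,a)$. The technical heart here is the verification that $T$ is absolutely computed and still represents the $\bpi{2}$-set on the forcing side, which is precisely where homogeneity of the measures, equivalently the fact that $T$ and the Shoenfield tree for the complementary $\bsi{2}$-set are \emph{absolutely complementing}, is used.

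For the converse, that $\bsi{3}$-$\PP$-absoluteness for every set forcing $\PP\in M$ entails a sharp for every set of $M$, I would argue by contraposition. Suppose some set $X$ lacks a sharp; collapsing it by $\Col(\w,X)\in M$ and using that set forcing neither creates nor destroys sharps, I reduce to a real $x$, available in a collapse extension, with $x^\sharp$ absent. Jensen's Covering Lemma over $L[x]$ then gives the rigidity of $L[x]$---correct computation of successors of singular cardinals---that lets me design a single set forcing $\PP\in M$ (an iteration beginning with the collapse and followed by an almost-disjoint coding or a further collapse) which adds a real coding a well-ordering, or a cofinal map, detectable by a $\bsi{3}$-sentence that $M$ cannot satisfy; the dependence on $X$ is absorbed into $\PP$, keeping the relevant parameter in $M$. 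This produces a $\bsi{3}$-formula whose truth value differs between $M$ and $M^\PP$, contradicting the hypothesis. I expect this converse to be the main obstacle: it is the only step requiring fine-structural input, and the only place where one must be careful that the witnessing $\bsi{3}$-sentence uses a parameter lying in $M$, whereas the forward direction is a clean and modular tree-absoluteness argument once the Martin--Solovay tree is in hand.
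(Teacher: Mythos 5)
The paper does not actually prove this theorem: it cites \cite{martinsol} for the original argument and \cite[Theorem 1]{schtalk} for an alternative proof, so your attempt must be measured against the standard arguments in those sources. Your forward direction reproduces the classical Martin--Solovay route faithfully: the upward half is free by Shoenfield (since $\bsi{3}$ statements are $\exists^{\mathbb{R}}\bpi{2}$ and $\bpi{2}$ is absolute for any set forcing), and the downward half goes through the Martin--Solovay tree, with the decisive point correctly identified as the absolutely complementing property of the tree for the $\bpi{2}$ matrix against the Shoenfield tree for its complement. One elision worth flagging: the measures supplied by sharps are only countably complete, so what must actually be checked, for a forcing $\PP$ of arbitrary size, is that the towers along new branches remain countably complete in $M^\PP$; this is exactly where sharps for all \emph{sets} (not merely reals) and their preservation under set forcing (the Fact quoted at the start of the paper) are consumed, and where the "suitable ordinal $\kappa$" in your tree must be chosen in dependence on $|\PP|$. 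Your sketch gestures at this correctly, so the forward half is sound in outline.

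The genuine gap is in the converse, and it sits precisely where you predicted. Two steps are missing. First, "set forcing neither creates nor destroys sharps": destruction is the quoted Fact, but creation requires its own (short) argument --- e.g., that the theory of $\LL[X]$ with sufficiently large $V$-cardinals plugged in as indiscernibles is computed identically in $M$ and $M^\PP$, so a sharp found in the extension already lies in $M$; without this, your reduction from sets to reals via $\Col(\w,X)$ does not close. Second, and more seriously, the heart of the proof --- exhibiting a specific $\bsi{3}$ sentence, with real parameters in $M$, made true by your forcing but unsatisfiable in $M$ --- is never produced, and your two candidate continuations are not interchangeable: a bare "further collapse" cannot work, since no statement of complexity $\bsi{3}$ with ground-model real parameters obviously separates $M$ from a collapse extension. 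The known argument (Woodin's, in the style of the David paper \cite{david} this article itself invokes) combines almost-disjoint coding with David's trick: one arranges the coded real to be recognizable by a \emph{local} $\bpi{2}$ condition (a $\Pi^1_2$-singleton-style certification over countable levels of $\LL[x]$), so that the existential statement is genuinely $\bsi{3}$, and the Covering Lemma over $\LL[x]$ is used exactly to guarantee that countable levels of $\LL[x]$ compute the relevant cardinal-successor facts correctly --- both to make the recognition succeed in the extension and to show it must fail in $M$. As written, your converse is a correct plan with the decisive verification absent; your own assessment that this is "the main obstacle" is accurate, but it means the right-to-left implication remains unproved.
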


\begin{proof}The original result appears in  \cite{martinsol} making use of the so called \emph{Martin-Solovay trees}.  An alternative proof, can be found in \cite[Theorem 1]{schtalk}.   \end{proof}
The following result, derived from the original proof of Martin and Solovay, could be stated assuming just sharps for reals (cf. \cite[Theorem 2.1]{hjorth_u2}).
\begin{theorem}Let $M, N$ be inner models of $\ZFC$ closed under sharps for reals, with $M\subset N$.  If $u_2^M=u_2^N$, where $u_2$ stands for the second uniform indiscernible,  then every $\bsi{3}$-formula is absolute between $M$ and $N$. \end{theorem}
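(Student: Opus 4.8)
The plan is to reduce the statement to the existence of two trees on $\omega\times u_2$, one projecting to a given $\bsi{3}$ set and one to its complement, and then to run the classical ``two trees'' absoluteness argument. Fix a real parameter $a\in M$ and a $\bsi{3}(a)$ set $A=\{x\in\RR:\exists y\,\psi(x,y)\}$ with $\psi$ of complexity $\bpi{2}(a)$; it suffices to prove $A^M=A^N\cap M$, and since $a$ is arbitrary this yields absoluteness of every $\bsi{3}$-formula and, taking complements, of every $\bpi{3}$-formula. Using closure under sharps for reals I would invoke the Martin--Solovay construction to produce, definably from $a$ and the (absolute) sharp function, a weakly homogeneous tree $T=T_a$ on $\omega\times u_2$ with $\mathrm{p}[T]=A$, together with its associated Martin--Solovay tree $\tilde T=\tilde T_a$ satisfying $\mathrm{p}[\tilde T]=\RR\setminus A$. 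Here $\psi$ being $\bpi{2}$ is weakly homogeneously Suslin under sharps, a single real quantifier preserves weak homogeneity, and the complement tree $\tilde T$ is obtained from $T$ and its homogeneity measures; by Hjorth \cite[Theorem 2.1]{hjorth_u2} the relevant norms and measures can be arranged below $u_2$.

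Next I would record that these two trees are computed identically in $M$ and in $N$. Since $M\subseteq N$ are inner models of $\ZFC$ closed under sharps for reals and containing $a$, the map $x\mapsto x^\sharp$ agrees on the reals of $M$, the uniform indiscernibles entering the construction coincide by the hypothesis $u_2^M=u_2^N$, and hence $T^M=T^N=:T$ and $\tilde T^M=\tilde T^N=:\tilde T$. Moreover, as the Martin--Solovay complementation is a theorem of $\ZFC$ together with ``sharps for reals'', it holds internally in each model: $\mathrm{p}[T]^W=A^W$ and $\mathrm{p}[\tilde T]^W=(\RR\setminus A)^W$ for $W\in\{M,N\}$.

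With this in hand the absoluteness argument is symmetric and purely about illfoundedness of section trees. Let $x\in\RR\cap M$. If $x\in A^M$ then $T_x$ has an infinite branch in $M$; as $M\subseteq N$ this branch lies in $N$, so $T_x$ is illfounded in $N$ and $x\in A^N$. Conversely, if $x\notin A^M$ then $x\in(\RR\setminus A)^M$, so $\tilde T_x$ has an infinite branch in $M\subseteq N$, whence $x\in(\RR\setminus A)^N$, i.e. $x\notin A^N$. Therefore $x\in A^M\iff x\in A^N$, as required.

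Finally, the hard part is not the bookkeeping of the last paragraph but the two structural inputs feeding it: that a single real quantifier over $\bpi{2}$ is captured by a weakly homogeneous tree whose complement is again Suslin, which is exactly the Martin--Solovay analysis of $\bpi{2}$ under sharps; and Hjorth's refinement that at this $\bsi{3}$ level the construction descends to $u_2$, so that $u_2^M=u_2^N$ is precisely what forces $T^M=T^N$ and $\tilde T^M=\tilde T^N$. I expect the genuinely delicate point to be verifying that the complementation $\mathrm{p}[\tilde T]=\RR\setminus A$ is correctly computed inside the possibly smaller model $M$ (equivalently, that $M$ computes the homogeneity measures correctly), since this is where closure of $M$ under sharps and agreement with $N$ on $u_2$ are indispensable; upward preservation of the projections, by contrast, is automatic from $M\subseteq N$.
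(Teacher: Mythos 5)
The paper does not actually prove this theorem: it is quoted as a known result ``derived from the original proof of Martin and Solovay'' with a pointer to \cite[Theorem 2.1]{hjorth_u2}, so your proposal has to be measured against that classical argument --- and it diverges from it at the decisive step in a way that does not work. From closure under sharps alone you cannot produce your tree $\tilde T$ with $\mathrm{p}[\tilde T]=\RR\setminus A$ for a $\bsi{3}$ set $A$. The Martin--Solovay complementation lives one level down: sharps make the Shoenfield tree of a $\bsi{2}$ set homogeneous in the relevant sense (towers of countably complete measures concentrating on $L[x]$-indiscernibles; genuine weak homogeneity of $\bpi{2}$ sets is already roughly Woodin-cardinal strength, so your opening claim overstates even that), and the Martin--Solovay tree of that homogeneous tree projects to the $\bpi{2}$ complement; integrating the real quantifier then yields a tree for $A$ itself. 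Complementing \emph{that} tree would require its weak homogeneity, which sharps do not supply. Indeed, if every $\bsi{3}$ set and its complement carried definable trees computed identically in $M$ and $N$, your closing two-tree argument would just as well prove $\bsi{4}$-absoluteness from sharps, which is known to require strictly stronger hypotheses --- this calibration is exactly why the rest of the paper needs the operators $x\mapsto\mn{n}{x}$ to climb the hierarchy.

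The good news is that the complement tree is superfluous for this statement. Upward absoluteness is free: a true $\bsi{3}$ statement has a witness in $M$ and a $\bpi{2}$ matrix, so Shoenfield transfers it to $N$. For the downward direction a single tree suffices: a tree $T$ on $\omega\times u_2$, definable from the parameter and the sharp function, with $T^M=T^N$ and with $\mathrm{p}[T]=A$ provable from $\ZFC$ plus sharps and hence true internally in each of $M$ and $N$; then for $x\in\RR\cap M$ the illfoundedness of the section $T_x$ is absolute between inner models containing $T$ (branches and ranking functions both persist), which gives $A^M=A^N\cap M$ in one stroke. All the real content is therefore concentrated in the two claims you dispatch in a single sentence: that Hjorth's refinement places the tree on $u_2$ at all (the classical Martin--Solovay tree lives on $u_\omega$, so agreement of $u_2$ alone would otherwise not suffice), and that $T^M=T^N$, which needs the Kunen-style representation of ordinals below $u_2$ by terms of the form $\tau^{L[x]}$ evaluated at uniform indiscernibles together with agreement of the sharp function on $\RR^M$ --- not merely ``the construction is definable'', since the Martin--Solovay tree integrates over all reals of the ambient model and $\RR^M\subsetneq\RR^N$ in general. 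Your final paragraph rightly senses that internal correctness of the projection is the delicate point, but it locates the difficulty in a complementation step that simply is not available at this level.
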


It is natural to ask whether is it possible to generalize the Martin-Solovay theorem by assuming $\bpi{n}$-determinacy for $n\geq 2$.  In this respect, Schlicht in \cite[Lemma 3.13]{phildis} has shown the next result:

\begin{theorem}\label{cccabs} Let $n<\w$ and let $\PP$ be a $\Sigma^1_2$ provably c.c.c. forcing notion.  Assume that $\mn{n}{x}$ exists and is $\omega_1$-iterable for every $x\in{}^\w\w$.  Then, $\bsi{n+3}$-$\PP$-absoluteness holds. 
\end{theorem}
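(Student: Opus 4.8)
The plan is to reduce $\bsi{n+3}$-$\PP$-absoluteness to the absoluteness of well-foundedness of a single tree of ordinals, exploiting a Suslin representation of $\bpi{n+2}$ built from the mice $\mn{n}{z}$. Fix a $\Sigma^1_{n+3}$ formula $\varphi$, written as $\varphi(a)\equiv\exists x\,\psi(x,a)$ with $\psi$ a $\Pi^1_{n+2}$ formula and $a\in\RR\cap\VV$; the goal is to show $\VV\models\varphi(a)$ if and only if $\VV^\PP\models\varphi(a)$. The first ingredient is preservation: since $\PP$ is $\Sigma^1_2$ provably c.c.c., by \cite[Lemma 3.11]{phildis} (quoted in the introduction) the hypothesis that $\mn{n}{z}$ exists and is $\w_1$-iterable for every real $z$ is inherited by $\VV[G]$. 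In particular, by Theorem \ref{micedet}, $\bpi{n+1}$-determinacy holds in both $\VV$ and $\VV[G]$, and every real of either model carries its unique iterable $\mn{n}{\cdot}$ there.

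The second ingredient is the Suslin representation. Under $\bpi{n+1}$-determinacy the pointclass $\bpi{n+2}$ admits a generalized Martin--Solovay tree: uniformly in $a$ there are an ordinal $\kappa$ and a tree $T_a$ on $\w\times\kappa$, definable from the models $M_n(z)$ and their iteration data, with $p[T_a]=\set{x}{\psi(x,a)}$. For $n=0$ this is the classical Martin--Solovay tree for $\bpi{2}$ built from sharps (cf.\ \cite{martinsol}); for general $n$ it is the analogous construction from the $\mn{n}{\cdot}$-mice. Here the correctness Lemma \ref{corr}, asserting that $M_n(z)$ is $\bsi{n+2}$-correct, is exactly what certifies that the projection of $T_a$ computes $\psi(\cdot,a)$ correctly in a given model.

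With $T_a\in\VV$ in hand, both directions of the equivalence follow from one observation. Since $\varphi(a)$ is equivalent to $p[T_a]\neq\varnothing$, i.e.\ to $T_a$ being ill-founded, and since $T_a$ is a tree on ordinals lying in $\VV$, well-foundedness of $T_a$ is absolute between $\VV$ and $\VV[G]$ by Mostowski absoluteness (a tree on ordinals is well-founded iff it admits a rank function, and this persists between transitive models with the same ordinals). Hence $T_a$ is ill-founded in $\VV$ iff it is ill-founded in $\VV[G]$, and $\VV\models\varphi(a)\iff\VV[G]\models\varphi(a)$ follows, provided $T_a$ is a correct Suslin representation of $\psi(\cdot,a)$ in each of the two models.

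I expect the main obstacle to be precisely this last proviso: that the tree $T_a$, built in $\VV$, continues to project to exactly $\set{x}{\psi(x,a)}$ when evaluated on the new reals of $\VV[G]$, that is, $p[T_a]^{\VV[G]}=\set{x}{\psi(x,a)}^{\VV[G]}$. Correctness on new reals is exactly what fails for more complex forcings---this is the content of David's counterexample---and it is here that the $\Sigma^1_2$ provably c.c.c.\ hypothesis is indispensable: through the preservation step it guarantees that the mice feeding the Martin--Solovay construction survive into $\VV[G]$ together with their $\w_1$-iterability, so that Lemma \ref{corr} still applies in $\VV[G]$ and matches the projection of $T_a$ with genuine $\bpi{n+2}$-truth. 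Establishing this preservation of the Suslin representation, rather than the soft absoluteness of well-foundedness, is where the real work lies.
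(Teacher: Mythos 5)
Your plan is a genuinely different architecture from the one underlying this theorem (which the paper quotes from Schlicht's thesis; the analogous Theorem \ref{abs} for the forcings in $\T$ is proved by induction on $n$, using countable names, the definability of the forcing relation as in Lemma \ref{fordef}, forcing over the countable mice $\mn{n}{\cdot}$, preservation of their $\w_1$-iterability, and Woodin's correctness Lemma \ref{corr}). But as written your proposal does not prove the theorem: the only step you actually carry out---absoluteness of well-foundedness of a tree on ordinals---is the trivial part, and your final paragraph explicitly defers the decisive claim, namely that the tree $T_a$ built in $\VV$ satisfies $p[T_a]^{\VV[G]}=\{x : \psi(x,a)\}^{\VV[G]}$. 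Both inclusions of this equality are needed (one for each direction of absoluteness), and neither follows from what you establish. Preservation of the mice and their iterability into $\VV[G]$ does not by itself yield correctness of a tree constructed in $\VV$: the branches of a Martin--Solovay-style tree code iteration data computed in $\VV$, and to see that it still projects correctly on the new reals one needs a genuine additional mechanism---for instance weak homogeneity of $T_a$, which at this hypothesis level (closure under $\mn{n}{\cdot}$ for reals only, no Woodin cardinals in $\VV$) is unavailable, or a capturing/genericity argument making each real of $\VV[G]$ generic over an iterate of a $\VV$-mouse. Your appeal to Lemma \ref{corr} here is circular: that lemma certifies $\Sigma^1_{n+2}$-truth inside iterable mice, but converting it into the statement that the $\VV$-constructed tree computes $\bpi{n+2}$-truth on reals of $\VV[G]$ is exactly the missing construction.

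The case $n=0$ makes the gap concrete: correctness of the classical Martin--Solovay tree in an extension closed under sharps for reals is essentially equivalent to $u_2^{\VV}=u_2^{\VV[G]}$ (the theorem of Hjorth quoted in the paper, cf.\ \cite{hjorth_u2}), and the paper treats preservation of $u_2$ as a nontrivial result in its own right, proved via the capturing machinery rather than deduced from preservation of sharps alone. So the assertion that the $\Sigma^1_2$ provably c.c.c.\ hypothesis ``guarantees'' the survival of the Suslin representation begs precisely the question at issue. A repaired version of your route would have to either prove the $u_2$-style agreement (and its higher-$n$ analogues for the ordinals feeding the tree) for c.c.c.\ extensions, or abandon the global tree and argue locally as the paper does: force over the countable iterable mouse $\mn{n}{\tau,r,a,p}$, use the ($\bsi{2}$, hence nicely definable) forcing relation to transfer the $\bpi{n+2}$ statement into the mouse, and invoke Lemma \ref{corr} in both $\VV$ and $\VV[G]$, where the mice remain iterable by the preservation lemma you cite.
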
 

The next theorem, shows the analogous result  to Theorem \ref{cccabs} when $\PP\in \T$. 
We would like to thank the anonymous referee for the observation that for even $n$, the proof works for all Suslin${}^+$ proper forcings. 

\begin{theorem}\label{abs} Let $n\in \w$ and $\PP\in\T$.  Suppose that $\mn{n}{x}$ exists 
%and is $\w_1$-iterable 
for all $x\in{}^\w\w$.  Then $\bsi{n+3}$-$\mathbb{P}$-absoluteness holds.
\end{theorem}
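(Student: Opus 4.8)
The plan is to establish one-step $\bsi{n+3}$-$\PP$-absoluteness directly in both directions. Writing a $\bsi{n+3}$-formula as $\varphi(a)\equiv\exists x\,\psi(x,a)$ with $\psi$ a $\bpi{n+2}$-formula and $a\in{}^\w\w\cap\VV$, I will show $\VV\models\varphi(a)$ iff $\VV[G]\models\varphi(a)$. Everything rests on two inputs. The first is Woodin's correctness (Lemma~\ref{corr} together with the remark after it): whenever $z$ is a real in a model $\WW$ closed under $\M^\sharp_n$, the model $M_n(z)$ computed in $\WW$ satisfies $M_n(z)\prec_{\bsi{n+2}}\WW$, so it decides the $\bpi{n+2}$-matrix $\psi$ correctly; for odd $n$ this uses that $\mn{n}{z}$ carries a top extender, and $\mn{n}{z}$ and $M_n(z)$ share their reals (Remark~\ref{agree}), hence agree on $\psi$. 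The second input is the capturing Theorem~\ref{keylemma}, which here plays the role that provable c.c.c.-ness plays in the proof of Theorem~\ref{cccabs}. Throughout I fold $a$ into the parameter, so that whenever capturing yields a real $y$ I may assume $a\in M_n(y)$.

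Upward absoluteness is the easy half. If $\VV\models\psi(r,a)$ with $r\in\VV$, set $z=\langle r,a\rangle$, so $M_n(z)\models\psi(r,a)$ by correctness in $\VV$. By Theorem~\ref{mainthm}$(1)_n$ the premouse $\mn{n}{z}$, and hence $M_n(z)$, is unchanged and still $\w_1$-iterable in $\VV[G]$, so correctness now gives $M_n(z)\prec_{\bsi{n+2}}\VV[G]$ and therefore $\VV[G]\models\psi(r,a)$, i.e. $\VV[G]\models\varphi(a)$.

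For the downward direction suppose $\VV[G]\models\psi(r,a)$ with $r\in\VV[G]$. By Theorem~\ref{keylemma} there is $y\in{}^\w\w\cap\VV$ with $a\in M_n(y)$ such that $x_G$ is $\MM$-generic over $M_n(y)$ and $r\in M_n(y)[x_G]$. As $\MM$ is small below the Woodins of $M_n(y)$, the model $M_n(y)[x_G]$ retains its $n$ Woodin cardinals and, by the extender- and strategy-lifting of the proof of Theorem~\ref{mainthm}$(2)_n$, is $\w_1$-iterable in $\VV[G]$; since its reals coincide with those of the corresponding small generic extension of the sharp $\mn{n}{y}$, which still carries a top extender, Lemma~\ref{corr} and its following remark yield $M_n(y)[x_G]\prec_{\bsi{n+2}}\VV[G]$ uniformly in the parity of $n$, so $M_n(y)[x_G]\models\psi(r,a)$. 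Fixing a name $\sigma\in M_n(y)$ with $\sigma^{x_G}=r$ and a condition $m\in\MM^{M_n(y)}$ in the induced generic filter, we obtain $m\forc{\MM}{M_n(y)}\psi(\sigma,\check a)$, a statement internal to $M_n(y)\in\VV$ and hence true as computed in $\VV$. To \emph{realize} a witness inside $\VV$ I use that, by Remark~\ref{agree}, $M_n(y)$ shares its reals with the countable mouse $\mn{n}{y}$, so $\RR\cap M_n(y)$ is countable in $\VV$; since $M_n(y)$ is a fine-structural model it satisfies $\GCH$, whence $\pow(\MM)^{M_n(y)}$ has $M_n(y)$-cardinality some $\aleph_k^{M_n(y)}$, a countable ordinal in $\VV$. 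Thus $M_n(y)$ has only countably many dense subsets of $\MM$ from the point of view of $\VV$, and I may choose in $\VV$ an $\MM^{M_n(y)}$-generic filter $h$ over $M_n(y)$ with $m\in h$. Then $M_n(y)[h]\models\psi(\sigma^h,a)$, and as before $M_n(y)[h]$ has $n$ Woodins and is $\w_1$-iterable in $\VV$, so $M_n(y)[h]\prec_{\bsi{n+2}}\VV$ and $\VV\models\psi(\sigma^h,a)$; hence $\VV\models\varphi(a)$.

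The main obstacle is the realization step. Producing a genuine $\MM^{M_n(y)}$-generic filter inside $\VV$ over the proper-class model $M_n(y)$ is exactly what capturing must buy in place of the c.c.c. hypothesis of Theorem~\ref{cccabs}, and it works only because the fine-structural $\GCH$ collapses $\pow(\MM)^{M_n(y)}$ to a countable set in $\VV$. Equally delicate is arranging that Lemma~\ref{corr} delivers correctness at precisely level $\bsi{n+2}$ for both $M_n(y)[x_G]$ (over $\VV[G]$) and $M_n(y)[h]$ (over $\VV$); for odd $n$ this forces one to route the correctness through the sharp $\mn{n}{y}$, whose small generic extensions retain a top extender so that the remark after Lemma~\ref{corr} applies, and to check that the $\w_1$-iterability of these extensions descends from the lifting arguments already established for Theorem~\ref{mainthm}.
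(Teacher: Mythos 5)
Your proposal is correct, but it follows a genuinely different route from the paper's proof. The paper argues by induction on $n$ (the inductive hypothesis handles the easy $\bpi{n+2}$ half) and splits the hard direction by parity: for $n$ even it never uses capturing at all, but instead invokes Lemma~\ref{fordef} --- the definability of the $\PP$-forcing relation coming from Suslin$^+$ properness --- to see that the statement $p\Vdash\theta(\tau,r,a)$ is itself $\bpi{n+2}$, reflects it into the countable mouse $\mn{n}{\tau,r,a,p}$ via Lemma~\ref{corr}, and then realizes a $\PP^{M}$-generic filter over that \emph{countable} mouse, where existence of generics is trivial; for $n$ odd it uses capturing (Theorem~\ref{keylemma}) together with the $\Col(\w,\delta_0)$ formulation of Lemma~\ref{corr}, so that only a two-step forcing statement is transferred and no $\bsi{n+2}$-correctness of the generic extension itself is ever asserted. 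You instead avoid induction and Lemma~\ref{fordef} entirely, use capturing uniformly in both parities, apply the truth lemma internally to the class model $M_n(y)$ to extract $m\forc{\MM}{M_n(y)}\psi(\sigma,\check a)$, and realize an $\MM$-generic over $M_n(y)$ inside $\VV$; your justification for this realization is sound --- by Remark~\ref{agree} the model $M_n(y)$ agrees with the countable mouse $\mn{n}{y}$ up to its height, and since $\MM$ is small below $\cp(\toe{\mn{n}{y}})$, all nice names and dense subsets of $\MM^{M_n(y)}$ lie in that countable initial segment, so $\GCH$ in the fine-structural model is not even strictly needed (note also that Theorem~\ref{keylemma} already records genericity over $M_n(y)$, which you could cite directly). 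For odd $n$ your route leans on the remark following Lemma~\ref{corr}: you need the generic extension of the sharp, equipped with the lifted extender sequence and top extender, to be an iterable structure of the right kind, and this is precisely the structure $\mathcal{N}$ whose $\w_1$-iterability is established in the proof of Theorem~\ref{mainthm}$(2)_n$, so your citation closes that step. In effect your argument is the paper's odd case made uniform across parities: what you gain is a shorter, non-inductive, case-free proof; what the paper's version buys is an even case that isolates exactly where Suslin$^+$ properness (via the complexity of the forcing relation) enters, forces with $\PP$ itself rather than with $\MM$, and, in the odd case, sidesteps any correctness claim about generic extensions by routing everything through the collapse.
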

%\end{lemma}

\begin{proof} By induction on $n$. Let $\varphi(x)$ be a $\Sigma^1_{n+3}$-formula with parameters in $\VV$.   We may assume that $a\in\RR$ is the only real parameter in $\varphi$.   Let $r\in{}^\w\w$.  Then, \centm{$\varphi(r)\equiv \exists y\theta(y,r,a)$,} where the formula $\theta$ is $\bpi{n+2}$. 

Suppose that $\VV\models \varphi(r)$.  Then, there is some $y_0\in V$ such that $V\models\theta(y_0, r, a)$.  By inductive hypothesis, since $\theta$ is $\bpi{n+2}$, we have that $V^\PP\models \theta(y_0,r,a)$.  Thus, $V^\PP\models \exists y\theta(y,r,a)$ i.e., $V^\PP\models\varphi(r)$. 

    Now, assume that $\VV^\PP\models \varphi(r)$ for $r\in({}^\w\w)^\VV$ and let $G$ be $\PP$-generic over $\VV$.  Then,
\centm{$\VV[G]\models \exists y\theta(y,r,a)$}Let $b\in ({}^\w\w)^{\VV[G]}$ be a witness for the sentence above and let us take a countable $\PP$-name $\tau$ for $b$ by Proposition \ref{cnames}. 
Let $p\in\PP$ be such that $p\forc{\PP}{\VV}\theta(\tau,r,a)$.  

Since $\theta$ is $\bpi{n+2}$, by Lemma \ref{fordef}, so is the formula $\theta'(p,\tau,r,a):  p\Vdash \theta(\tau,r,a)$.

Note that $\tau, r, a$ and $p$ are in $\VV$, so by assumption, $\mn{n}{\tau,r,a,p}$ exists and is $\w_1$-iterable in $\VV$.    By Lemma \ref{corr} we have two cases:
\setcounter{case}{0}
\begin{case}    If $n$ is even, we have that\centm{$\theta'(\tau, r, a,p)\iff \mn{n}{\tau,r,a,p}\models\theta'(\tau, r, a, p)$.} Since $\mn{n}{\tau,r,a,p}$ is countable,  there is some $g\in\VV$ with $p\in g$ which is $\PP^{M}$-generic over $\mn{n}{\tau,r,a,p}$, where $\PP^{M}$ is the version of $\PP$ in $\mn{n}{\tau,r,a,p}$. Hence, as $\mn{n}{\tau,r,a,p}\models p\Vdash \theta(\tau, r, a)$, we have that \centm{$\mn{n}{\tau,r,a,p}[g]\models \theta(\tau^g, r, a)$.}By the proof of Theorem \ref{mainthm}, $\mn{n}{\tau,r,a,p}[g]$ is $\w_1$-iterable.    Again, as $\theta$ is $\bpi{n+2}$ and $\mn{n}{\tau,r,a,p}[g]$ has $n$-Woodin cardinals countable in $V$, by Lemma \ref{corr} we have that\centm{$\VV\models\theta(\tau^g,r,a)$.}\end{case}

\begin{case}   Suppose that $n$ is odd.  Remember that $V[G]\models \theta(b,r,a)$, where $r,a \in V\cap{}^\w\w$ and $b\in V[G]$.  Since $V[G]$ is closed under the operator $\M_n^\sharp$ by Theorem \ref{mainthm}, we have that $\mn{n}{b,r,a}$ is $\w_1$-iterable in $V[G]$.  Let $\delta_0$ be the minimum Woodin in $\mn{n}{b,r,a}$.   Therefore, in $V[G]$ by Lemma \ref{corr}, we have \centm{$\forc{\text{Col}(\w,\, \delta_0)}{{\mn{n}{b,\,r,\,a}}}\theta(b,r,a)$.}
Since $(b,r,a)\in V[G]$ by Theorem \ref{keylemma} there is some $x\in V$ such that $b,r,a\in \mn{n}{x}[g]$ where $g$ is a $\MM$-generic real over $\mn{n}{x}$, $g\in V[G]$.  In fact, we can say that $r, a\in\mn{n}{x}$.  Thus, 
\centm{$\forc{\text{Col}(\w,\, \delta_0)}{{\mn{n}{x}[g]}} \theta(b,r,a)$.}
Since the preceding relation holds in $V[G]$, and locally inside $\mn{n}{x}[g]$, there is some $P\in g$ such that
\centm{$P\forc{\MM}{\mn{n}{x}}\:\:\forc{\text{Col}(\w,\, \delta_0)}{{\mn{n}{x}[\dot{g}]}} \theta(\tau,\check{r},\check{a})$.}
Let $h\in V$ be $\MM$-generic over $\mn{n}{x}$ with $P\in h$. Then 
\centm{$\forc{\text{Col}(\w,\, \delta_0)}{{\mn{n}{x}[h]}} \theta(\tau^{h},r,a)$.}
Since $\mn{n}{x}[h]\in V$ is $\w_1$-iterable, it follows from Lemma \ref{corr} that $V\models \theta(\tau^{h},r,a)$, i.e. $V\models \varphi(r)$.\qedhere\end{case}

 \end{proof}

This gives a partial answer to:

\begin{question}{\rm\cite[Question 7.3]{ikegami_2}} Suppose $\bdi{2}$-determinacy holds.  Then can we prove $\bsi{4}$-$\PP$-absoluteness for each strongly arboreal, proper, provably $\Delta^1_{2}$ forcing $\PP$?\end{question}

\section{Preserving orbits of thin transitive relations}
\subsection{Preserving $u_2$ by forcings in $\T$}\label{thinexample}

We will show by means of a \emph{thin} equivalence relation that forcing with any $\PP\in\T$ does not change the value of the second uniform indiscernible $u_2$.   %, we present an example of this fact which leads to the preservation of the second uniform indiscernible $u_2$ by forcings in $\T$.

\begin{definition}Let $E\neq \varnothing$ be a relation defined on $\RR$.   For each $x\in\dom{E}$,\linebreak the $E$-\emph{orbit} of $x$ is the set \index{relation!eorbx@$E$-orbit}$O_E(x)=\{y\in{}^\w\w:xEy \text{  or }yEx \}$.  In particular, if $E$ is an equivalence relation, $O_E(x)$ corresponds to the $E$-equivalence class of $x$.

\noindent We say that a relation $E$ defined on the reals is \emph{thin}\index{relation!thin} if there is no perfect set $P\subset \RR$ such that for every $x,y\in P$, $x\neq y$ implies $y\notin \OO_E(x)$\footnote{This is equivalent to say that every pair of elements in $P$ are not $E$-related.}.\end{definition}

\begin{remark}If $E$ is not an equivalence relation, it might happen that $y\notin\OO_E(x)$ and still $\OO_E(x)\cap\OO_E(y)\neq\varnothing$.  Also, note that $y\in\OO_E(x)$ if and only if $x\in\OO_E(y)$.\end{remark}
\begin{definition}[Uniform indiscernibles]\label{indis}\index{uniform indiscernibles}Assume that $x^\sharp$ exists for every $x\in{}^\w\w$.\:  Let $C_x$ be the club of indiscernibles for $L[x]$ and set  $\Next{x,\delta}:=\text{min}\{\alpha\in C_x:\alpha>\delta\}$.\:   Then, for each ordinal $\gamma$, we define 
 \[u_\gamma =\begin{cases}\sup_{x\in {}^\w\w}\Next{x,0}& \text{ if $\gamma=1$},\\
 \sup_{x\in{}^\w\w}\Next{x,u_\alpha} & \text{ if $\gamma=\alpha+1$}, \\
\sup_{\alpha\in\lambda}u_\alpha& \text{ if $\gamma$ is a limit}.
 \end{cases}\] 
 
\end{definition}

If $x\in{}^\w\w$ and we assume that $x^\sharp$ exists,   all the cardinals in $V$ are indiscernibles for $L[x]$.  Therefore, if $V$ is closed under sharps for reals we have that $u_1=\w_1$ and $u_2\leq\w_2$.\:  Moreover, we have the next theorem: 
 \begin{theorem}[Kunen-Martin]\label{u2}Assume that for all $x\in {}^\w\w$, $x^\sharp$ exists.  Then, the following are all equal:
\begin{enumerate}[(i)]\item $u_2$;
\item $\sup\{(\w_1^V)^{+ L[x]}: x\in {}^\w\w\}$; % where $\omega_1=\omega_1^V$;
\item $\sup \{\alpha:  \alpha\text{ is the rank of a $\boldsymbol{\Pi}^1_1$ well-founded relation}\}$;
\item ${\boldsymbol{\delta}^1_2}=\sup\{\alpha:\exists f:{}^\w\w\to\alpha \text{ such that $f$ defines a $\boldsymbol{\Delta}^1_2$ prewellordering of\,  }{}^{\w}\w\}$.\end{enumerate}
\end{theorem}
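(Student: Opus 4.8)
The statement is the classical Kunen--Martin computation of the projective ordinal $\boldsymbol{\delta}^1_2$, and the plan is to prove the four quantities equal by treating (i)$=$(ii) with the theory of Silver indiscernibles, (iii)$=$(iv) as a pointclass fact about normed classes that needs no sharps, and the genuinely substantial equality (ii)$=$(iv), that is $\boldsymbol{\delta}^1_2=u_2$, by the Kunen--Martin theorem together with a coding of ordinals below $u_2$ by reals; all the ingredients are standard (see \cite{kanamori} and \cite{jech}), so the work lies in assembling them.

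For (i)$=$(ii): since the excerpt already records $u_1=\w_1$, I would only need the lemma that for every real $x$ the least indiscernible for $L[x]$ strictly above $\w_1$ equals $(\w_1)^{+L[x]}$. The point is that no $\eta$ with $\w_1<\eta<(\w_1)^{+L[x]}$ can be an $L[x]$-indiscernible, because in $L[x]$ such an $\eta$ is a surjective image of $\w_1$ and the indiscernibility of $\w_1$ together with definability in $L[x]$ from smaller indiscernibles would collapse the interval, whereas $(\w_1)^{+L[x]}$, being the $L[x]$-successor of the indiscernible $\w_1$, is the next indiscernible. Granting this, $\Next{x,u_1}=(\w_1)^{+L[x]}$, and Definition~\ref{indis} gives $u_2=\sup_{x\in{}^\w\w}(\w_1)^{+L[x]}$, which is (ii).

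For (iii)$=$(iv): this is the instance $n=1$ of the Moschovakis identity $\boldsymbol{\delta}^1_{n+1}=\sup\{\rank(R):R\text{ a }\boldsymbol{\Pi}^1_n\text{ wellfounded relation}\}$, and is purely descriptive. Using that $\bpi{1}$ has the prewellordering property, the norm attached to a $\bpi{1}$ wellfounded relation $R$ yields a $\bsi{2}$ prewellordering of length $\rank(R)$, so $\rank(R)<\boldsymbol{\delta}^1_2$; conversely, writing a $\bdi{2}$ wellordering as $\exists^{\RR}\bpi{1}$ via $\bsi{2}=\exists^{\RR}\bpi{1}$ and then uniformizing, I would manufacture a $\bpi{1}$ wellfounded relation whose rank dominates the given length. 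I would also invoke the standard fact that the suprema of lengths of $\bdi{2}$ and of $\bsi{2}$ prewellorderings coincide.

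The hard part, carrying the real content of the theorem, is (ii)$=$(iv), i.e. $\boldsymbol{\delta}^1_2=u_2$, and it is exactly here that sharps are indispensable. For $\boldsymbol{\delta}^1_2\le u_2$ the bare Kunen--Martin theorem (a $\kappa$-Suslin wellfounded relation has rank $<\kappa^{+}$) only gives $<\w_2$, since $\bsi{2}$ sets are $\w_1$-Suslin by Shoenfield; to sharpen this to $<(\w_1)^{+L[x]}$ for a $\Sigma^1_2(x)$ wellordering I would run the Kunen--Martin rank analysis on the Shoenfield tree $T\in L[x]$ inside $L[x]$ and use $x^\sharp$ to see that the true wellfounded rank is computed correctly below the $L[x]$-successor of $\w_1$, then take the supremum over $x$. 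For $u_2\le\boldsymbol{\delta}^1_2$ I would, for each $x$, build a $\bsi{2}$ prewellordering of length $(\w_1)^{+L[x]}$ by coding the canonical $L[x]$-wellordering of the $L[x]$-sets of ordinals below $(\w_1)^{+L[x]}$, expressing membership and comparison in $L[x]$ in a $\Sigma^1_2(x)$ way through $x^\sharp$ and its indiscernible descriptions; this is legitimate because $x^\sharp$ forces $|(\w_1)^{+L[x]}|^V=\w_1\le 2^{\aleph_0}$. I expect the main obstacle to be precisely this coding, together with the verification that the Kunen--Martin bound descends correctly through $L[x]$; once both inequalities are in hand, chaining (i)$=$(ii), (ii)$=$(iv), and (iv)$=$(iii) closes the theorem.
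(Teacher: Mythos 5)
The paper itself offers no proof of Theorem~\ref{u2}; it is quoted as a classical result, so your proposal has to be measured against the standard argument. There it has a genuine gap: the lemma on which your step (i)$=$(ii) rests --- that the least $L[x]$-indiscernible strictly above $\w_1$ equals $(\w_1)^{+L[x]}$ --- is false. Silver indiscernibles are inaccessible (indeed weakly compact and much more) in $L[x]$, so the least one above $\w_1$ is a limit cardinal of $L[x]$; equivalently, $(\w_1)^{+L[x]}$ is \emph{definable} in $L[x]$ from the indiscernible $\w_1$ as its cardinal successor, and no Silver indiscernible is definable in $L[x]$ from strictly smaller indiscernibles. Hence $\Next{x,u_1}>(\w_1)^{+L[x]}$ \emph{strictly}, for every $x$. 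Your half ``no $\eta$ with $\w_1<\eta<(\w_1)^{+L[x]}$ is an indiscernible'' is fine (indiscernibles are $L[x]$-cardinals), but the identification of the next indiscernible with $(\w_1)^{+L[x]}$ is wrong, and so (i)$=$(ii) is only an equality of suprema over all reals. The nontrivial inequality $u_2\le\sup_x(\w_1)^{+L[x]}$ is Solovay's theorem; its proof needs the lemma that every ordinal $\xi<u_2$ has the form $\tau^{L[x]}(u_1)$ for a Skolem term $\tau$ and a real $x$ (absorbing the finitely many countable ordinal parameters into a new real, using remarkability), and your proposal contains no substitute for this.

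The gap propagates. Your route to $u_2\le\boldsymbol{\delta}^1_2$ builds, for each $x$, a $\bsi{2}(x)$ prewellordering of length $(\w_1)^{+L[x]}$ and then invokes (i)$=$(ii); once the latter collapses, this only yields $\sup_x(\w_1)^{+L[x]}\le\boldsymbol{\delta}^1_2$, which together with your refined Kunen--Martin bound $\boldsymbol{\delta}^1_2\le\sup_x(\w_1)^{+L[x]}$ closes (ii)$=$(iv) (and, with your essentially correct treatment of (iii), also (iii)), but leaves item (i) dangling: nothing bounds $u_2$ from above. (Incidentally, the refined bound needs no sharps at all: the auxiliary wellfounded relation the Kunen--Martin construction extracts from the Shoenfield tree lies in $L[x]$, and ranks of wellfounded relations are absolute, so your appeal to $x^\sharp$ there is superfluous.) The standard repair is to prove $u_2\le\boldsymbol{\delta}^1_2$ directly: code each $\xi<u_2$ by a pair $(x,\tau)$ with $\xi=\tau^{L[x]}(u_1)$ via Solovay's lemma, and note that comparison of codes is $\bdi{2}$ because it is decided by $(x\oplus y)^\sharp$ and, granting all sharps exist, membership in $z^\sharp$ admits both an existential and a universal $\Sigma^1_2$-type description (uniqueness plus existence of the well-founded remarkable E.M.\ set). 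Then the chain $u_2\le\boldsymbol{\delta}^1_2\le\sup_x(\w_1)^{+L[x]}\le u_2$ closes the theorem, the last inequality being the trivial one, since $\sup_x(\w_1)^{+L[x]}\le\sup_x\Next{x,u_1}=u_2$ by the strict inequality noted above.
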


According to the characterization of $u_2$ given in item (ii) above, we define an equivalence relation closely related to it.

\begin{proposition}\label{u2e}Suppose that for every $x\in{}^\w\w$, $x^\sharp$ exists.\:  Let $E\subset\RR\times\RR$ be defined as follows:\centm{$xEy \iff ((\omega_1^V)^{+L[x]}= (\omega_1^V)^{+L[y]})$.}Then $E$ is a thin $\Delta^1_3$ equivalence relation. \end{proposition}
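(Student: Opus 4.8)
The plan is to establish separately the three assertions---that $E$ is an equivalence relation, that it is $\bdi{3}$, and that it is thin---the last being the substantial one. Throughout write $\alpha_x:=(\ww1^{\VV})^{+\LL[x]}$ for the successor of $\ww1=\ww1^{\VV}$ as computed inside $\LL[x]$. Because $x^\sharp$ exists, $\ww1$ is inaccessible in $\LL[x]$, so $\alpha_x$ is a well-defined ordinal with $\ww1<\alpha_x<\ww2$, and by Theorem \ref{u2}(ii) one has $\sup_{x}\alpha_x=u_2\leq\ww2$. Since $xEy$ holds precisely when $\alpha_x=\alpha_y$, the relation $E$ is the kernel of the map $x\mapsto\alpha_x$ and is therefore reflexive, symmetric and transitive; this settles that $E$ is an equivalence relation.

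For the complexity I would read $\alpha_x$ off from $x^\sharp$. As every uncountable $\VV$-cardinal---in particular $\ww1$---is a Silver indiscernible for $\LL[x]$, the ordinal $\alpha_x$ is the value at $\ww1$ of the Skolem term that $x^\sharp$ designates as ``the cardinal successor of $\ww1$''; consequently the comparison $\alpha_x=\alpha_y$ is decided by the pair $(x^\sharp,y^\sharp)$ through an arithmetical condition $\theta$ on these two reals. Now ``$z=x^\sharp$'' is $\bpi{2}(x)$ and $x^\sharp$ is the unique such real, so writing $xEy$ as $\exists u\,\exists v\,(u=x^\sharp\wedge v=y^\sharp\wedge\theta(u,v))$ displays it as $\bsi{3}$, while writing it as $\forall u\,\forall v\,((u=x^\sharp\wedge v=y^\sharp)\to\theta(u,v))$ displays it as $\bpi{3}$. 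Hence $E$ is $\bdi{3}$.

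The main obstacle is thinness, and here I would argue by cardinality together with generic absoluteness rather than by producing two $E$-equivalent reals directly. Suppose toward a contradiction that $P=[T]$ is a perfect set of pairwise $E$-inequivalent reals, with $T$ coded by a real $a$. Then $x\mapsto\alpha_x$ is injective on $[T]$, so the sentence $\psi\equiv\forall x\,\forall y\,(x,y\in[T]\wedge x\neq y\to\neg\,xEy)$ holds in $\VV$, and $\psi$ is $\bpi{3}(a)$ since $E$ is $\bdi{3}$. I would then force with $\CC$ to add $\ww3$ Cohen reals, obtaining $\VV[G]$. Cohen forcing is ccc, hence preserves cardinals and sharps for reals, and it preserves $u_2$; thus $\VV$ and $\VV[G]$ are inner models closed under sharps for reals with $u_2^{\VV}=u_2^{\VV[G]}$, whence by the Martin--Solovay theorem $\bsi{3}$-absoluteness holds between them and $\psi$ passes up to $\VV[G]$.

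In $\VV[G]$ the sentence $\psi$ now asserts that $x\mapsto\alpha_x$ is injective on $[T]^{\VV[G]}$, a set of size $(2^{\aleph_0})^{\VV[G]}\geq\ww3$. But the range of this map is a set of ordinals of cardinality at most $|u_2^{\VV[G]}|\leq\aleph_2<\aleph_3$, so no such injection can exist. This contradiction shows that no perfect set is pairwise $E$-inequivalent, i.e.\ $E$ is thin. The only delicate point is the preservation of $u_2$ (equivalently, the applicability of Martin--Solovay): one checks it by noting that every real of $\VV[G]$ already lies in a single-Cohen-real extension of $\VV$ and that passing to such an extension does not enlarge $\sup_x\alpha_x$; the remaining steps are routine bookkeeping with the $\bpi{2}$-singleton presentation of sharps.
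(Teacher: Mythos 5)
Your $\bdi{3}$ computation breaks down at the word ``consequently''. From $\alpha_x=t^{L[x]}(\omega_1)$ and $\alpha_y=t^{L[y]}(\omega_1)$ it does \emph{not} follow that the comparison $\alpha_x=\alpha_y$ is an arithmetical condition on the pair $(x^\sharp,y^\sharp)$: the two sharps record the theories of $L[x]$ and $L[y]$ \emph{separately}, whereas how the term values over the two models interleave at $\omega_1$ is a fact about a joint model such as $L[x,y]$. That comparison is decided by the theory of $L[x,y]$ at its indiscernibles, i.e.\ by $(x\oplus y)^\sharp$ (equivalently by $z^\sharp$ for any $z\geq_T x,y$), and nothing makes the join sharp, or even the one bit you need from it, arithmetic in $x^\sharp\oplus y^\sharp$. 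This is precisely why the paper's proof writes $xEy$ iff $\exists z\,(x,y\leq_T z \wedge z^\sharp\models \kappa^{+L[x]}=\kappa^{+L[y]})$ iff the corresponding $\forall z$ statement, with $\kappa=\cp{\toe{z^\sharp}}$: since all uncountable cardinals are indiscernible for $L[z]$, the comparison at $\omega_1$ is part of the theory coded by $z^\sharp$, and the $\exists z/\forall z$ pair yields the $\bsi{3}$ and $\bpi{3}$ forms. Your argument is repaired locally by replacing the pair of sharps with a single sharp above both reals; as written, the key step is a non sequitur. A second, smaller misstatement: ``Cohen forcing is ccc, hence preserves \dots sharps for reals'' is false as an inference---R.~David's forcing, cited in the paper's introduction, is a ccc $\bsi{3}$ forcing destroying sharps \cite{david}. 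For $\Add{\omega}{\omega_3}$ the conclusion is nevertheless true, by the reduction you only gesture at for $u_2$: every real of $\VV[G]$ lies in a subextension by countably many coordinates, hence in a single-Cohen extension $\VV[c]$, where the sharp exists (e.g.\ by the lifting argument or \cite[Lemma 3.11]{phildis}), and ``$u=x^\sharp$'' being $\bpi{2}$ is absolute between $\VV[c]$ and $\VV[G]$ by Shoenfield.

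Modulo those repairs, your thinness argument is correct and takes a genuinely different route from the paper's. The paper adds \emph{one} Cohen real $c$, uses the $\bsi{3}$-absoluteness lemma for provably ccc $\Sigma^1_2$ forcings \cite[Lemma 3.12]{phildis} to keep the perfect set pairwise $E$-inequivalent, extracts from it a $\bdi{3}$ well-ordering of the reals of $\VV[c]$, and contradicts the non-definability of $c$ from ground-model parameters. You instead add $\aleph_3$ Cohen reals and count: the transferred $\bpi{3}$ sentence makes $x\mapsto\alpha_x$ injective on a set of size $2^{\aleph_0}\geq\aleph_3$ while its range lies in $u_2\leq\omega_2$, which is impossible. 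Your $u_2$-preservation sketch is sound ($x\leq_T z\oplus c$ with $c$ Cohen over $L[z]$, a small forcing, so $\alpha_x\leq\alpha_{z\oplus c}=\alpha_z<u_2$), and with sharps secured in $\VV[G]$ the Martin--Solovay step matches the theorem the paper quotes from \cite{hjorth_u2}. Note you could bypass Martin--Solovay altogether: transfer the $\bpi{3}$ sentence into each single-Cohen subextension by the already-cited ccc absoluteness lemma, and observe that $E$, being defined by ordinal computations in $L[x]$ and $L[y]$ at the preserved $\omega_1$, is outright absolute between $\VV[c]$ and $\VV[G]$; this would keep the whole proof within the tools the paper itself assembles. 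What your counting endgame buys is independence from homogeneity of the forcing; what the paper's one-real argument buys is that no cardinal arithmetic beyond a single Cohen real is needed.
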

\begin{proof}Notice that $xEy$ iff
\centm{\small$\exists z(x,y\leq_T z \text{ and }   z^\sharp\models \kappa^{+L[x]}=\kappa^{+L[y]})\iff \forall z(x,y\leq_T z \text{ and }   z^\sharp\models \kappa^{+L[x]}=\kappa^{+L[y]})$,} where $\kappa$ is the critical point of $\toe{z^\sharp}$ and $\leq_T$ stands for Turing reducibility.  Therefore, in the presence of sharps for reals, $E$ is a $\Delta^1_3$ equivalence relation.\medskip

\noindent Now we will show that $E$ is thin.  Suppose on the contrary, that there is a perfect set $P\subset{}^{\w}{\w}$ such that $[P]^2\subset\mathbb{R}^2\smallsetminus E$. Since $E$ is $\Delta^1_3$, the formula\centm{$\forall x,y\in P (x\neq y\implies (x,y)\in\mathbb{R}^2\smallsetminus E)$}is $\Pi^1_3$ in a parameter coding $P$.
As $V$ is closed under sharps for reals, by \cite[Lemma 3.12]{phildis} we have ${\bf \Sigma}^1_3$-absoluteness for any $\Sigma^1_2$ provably c.c.c. forcing notion.   In particular, if $c$ is Cohen generic over $V$ then	\centm{$V[c]\models [P]^2\subset \mathbb{R}^2\smallsetminus E$.}
Notice that $P$ induces a $\Delta^1_3$ well-ordering of the reals by taking \[x\prec y\text{ iff }(\omega_1^V)^{+L[\varphi(x)]}<(\omega_1^V)^{+L[\varphi(y)]},\]
where $\varphi:{}^{\w}\w\to P$ is a recursive bijection with parameters in the ground model. 

Therefore, there exists $a\in{}^{\w}\w\cap V$ and a $\Delta^1_3(a)$ definition $\phi(x,y)$ such that\begin{equation}\label{eq3}V[c]\models "\{(u,v): \phi(u,v,a)\} \text{ is a well-ordering of $\mathbb{R}$"}.\end{equation}
Let $f: {}^{\w}\w\to \alpha$, $\alpha\in \textsf{OR}$ be an order-isomorphism given by (\ref{eq3}).\:  Note that $f$ is definable from the real $a\in V$.  

\noindent Thus,  $c$ is the only solution to the formula
\[\psi(x,a,\gamma): 
%\exists x 
f(x)=\gamma\]
for some $\gamma<\alpha$, i.e. the Cohen generic real $c$ is definable with a formula using parameters from the ground model.  This is impossible, since Cohen forcing is homogeneous (see \cite[Corollary 6.63]{ralfbook}).
\end{proof}

Suppose that $\VV$ is closed under sharps for reals.  How does a forcing notion in $\T$ behave with respect to the $E$-orbits, aka $E$-equivalence clases?

\begin{theorem}\label{u2t}Let $E$ defined as in Proposition \ref{u2e} and let $\mathbb{P}\in\mathcal{T}$.\:  Suppose that $x^\sharp$ exists for all $x\in{}^\w\w$.  Then, $\PP$ does not add any new $E$-orbits.
\end{theorem}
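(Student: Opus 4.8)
The plan is to prove the statement directly, by showing that every real of $V[G]$ lies in the $E$-class of some ground model real; since $E$ is an equivalence relation, $O_E(r)=[r]_E$, so this is exactly the assertion that no $E$-orbit is new. First I would record that every $\PP\in\T$ is proper, so $\omega_1^{V[G]}=\omega_1^V=:\kappa$; hence the relation $E$ is computed with the same value of $\omega_1$ in $V$ and in $V[G]$, and it suffices to find, for a given $r\in{}^\w\w\cap V[G]$, some $x\in{}^\w\w\cap V$ with $\kappa^{+L[x]}=\kappa^{+L[r]}$. Fix $r$ and set $\beta:=\kappa^{+L[r]}$. By Theorem \ref{keylemma} applied with $n=0$ (recall $M_0(y)=L[y]$ by Remark \ref{sharpsl}) there is $y\in{}^\w\w\cap V$ such that, in $V[G]$, the generic real $x_G$ is $\MM$-generic over $L[y]$ modulo the definable map $\Theta$ and $r\in L[y][x_G]$. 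Writing $m_G=\Theta(x_G)$ for the induced Mathias generic, we have $L[y][x_G]=L[y][m_G]$ and $r=\dot\rho^{m_G}$ for some $\MM$-name $\dot\rho\in L[y]$ for a real.

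The key point is that, from the point of view of $V$, the forcing $\MM^{L[y]}$ is \emph{countable}. Indeed, since $y^\sharp$ exists the uncountable cardinals of $V$ are indiscernibles for $L[y]$ (Lemma \ref{sheq}), so $\kappa=\omega_1^V$ is inaccessible in $L[y]$; as $\MM^{L[y]}$ has size $\aleph_1^{L[y]}$ and, by {\sf GCH} in $L[y]$, at most $\aleph_2^{L[y]}$ subsets of $\MM$ belong to $L[y]$, and both of these ordinals are $<\kappa$, only countably many (in $V$) dense subsets of $\MM^{L[y]}$ lie in $L[y]$. Because $\MM^{L[y]}$ has size $<\kappa$ it preserves the $L[y]$-cardinal $\kappa^{+L[y]}$, and no cardinals are created in the interval $(\kappa,\kappa^{+L[y]})$; thus $\kappa^{+L[y][h]}=\kappa^{+L[y]}$ for any $\MM$-generic $h$ over $L[y]$, and in particular $\beta=\kappa^{+L[r]}\le \kappa^{+L[y][m_G]}=\kappa^{+L[y]}$. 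Consequently $\kappa^{+L[\dot\rho]}$ is forced to be a fixed ordinal bounded by $\kappa^{+L[y]}$, and there is a condition $q\in m_G$ with
\[q\Vdash^{L[y]}_{\MM}\ \kappa^{+L[\dot\rho]}=\check\beta.\]

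Finally I would manufacture the witness inside $V$. Since only countably many dense sets have to be met and $L[y]\in V$, we may diagonalize in $V$ to obtain a filter $h\in V$ that is $\MM$-generic over $L[y]$ with $q\in h$. Put $x:=\dot\rho^{h}$. As $y,h\in V$ we have $L[y][h]\subseteq V$, so $x\in{}^\w\w\cap V$; and since $q\in h$ forces $\kappa^{+L[\dot\rho]}=\check\beta$, the forcing theorem over $L[y]$ together with the absoluteness of the inner-model computation $\kappa^{+L[\cdot]}$ yields $\kappa^{+L[x]}=\beta=\kappa^{+L[r]}$, i.e. $x\mathrel{E}r$ with $x\in V$, which is exactly what was required. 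Note that the argument is uniform in $\PP\in\T$, since the capturing of Theorem \ref{keylemma} always proceeds through the single forcing $\MM$.

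I expect the main obstacle to be the bookkeeping around the forcing statement $\kappa^{+L[\dot\rho]}=\check\beta$: one must verify that ``the successor of the fixed ordinal $\kappa$ inside the inner model $L[\dot\rho]$'' is a legitimately forcing-expressible ordinal, that it is decided below $\kappa^{+L[y]}$ (which is where the smallness of $\MM^{L[y]}$ and its preservation of $\kappa^{+L[y]}$ are used), and that the value read off from the genuine generic $m_G$ is the true $\beta=\kappa^{+L[r]}$. A secondary delicate point is the passage through the recoding $\Theta$ relating $x_G$ and the Mathias generic $m_G$, and the check that the filter $h$ built in $V$ really meets every $L[y]$-dense set, so that $\dot\rho^{h}$ is a bona fide generic evaluation lying in $V$. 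It is worth emphasizing that thinness of $E$ is not needed for this direct argument; only properness (to fix $\omega_1$), closure under sharps (to make $\kappa$ inaccessible in $L[y]$, whence $\MM^{L[y]}$ is countable in $V$), and the capturing of Theorem \ref{keylemma} are used.
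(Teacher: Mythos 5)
Your proof is correct, and its second half takes a genuinely different route from the paper's. Both arguments begin the same way: by Theorem \ref{keylemma} with $n=0$ (so $M_0(y)=L[y]$), the generic real is $\MM$-generic over $L[y]$ modulo $\Theta$, the new real $r$ is $\dot\rho^{m_G}$ for an $\MM^{L[y]}$-name $\dot\rho\in L[y]$, and smallness of $\MM^{L[y]}$ pins $\kappa^{+L[r]}$ below $\kappa^{+L[y]}$, where $\kappa=\w_1^V=\w_1^{V[G]}$ by properness. From there the paper takes an embedding route: it iterates the top measure of $z^\sharp$ exactly $\w_1$ many times so that the critical point is sent to $\w_1^V$, lifts the resulting $j\colon z^\sharp\to M$ through the Mathias extension via the Lifting Lemma \ref{lift}, and then expresses ``there exist $x$ and a filter $g$, $\MM^{L[z]}$-generic over the \emph{countable} structure $z^\sharp$, with $x\in z^\sharp[g]$ and $\kappa^{+L[x]}=\beta$'' as a $\Sigma^1_1$ statement in real codes, which it pulls back from $V[G]$ to $V$ by $\Sigma^1_1$-absoluteness to produce the ground-model witness. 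You bypass both the iteration/lifting machinery and the absoluteness step: since $\w_1^V$ is a Silver indiscernible, hence inaccessible in $L[y]$ (Lemma \ref{sheq}), the set $\pow(\MM)^{L[y]}$ is countable in $V$, so you can diagonalize in $V$ against all $L[y]$-dense sets below a condition $q\in m_G$ forcing $\kappa^{+L[\dot\rho]}=\check\beta$, and the forcing theorem over $L[y]$ plus the absoluteness of the computation $\kappa^{+L[\,\cdot\,]}$ (any inner model of full ordinal height containing $x$ computes it correctly) yields $x=\dot\rho^{h}\in V$ with $x\mathrel{E}r$. One phrase is slightly loose --- ``$\kappa^{+L[\dot\rho]}$ is forced to be a fixed ordinal'' overstates matters, since the value may depend on the generic --- but you only use that some $q\in m_G$ decides it to be $\beta$, which is exactly what the forcing theorem gives, so this is cosmetic. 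As for what each approach buys: yours is more elementary and self-contained, needing only indiscernibles, a countable dense-set enumeration, and the forcing theorem; the paper's $\Sigma^1_1$-absoluteness template, which replaces genericity over the proper class $L[z]$ by genericity over the countable mouse $z^\sharp$, is the pattern that scales to $n>0$, where orbit invariants of thin $\Delta^1_{n+3}$ relations are no longer absolutely computable ordinals and one must instead invoke correctness of $\mn{n}{x}$-closed models (Lemma \ref{corr}) inside generic extensions of countable substructures, as in Theorem \ref{classes}. Your observation that thinness of $E$ is not needed here is also accurate --- the paper's own proof of Theorem \ref{u2t} does not use it either; thinness enters only in Proposition \ref{u2e} and in the general Theorem \ref{classes}.
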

\begin{proof} Let $G$ be $\PP$-generic over $\VV$.  It is enough to show that for every $x\in V[G]$ there exists $x'\in V$ such that $xE x'$.

\noindent Let $x\in V[G]$.  By Theorem \ref{keylemma}, there exists some $z\in {}^\w\w \cap V$ such that $x\in L[z][g]$ where $g$ is $\MM^{L[z]}$-generic over $L[z]$.\:  Since $\MM^{L[z]}$ is proper,  preserves cardinalities over $L[z]$ and thus $(\w_1^V)^{+L[x]}\leq (\w_1^V)^{+L[z,\,g]}=(\w_1^V)^{+L[z]}$.  % \[\omega_1^{+L[x]}\leq \omega_1^{+L[z,g]}=\omega_1^{+L[z]}\] 
%This implies that 

\noindent Suppose that $z^\sharp=\langle J_\alpha(z); \in, U\rangle$ and let $M=M_{\w_1}$ be the $\w_1$-th iterated of $z^\sharp$ by $U$.  Let $j:z^\sharp\to M$ be the induced elementary embedding.  Observe that if $\kappa:=\cp{U}=\cp{j}$ then $j(\kappa)=\w_1^V$.

\begin{figure}[h]\begin{center}%\includegraphics[width=7.5cm]{embedding.jpg}
\psscalebox{1.0 1.0} % Change this value to rescale the drawing.

\begin{pspicture}(0,-2.9)(6.488,2.9)
\psline[linecolor=black, linewidth=0.016](0.42799988,0.84)(0.42799988,-2.36)(0.42799988,-2.36)
\psline[linecolor=black, linewidth=0.016](4.028,2.9)(4.028,-2.36)(4.028,-2.36)
\psline[linecolor=black, linewidth=0.016](0.42799988,0.84)(0.007999878,0.84)(0.007999878,-0.5)(0.42799988,-0.5)
\psline[linecolor=black, linewidth=0.016](4.028,2.88)(3.6279998,2.88)(3.6079998,1.2576344)(4.028,1.24)(4.028,1.24)
\psline[linecolor=black, linewidth=0.016, dotsize=0.07055555cm 2.0,arrowsize=0.05291667cm 2.0,arrowlength=2.4,arrowinset=0.0]{**->}(0.3879999,-0.5)(4.068,1.26)
\rput[bl](0.107999876,-2.9){$z^\sharp[g]$}
\rput[bl](3.668,-2.86){$M[g]$}
\rput[bl](0.5879999,-0.6){$\kappa$}
\rput[bl](0.54799986,0.16){$\beta$}
\rput[bl](4.1879997,1.12){$(\omega_1^V)$}
\rput[bl](4.1679997,1.98){${(\omega_1^V)}^{+L[z]}= j(\beta)$}
\psline[linecolor=black, linewidth=0.016, linestyle=dashed, dash=0.17638889cm 0.10583334cm, dotsize=0.07055555cm 2.0,arrowsize=0.05291667cm 2.0,arrowlength=2.4,arrowinset=0.0]{**->}(0.40799987,0.42)(4.088,2.18)
\end{pspicture}

\end{center}\caption{\footnotesize{ Lifting embeddings}} \label{fig3}  
\end{figure}

\noindent We can lift $j:z^\sharp\to M$ to the extension by $\mathbb{M}^{L[z]}$ and obtain an elementary embedding $j':z^\sharp[g]\to M[g]$, see Figure \ref{fig3}.  As $x\in z^\sharp[g]$, in $z^\sharp[g]$ if  $\beta=\kappa^{+L[x]}$ we have $j(\beta)=j'(\beta)=(\w_1^V)^{+{L[x]}}$.   Thus $V[G]\models\kappa^{+L[x]}=\beta$.    Say $\gamma=j(\beta)=j'(\beta)$. 
\noindent Notice that in $\VV$, $\mathbb{M}^{L[z]}$ can be coded by a real.  Thus, the statement\begin{equation}\label{eq4}\exists x\exists g(g \text{ is $\mathbb{M}^{L[z]}$-generic over $z^\sharp$}\wedge x\in z^\sharp[g]\wedge \beta=\kappa^{+L[x]^{z^\sharp[g]}})\end{equation}
is $\Sigma^1_1$ in the parameters $z^\sharp, \beta$, $\mathbb{M}^{L[z]}$.
As (\ref{eq4}) holds in $V[G]$, by $\Sigma^1_1$-absoluteness it also holds in $V$.   If $x', g'\in V$ witness (\ref{eq4}), then $x'\in z^\sharp[g']$ and $\beta=\kappa^{+L[x']}$. 
For the lift $j''$ of $j\colon z^\sharp\to M$ to $z^\sharp[g']$, $\gamma=j(\beta)= j''(\beta)=(\w_1^V)^{+L[x']}$, i.e. $x E x'$. 
\qedhere 

\end{proof}

\begin{corollary} 
Suppose that $x^\sharp$ exists for every $x\in{}^\w\w$ and let $\mathbb{P}\in\mathcal{T}$.  Then, $\PP$ does not change the value of $u_2$, that is $u_2^V= u_2^{V^\PP}$. 
\end{corollary}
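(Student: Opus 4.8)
The plan is to read off $u_2^V = u_2^{V[G]}$ from the characterization $u_2 = \sup\{(\w_1)^{+L[x]} : x\in{}^\w\w\}$ supplied by the Kunen--Martin theorem (Theorem \ref{u2}, item (ii)), using Theorem \ref{u2t} to control the contribution of the newly added reals. First I would observe that both sides of the claimed equality are even meaningful: by the preservation of sharps for reals under forcings in $\T$ (the preservation theorem at the start of the section, equivalently the $n=0$ instance underlying Theorem \ref{mainthm}), $V[G]$ is again closed under sharps for reals, so Definition \ref{indis} and Theorem \ref{u2} apply in $V[G]$ exactly as they do in $V$. Thus
\[
u_2^V=\sup\{(\w_1^V)^{+L[x]} : x\in{}^\w\w\cap V\},\qquad
u_2^{V[G]}=\sup\{(\w_1^{V[G]})^{+L[x]} : x\in{}^\w\w\cap V[G]\}.
\]

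Next I would pin down the two routine absoluteness facts that make these two suprema comparable. Since every $\PP\in\T$ is proper, it preserves $\w_1$, so $\w_1^V=\w_1^{V[G]}$; call this common ordinal $\w_1$. For any real $x$ lying in $V$ (indeed any real in $V[G]$), the model $L[x]$ is the same whether built in $V$ or in $V[G]$, and the value $\w_1^{+L[x]}$ is the least ordinal above $\w_1$ that $L[x]$ regards as a cardinal; this is an internal computation in $L[x]$ from the fixed ordinal $\w_1$, hence absolute between $V$ and $V[G]$. Consequently the indexing ordinals $(\w_1)^{+L[x]}$ mean the same thing in both models, and from $V\subseteq V[G]$ we immediately get $u_2^V\le u_2^{V[G]}$.

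For the reverse inequality I would invoke Theorem \ref{u2t}: $\PP$ adds no new $E$-orbits, where $xEy$ abbreviates $\w_1^{+L[x]}=\w_1^{+L[y]}$ (Proposition \ref{u2e}). Hence for every real $x\in V[G]$ there is a real $x'\in V$ with $xEx'$, i.e. $\w_1^{+L[x]}=\w_1^{+L[x']}$. This shows that every ordinal appearing in the supremum defining $u_2^{V[G]}$ already appears in the supremum defining $u_2^V$, so the two sets of values coincide and $u_2^{V[G]}\le u_2^V$. Combining the two inequalities yields $u_2^V=u_2^{V[G]}$, as required. The substantive work has already been carried out in Theorem \ref{u2t}; the only genuinely delicate point remaining is the bookkeeping that the successor-cardinal computation $\w_1^{+L[x]}$ is absolute and that $\w_1$ is preserved, so that the two $u_2$-suprema are literally indexed by the same ordinals rather than merely order-isomorphic ranges.
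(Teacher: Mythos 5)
Your proof is correct and follows essentially the same route as the paper: both read $u_2$ off the Kunen--Martin characterization $u_2=\sup\{(\w_1)^{+L[x]}: x\in{}^\w\w\}$ and use Theorem \ref{u2t} to match the values arising from new reals in $V[G]$ with values already realized in $V$. You merely make explicit some bookkeeping the paper leaves implicit (closure of $V[G]$ under sharps, preservation of $\w_1$ by properness, and absoluteness of the computation $\w_1^{+L[x]}$ inside $L[x]$), which is a welcome addition but not a different argument.
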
 
\begin{proof}In the presence of sharps for reals, $u_2=\sup\{(\w_1^V)^{+ L[x]}: x\in {}^\w\w\}$. 
Let $G$ be $\PP$-generic over $V$ and note that $u_2=\sup\{ (\omega_1)^{+L[x]} \mid x\in ({}^\omega\omega)^V\} = \sup\{ (\omega_1^V)^{+L[x]} \mid x\in ({}^\omega\omega)^{V[G]}\} = u_2^{V[G]}$. 
%=\sup({}^\w\w/_{E})$ where ${}^\w\w/_{E}$ is the set of the $E$-equivalence clases of ${}^\w\w$.\:   By \ref{u2t},  $({}^\w\w/_{E})^{\VV^\PP}=({}^\w\w/_{E})^\VV$ and thus $u_2^{\VV^\PP}=u_2^\VV$.   
\end{proof}

This gives a partial answer to: \begin{question}{\rm\cite[Question 7.4]{ikegami_2}}  Suppose every real has a sharp.  Let $\PP$ be a strongly arboreal, proper, provably ${\Delta}^1_2$-forcing.  Can we prove that every real has a sharp in $V^\PP$ and $u_2^V=u_2^{V^\PP}$?
\end{question}

\subsection{Preserving orbits of absolutely ${\bf \Delta}^1_{n+3}$ thin transitive relations}

A set is called \emph{absolutely} ${\bf \Delta}^1_n$ if it has ${\bf \Sigma}^1_n$ and ${\bf \Pi}^1_n$ definitions that remain equivalent in all generic extensions. 

\begin{lemma}\label{dt}Let  $n\in\w$ and let $E$ be a thin $\boldsymbol{\Pi}^1_{n+3}$ relation.  Suppose that $M^\sharp_n(x)$ exists and is $\w_1$-iterable for every $x\in{}^\w\w$.   Let $\mathbb{P}\in\T$ and  let $\tau_l$ be the canonical $\mathbb{P}$-name for the left generic real and  let $\tau_r$ be the canonical $\PP$-name for the generic real on the right.\:  Then, the set\centm{$D:=\{p\in\mathbb{P}:(p,p)\forc{\mathbb{P}\times\PP}{\VV}{\tau_l}\ E\tau_r\}$} is dense.\end{lemma}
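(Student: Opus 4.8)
The plan is to argue by contradiction via a fusion construction: assuming $D$ is not dense below some $p_0\in\PP$, I would build a perfect tree $S\le_\PP p_0$ whose distinct branches are pairwise outside each other's $E$–orbit, contradicting that $E$ is thin. It is harmless — and it is exactly what is needed in the sequel — to treat the symmetric orbit relation: since the coordinate swap $\iota(u,v)=(v,u)$ is an automorphism of $\PP\times\PP$ fixing every diagonal condition and interchanging $\tau_l$ and $\tau_r$, a diagonal condition forces $\tau_l E\tau_r$ iff it forces $\tau_r E\tau_l$, so I may assume $E$ symmetric and then $y\notin O_E(x)$ is simply $\neg(x E y)$. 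Because the set of conditions forcing $\tau_l E\tau_r$ is downward closed, it is dense below a diagonal condition $(r,r)$ precisely when $(r,r)\Vdash\tau_l E\tau_r$; as $r\le_\PP p_0$ gives $r\notin D$, I can therefore find, below any such $(r,r)$, a product condition $(c_0,c_1)\le(r,r)$ forcing $\tau_l\not E\tau_r$, and after shrinking $c_0,c_1$ to subtrees past a splitting node I may assume they have incomparable stems.

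Next I would fix a sufficiently large regular $\theta$ and a countable $N\prec V_\theta$ with $p_0$ and the real parameter of $E$ in $N$, with transitive collapse $\bar N$ and uncollapsing map $\pi:\bar N\to N$; by elementarity $D$ is still nowhere dense below $p_0$ in $\bar N$, and $\bar N$ computes the forcing relation of $\PP\times\PP$ correctly. Working below $p_0$, I would construct a fusion sequence for $\PP$ whose limit $S=\bigcap_k S_k\in\PP$ has two features. At each splitting node the two immediate subtrees are taken inside some $(c_0,c_1)$ as in the previous paragraph, so that the left branch lies in $[c_0]$ and the right branch in $[c_1]$; and simultaneously every pair of distinct branches $(x,y)\in[S]^2$ is $\PP\times\PP$–generic over $\bar N$. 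The latter is the perfect-set-of-mutually-generic-branches construction already used for the auxiliary forcings of Section \ref{sharpsreals} (compare Lemmas \ref{perfect} and \ref{uniform} and Proposition \ref{denseproduct}): since $\bar N$ is countable only countably many dense sets must be met, and the fusion lemma for $\PP\in\T$ keeps $S$ a condition.

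Finally I would transfer the forcing-level information to $V$. Fix distinct $x,y\in[S]$ with attached pair $(c_0,c_1)$, so $c_0\in G_x$, $c_1\in G_y$ and $(c_0,c_1)\Vdash^{\bar N}\tau_l\not E\tau_r$ (computed correctly in $\bar N$); since $(x,y)$ is $\PP\times\PP$–generic over $\bar N$, the model $\bar N[x][y]$ satisfies the $\bsi{n+3}$ statement $\neg(x E y)$. By the product version of the capturing and preservation results of Section \ref{sharpsreals} together with Theorem \ref{mainthm}(3), $\bar N[x][y]$ is closed under $x\mapsto\mn{n}{x}$, and $\mn{n}{x,y}$ is $\w_1$–iterable in both $\bar N[x][y]$ and $V$ with its $n$ Woodin cardinals countable in $V$. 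Hence, exactly as in the proof of Theorem \ref{abs} — invoking Lemma \ref{corr} directly when $n$ is even and through a $\Col(\w,\delta_0)$–collapse when $n$ is odd, and carrying the single outermost existential witness at the $\bpi{n+2}$ level — the statement $\neg(x E y)$ reflects up to $V$. Thus in $V$ any two distinct branches of $S$ lie outside each other's $E$–orbit, so $[S]$ is a perfect set witnessing that $E$ is not thin, a contradiction; therefore $D$ is dense. I expect the crux of the argument to be exactly this passage from ``$(c_0,c_1)$ forces non-$E$'' to a genuine non-$E$ statement about the actual branches of $S$ in $V$: it cannot be obtained by computing the (too complex, roughly $\bsi{n+4}$) forcing relation inside a mouse, and is instead achieved by making the branch pairs generic over $\bar N$ and then running the mouse-based correctness engine of Theorem \ref{abs}.
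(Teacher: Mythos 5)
Your overall architecture is the paper's: assume $D$ fails to be dense below some $p_0$, extract pairs below diagonal conditions forcing $\neg\tau_l E\tau_r$, pass to a countable $N\prec V_\theta$ with collapse $\bar N$, manufacture mutually generic pairs over $\bar N$ satisfying $\neg\tau_l E\tau_r$, and then reflect the $\bsi{n+3}$ statement $\neg(x E y)$ up to $V$ using closure of the small extension under $x\mapsto\mn{n}{x}$ (Theorem \ref{mainthm}(3)$_n$ plus the product capturing results) and $\bsi{n+2}$-correctness from Lemma \ref{corr} --- that reflection engine is exactly what the paper runs. But your combinatorial core has a genuine gap: you want a single fusion condition $S\in\PP$ such that \emph{every} pair of distinct branches of $S$ is $\PP\times\PP$-generic over $\bar N$, and such an $S$ cannot exist when $\PP\in\{\UU,\MM\}$. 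For Mathias forcing, any condition $U_{(s,A)}$ has comparable branches: if $x$ is a branch then so is $y=x\smallsetminus\{\min(x\smallsetminus s)\}$, and $y$ is recursive in $x$ together with the finite set $s$, hence $y\in\bar N[x]$, so $(x,y)$ is never product-generic (a product-generic second coordinate must lie outside $\bar N[x]$, as $\bar\PP$ is atomless). For Silver forcing, uniformity forces that flipping a branch $x$ of $S$ at a single splitting coordinate yields another branch $y\in[S]$ with $y$ recursive in $x$; alternatively, the dense sets in $\bar N$ asserting that the two generics disagree at arbitrarily many positions in both directions can never be met by such a pair. So the ``perfect set of mutually generic branches'' construction, which is fine for $\sa$ (and adaptable to $\LLL$, $\ML$, where subtrees below distinct nodes shrink independently), breaks for two of the five members of $\T$. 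The paper avoids this by decoupling the perfect set from the branches of any single condition: it builds, \emph{inside} $\bar N$, a tree of conditions $\langle p_s: s\in{}^{<\w}2\rangle$ with $(p_{s^\frown 0},p_{s^\frown 1})\forc{\bar\PP\times\bar\PP}{\bar N}\neg\bar\tau_l E\bar\tau_r$, with $p_s$ deciding $\bar\tau(n)$ for $n<\lh(s)$, and with $(p_s,p_t)\in D_0\cap\dots\cap D_i$ for all distinct $s,t$ of length $i$; the mutually generic objects are the filters $g_x$ generated along branches $x\in{}^\w 2$ of the \emph{index} tree, and the perfect set is the image of the continuous map $x\mapsto\tau^{g_x}$. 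That construction never asks the generic reals to be branches of one condition, which is why it works uniformly for all $\PP\in\T$.

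Two smaller points. First, your symmetrization step is not quite sound as a reduction: the swap automorphism does show that a diagonal condition forces $\tau_l E\tau_r$ iff it forces $\tau_r E\tau_l$, but proving the lemma for the symmetrized relation $E\cup E^{-1}$ only yields diagonal conditions forcing the disjunction $\tau_l E\tau_r\vee\tau_r E\tau_l$, which is weaker than the stated conclusion $(p,p)\forc{\PP\times\PP}{\VV}\tau_l E\tau_r$ --- and Theorem \ref{classes} applies the lemma to transitive, not symmetric, relations and uses the one-directional statement. The orientation issue you are trying to dodge is genuinely delicate (a pair forcing $\neg\tau_l E\tau_r$ need not be refinable to also force $\neg\tau_r E\tau_l$, and the swap automorphism moves you off the pair being refined); the paper's construction simply fixes the left--right orientation at each split via its requirement (iii). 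Second, your even/odd case split through $\operatorname{Col}(\w,\delta_0)$ is unnecessary here: since $\neg(x E y)$ is $\bsi{n+3}$ and holds in $\bar N[g_x,g_y]\prec_{\bsi{n+2}}V$, it transfers upward directly (an existential witness in the submodel is a witness in $V$), which is all the paper uses at this step.
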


\begin{proof}We follow the lines of the proof in \cite[Theorem 3.4]{fomag}.   Let $a\in{}^\w\w$ be the parameter defining the equivalence relation $E$.  Suppose that $D$ is not dense.   Then, we can pick a condition $p$ in $\mathbb{P}$ such that for every $q\leq p$, there are conditions $q_0, q_1 \leq q$ satisfying\begin{equation}\label{eq5}(q_0,q_1){\forc{\PP\times\PP}{\VV}}\neg\tau_l E\tau_r.\end{equation} Let $\theta$ be large enough such that $N\prec V_\theta$ is a countable elementary substructure with  $a,\mathbb{P},\tau_l,\tau_r,p\in N$.  Let $\bar{N}=\Trcl(N)$ with uncollapsing map $\pi:\bar{N}\to N$ and $\pi(\bar{\PP})=\PP$, $\pi(\bar{\tau}_l)=\tau_l$, $\pi(\bar{\tau}_r)=\tau_r$.  Since $p\in\HHH{\omega_1}$, $\pi(p)=p$; by elementarity between $N$ and $\VV_\theta$ we have also $E\cap N=E^N$.     

\noindent Let $\{D_n:n<\w\}$ be a enumeration of the dense open sets of \:$\bar{\mathbb{P}}\times\bar{\mathbb{P}}$ in $\bar{N}$.  Inductively,  we construct in $\bar{N}$ a tree of $\bar{\PP}$-conditions $\langle p_s: s\in{}^{<\w} 2\rangle$ satisfying 

\begin{enumerate}[(i)]  \item $p_\varnothing=p$,
\item $p_{s^\frown i}\leq p_s$ for $i\in\{0,1\}$, 
\item $(p_{s^\frown 0}, p_{s^\frown 1})\:\forc{\PP\times\PP}{\bar{N}}\neg \bar{\tau}_l\: E\:\bar{\tau}_r$,
\item $p_s$ decides the values of $\bar{\tau}_l(n)$ and $\bar{\tau}_r(n)$ for every $n<\lh(s)$, and 
\item if $s,t\in {}^i2$ and $s\neq t$, then $(p_s,p_t)\in D_0\cap\cdots \cap D_i$.
\end{enumerate}       
We begin with $s=\varnothing$.  Let \centm{$P_\varnothing=\{(t,t')\in\bar{\PP}\times\bar{\PP}: t,t'\leq p_\varnothing\text{\: and\: } (t, t')\:\forc{\bar{\PP}\times\bar{\PP}}{\bar{N}}\neg \bar{\tau}_l\: E\:\bar{\tau}_r\}$.}  By (\ref{eq5}), $P_{\varnothing}\neq\varnothing$; then, as $D_0$ is dense in $\PP\times\PP$ there is some $(p'_0,p_1')\leq (t,t')$ in $D_0$ where $(t,t')\in P_\varnothing$.   As the set of conditions \centm{$M_{0}=\{t\in \bar{\PP}: t \text{ decides the value of } \bar{\tau}(0)\}$} is open dense in $\bar{\PP}$, we can find $p_i\leq p'_i$ in $M_0$, for $i=0,1$.  Then, the condition $(p_0,p_1)\in\bar{\PP}\times\bar{\PP}$ satisfies (ii) through (v) above.

\noindent Now, assume that for every $s\in{}^{<\w}2$ of length $\leq n=\lh(s)$, we have produced $p_s$ satisfying (ii)-(v).  Then, let \centm{$P_s=\{(t,t')\in\PP\times\PP: t,t'\leq p_s\text{\: and\: } (t, t')\:\forc{\PP\times\PP}{\bar{N}}\neg \bar{\tau}_l\: E\:\bar{\tau}_r\}$.}  As $P_s\neq\varnothing$ and $D_0\cap\cdots \cap D_{n+1}$ is dense in $\PP\times\PP$, we can find $(p'_{s^\frown0},p'_{s^\frown1})\leq (t,t')$ in $D_0\cap\cdots D_{n+1}$ for some $(t,t')\in P_s$.  Now, as the set of conditions \centm{$M_n=\{ t\in\bar{\PP}:\text{ $t$ decides the values }\bar{\tau}(0),\dots,\bar{\tau}(n)\}$} is dense in $\bar{\PP}$, we can find $p_{s^\frown0}\leq p'_{s^\frown0}, p_{s^\frown1}\leq p_{s^\frown1}'$ such that $p_{s^\frown0}, p_{s^\frown1}\in M_n$.  Note that the condition $(p_{s^\frown0}, p_{s^\frown1})$ satisfies (ii)-(v), as required. 

\noindent For each $x\in{}^\w2$, $x\in\VV$, let $g_x=\{q\in\bar{\PP}: \exists n\in\w \:(p_{\res{x\,}{\,n}}\leq q)\:\}$.  Then, given two different reals $x,y\in{}^{\w}2$ in $\VV$, we get two mutually $\bar{P}$-generic reals $g_x$, $g_y$ over $\bar{N}$.  By the property (iii) of the construction, we have:\begin{equation}\label{eq6}\bar{N}[g_x,g_y]\models\neg \bar{\tau}^{g_x} E\bar{\tau}^{g_y}.\end{equation}Consider the model $\bar{N}[g_x,g_y]$.  As $g_x$ and $g_y$ are coded by the reals $x$ and $y\in\VV$, we are under the hypothesis of $(3)_n$ of Theorem \ref{mainthm}.  Therefore, $\bar{N}[g_x,g_y]$ is closed under the operator $r\mapsto \mn{n}{r}$, $r\in{}^\w\w$ and computes each $\mn{n}{r}$ correctly.  Hence, from Lemma \ref{corr} we have $\bar{N}[g_x,g_y]\prec_{\bsi{n+2}}V$.  Thus, since (\ref{eq6}) is a $\bsi{n+3}(a)$ formula, we have \centm{$\VV\models \neg\tau^{g_x}E\tau^{g_y}$} whenever $x\neq y$.\:   By the procedure generating the tree of conditions $\langle p_s:s\in{}^{<\w}2\rangle$, we have that  $x\mapsto \tau^{g_x}$ is a continuous function on ${}^\w2$.  So, we get a perfect set of pairwise pairwise not $E$-related reals.  This contradicts that $E$ is thin.\end{proof}

The next theorem improves, in some sense, the results obtained in \cite[Theorem 3.9]{phildis}. 

\begin{theorem}\label{classes}Let $E$ be a thin absolutely ${\bf \Delta}^1_{n+3}$ transitive relation\footnote{It suffices to assume that the ${\bf \Delta}^1_{n+3}$-definition remains valid in all generic extension for $\PP$ and $\PP^2$.}  and let $\PP\in\T$.  Suppose that $M^{\sharp}_n(x)$ exists for every $x\in{}^\w\w$.\:  Then $\mathbb{P}$ does not add any new $E$-orbits.	\end{theorem}	

\begin{proof}We follow the proof of \cite[Theorem 3.9]{phildis}.  Suppose that $a\in{}^\w\w$ is the defining parameter of $E$.  By Theorem \ref{abs}, $\bsi{n+3}$-$\PP$-absoluteness holds.  Thus, $E^{\VV^\PP}\cap V=E$ and we shall use $E$ to represent the set given by the same $\Sigma^1_3$ and $\Pi^1_3$-formulas defining $E$ in any $\PP$-generic extension.  Therefore, in $\VV^\PP$, $E$ is also a transitive relation. 

\noindent Let $G$ be $\PP$-generic and suppose that there is some $y\in\VV[G]\cap{}^\w\w$ such that for every $x\in{}^\w\w$, $y\notin\OO_E(x)$.  Then, if $\tau\in\HC{1}$ is a $\PP$-name for $y$, there is some $p\in\PP$ such that for every $x\in{}^\w\w$, we have \begin{equation}\label{eq7}p\forc{\PP}{\VV}\neg \check{x}E\tau.\end{equation}
By Lemma \ref{dt}, we can find a condition $q\leq p$ such that \begin{equation}\label{eq8}(q,q)\forc{\PP\times\PP}{\VV}\tau_lE\tau_r.\end{equation}Let $\theta\gg \PP$.  Take $N\prec\HHH{\theta}$ be a countable substructure such that $a,\PP, q,\tau\in N$.  As $\PP$ is proper,  there is a $(N,\PP)$-generic condition $r\leq_\PP q$. %, i.e. $r\forc{\PP}{\VV}\dot{H}\cap\PP^N\text{ is }\PP\cap N\text{-generic over }N$.

\noindent Let $\bar{N}=\Trcl(N)$ with uncollapsing map $\pi:\bar{N}\to N$ and let  $\pi(\bar{\PP})=\PP$.   Since $q$ and $\tau$ are in $\HC{1}$, we have that both are in $\bar{N}$ and so $\pi(q)=q$, $\pi(\tau)=\tau$.  

\noindent Pick $G_0\in \VV$ $\bar{\PP}$-generic over $\bar{N}$ with $q\in G_0$.  Now, let $G'$ be $\PP$-generic over $\VV$ with $r\in G'$ and let $G_2:=\pi^{-1}[G'\cap\PP^N]$. Note that $\pi[G_2]=G'\cap\PP^N$ is $\PP$-generic over $N$.  Also, since $r\in G'$ and $r\leq q$, we have that $q\in G'$.  So,  $q\in G'\cap \PP^N$ and therefore, $q=\pi(q)\in\pi^{-1}[G'\cap\PP^N]=G_2$.    
\begin{claim} $G_2$ is $\bar{\PP}$-generic over $\bar{N}$. \end{claim}
\begin{proof}Otherwise there is a dense set $D\in\bar{N}$, such that $G_2\cap D=\varnothing$.  Note that $\pi[D]$ is also dense in $N$; in fact, since $\PP\subset \HC{1}$, we have that every condition in $\PP^N$ is actually in $\PP^{\bar{N}}=\bar{\PP}$.   Therefore, $\pi[D]\cap G'\cap\PP^N\neq\varnothing$.  So, there is some $r'\in\pi[D]\cap(G\cap\PP^N)$.  Then, $r'\in D\cap G_2$ which contradicts our assumption.\end{proof}
\noindent Now, pick $G_1\in\VV[G']$ with $q\in G_1$ so that $G_1$ is $\bar{\PP}$-generic over $\bar{N}[G_0]$ and $\bar{N}[G_2]$.   Thus, the pairs $(G_0, G_1)$ and $(G_1,G_2)$ are $\bar{\PP}\times\bar{\PP}$-generic over $\bar{N}$.\:  Working in $\VV[G']$, set \:\: $x_i:=\tau^{G_i}$\: for each $i\in\{0,1,2\}$.
 
\noindent By \eqref{eq8}, we have also $(q,q)\forc{\bar{\PP}\times\bar{\PP}}{\bar{N}}\tau_l E\tau_r$.  Then, as $q\in G_i$, $i=0,1,2$ the following holds: 
\begin{equation}\label{eqr8}\bar{N}[G_0,G_1]\models \tau^{G_0}E\tau^{G_1}\equiv x_0Ex_1,\end{equation}
\begin{equation}\label{eq9}\bar{N}[G_1,G_2]\models \tau^{G_1}E\tau^{G_2}\equiv x_1Ex_2.\end{equation}
By Theorem \ref{mainthm}, part $(3)_n$,  we have that $\bar{N}[G_i,G_{i+1}]$, $i=0,1$, is closed under the operator $x\mapsto \mn{n}{x}$, $x\in{}^\w\w$ and computes it correctly.  Hence, by Lemma \ref{corr} we have \centm{$\bar{N}[G_0,G_1]\prec_{\bsi{n+2}} \VV[G']$\:  and\:  $\bar{N}[G_1,G_2]\prec_{\bsi{n+2}} \VV[G']$.}  Thus, it follows that\centm{$\VV[G']\models x_0Ex_1 \text{   and   }x_1Ex_2$.}
Since $E$ is transitive, we have that $\VV[G']\models x_0Ex_2$; however, notice that $x_0\in\VV$ and $x_2=\tau^{G'}\in\VV[G']$.  Since $q\in G'$, so is $p\geq q$ but then, by (\ref{eq7}) it must follow that \centm{$\VV[G']\models \neg x_0 E x_2$,} which is a contradiction.\end{proof}

\section{Open Questions}

Various arguments in this paper work for absolutely Axiom A forcings on the reals with sufficiently simple definitions. However, we do not know of any argument using only properness. Thus it is open whether the following analogues to Theorems \ref{abs} and \ref{classes} hold. 

%In particular, we ask about the following stronger versions of the main results (Theorems \ref{abs} and \ref{classes}). 
%In particular, we ask whether analogues to the main results (Theorems \ref{abs} and \ref{classes}) hold for all $\bsi{2}$ proper forcings. 

\begin{question} 
Assume $\bpi{n+1}$-determinacy and suppose that $\PP$ is a $\bsi{2}$ proper forcing. 
\begin{enumerate}[(a)] 
\item 
Does $\bsi{n+3}$-$\PP$-absoluteness hold? 
\item 
Does $\PP$ add no new orbits to any absolutely ${\bf \Delta}^1_{n+3}$ thin transitive relation? 
\end{enumerate} 
\end{question} 

Furthermore, our proofs for thin transitive relations in Section \ref{thinexample} and related arguments in \cite[Theorem 2.3]{hjorth_thin} and \cite[Theorem 3.4]{fomag} use transitivity in an essential way. This suggests to ask whether this assumption can be removed. 

\begin{question} 
Assuming $\bpi{n+1}$-determinacy, can some $\PP\in \T$ add new connected components to some absolutely ${\bf \Delta}^1_{n+3}$ thin symmetric graph? 
\end{question} 

\section*{References}

\bibliography{ref1}

\end{document}